\numberwithin{equation}{section}
\numberwithin{figure}{section}
\theoremstyle{plain}
\newtheorem{thm}{\protect\theoremname}[section]
\theoremstyle{definition}
\newtheorem{rem}[thm]{\protect\remarkname}
\theoremstyle{definition}
\theoremstyle{plain}
\newtheorem{prop}[thm]{\protect\propositionname}
\theoremstyle{plain}
\newtheorem{lem}[thm]{\protect\lemmaname}
\theoremstyle{plain}
\theoremstyle{plain}
\newtheorem{cor}[thm]{\protect\corollaryname}
\theoremstyle{definition}
\theoremstyle{definition}
\theoremstyle{definition}
\theoremstyle{definition}
\newtheorem{definition}{Definition}
\newtheorem*{claim}{Claim}
\newenvironment{example}
{\pushQED{\qed}\examplex}
{\popQED\endexamplex}
\DeclareMathOperator{\diam}{diam}
\DeclareMathOperator{\Leb}{Leb}
\DeclareMathOperator{\supp}{supp}
\newcommand{\ind}{\mathds{1}}
\newcommand{\R}{\mathbb R}
\newcommand{\Z}{\mathbb Z}
\newcommand{\N}{\mathbb N}
\newcommand{\ii}{{\underline{i}}}
\newcommand{\jj}{{\bf j}}
\renewcommand{\r}{\color{red}}
\newcommand{\eps}{\varepsilon}
\newcommand{\tc}{\tilde{c}}
\newcommand{\hdim}{\dim_H}
\providecommand{\conjecturename}{Conjecture}
\providecommand{\corollaryname}{Corollary}
\providecommand{\definitionname}{Definition}
\providecommand{\examplename}{Example}
\providecommand{\lemmaname}{Lemma}
\providecommand{\problemname}{Problem}
\providecommand{\propositionname}{Proposition}
\providecommand{\remarkname}{Remark}
\providecommand{\theoremname}{Theorem}
\providecommand{\taskname}{Task}
\def\diam{{\rm diam}}
\def\supp{{\rm supp}}
\def\vphi{\varphi}
\def\half{\frac{1}{2}}
\newcommand{\lam}{\lambda}
\def\Lam{\Lambda}
\newcommand{\gam}{\gamma}
\newcommand{\om}{\omega}
\def\Om{\Omega}
\newcommand{\sig}{\sigma}
\def\N{{\mathbb N}}
\def\pb{{\mathbf p}}
\def\Ak{{\mathcal A}}
\def\Bk{{\mathcal B}}
\def\Jk{{\mathcal J}}
\def\Lk{{\mathcal L}}
\def\be{\begin{equation}}
	\def\ee{\end{equation}}
\newcommand{\Ek}{{\mathcal E}}
\newcommand{\Fk}{{\mathcal F}}
\newcommand{\es}{\emptyset}
\def\ov{\overline}
\newcommand{\const}{{\rm const}}
\def\what{\widehat}
\def\wt{\widetilde}
\renewcommand{\r}{\color{red}}
\begin{document}
\title{Typical absolute continuity for classes of dynamically defined measures}

\author{Bal\'azs B\'ar\'any$^1$}
\address{$^1$Budapest University of Technology and Economics, Department of Stochastics, MTA-BME Stochastics Research Group, P.O.Box 91, 1521 Budapest, Hungary}
\email{balubsheep@gmail.com}

\author{K\'aroly Simon$^{1,2}$}
\address{$^2$Alfr\'ed R\'enyi Institute of Mathematics -- E\"otv\"os Lor\'and Research Network, Re\'altanoda u. 13-15., 1053 Budapest, Hungary}
\email{simonk@math.bme.hu}

\author{Boris Solomyak$^3$}
\address{$^3$Bar-Ilan University, Department of Mathematics, Ramat Gan, 5290002 Israel}
\email{bsolom3@gmail.com}

\author{Adam \'Spiewak$^3$}
%\address[Adam \'Spiewak]{Bar-Ilan University, Department of Mathematics, Ramat Gan, 5290002 Israel}
\email{ad.spiewak@gmail.com}

\date{\today}

\begin{abstract} 
We consider one-parameter families of smooth uniformly contractive iterated function systems $\{f^\lambda_j\}$ on the real line.
Given a family of parameter dependent measures $\{\mu_{\lambda}\}$ on the symbolic space, we study geometric and dimensional properties of their images under the natural projection maps $\Pi^\lambda$. 
The main novelty of our work is that the measures $\mu_\lambda$ depend on the parameter, whereas up till now it has been usually assumed that the measure on the symbolic space is fixed and the parameter dependence comes only from the natural projection. This is especially the case in the question of absolute continuity of the projected measure $(\Pi^\lambda)_*\mu_\lambda$, where we had to develop a new approach in place of earlier attempt which contains an error.
Our main result states that if $\mu_\lambda$ are Gibbs measures for a family of H\"older continuous potentials $\phi^\lambda$, with H\"older continuous dependence on $\lambda$ and $\{\Pi^\lambda\}$ satisfy the transversality condition, then the projected measure $(\Pi^\lambda)_*\mu_\lambda$ is absolutely continuous for Lebesgue a.e.\ $\lambda$, such that the ratio of entropy over the Lyapunov exponent is strictly greater than $1$. We deduce it from a more general almost sure lower bound on the Sobolev dimension for families of measures with regular enough dependence on the parameter. Under less restrictive assumptions, we also obtain an almost sure formula for the Hausdorff dimension. As applications of our results, we study stationary measures for iterated function systems with place-dependent probabilities (place-dependent Bernoulli convolutions and the Blackwell measure for binary channel) and equilibrium measures for hyperbolic IFS with overlaps (in particular: natural measures for non-homogeneous self-similar IFS and certain systems corresponding to random continued fractions).

\end{abstract}

\keywords{iterated function systems, transversality, absolute continuity, place-dependent measures, Sobolev dimension}

\subjclass[2000]{37E05 (Dynamical systems involving maps of the interval (piecewise continuous, continuous, smooth)), 28A80 (Fractals), 60G30 (Continuity and singularity of induced measures)}

\maketitle
\tableofcontents

%{TO DO
%	\begin{itemize}
%	\end{itemize}
%}

\section{Introduction}

Let $\Ak = \{1,\ldots,m\}$ and let $\Psi=\{f_j\}_{j \in \Ak}$ be a set of contracting smooth functions on a compact interval $I\subset \R$ mapping $I$ into itself. We call the set $\Psi$ an \textit{iterated function system} (IFS) on $I$. It is well known that there exists a unique non-empty compact set $\Lambda\subseteq I$ such that it is invariant with respect to 
the IFS, that is $\Lambda=\bigcup_{j\in\Ak}f_j\left(\Lambda\right)$. We call the set $\Lambda$ the \textit{attractor} of the IFS, see Hutchinson \cite{H} or Falconer \cite{Falconer}. 

Moreover, let $\Omega=\Ak^\N$ be the symbolic space and  $\sigma$ the left shift transformation on $\Omega$. There is a natural projection $\Pi\colon\Omega\mapsto\Lambda$ defined as
\begin{equation*}\label{enatproj}
\Pi(\omega):=\lim_{n\rightarrow\infty}f_{\omega_1}\circ\cdots\circ f_{\omega_n}(x),\text{ for $\omega=(\omega_1, \omega_2, \ldots)\in\Omega$,}
\end{equation*}
where $x \in I$ is any point (the limit does not depend on the choice of $x$). If $\mu$ is a probability measure on $\Omega$ then we call the measure $\Pi_*\mu=\mu\circ\Pi^{-1}$ on $\Lambda$ the {\em push-forward measure} of $\mu$. Usually, we assume that $\mu$ is $\sigma$-invariant and ergodic. Let us denote the entropy of $\mu$ by $h_\mu$ and the Lyapunov exponent by $\chi_\mu$. The ratio $h_\mu / \chi_\mu$ is called the {\em Lyapunov dimension} of $\mu$.
%$$
%\chi_\mu:=-\int\log|f_{\omega_1}'(\Pi(\sigma\omega))|d\mu(\omega).
%$$

Considerable attention has been paid to the dimension theory and measure theoretic properties of attractors and  push-forward measures of iterated function systems. A natural upper bound for the Hausdorff and box counting dimension of the attractor is the unique root $s$ of the pressure function $s\mapsto P(-s\log|f'_{\om_1}(\Pi(\sig \om))|)=0$, see the next section for definitions.
%$$
%s\mapsto P(s):=\lim_{n\to\infty}\frac{1}{n}\log\left(\sum_{\omega\in\Ak^n}\sup_{\tau\in[\omega]}|f_{\omega}'(\Pi(\sigma^n\tau))|^s\right),
%$$
%where $[\omega]$ denotes the cylinder set $\{\tau\in\Omega:\tau_k=\omega_k\text{ for }1\leq k\leq n\}$ for a finite word $\omega=(\omega_1,\ldots,\omega_n)$.
Ruelle \cite{Ru} showed that in case of separation, e.g., the Open Set Condition (OSC), the Hausdorff dimension of the attractor equals to the root of the pressure function, 
see also Falconer \cite{falcbook3}. Similarly, the Hausdorff dimension of the push-forward measures is bounded above by the Lyapunov dimension of $\mu$; moreover, if the OSC holds, then the dimension equals to the Lyapunov dimension of $\mu$, see Feng and Hu \cite{FH}.

The situation becomes more complicated if there are overlaps between the maps. To handle this case, Pollicott and Simon \cite{PS} introduced the transversality method for parametrized families of iterated function systems. Later, this method was widely applied and generalised, see for example, Solomyak~\cite{Sol,So}, Peres and Solomyak~\cite{peso,peso1}, Simon and Solomyak~\cite{SiSo}, Neunh\"auserer \cite{Neun}, Ngai and Wang \cite{NW}, and Peres and Schlag \cite{PS00}. 

We have a deeper understanding in the special case, when the maps of the IFS are similarities and the measure $\mu$ is Bernoulli, thanks  to recent results. In his seminal paper, Hochman \cite{Hoch}, using methods of additive combinatorics, determined the value of the Hausdorff dimension of the attractor (self-similar set) and the push-forward measure (self-similar measure) under the {\em exponential separation condition}. Relying on this result and the Fourier decay of the push-forward measure, Shmerkin \cite{Shm} proved that the exceptional set of parameters for absolute continuity of Bernoulli convolution measures has zero Hausdorff dimension. These results were extended by Shmerkin and Solomyak \cite{SS} and Saglietti, Shmerkin and Solomyak \cite{SSS} to more
general IFS of similarities and Bernoulli measures. 
Further progress on absolute continuity of Bernoulli convolutions was obtained by Varj\'u \cite{Varju}.
Jordan and Rapaport \cite{JR} showed that the dimension of the push-forward measure of any ergodic shift-invariant measure equals to the entropy over Lyapunov exponent ratio under the exponential separation condition. However, such strong results are unknown in the case when the IFS consists of general conformal maps.
%Later, this result was extended for general IFS of similarities and Bernoulli measures by Shmerkin and Solomyak \cite{SS} and Saglietti, Shmerkin and Solomyak \cite{SSS}. Moreover, in these papers the $L^q$-spectrum was also studied. Jordan and Rapaport \cite{JR} showed that the dimension of the push-forward measure of any ergodic shift-invariant measure equals to the entropy over Lyapunov exponent ratio under the exponential separation condition. Unfortunately, such strong results are unknown in the case when the IFS consists of general conformal maps.

Simon, Solomyak and Urba\'nski \cite{SSU1, SSU Parabolic} showed that if a smoothly parametrized (hyperbolic or parabolic) family of conformal IFS's $\{f_i^\lambda\}_{i\in\Ak}$ satisfies the transversality condition over a bounded open domain $U$ of parameters, then for Lebesgue almost every parameter $\lambda\in U$ the dimension of the attractor equals to $\min\{1,s_\lambda\}$, where $s_\lambda$ is the root of the pressure function, which depends on the parameter.
%$s\mapsto P(s\log|(f^{\lambda})'|)$, which depends on the parameter clearly. 
Moreover, it has positive Lebesgue measure for almost every parameter, such that $s_\lam>1$. Similarly, the dimension of the push-forward measure of any fixed ergodic shift-invariant measure $\mu$ is equal to the Lyapunov dimension of $\mu$, and the measure is absolutely continuous for almost every parameter  where $h_\mu/\chi_\mu>1$.
Peres and Schlag \cite{PS00} obtained upper bounds on the Hausdorff dimension of the set of exceptional parameters using a version of transversality, in the framework of a ``generalized projection''. All these results required a fixed ergodic shift-invariant measure on $\Omega$. However, there are important cases when the measure on $\Omega$ depends also on the parameter $\lambda$. There are two natural occurrences of such situation.

One is the so-called place-dependent measures, which were studied  by Fan and Lau \cite{FL}, Hu, Lau and Wang \cite{HLW}, 
Jaroszewska \cite{J}, Jaroszewska and Rams \cite{JR}, Kwieci\'nska and W. S\l omczy\'nski \cite{KS}, Czudek \cite{Cz} and others. Let $\{p_i\}_{i\in\Ak}$ be a family of H\"older continuous maps $p_i\colon I\mapsto[0,1]$ such that $\sum_{i\in\Ak}p_i\equiv1$. Fan and Lau \cite{FL} showed that there exists a unique measure $\nu$ on $I$ such that
$$
\int \varphi(x)d\nu(x)=\int\sum_{i\in\Ak}p_i(x)\varphi(f_i(x))d\nu(x)\ \ \text{ for any continuous test function }\varphi.
$$
In view of a result by Bowen \cite{Bow}, it is clear that 
$\nu$ is the push-forward of the equilibrium measure $\mu$ (on the symbolic space $\Ak^\N$) of the pressure corresponding to the potential $\omega\mapsto\log p_{\omega_1}(\Pi(\sigma\omega))$.
It is shown in \cite{FL} that if the open set condition holds, then the dimension of $\nu$ equals $\frac{h_\mu}{\chi_\mu}$.  In  the case of parametrized family $\{f_i^\lambda\}_{i\in\Ak}$ the equilibrium measure depends on the parameter. 

B\'ar\'any \cite{BB15} studied such parametrized place-dependent families and claimed to generalise the result of \cite{SSU Parabolic} for this case. However, the proof contains a crucial
error, which cannot be fixed easily.  In the present paper we have managed to overcome the obstacles and correct the error, using a delicate modification of the Peres-Schlag \cite{PS00}
	method. In fact, our results are much more general. Here we state the main result in the most important situation, in non-technical terms; complete statements may be found in Section 3.
	
	\begin{thm}
		Let $\{f_j^\lam\}_{j\in \Ak}$ be a  $C^{2+\delta}$ smooth family of hyperbolic IFS on a compact interval, smoothly depending on a real parameter $\lam\in U$, and let $\Pi^\lam:\Om\to \R$ be the corresponding natural projection map. We assume that the (classical) transversality condition holds on $U$. 
		Let $\{\mu_\lam\}_{\lam\in U}$ be a family of Gibbs measures, corresponding to a family of H\"older-continuous potentials, with a H\"older-continuous
		dependence on parameter. Then the push-forward measure $(\Pi^\lam)_*\mu_\lam$ is absolutely continuous for Lebesgue-a.e.\ $\lam\in U$ such that $h_{\mu_\lam}/\chi_{\mu_\lam}>1$.
	\end{thm}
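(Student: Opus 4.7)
The plan is to deduce absolute continuity from an almost-sure lower bound on the Sobolev dimension of the projected measure: it is classical that $\dim_S \nu > 1$ implies $\nu \ll \Leb$ with an $L^2$ density, and $\dim_S \nu > s$ is equivalent to the finiteness of the Sobolev-type energy $\int_\R |\widehat \nu(\xi)|^2 (1+|\xi|)^{s-1} d\xi$. Fixing a cutoff $\phi \in C_c^\infty(U)$ and any $s < h_{\mu_\lam}/\chi_{\mu_\lam}$, my target is therefore the finiteness of
\[
 \int_U \phi(\lam) \int_\R |\widehat{(\Pi^\lam)_*\mu_\lam}(\xi)|^2 (1+|\xi|)^{s-1} d\xi\, d\lam,
\]
from which the theorem follows by a Fubini argument restricted to the set $\{h_{\mu_\lam}/\chi_{\mu_\lam} > 1\}$.

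Expanding $|\widehat{\mu_\lam}|^2$ as a double symbolic integral turns this energy into a triple integral over $(\om,\tau,\lam) \in \Om \times \Om \times U$ of a kernel $\Phi_s(\Pi^\lam \om - \Pi^\lam \tau)$ against $d(\mu_\lam \times \mu_\lam)\,d\lam$. The classical Peres--Schlag scheme fixes $\mu$ and interchanges the $\lam$-integral with the symbolic integral, using the transversality estimate $\Leb\{\lam: |\Pi^\lam \om - \Pi^\lam \tau| \le r\} \le Cr/\rho_k$ for pairs $(\om,\tau)$ first separating at level $k$. The obstruction here is precisely the $\lam$-dependence of $\mu_\lam$. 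I would resolve this through a multi-scale discretization of the parameter space: for each separation level $k$, partition $U$ into sub-intervals of width $\delta_k$, approximate $\mu_\lam$ restricted to the finite $\sigma$-algebra of level-$k$ cylinders by the fixed Gibbs measure $\mu_{\lam_j}$ at the center of each sub-interval, and apply the fixed-measure transversality bound on each piece.

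The comparison is controlled by the Gibbs property and H\"older dependence on $\lam$: the standard identification $\mu_\lam([\eta|_k]) \asymp \exp(S_k \phi^\lam(\eta) - k P(\phi^\lam))$ together with $\|\phi^\lam - \phi^{\lam'}\|_\infty \le C|\lam - \lam'|^\alpha$ yields $\mu_\lam([\eta|_k]) \le \exp(Ck|\lam - \lam'|^\alpha)\,\mu_{\lam'}([\eta|_k])$. Choosing $\delta_k \asymp k^{-1/\alpha}$ keeps this ratio bounded, and the number of sub-intervals, $\asymp k^{1/\alpha}$, is only polynomial in $k$. Pairing the per-sub-interval transversality bound with the Shannon--McMillan--Breiman type estimate $(\mu_\lam \times \mu_\lam)\{(\om,\tau): \om,\tau \text{ agree up to level } k\} \lesssim e^{-k h_{\mu_\lam}}$ and summing the contributions dyadically in $k$ yields a geometric series whenever $s < h_{\mu_\lam}/\chi_{\mu_\lam}$, producing the desired energy bound.

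The main obstacle is controlling the exponential degradation of the $\mu_\lam \leftrightarrow \mu_{\lam'}$ comparison at fine cylinder depth: the factor $e^{Ck|\lam - \lam'|^\alpha}$ forces the partition width to shrink like a power of $k$, and absorbing the resulting polynomial-in-$k$ multiplicative loss without eroding the exponent $s$ is exactly where the naive Peres--Schlag argument fails and a genuinely new variant must be introduced. This is the delicate point where B\'ar\'any's earlier attempt broke down. Once the almost-sure lower bound $\dim_S((\Pi^\lam)_*\mu_\lam) \ge h_{\mu_\lam}/\chi_{\mu_\lam}$ is established, absolute continuity on $\{h_{\mu_\lam}/\chi_{\mu_\lam} > 1\}$ is immediate.
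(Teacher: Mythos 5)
Your overall strategy (Littlewood--Paley/Sobolev energy, transversality in $\lam$, comparison of $\mu_\lam$ with nearby fixed Gibbs measures via H\"older dependence of the potential) is the right one, and it is essentially the route the paper takes; but two steps in your outline would fail as written. The first is the estimate $(\mu_\lam\times\mu_\lam)\{(\om,\tau):\ \om,\tau\text{ agree to level }k\}\lesssim e^{-kh_{\mu_\lam}}$. This quantity equals $\sum_{|u|=k}\mu_\lam([u])^2$, which decays at the rate of the order-$2$ R\'enyi entropy, not the Shannon entropy; already for a non-uniform Bernoulli measure it is $\bigl(\sum_i p_i^2\bigr)^k\gg e^{-kh}$. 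Consequently your dyadic summation converges only for $s<\dim_{cor}(\mu_\lam,d_\lam)$, and the method as proposed yields absolute continuity only on $\{\dim_{cor}(\mu_\lam,d_\lam)>1\}$, which is strictly smaller than $\{h_{\mu_\lam}/\chi_{\mu_\lam}>1\}$ in general. Closing this gap is a substantial part of the actual proof: one restricts $\mu_\lam$ to a single set $A$ (independent of $\lam$ near a fixed $\lam_0$), built from uniform large-deviation estimates for the Gibbs family, on which cylinder measures are uniformly comparable to $e^{-kh}$, and -- this is the delicate point -- one must verify that the restricted measures $\mu_\lam|_A$ still satisfy the cylinder-comparison property \ref{as:measure} that your parameter discretization relies on (Propositions \ref{prop:stable cor bound}--\ref{prop:unifegorov}). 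A plain Egorov set chosen separately for each $\lam$ destroys exactly that property.

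The second gap concerns how transversality is used. For Sobolev dimension above $1$ the sublevel-set bound $\Leb\{\lam:|\Pi^\lam\om-\Pi^\lam\tau|\le r\}\le Cr$ is not enough; one needs the cancellation coming from $\int\psi=0$ for the Littlewood--Paley kernel near the (simple, by transversality of degree $\beta$) zeros of $\lam\mapsto\Pi^\lam\om-\Pi^\lam\tau$, and this forces the $\lam$-dependent amplitude -- including the measure-comparison factor -- to be H\"older in $\lam$ with constants that grow at most like a small exponential in the frequency $j$. Your device of replacing $\mu_\lam$ by a piecewise-constant (in $\lam$) family on a partition of width $\delta_k\asymp k^{-1/\alpha}$ makes the amplitude discontinuous at the sub-interval endpoints, the zero-neighbourhoods of $\Phi_\lam$ can straddle those endpoints, and the constants in the transversality lemma degenerate as the cutoff's support approaches the boundary of intervals whose length shrinks with $j$; so the cancellation step is not justified in your scheme. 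The paper avoids all of this by fixing \emph{one} reference parameter $\lam_0$ and one small interval $J$ (independent of $j$), and using \ref{as:measure} to show that the single error factor $e_j(\om_1,\om_2,\lam)=\mu_\lam([\om_1|_n])\mu_\lam([\om_2|_n])/\mu_{\lam_0}([\om_1|_n])\mu_{\lam_0}([\om_2|_n])$, with $n\asymp j$, is both of size $2^{O(\xi j)}$ and H\"older in $\lam$ with comparable constants (Proposition \ref{prop:trans int}); the small exponential loss is then absorbed into the gap $q-1-2\gamma-\eps$. So the loss you worry about is handled not by refining the partition with $k$, but by shrinking a fixed neighbourhood of $\lam_0$ once and for all and keeping quantitative H\"older control of the comparison factor.
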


	We also showed, under slightly less restrictive assumptions, that the push-forward measure $(\Pi^\lam)_*\mu_\lam$ has Hausdorff dimension equal to $\min\{1,  h_{\mu_\lam}/\chi_{\mu_\lam}\}$
	almost everywhere in $U$. The proof of this result is not as difficult,  similar to B\'ar\'any-Rams \cite{BR18}, and is included for completeness.

Place-dependent measures play an important role, for example, in the theory of hidden Markov chains. Blackwell \cite{Bla} expressed the entropy of hidden Markov chains over finite state space as an integral with respect to a place-dependent measure, which is nowadays called the {\em Blackwell measure}. The singularity of the Blackwell measure was studied by B\'ar\'any, Pollicott and Simon \cite{BPS}. Later, B\'ar\'any and Kolossv\'ary \cite{BK} showed that the transversality condition holds on a certain region of parameters and applied the main theorem of B\'ar\'any \cite{BB15} to claim
absolute continuity almost everywhere in this region. Since the Blackwell measure satisfies the assumptions of the main result of the present paper, we recover this 
result of B\'ar\'any and Kolossv\'ary \cite{BK}.

Another important case, when the parameter dependence of the measure occurs, is the natural measure of the parametrized IFS $\{f_i^\lambda\}_{i\in\Ak}$, which is the equilibrium measure $\nu_\lambda$ with respect to the potential $\omega\mapsto s_\lambda\log|(f_{\omega_1}^{\lambda})'(\Pi_\lambda(\sigma\omega))|$. See \cite{PU} for more on the subject. In case of overlaps, neither the dimension nor the absolute continuity was known. Our result applies in this situation as well. In particular, it follows that a natural measure for non-homogeneous self-similar IFS is absolutely continuous for almost every parameter with similarity dimension strictly larger than $1$, in the transversality region (such regions were found e.g. for non-homogeneous Bernoulli convolutions, see \cite{Neun,NW}). A similar conclusion is obtained for a (non-linear) system corresponding to certain random continued fractions.

\subsection{About the proof}
In order to prove ``almost-sure'' results for push-forwards of measures $\mu_\lam$ depending on parameter, we need to impose ``correct'' continuity assumptions on the measure, which are, on one hand, sufficiently strong to apply the techniques, but on the other hand,  can be verified in practice. These continuity assumptions are imposed on measures of cylinder sets
and involve estimates of the ratios $\mu_{\lam}([w])/\mu_{\lam_0}([w])$ for $\lam$ close to $\lam_0$. For the result on Hausdorff dimension of the push-forward measure, the condition is less restrictive, see \ref{as:measure_cont} below, and we could apply more or less ``classical'' transversality techniques, since roughly speaking, we can ``afford'' to lose  $\epsilon$ in dimension estimates.

The results on absolute continuity are much more delicate. The idea is to adapt the method of Peres-Schlag \cite{PS00} and to show that almost everywhere in the super-critical parameter interval, the
Sobolev dimension of the push-forward measure is greater than one. This implies not just absolute continuity, but also $L^2$-density and even existence of $L^2$-fractional derivatives of some positive order. This adaptation is not straightforward. First, \cite{PS00} uses the notion of transversality  of degree $\beta$, which has to be verified in our situation. Second, we cannot apply the result of
\cite{PS00} as a ``black box'', but rather have to work at a certain ``discretized'' level, in order to utilize the continuity assumptions on the measure dependence, see \ref{as:measure} below.
It should be mentioned that Peres-Schlag \cite{PS00} used their theorem on Sobolev dimension to estimate the Hausdorff dimension of the set of exceptional parameters for absolute continuity.
We do not deal with this issue and only concern ourselves with almost sure absolute continuity. We should also point out that \cite{PS00} contains two kinds of results: the infinite regularity case and the limited regularity case. It is the latter one (in fact, with the lowest possible regularity) that we adapt.

Another issue that comes up is that absolute continuity by the Peres-Schlag method is originally
shown under the assumption that the {\em correlation dimension} of the measure $\mu_\lam$ is greater than
one (in the metric corresponding to $\lam$), which is a stronger condition, in general, than $h_{\mu_\lam}/\chi_{\mu_\lam}>1$. The usual approach to overcome this is to restrict the measure to a 
``Egorov set'', where the convergence in the definitions of the entropy  and the Lyapunov exponent is uniform. This works fine when we consider the push-forward of a fixed measure, but in our case this is more delicate, since we have to guarantee that \ref{as:measure} is preserved under the restriction. Here we manage to overcome the obstacle with the help of large deviations estimates for Gibbs measures (see \cite{Young90, DK, OP}). 

\subsection{Organization of the paper} In the next section we collect all the main assumptions, definitions and notation. In Section 3 we state our main results. In fact, we state two results on almost sure absolute continuity: in the first one we don't make the assumption that $\mu_\lam$ is a family of Gibbs measure and only assume what is needed to prove almost sure absolute continuity  in the parameter
interval where the correlation dimension is greater than one. The second one is the sharp result for Gibbs measures. Section 4 is devoted to preliminaries, mainly the regularity properties of the IFS and the parameter dependence. Shorter proofs are included in this section, but longer and more technical calculations are postponed to the Appendices. In Section 5 we prove the theorem on the 
Hausdorff dimension of the push-forward measures. In Section 6 we verify that the transversality of degree $\beta$ condition of Peres-Schlag holds under our ``standard'' transversality assumptions, given sufficient regularity. The ``heart'' of the proof, namely, the adaptation of a discretized Peres-Schlag method, where transversality condition is used, is contained in Section 7. Section 8 is devoted to the
case of Gibbs measures: first we show that under the continuity assumptions on the potential, the Gibbs measures satisfy \ref{as:measure}, and then use large deviation estimates to extract 
``large submeasures'' still satisfying \ref{as:measure}, but with correlation dimension arbitrary close to $h_{\mu_\lam} / \chi_{\mu_\lam}$. After that, it only remains to collect the pieces to complete the proof of the main results; this is done in Section 9. Section 10 is devoted to applications. There we also present a sufficient condition for transversality to hold for "vertical" translation families of the form $f^\lam_j(x) = f_j(x) + a_j(\lam)$. Last, but not least, Section \ref{sec:open} contains some open questions and possible directions for further research.

\subsection{Acknowledgements} 
Bal\'azs B\'ar\'any and K\'aroly Simon acknowledge support from grants OTKA~K123782 and OTKA~FK134251. Boris Solomyak and Adam \'Spiewak acknowledge support from the Israel Science Foundation, grant 911/19.

\section{Assumptions, notation and definitions}

Let $\Ak = \{1,\ldots,m\}$ and suppose we have an IFS $\{f_j^\lam\}_{j \in \Ak}$ on a compact interval $X\subset \R$, depending on a parameter $\lam\in \ov{U} \subset \R$ with $U$ being an open and bounded interval. Let $\diam(X)=1$ for simplicity. We assume that there exists  $\delta \in (0,1]$ such that
\begin{enumerate}[start=1,label={(A\arabic*)}]
\item\label{as:C2} the maps $f_j^\lam$ are $C^{2+\delta}$-smooth on $X$ with $M_1 = \sup \limits_{\lam \in U} \sup \limits_{j \in \Ak} \left\{ \left\| \frac{d^2}{dx^2}f^\lam_j \right\|_{\infty} \right\}  < \infty $ and there exist constants $C_1, C_2 > 0$ such that
\[ \left| \frac{d^2}{dx^2}f^\lam_j(x) - \frac{d^2}{dx^2}f^\lam_j(y) \right| \leq C_1|x-y|^\delta \text{ and } \left| \frac{d^2}{dx^2}f^{\lam_1}_j(x) - \frac{d^2}{dx^2}f^{\lam_2}_j(x) \right| \leq C_2|\lam_1-\lam_2|^\delta  \]
hold for all $x,y \in X,\ j \in \Ak,\ \lam, \lam_1, \lam_2 \in U$.

\bigskip

\item\label{as:lam hoelder} the maps $\lam \mapsto f^\lam_j(x)$ are $C^{1+\delta}$-smooth on $U$ and there exists a constant $C_3 > 0$ such that 
\[\left|\frac{d}{d\lam}f_j^{\lam_1}(x) - \frac{d}{d\lam}f_j^{\lam_2}(x)\right| \leq C_3 | \lam_1 - \lam_2|^\delta\] holds for all $x \in X,\ j \in \Ak,\ \lam_1, \lam_2 \in U$.

\bigskip

\item\label{as:dxdlam} the second partial derivatives $\frac{d^2}{dxd\lam}f^\lam_j(x), \frac{d^2}{d\lam dx}f^\lam_j(x)$ exist and are continuous on $U \times X$ (hence equal) with $M_2 = \sup \limits_{j \in \Ak} \sup \limits_{\lambda \in U} \left\| \frac{d^2}{d\lam dx} f^\lam_j(x)\right\|_{\infty} < \infty$ and there exist constants $C_4, C_5>0$ such that
\[ \left| \frac{d^2}{dxd\lam}f^{\lam}_j(x) - \frac{d^2}{dxd\lam}f^{\lam}_j(y) \right| \leq C_4 |x-y|^\delta \text{ and } \left| \frac{d^2}{dxd\lam}f^{\lam_1}_j(x) - \frac{d^2}{dxd\lam}f^{\lam_2}_j(x) \right| \leq C_5 |\lam_1 - \lam_2|^\delta \]
hold for all $x,y \in X,\ j \in \Ak,\ \lam, \lam_1, \lam_2 \in U$.

\bigskip

\item\label{as:hyper} the system $\{f_j^\lam\}_{j \in \Ak}$ is {\em uniformly hyperbolic and contractive}: there exists $\gamma_1,\ \gamma_2 > 0$ such that
\[
0 < \gam_1 \le |(\textstyle{\frac{d}{dx}}f_j^\lam)(x)| \le \gam_2 < 1\ \ \mbox{holds for all}\ j \in \Ak,\ x\in X,\ \lam\in \ov{U}.
\]
\end{enumerate}

\bigskip

\noindent Let $\Om = \Ak^\N$ and let $\sigma$ denote the left shift on $\Om$. Let $\Om^* = \bigcup \limits_{n \geq 0} \Ak^n$ be the set of finite words over $\Ak$ and let $|u|$ be the length of $u$. For $u = (u_1, \ldots u_n) \in \Om^*$ denote
\[ f^\lam_{u} = f^\lam_{u_1 \ldots u_n} := f^\lam_{u_1} \circ \ldots \circ f^\lam_{u_n} \]
(with $f_{u} = \mathrm{id}$ if $u$ is an empty word) and let $\Pi^\lam : \Om \to X,\ \lam \in \ov{U}$
$$
\Pi^\lam(u) = \lim_{n\to \infty} f^\lam_{u_1\ldots u_n}(x_0) \text{ for } u \in \Om
$$
be the natural projection (it does not depend on the choice of $x_0 \in X$). For $u \in \Om^* \cup \Om$ let $u|_n = (u_1, \ldots, u_n)$ denote the restriction of $u$ to the first $n$ coordinates. For $u = (u_1, \ldots, u_n) \in \Om^*$ and $0 \leq k \leq |u|$ let $\sigma^k u = (u_{k+1},\ldots,u_n)$. For $u, v \in \Om$ let $u \wedge v = (u_1, \ldots, u_n)$, where $n = \sup \{ k \geq 1: u_k = v_k  \}$, i.e. $u \wedge v$ is the common prefix of $u$ and $v$. For $u \in \Om^*$ let $[u] = \{ \om \in \Om : \om|_{|u|} = u \}$ be the cylinder corresponding to $u$.

We will assume that the following transversality condition is satisfied for $\lam \in U$:
\medskip
\begin{enumerate}[label={(T)}]
\item\label{as:trans} $\exists\,\eta>0:\ \forall\, u,v\in \Om,\ \ u_1 \ne v_1, \ \left|\Pi^\lam(u) - \Pi^\lam(v)\right| < \eta \implies \left|\textstyle{\frac{d}{d\lam}}(\Pi^\lam(u) - \Pi^\lam(v))\right| \ge \eta.$
\end{enumerate}

\medskip
In our setting, transversality condition \ref{as:trans} is equivalent to other transversality conditions appearing in the literature - see Section \ref{sec:tran} and Lemma \ref{lem-tran} for details.

Let $\left\{ \mu_\lam \right\}_{\lam \in \ov{U}}$ be a collection of finite Borel measures on $\Om$. We will consider two continuity assumptions on $\mu_{\lam}$:

\medskip

\begin{enumerate}[label={(M0)}]
	\item\label{as:measure_cont} for every $\lambda_0$ and every $\eps>0$ there exist $C,\xi>0$ such that
	\[
	C^{-1} e^{-\eps|\omega|} \mu_{\lambda_0}([\omega]) \leq\mu_{\lambda}([\omega])\leq C  e^{\eps|\omega|} \mu_{\lambda_0}([\omega])
	\]
	holds for every $\om \in \Om^*,\ |\om| \geq 1$ and $\lam \in \ov{U}$ with $|\lam - \lam_0|<\xi$;
\end{enumerate}

\medskip

\begin{enumerate}[label={(M)}]
\item\label{as:measure} there exists $c>0$ and  $\theta \in (0,1]$ such that for all $\omega\in\Om^*,\ |\om|\geq 1$, and all $\lambda,\lambda' \in \ov{U}$,
\[
	e^{-c|\lambda-\lambda'|^\theta|\omega|} \mu_{\lambda'}([\omega]) \leq\mu_{\lambda}([\omega])\leq e^{c|\lambda-\lambda'|^\theta|\omega|} \mu_{\lambda'}([\omega]).
\]
\end{enumerate}
\medskip
Note that \ref{as:measure} implies \ref{as:measure_cont}.

For a compact metric space $(X,d)$, let $\mathcal{M}(X)$ denote the set of finite Borel measures on $X$ and $\mathcal{P}(X)$ the set of Borel probability measures on $X$. For $\mu \in \mathcal{M}(X)$ and $\alpha>0$, define the $\alpha$-energy as
\begin{equation}\label{eq:energy} \Ek_{\alpha}(\mu, d) = \int \int d(x,y)^{-\alpha}d\mu(x)d\mu(y). \end{equation}
Define the {\em correlation dimension} of $\mu$ with respect to the metric $d$ as
\[ \dim_{cor}(\mu, d) = \sup \{ \alpha > 0 : \Ek_{\alpha}(\mu, d) < \infty \}. \]
For a Borel measure $\nu$ on $\R$, the Fourier transform of $\nu$ is given by
$
\hat{\nu}(\xi)=\int e^{i\xi x}d\nu(x).
$
For a finite Borel measure $\nu$ and $\gamma \in \R$, we define the homogenous Sobolev norm as
\[ \| \nu \|^2_{2,\gamma} = \int \limits_{\R}|\hat{\nu}(\xi)|^2|\xi|^{2\gamma}d\xi \]
and the {\em Sobolev dimension}
\[ \dim_S(\nu) = \sup \left\{ \alpha \in \R : \int \limits_\R |\hat{\nu}(\xi)|^2(1+|\xi|)^{\alpha - 1}d\xi < \infty \right\}. \]
Note that $0 \leq \dim_S(\nu) \leq \infty$ and
\[ \int \limits_\R |\hat{\nu}(\xi)|^2(1+|\xi|)^{\alpha - 1}d\xi < \infty\  { \iff}\  \int \limits_\R |\hat{\nu}(\xi)|^2|\xi|^{\alpha - 1}d\xi = \| \nu \|_{2,\frac{\alpha - 1}{2}}^2 < \infty\] for $\alpha > 0$ (see \cite[Section 5.2]{Mattila Fourier}). If $0 < \dim_S(\nu) < 1$, then $\dim_S(\nu) = \dim_{cor}(\nu)$, where the correlation dimension is taken with respect to the standard metric on $\R$. If $\dim_S(\nu) > 1$, then 
{ $\nu$ is absolutely continuous with a density (Radon-Nikodym derivative) in $L^2(\R)$},
and moreover $\nu$ has fractional derivatives in $L^2$ of some positive order -- see \cite[Theorem 5.4]{Mattila Fourier}

For an IFS $\{f_j^\lam\}_{j \in \Ak}$ and a family of shift-invariant and ergodic probability measure $\mu_{\lam}$ on $\Om$, let $h_{\mu_{\lam}}$ be the entropy of $\mu_{\lam}$ defined as
\[ h_{\mu_\lam} = - \lim \limits_{n \to \infty} \frac{1}{n} \sum \limits_{\om \in \Ak^n} \mu_\lam([\om])\log\mu_\lam([\om])  \]
and let $\chi_{\mu_{\lam}}$ be the Lyapunov exponent of $\mu_{\lam}$ given by
\[\chi_{\mu_{\lam}} = - \int \limits_{\Om} \log \left| \left(f^\lam_{\om_1}\right)' (\Pi^{\lam}(\sigma \om)) \right|d\mu_{\lam}(\om).\]

For $\lam \in \ov{U}$ we define a metric $d_\lam$ on $\Om$ by
\be\label{eq:metric_def_lam}
d_\lam(u,v) = \left|f_{u\wedge v}^{\lam} (X)\right| \text{ for } u,v \in \Om.
\ee

Let $\phi : \Omega \to \R$ be a continuous function on the symbolic space $\Om$. A shift-invariant ergodic probability measure $\mu$ on $\Om$ is called a {\em Gibbs measure of the potential} $\phi$ if there exists $P \in \R$ and $C_G\ge 1$ such that for every $\om \in \Om$ and $n \in \N$, holds the inequality
\[ C_G^{-1} \leq \frac{\mu([\om|_n])}{\exp(-Pn + \sum \limits_{k=0}^{n-1} \phi(\sigma^k \om))} \leq C_G. \]
It is known that if $\phi$ is H\"older continuous, then there exists a unique Gibbs measure of $\phi$ (see \cite{Bow}).

%%%%%%%%%%%%%%%%

\section{Main results}

\begin{thm}\label{thm:main_hausdorff}
Let $\{f_j^\lam\}_{j \in \Ak}$ be a parametrized IFS satisfying smoothness assumptions \ref{as:C2} - \ref{as:hyper} and the transversality condition \ref{as:trans} on $U$. Let $\left\{ \mu_\lam \right\}_{\lam \in \ov{U}}$ be a collection of finite ergodic shift-invariant Borel measures on $\Om$ satisfying \ref{as:measure_cont}, such that $h_{\mu_{\lam}}$ and $\chi_{\mu_{\lam}}$ are continuous in $\lam$. Then equality
\[ \hdim((\Pi^\lam)_*\mu_\lam) = \min\left\{1, \frac{h_{\mu_\lam}}{\chi_{\mu_\lam}}\right\} \]
holds for Lebesgue almost every $\lam \in U$.
\end{thm}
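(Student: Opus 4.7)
The proof splits into matching upper and lower bounds on $\dim_H((\Pi^\lambda)_*\mu_\lambda)$. For the upper bound, I would equip $\Omega$ with the parameter-dependent metric $d_\lambda(u,v)=|f^\lambda_{u\wedge v}(X)|$ from \eqref{eq:metric_def_lam}. Under \ref{as:C2}--\ref{as:hyper} the natural projection $\Pi^\lambda:(\Omega,d_\lambda)\to\R$ is $1$-Lipschitz; applying the Shannon--McMillan--Breiman theorem to the ergodic $\mu_\lambda$ and Birkhoff's theorem to the cocycle $\log|(f^\lambda_{\omega_1})'\circ\Pi^\lambda\circ\sigma|$ shows that the local dimension of $\mu_\lambda$ in $(\Omega,d_\lambda)$ equals $h_{\mu_\lambda}/\chi_{\mu_\lambda}$ at $\mu_\lambda$-a.e.\ point, and combining this with the Lipschitz property of $\Pi^\lambda$ and the trivial bound by $1$ on $\R$ yields $\dim_H(\Pi^\lambda)_*\mu_\lambda\le\min\{1,h_{\mu_\lambda}/\chi_{\mu_\lambda}\}$.

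For the lower bound I fix $\lambda_0\in U$ and $s<\min\{1,h_{\mu_{\lambda_0}}/\chi_{\mu_{\lambda_0}}\}$; a countable cover of $U$, continuity of $h_{\mu_\lambda},\chi_{\mu_\lambda}$, and a sequence $s\nearrow\min\{1,h_{\mu_\lambda}/\chi_{\mu_\lambda}\}$ reduce the theorem to proving $\dim_H(\Pi^\lambda)_*\mu_\lambda\ge s$ for Lebesgue-a.e.\ $\lambda$ in a small open interval $U_0\ni\lambda_0$. Fix $\eps>0$ with $s(\chi_{\mu_{\lambda_0}}+3\eps)<h_{\mu_{\lambda_0}}-3\eps$. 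By Egorov applied to the Shannon--McMillan--Breiman and Birkhoff theorems for $\mu_{\lambda_0}$ there exist $N\in\N$ and a Borel set $A\subseteq\Omega$ with $\mu_{\lambda_0}(A)>0$ such that for $\omega\in A$ and $n\ge N$,
\[ \mu_{\lambda_0}([\omega|_n])\le e^{-n(h_{\mu_{\lambda_0}}-\eps)},\qquad |(f^{\lambda_0}_{\omega|_n})'(x)|\ge e^{-n(\chi_{\mu_{\lambda_0}}+\eps)}\text{ for all } x\in X. \]
After shrinking $U_0$ and using \ref{as:measure_cont} together with the smoothness \ref{as:C2}--\ref{as:dxdlam}, these bounds transfer to all $\lambda\in U_0$ with $\eps$ replaced by $2\eps$, while $\mu_\lambda(A)$ remains positive for $\lambda\in U_0$.

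The core of the proof is the energy bound $\int_{U_0}\Ek_s\bigl((\Pi^\lambda)_*(\mu_\lambda|_A)\bigr)\,d\lambda<\infty$, as Fubini then forces $\Ek_s((\Pi^\lambda)_*(\mu_\lambda|_A))<\infty$ for Lebesgue-a.e.\ $\lambda$, and hence $\dim_H(\Pi^\lambda)_*\mu_\lambda\ge\dim_H(\Pi^\lambda)_*(\mu_\lambda|_A)\ge s$. I would decompose the double integral on $A\times A$ by the common-prefix length $n=|u\wedge v|$ and estimate each piece using two ingredients: first, the mean-value theorem with bounded distortion gives $|\Pi^\lambda u-\Pi^\lambda v|\ge c_0 e^{-n(\chi_{\mu_{\lambda_0}}+2\eps)}|\Pi^\lambda(\sigma^n u)-\Pi^\lambda(\sigma^n v)|$ for $u,v\in A$ with $n\ge N$; second, since $\sigma^n u$ and $\sigma^n v$ differ in the first symbol, the standard Peres--Solomyak transversality argument based on \ref{as:trans} yields $\int_{U_0}|\Pi^\lambda(\sigma^n u)-\Pi^\lambda(\sigma^n v)|^{-s}\,d\lambda\le C_s$ for every $s<1$. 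Applying these cylinder-by-cylinder, and using \ref{as:measure_cont} to control the cylinder masses of $\mu_\lambda$ in terms of $\mu_{\lambda_0}$ (at the cost of the subexponential factor $e^{\eps n}$ per cylinder), the $n$-th contribution is bounded by
\[ C e^{sn(\chi_{\mu_{\lambda_0}}+2\eps)+2\eps n}\sum_{|w|=n}\mu_{\lambda_0}([w]\cap A)^2 \le C e^{-n\left[(h_{\mu_{\lambda_0}}-\eps)-s(\chi_{\mu_{\lambda_0}}+2\eps)-2\eps\right]}, \]
which is summable by the choice of $\eps$. The principal technical obstacle is that $\mu_\lambda$ itself varies with $\lambda$, so the $\lambda$-integration cannot be literally swapped with the double $\mu_\lambda$-integral; this is circumvented by performing every manipulation at the level of length-$n$ cylinders, where \ref{as:measure_cont} applies directly to cylinder masses and the transversality estimate is invoked pointwise in $(u,v)$ before being integrated.
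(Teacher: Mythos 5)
Your overall architecture (restrict to an Egorov-type set, bound an integrated $s$-energy, use Fubini plus transversality, and pass between $\mu_\lambda$ and $\mu_{\lambda_0}$ via \ref{as:measure_cont}) is the same as the paper's, and you correctly identify the central difficulty; but the two steps where that difficulty actually gets resolved are gapped. First, you build $A$ by Egorov at $\lambda_0$ alone and assert that ``$\mu_\lambda(A)$ remains positive for $\lambda\in U_0$.'' This does not follow from \ref{as:measure_cont}: that condition compares only cylinder masses, with a factor $Ce^{\eps n}$ that degenerates with depth, so two measure families satisfying it (e.g.\ nearby Bernoulli or Gibbs measures) are typically mutually singular, and a set defined by requiring the $\mu_{\lambda_0}$-SMB/Birkhoff bounds for \emph{all} $n\ge N$ can perfectly well be $\mu_\lambda$-null for every $\lambda\ne\lambda_0$. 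Since your Frostman conclusion only bites at parameters with $\mu_\lambda(A)>0$, this is a real hole. The paper handles it by applying Egorov at \emph{every} $\lambda$ (with a $\lambda$-dependent constant $C_\lambda$ in place of a threshold $N$), using Lusin in the parameter to make $C_\lambda$ uniformly bounded off a parameter set of measure $<\eps'$, and then defining one set $A$ from $\lambda_0$-data with relaxed constants so that $A\supset A_\lambda$ for all those $\lambda$; an alternative repair is to run your estimate for a countable family of sets $A^{(T)}$, $T\in\N$, defined with multiplicative constants $T$ and all $n\ge1$, and to check that every $\lambda$ near $\lambda_0$ gives positive mass to some $A^{(T)}$. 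Either way, an argument is needed that you have not supplied.

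Second, and more seriously, the core energy estimate is not justified as stated. You want to invoke the transversality bound $\int_{U_0}|\Pi^\lambda(\sigma^nu)-\Pi^\lambda(\sigma^nv)|^{-s}\,d\lambda\le C_s$ ``pointwise in $(u,v)$ before being integrated,'' but doing the $\lambda$-integral first requires a $\lambda$-independent measure in the outer double integral, and your proposed circumvention---``performing every manipulation at the level of length-$n$ cylinders''---cannot deliver that: the shifted kernel $|\Pi^\lambda(\sigma^nu)-\Pi^\lambda(\sigma^nv)|^{-s}$ depends on the full tails and is not measurable with respect to any finite-depth cylinder algebra, while \ref{as:measure_cont} only licenses switching from $\mu_\lambda$ to $\mu_{\lambda_0}$ on cylinders, with a loss that grows with the depth used. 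The paper's proof of Proposition \ref{prop:hdim_lam0_main} resolves exactly this point: it decomposes the kernel dyadically as $\sum_m 2^{s(m+1)}\mathds{1}_{B_m^\lambda}$, replaces $B_m^\lambda$ by the cylinder-level set $D_m^\lambda$ obtained by truncating tails at depth $k(m)\approx Qm$ and appending $1^\infty$, applies \ref{as:measure_cont} at depth $n+k(m)$ (cost $e^{2\eps(n+Qm)}$), and only then exchanges the order of integration with the fixed measure $\mu_{\lambda_0}$ and uses transversality in the form \eqref{tran3}. The extra, distance-dependent loss $2^{Q'\eps m}$ is what produces the paper's summability condition $s+Q'\eps<1$; your estimate carries only the factor $e^{2\eps n}$ and requires only $s<1$, which is a symptom of the fact that the measure switch has not actually been performed. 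This is harmless for the final statement (since $\eps$ is arbitrary), but as written your proof of the key bound is missing the mechanism that makes it legitimate.
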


The most general version of our main result is the following:

\begin{thm}\label{thm:main_cor_dim}
Let $\{f_j^\lam\}_{j \in \Ak}$ be a parametrized IFS satisfying smoothness assumptions \ref{as:C2} - \ref{as:hyper} and the transversality condition \ref{as:trans} on $U$. Let $\left\{ \mu_\lam \right\}_{\lam \in \ov{U}}$ be a collection of finite Borel measures on $\Om$ satisfying \ref{as:measure}. Then
\[ \dim_S((\Pi^\lam)_* \mu_\lam) \geq \min \left\{\dim_{cor}(\mu_\lam, d_\lam), 1 + \min\{\delta, \theta \} \right\}  \]
holds for Lebesgue almost every $\lam \in U$, where $d_\lam$ is the metric on $\Om$ defined in \eqref{eq:metric_def_lam} {and $\delta, \theta$ are from assumptions \ref{as:C2}-\ref{as:hyper} and \ref{as:measure} respectively.} Consequently, $(\Pi^\lam)_* \mu_\lam$ is absolutely continuous with a density in $L^2$ for Lebesgue almost every $\lam$ in the set $\{ \lam \in U : \dim_{cor}(\mu_\lam, d_\lam) > 1 \}$.
\end{thm}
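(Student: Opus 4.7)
The goal is a lower bound on the Sobolev dimension of $(\Pi^\lam)_*\mu_\lam$ for a.e.\ $\lam$. The plan is to adapt the limited-regularity Peres-Schlag $L^2$-Fourier method, carried out at a discrete scale (cylinders of length $n$), so as to accommodate the parameter dependence of $\mu_\lam$. Fix a base point $\lam_*\in U$, a target exponent $\alpha<\min\{\dim_{cor}(\mu_{\lam_*},d_{\lam_*}),\,1+\min\{\delta,\theta\}\}$, and a smooth bump $\rho$ supported on a small subinterval $V\subset U$ containing $\lam_*$. By Fubini and the definition of Sobolev dimension, it suffices to prove
\[ \int_V\rho(\lam)\int_\R |\widehat{(\Pi^\lam)_*\mu_\lam}(\xi)|^2(1+|\xi|)^{\alpha-1}\,d\xi\,d\lam <\infty, \]
after which the conclusion $\dim_S((\Pi^\lam)_*\mu_\lam)\ge\alpha$ a.e.\ in $V$ follows by a routine measure-theoretic argument. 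Covering $U$ by finitely many such $V$ and letting $\alpha$ tend to the supremum yields the first claim; the absolute continuity statement follows from the standard implication $\dim_S(\nu)>1\Rightarrow \nu\ll\Leb$ with $L^2$ density.

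The core novelty is that in the resulting oscillatory triple integral,
\[ \int_V\rho(\lam)\int_{\Om\times\Om} e^{i\xi(\Pi^\lam(u)-\Pi^\lam(v))}\,d\mu_\lam(u)d\mu_\lam(v)\,d\lam, \]
both $\Pi^\lam$ and $\mu_\lam$ depend on $\lam$. To handle this, we fix a large frequency $\xi$, set $n=n(\xi)\asymp\log|\xi|$, partition $V$ into subintervals $V_j$ of length $\ell=|\xi|^{-\kappa}$, and choose a center $\lam_j\in V_j$. Condition \ref{as:measure} then gives
\[ \sup_{\lam\in V_j}\sup_{|w|\le n}\bigl|\log(\mu_\lam([w])/\mu_{\lam_j}([w]))\bigr|\le c\,\ell^\theta n, \]
and taking $\kappa$ large enough forces this quantity to stay bounded. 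Consequently, restricting the $(u,v)$-integration to cylinders of level $n$, we may replace $d\mu_\lam$ by $d\mu_{\lam_j}$ within each $V_j$ at the cost of a universal multiplicative constant, reducing the problem to the classical Peres-Schlag oscillatory integral with a \emph{fixed} measure $\mu_{\lam_j}$.

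After this reduction, we decompose the integral over $\Om\times\Om$ according to the length $k=|u\wedge v|$ of the common prefix. Pairs with $k\ge n$ contribute $O(|\xi|^{-1})$ trivially because $d_\lam(u,v)\le\gam_2^n\lesssim|\xi|^{-1}$. For $k<n$, the transversality of degree $\beta$ established in Section~6 (derived from \ref{as:trans} and \ref{as:C2}-\ref{as:hyper}) yields
\[ \Bigl|\int_{V_j}\rho(\lam)e^{i\xi(\Pi^\lam(u)-\Pi^\lam(v))}d\lam\Bigr|\lesssim \min\!\bigl(\ell,\,(|\xi|\,d_{\lam_j}(u,v))^{-1/\beta}\bigr), \]
with a $C^{1+\delta}$-type correction controlled by \ref{as:lam hoelder} and \ref{as:dxdlam}. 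Integrating against $d\mu_{\lam_j}\otimes d\mu_{\lam_j}$ produces a constant multiple of the $\alpha$-energy $\Ek_\alpha(\mu_{\lam_j},d_{\lam_j})$, which is finite since $\alpha<\dim_{cor}(\mu_{\lam_j},d_{\lam_j})$. Summing the bound over the $\ell^{-1}$ subintervals $V_j$ and integrating against $|\xi|^{\alpha-1}d\xi$ closes the estimate, with the exponent balance pinning down the admissible range $\alpha<1+\min\{\delta,\theta\}$: the $\delta$ loss arises from the H\"older regularity of $\Pi^\lam$ in $\lam$, and the $\theta$ loss from the measure-swap above.

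The main obstacle is precisely the joint calibration of the three scales---the Fourier frequency $|\xi|$, the cylinder level $n(\xi)$, and the parameter-window length $\ell=|\xi|^{-\kappa}$. The measure-swap error from \ref{as:measure} forces $\ell^\theta n=O(1)$, while effective transversality decay requires $\ell$ to be at least the stationary-phase scale $(|\xi|\gam_1^n)^{-1/\beta}$. These constraints are simultaneously satisfiable only in a narrow regime, and only up to the critical exponent $1+\min\{\delta,\theta\}$, which is the structural reason one cannot invoke \cite{PS00} as a black box and must carry out the oscillatory analysis at the discrete cylinder level.
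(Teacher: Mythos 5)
Your high-level strategy (Fubini, localize in the parameter, use \ref{as:measure} to swap $\mu_\lam$ for a fixed measure, invoke transversality of degree $\beta$, and close with an energy bound) parallels the paper's, but the step on which everything rests is not valid as stated. You bound, for a fixed frequency $\xi$ and a fixed pair $(u,v)$, the parameter integral of the pure exponential by
$\bigl|\int_{V_j}\rho(\lam)e^{i\xi(\Pi^\lam(u)-\Pi^\lam(v))}d\lam\bigr|\lesssim\min\bigl(\ell,(|\xi|\,d_{\lam_j}(u,v))^{-1/\beta}\bigr)$,
attributing the decay to transversality. Transversality (even of degree $\beta$) only gives a lower bound on $\frac{d}{d\lam}\bigl(\Pi^\lam(u)-\Pi^\lam(v)\bigr)$ on the set where $\bigl|\Pi^\lam(u)-\Pi^\lam(v)\bigr|$ is itself small; it says nothing about the phase derivative where the difference is of order $\eta$ or larger, and there the phase $\lam\mapsto\xi\bigl(\Pi^\lam(u)-\Pi^\lam(v)\bigr)$ can be stationary while $|e^{i\xi x}|\equiv 1$ provides no decay of the kernel. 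Subintervals $V_j$ containing such near-critical parameters only admit the trivial bound $\ell$, and their total length can be a fixed positive constant independent of $\xi$; summing then gives an $O(1)$ bound per frequency, and $\int O(1)(1+|\xi|)^{\alpha-1}d\xi=\infty$. This is precisely why the paper (following Peres--Schlag) does not estimate $|\hat\nu_\lam(\xi)|^2$ frequency by frequency with $e^{i\xi x}$, but replaces it by a Littlewood--Paley kernel $\psi$: the rapid decay of $\psi$ kills the region where $\Pi^\lam(u)-\Pi^\lam(v)$ is large relative to $2^{-j}$ (no oscillation needed), while near the zeros of the difference one uses degree-$\beta$ transversality together with the zero mean of $\psi$ and a change of variables, which requires H\"older regularity in $\lam$ of the whole amplitude.

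This also exposes a second inconsistency in your accounting of the exponent $1+\min\{\delta,\theta\}$. In your scheme the measure swap on each $V_j$ costs only a bounded multiplicative constant, so it could not produce any $\theta$-dependent loss; yet you assert that the $\theta$ in the threshold comes from the swap. In the paper's argument the swap is done once, from $\mu_\lam$ to $\mu_{\lam_0}$ over a fixed small interval $J$, the error factor $e_j(\om_1,\om_2,\lam)$ grows subexponentially in $j$ and, crucially, is kept inside the $\lam$-integral: its H\"older continuity in $\lam$ (a consequence of \ref{as:measure}, the term $F_3$ in the proof of Proposition \ref{prop:trans int}) is exactly what is needed in the zero-mean estimate near the zeros of the phase, and this is where $\theta$ genuinely enters, alongside the $\delta$-H\"older regularity of $\frac{d}{d\lam}\Pi^\lam$ from Proposition \ref{prop:C_1_delta_pi}. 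Finally, two smaller gaps: the contribution of pairs with $|u\wedge v|\ge n$ is not $O(|\xi|^{-1})$ merely because $d_\lam(u,v)\le\gam_2^n$ (the exponential has modulus one; you must bound the $\mu_{\lam_j}\otimes\mu_{\lam_j}$-measure of such pairs via the energy), and passing from a fixed base point $\lam_*$ to the almost-everywhere statement with the $\lam$-dependent quantity $\dim_{cor}(\mu_\lam,d_\lam)$ requires the continuity of $\lam\mapsto\dim_{cor}(\mu_\lam,d_\lam)$ (the paper's Lemma \ref{lem:dim_cor_continuity}) together with a density-point argument, not just a covering of $U$.
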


In the special case of Gibbs measures for potentials with H\"older continuous dependence on the parameter, we get the following:

\begin{thm}\label{thm:main_gibbs}
Let $\{f_j^\lam\}_{j \in \Ak}$ be a parametrized IFS satisfying smoothness assumptions \ref{as:C2} - \ref{as:hyper} and the transversality condition \ref{as:trans} on $U$. Let $\left\{ \mu_\lam \right\}_{\lam \in \ov{U}}$ be a family of Gibbs measures on $\Om$ corresponding to a family of continuous potentials $\phi^\lambda\colon\Om\mapsto\R$ such that there exists $0<\alpha<1$ and $b>0$ with
\begin{equation}\label{eq:unifvar}
\sup_{\lambda\in \ov{U}}\mathrm{var}_k(\phi^\lambda)\leq b\alpha^k,
\end{equation}
where $\mathrm{var}_k(\phi)=\sup\{|\phi(\omega_1)-\phi(\omega_2)|:|\omega_1\wedge\omega_2|=k\}$. Moreover, suppose that there exist constants $c_0>0$ and $\theta > 0$ such that
\begin{equation}\label{eq:assong}
|\phi^{\lambda}(\omega)-\phi^{\lam'}(\omega)|\leq c_0|\lambda-\lam'|^\theta\text{ for every $\omega\in\Omega$ and $\lambda,\lam'\in \ov{U}$.}
\end{equation}
Then $\{ \mu_{\lambda} \}_{\lam \in \ov{U}}$ satisfies \ref{as:measure}, hence conclusions of Theorem \ref{thm:main_cor_dim} hold (with $\theta$ as in \eqref{eq:assong}). Furthermore, $(\Pi^\lam)_* \mu_\lam$ is absolutely continuous for Lebesgue almost every $\lam$ in the set $\{ \lam \in U : \frac{h_{\mu_\lam}}{\chi_{\mu_\lam}} > 1 \}$.
\end{thm}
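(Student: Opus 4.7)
The plan is to reduce Theorem~\ref{thm:main_gibbs} to Theorem~\ref{thm:main_cor_dim} in two stages: first check that the Gibbs family satisfies \ref{as:measure}, and second bridge the gap between $\dim_{cor}(\mu_\lam,d_\lam)$ and $h_{\mu_\lam}/\chi_{\mu_\lam}$ via large deviations.

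\textbf{Step 1 (Verifying \ref{as:measure}).} Write $S_n\phi^\lam(\om)=\sum_{k=0}^{n-1}\phi^\lam(\sig^k\om)$ and let $P(\phi^\lam)$ be the topological pressure. By the Gibbs property,
\[
C_G^{-1}\leq \frac{\mu_\lam([\om|_n])}{\exp(-nP(\phi^\lam)+S_n\phi^\lam(\om))}\leq C_G.
\]
The hypothesis \eqref{eq:assong} gives $|S_n\phi^\lam(\om)-S_n\phi^{\lam'}(\om)|\leq c_0|\lam-\lam'|^\theta n$, and since $|P(\phi^\lam)-P(\phi^{\lam'})|\leq \|\phi^\lam-\phi^{\lam'}\|_\infty\leq c_0|\lam-\lam'|^\theta$ by standard thermodynamic formalism, the ratio $\mu_\lam([\om])/\mu_{\lam'}([\om])$ is squeezed between $e^{\pm c|\lam-\lam'|^\theta|\om|}$ for a suitable $c$, once $|\om|$ is large enough to absorb the constants $C_G^2$; reducing $\theta$ slightly if needed (which is harmless), we obtain \ref{as:measure} for all $|\om|\geq 1$.

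\textbf{Step 2 (Gap between correlation dimension and $h/\chi$).} The inequality $\dim_{cor}(\mu_\lam,d_\lam)\leq h_{\mu_\lam}/\chi_{\mu_\lam}$ follows from Birkhoff, but in general the correlation dimension may be strictly smaller, so Theorem~\ref{thm:main_cor_dim} cannot be applied directly. Fix $\eps>0$ small and, for each $N$, define the ``uniformity set''
\[
A^\eps_N=\Bigl\{\om\in\Om:\ e^{-(h_{\mu_\lam}+\eps)n}\leq \mu_\lam([\om|_n])\leq e^{-(h_{\mu_\lam}-\eps)n}\ \text{and}\ e^{-(\chi_{\mu_\lam}+\eps)n}\leq d_\lam(\om,\sig\om\text{-orbit})\leq e^{-(\chi_{\mu_\lam}-\eps)n}\ \text{for all}\ n\geq N\Bigr\},
\]
made precise using the Birkhoff sum of $\phi^\lam$ (for entropy, via Gibbs) and of $\log|(f^\lam_{\om_1})'\circ\Pi^\lam\circ\sig|$ (for the Lyapunov exponent). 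Let $\mu^{N,\eps}_\lam=\mu_\lam|_{A^\eps_N}$. A direct computation shows that on $A^\eps_N$ the energy integral
\[
\int\!\!\int d_\lam(u,v)^{-\alpha}\,d\mu^{N,\eps}_\lam(u)\,d\mu^{N,\eps}_\lam(v)
\]
is finite for every $\alpha<(h_{\mu_\lam}-\eps)/(\chi_{\mu_\lam}+\eps)$, so $\dim_{cor}(\mu^{N,\eps}_\lam,d_\lam)\geq (h_{\mu_\lam}-\eps)/(\chi_{\mu_\lam}+\eps)$, which exceeds $1$ when $h_{\mu_\lam}/\chi_{\mu_\lam}>1$ and $\eps$ is small.

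\textbf{Step 3 (Preserving \ref{as:measure} under restriction — the main obstacle).} This is the delicate part: restricting a measure does not automatically preserve the cylinder-ratio bound \ref{as:measure}, because the normalising constants depend on $\lam$ in an uncontrolled way. Here we use large deviation estimates for Gibbs measures (Young \cite{Young90}, Denker--Kesseb\"ohmer, Orey--Pelikan), applied to the potentials $\phi^\lam$ and $\log|(f^\lam_{\om_1})'\circ\Pi^\lam\circ\sig|$. The Hölder dependence of these potentials on $\lam$ (assumption~\ref{as:C2}, \eqref{eq:assong}) and the uniform Gibbs bound \eqref{eq:unifvar} yield exponential decay rates in the large deviation inequalities that are uniform in $\lam$ on compact subsets of $U$. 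Consequently $\mu_\lam(\Om\setminus A^\eps_N)\leq D e^{-\kappa N}$ with $\kappa=\kappa(\eps)>0$ and $D$ independent of $\lam$. A careful bookkeeping then shows that the restricted measures $\mu^{N,\eps}_\lam$ still satisfy \ref{as:measure} (possibly with larger constants), because the set $A^\eps_N$ is defined in terms of quantities whose $\lam$-variation is itself Hölder controlled with the same exponent $\theta$.

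\textbf{Step 4 (Conclusion).} Apply Theorem~\ref{thm:main_cor_dim} to each family $\{\mu^{N,\eps}_\lam\}_{\lam\in\ov{U}}$ (equivalently, to its restriction to any compact parameter window on which the Hölder constants are uniform and cover $U$ by countably many such windows). On the set $\{h_{\mu_\lam}/\chi_{\mu_\lam}>1\}$, for suitably small $\eps$, we obtain $\dim_S((\Pi^\lam)_*\mu^{N,\eps}_\lam)>1$ for Lebesgue-a.e.\ $\lam$, hence $(\Pi^\lam)_*\mu^{N,\eps}_\lam\ll\Leb$. Letting $N\to\infty$ along a countable sequence and using $\mu_\lam(\Om\setminus A^\eps_N)\to 0$, we express $(\Pi^\lam)_*\mu_\lam$ as a countable sum of absolutely continuous measures plus a zero-measure remainder, proving absolute continuity of $(\Pi^\lam)_*\mu_\lam$ for a.e.\ $\lam$ with $h_{\mu_\lam}/\chi_{\mu_\lam}>1$. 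The hardest technical step is Step~3: ensuring that the Egorov-type restriction remains compatible with \ref{as:measure}, which is exactly what the uniform large deviation bounds for Hölder families of Gibbs measures provide.
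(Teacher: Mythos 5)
Your overall strategy (verify \ref{as:measure} for the Gibbs family, then restrict to a large ``Egorov-type'' set via uniform large deviations and apply Theorem \ref{thm:main_cor_dim}) is the same as the paper's, but two steps have genuine gaps. First, Step 1 does not prove \ref{as:measure}. The Gibbs property together with \eqref{eq:assong} and $|P(\phi^\lam)-P(\phi^{\lam'})|\le c_0|\lam-\lam'|^\theta$ only gives $\mu_\lam([\om|_n])\le C_G^2 e^{2c_0|\lam-\lam'|^\theta n}\mu_{\lam'}([\om|_n])$, and the multiplicative constant $C_G^2$ cannot be ``absorbed'': for the required bound $e^{c|\lam-\lam'|^{\theta}n}$ (or $e^{c|\lam-\lam'|^{\theta'}n}$ with $\theta'<\theta$) one would need $\log C_G^2\le (c|\lam-\lam'|^{\theta'}-2c_0|\lam-\lam'|^{\theta})n$, which fails for every choice of $c,\theta'$ when $|\lam-\lam'|$ is small and $n$ is bounded. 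Condition \ref{as:measure} has no multiplicative constant, and this is not a cosmetic point: the constant-free form is exactly what makes the error factor $e_j(\om_1,\om_2,\lam)$ H\"older in $\lam$ in the proof of Proposition \ref{prop:trans int} (the estimate of $|F_3(u)-F_3(0)|$ needs the cylinder ratio to tend to $1$ as $\lam\to\lam'$), so you cannot weaken \ref{as:measure} to a version with constants and still invoke Theorem \ref{thm:main_cor_dim}. The paper obtains the constant-free bound (Proposition \ref{eq:regeq}, with exponent $\theta'<\theta$) through perturbation theory of the Ruelle transfer operator: Bowen's spectral estimates (Theorem \ref{thm:bowen}) plus H\"older dependence of the eigenvalue $\gamma_\lam$, eigenfunction $h_\lam$ and eigenmeasure $\nu_\lam$ on $\lam$ (Lemma \ref{lem:eigen}). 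Some argument of this strength is unavoidable here.

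Second, Step 3 — which you correctly identify as the main obstacle — is asserted rather than proved. ``Careful bookkeeping'' is precisely the content of Proposition \ref{prop:unifegorov}, and it does not follow formally from uniform large deviations: one must build the restriction set with a special product structure over blocks of exponentially growing lengths ($\Xi_K=\Omega_{m_K}\times\Omega_{n_{K+1}}\times\cdots$), and the proof that $\mu_\lam([\ii]\cap\Xi_K)/\mu_\tau([\ii]\cap\Xi_K)\le e^{c'|\lam-\tau|^{\theta'}|\ii|}$ is a genuine computation combining the Gibbs property, Proposition \ref{eq:regeq}, and the uniform exponential tail bounds. Moreover, your set $A^\eps_N$ is written in terms of $\mu_\lam$, $h_{\mu_\lam}$, $\chi_{\mu_\lam}$, hence appears to depend on $\lam$; but \ref{as:measure} for the restricted family only makes sense (and only holds) if the restriction set is the \emph{same} for all $\lam$ in the parameter window. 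The paper defines the good sets via Birkhoff sums of the potentials at the fixed center $\lam_0$ and then uses the $\lam$-H\"older control of the potentials and of $\int g_\ell^\lam\,d\mu_\lam$ (Lemmas \ref{lem:contint} and \ref{lem:largedev_ineq}) to transfer the large deviation bounds to nearby $\lam$. Your Steps 2 and 4 (correlation dimension of the restricted measure exceeds $1$, covering $U$ by windows, letting $N\to\infty$ to recover absolute continuity of the full push-forward) are fine in outline and match the paper, but without a correct Step 1 and an actual proof of Step 3 the argument is incomplete.
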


%%%%%%%%%%%%%%%%%%%%%%%%%

\section{Preliminaries}
Throughout this section we assume that we are given an IFS $\{f_j^\lam\}_{j \in \Ak}$  satisfying \ref{as:C2} - \ref{as:hyper} for some $\delta \in (0,1]$. We state several auxiliary results concerning regularity properties of the IFS $\{f_j^\lam\}_{j \in \Ak}$  and the natural projection $\Pi^\lam$, which will be used in subsequent sections. As some of the proofs are lengthy, yet standard in techniques, we postpone them partially to the Appendix.

\begin{lem}\label{lem:d2 bounds}
	There exist constants $C_{51} >0$ and $C_{52} > 0$ such that
	\be\label{eq:d2dx bound}
	\left|\frac{d^2}{dx^2} f^\lam_{u}(x) \right| \leq C_{51} \left|\frac{d}{dx} f^\lam_{u}(x) \right|
	\ee
	and
	\be\label{eq:dxdlam bound}
	\left|\frac{d^2}{d\lam dx} f^\lam_{u}(x) \right| \leq C_{52} |u| \left|\frac{d}{dx} f^\lam_{u} (x) \right|
	\ee
	hold for all $\lam \in U,\ x \in X,\ u \in \Om^*$.
\end{lem}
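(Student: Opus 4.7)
The plan is to establish both inequalities by the standard bounded distortion calculation, writing $f^\lam_u$ as a composition and differentiating logarithmically. Fix $\lam\in U$, $x\in X$, $u=(u_1,\ldots,u_n)\in\Om^*$. Introduce the intermediate points $x_n=x$ and $x_k = f^\lam_{u_{k+1}}\circ\cdots\circ f^\lam_{u_n}(x)$ for $0\leq k<n$, and set $g_k = f^\lam_{u_k}$, so that by the chain rule
\[
(f^\lam_u)'(x) = \prod_{k=1}^n g_k'(x_k),\qquad \frac{dx_k}{dx}=\prod_{j=k+1}^n g_j'(x_j).
\]
By \ref{as:hyper}, both quantities are nonzero and $|dx_k/dx|\le \gam_2^{n-k}$.

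For the first inequality, I would take the logarithmic derivative in $x$:
\[
\frac{(f^\lam_u)''(x)}{(f^\lam_u)'(x)} = \sum_{k=1}^n \frac{g_k''(x_k)}{g_k'(x_k)}\cdot \frac{dx_k}{dx}.
\]
Using $|g_k''|\leq M_1$ from \ref{as:C2}, $|g_k'|\geq\gam_1$ from \ref{as:hyper}, and the geometric bound on $dx_k/dx$, the sum is dominated by $\frac{M_1}{\gam_1}\sum_{k=1}^n\gam_2^{n-k} \le \frac{M_1}{\gam_1(1-\gam_2)} =: C_{51}$. Multiplying back by $|(f^\lam_u)'(x)|$ gives \eqref{eq:d2dx bound}.

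For the second inequality, I would instead differentiate $\log|(f^\lam_u)'(x)|$ in $\lam$, now remembering that $x_k$ itself depends on $\lam$:
\[
\frac{d}{d\lam}\log|(f^\lam_u)'(x)| = \sum_{k=1}^n \frac{1}{g_k'(x_k)}\left(\tfrac{\partial g_k'}{\partial\lam}(x_k) + g_k''(x_k)\,\tfrac{\partial x_k}{\partial\lam}\right).
\]
The partial $\partial_\lam g_k'$ is bounded by $M_2$ from \ref{as:dxdlam}, and $|g_k''|\leq M_1$. It remains to control $\partial x_k/\partial\lam$ uniformly in $k$ and $n$; this is the one mildly technical step. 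Using the recursion $x_k = f^\lam_{u_{k+1}}(x_{k+1})$, with $\partial x_n/\partial\lam=0$, I get
\[
\left|\tfrac{\partial x_k}{\partial\lam}\right| \le M_0 + \gam_2\left|\tfrac{\partial x_{k+1}}{\partial\lam}\right|,\qquad M_0 := \sup_{j,\lam,y}\left|\tfrac{\partial}{\partial\lam}f^\lam_j(y)\right|<\infty,
\]
which is finite by \ref{as:lam hoelder} (continuous on the compact $\ov U\times X$). Iterating yields $|\partial x_k/\partial\lam|\leq M_0/(1-\gam_2)$ independently of $k,n$. Therefore each summand in the previous display is bounded by a constant, and summing over $k=1,\dots,n$ produces the linear factor $|u|$. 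Exponentiating (or rather multiplying by $|(f^\lam_u)'(x)|$) and noting that $\frac{d^2}{d\lam dx}f^\lam_u = \frac{d}{d\lam}(f^\lam_u)'$ gives \eqref{eq:dxdlam bound} with $C_{52} = \frac{1}{\gam_1}\bigl(M_2 + \frac{M_1 M_0}{1-\gam_2}\bigr)$. The only conceptual point to be careful with is the legitimacy of swapping $\partial_\lam$ and the $x$-derivative, which is provided by \ref{as:dxdlam}; everything else is a routine geometric-series estimate.
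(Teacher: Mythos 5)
Your proposal is correct and follows essentially the same route as the paper's proof: the product/logarithmic-derivative expansion with the geometric bound $\frac{M_1}{\gamma_1(1-\gamma_2)}$ for \eqref{eq:d2dx bound}, and for \eqref{eq:dxdlam bound} the term-by-term bound $M_2 + M_1\left|\frac{\partial x_k}{\partial\lam}\right|$ combined with the recursion $\left|\frac{\partial x_k}{\partial\lam}\right| \le M_0 + \gamma_2\left|\frac{\partial x_{k+1}}{\partial\lam}\right|$ (your $M_0$ is the paper's $L$), yielding the same constant $C_{52}$.
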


\begin{proof} See Appendix \ref{app:proof of d2_bounds}.
\end{proof}

\begin{lem}[Parametric bounded distortion property]\label{lem:pbdp}
	There exist constants $c_{62}>0,\ C_{62}>1$ such that inequality
	\be \label{eq:bdp}
	\frac{1}{C_{62}}e^{-c_{62}|\lam_1 - \lam_2||u|} \leq \frac{\left| \frac{d}{dx} f^{\lam_1}_{u}(x) \right|}{\left| \frac{d}{dx} f^{\lam_2}_{u}(y) \right|} \leq C_{62}e^{c_{62}|\lam_1 - \lam_2||u|}
	\ee
	holds for all $\lam_1,\lam_2 \in \ov{U},\ x,y \in X,\ u \in \Om^*$.
\end{lem}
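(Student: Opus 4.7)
The plan is to take logarithms and decompose the double comparison into a space-variable part and a parameter-variable part, each controlled by one of the two estimates from Lemma \ref{lem:d2 bounds}. Specifically, I will write
\[
\log\!\left|\tfrac{d}{dx}f^{\lam_1}_{u}(x)\right| - \log\!\left|\tfrac{d}{dx}f^{\lam_2}_{u}(y)\right| = \Bigl(\log\!\left|\tfrac{d}{dx}f^{\lam_1}_{u}(x)\right| - \log\!\left|\tfrac{d}{dx}f^{\lam_1}_{u}(y)\right|\Bigr) + \Bigl(\log\!\left|\tfrac{d}{dx}f^{\lam_1}_{u}(y)\right| - \log\!\left|\tfrac{d}{dx}f^{\lam_2}_{u}(y)\right|\Bigr)
\]
and bound each bracket separately via the mean value theorem.

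For the first bracket (the classical bounded distortion part, uniform in $\lam$ and $u$), I would differentiate in the space variable:
\[
\frac{d}{dy}\log\!\left|\tfrac{d}{dx}f^{\lam}_{u}(y)\right| = \frac{\frac{d^{2}}{dx^{2}}f^{\lam}_{u}(y)}{\frac{d}{dx}f^{\lam}_{u}(y)}.
\]
By \eqref{eq:d2dx bound} this quantity is bounded by $C_{51}$ uniformly in $\lam, u, y$, so the first bracket is at most $C_{51}|x-y|\le C_{51}\cdot\diam(X)=C_{51}$.

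For the second bracket (the parameter-dependent part), I would differentiate in $\lam$. Assumption \ref{as:dxdlam} guarantees that the mixed partial $\frac{d^{2}}{d\lam\,dx}f^{\lam}_{j}$ exists and is continuous, which by an inductive argument along the composition $f^{\lam}_{u}=f^{\lam}_{u_{1}}\circ\cdots\circ f^{\lam}_{u_{n}}$ propagates to $f^{\lam}_{u}$, so that
\[
\frac{d}{d\lam}\log\!\left|\tfrac{d}{dx}f^{\lam}_{u}(y)\right| = \frac{\frac{d^{2}}{d\lam\,dx}f^{\lam}_{u}(y)}{\frac{d}{dx}f^{\lam}_{u}(y)}.
\]
By \eqref{eq:dxdlam bound}, this is bounded in absolute value by $C_{52}|u|$, so the second bracket is at most $C_{52}|u|\,|\lam_{1}-\lam_{2}|$. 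Exponentiating the combined estimate and setting $C_{62}:=e^{C_{51}}$ and $c_{62}:=C_{52}$ yields the claim.

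No serious obstacle is expected here: the proof is essentially a two-variable Lipschitz estimate for $\log|\tfrac{d}{dx}f^{\lam}_{u}|$, and the two required Lipschitz constants have already been packaged into Lemma \ref{lem:d2 bounds}. The only point demanding minor care is the legitimacy of differentiating $\log|\tfrac{d}{dx}f^{\lam}_{u}(y)|$ in $\lam$ through the $n$-fold composition, which is ensured by the existence and continuity of the mixed partials postulated in \ref{as:dxdlam}.
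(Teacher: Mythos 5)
Your proposal is correct and follows essentially the same route as the paper: split the logarithmic ratio into a same-parameter bounded-distortion part and a same-point parameter part, and control the latter by the mean value theorem in $\lam$ together with \eqref{eq:dxdlam bound}. The only cosmetic difference is that the paper establishes the space-variable part by a direct chain-rule summation using \ref{as:C2} and \ref{as:hyper}, whereas you deduce it from \eqref{eq:d2dx bound} via the mean value theorem, which is equally valid since $\frac{d}{dx}f^\lam_u$ never vanishes.
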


\begin{proof} First, let us prove the inequality with $\lambda_1 = \lambda_2$. For $u = (u_1, \ldots, u_n) \in \Om^*$, applying  \ref{as:C2} and  \ref{as:hyper}, together with inequality $\log \frac{x}{y} \leq \frac{\left| x - y \right|}{\min \{ x, y \}}$ for $x,y>0$ yields
	\begin{eqnarray}
	\log \frac{ \left|\frac{d}{dx} f^\lam_{u}(x) \right|}{\left| \frac{d}{dx} f^\lam_{u}(y) \right|} & = & \sum \limits_{k=1}^n \log\left| \frac{ \left(\frac{d}{dx} f^\lam_{u_k}\right)(f^\lam_{\sigma^k u}x) }{\left(\frac{d}{dx} f^\lam_{u_k}\right)(f^\lam_{\sigma^k u}y)}\right| \leq \sum \limits_{k=1}^n  \frac{\Big| \left|\left(\frac{d}{dx} f^\lam_{u_k}\right)(f^\lam_{\sigma^k u}x)\right| - \left|\left(\frac{d}{dx} f^\lam_{u_k}\right)(f^\lam_{\sigma^k u}y)\right|\Big| }{\min\left\{\left|\left(\frac{d}{dx} f^\lam_{u_k}\right)(f^\lam_{\sigma^k u}x)\right|,\left|\left(\frac{d}{dx} f^\lam_{u_k}\right)(f^\lam_{\sigma^k u}y)\right|\right\}} \nonumber\\
	& \leq & \frac{1}{\gamma_1} \sum \limits_{k=1}^n \left| \left(\frac{d}{dx} f^\lam_{u_k}\right)(f^\lam_{\sigma^k u}x) - \left(\frac{d}{dx} f^\lam_{u_k}\right)(f^\lam_{\sigma^k u}y) \right| \leq \frac{M_1}{\gamma_1} \sum \limits_{k=1}^n \left| f^\lam_{\sigma^k u}x - f^\lam_{\sigma^k u}y \right| \nonumber \\ & \leq & \frac{M_1}{\gamma_1} \sum \limits_{k=1}^n \gamma_2^{n-k} |x-y| \leq \frac{M_1\diam(X)}{\gamma_1(1-\gamma_2)} < \infty. \label{eq:nonpar bdp}
	\end{eqnarray}
	Therefore, \eqref{eq:bdp} holds for $\lambda_1 = \lambda_2$ with some constant $C_{62} > 1$. Fix now $\lambda_1, \lambda_2 \in U$. By the mean value theorem we have
	\[ \log \frac{ \left|\frac{d}{dx} f^{\lam_1}_{u}(x) \right|}{\left| \frac{d}{dx} f^{\lam_2}_{u}(x) \right|} \leq \left| \log \left|\frac{d}{dx} f^{\lam_1}_{u}(x) \right| - \log \left| \frac{d}{dx} f^{\lam_2}_{u}(x) \right| \right| = \frac{ \left| \frac{d^2}{d\lam dx} f^\xi_{u}(x) \right|}{\left| \frac{d}{dx}f^\xi_{u}(x) \right|} |\lam_1 - \lam_2|\]
	for some $\xi$ between $\lam_1$ and $\lam_2$. Applying \eqref{eq:dxdlam bound} we obtain
	\be\label{eq:par bdp} \log \frac{ \left|\frac{d}{dx} f^{\lam_1}_{u}(x) \right|}{\left| \frac{d}{dx} f^{\lam_2}_{u}(x) \right|} \leq C_{62} |u||\lambda_1 - \lambda_2|.
	\ee
	Combining \eqref{eq:nonpar bdp} with \eqref{eq:par bdp} finishes the proof.
\end{proof}

The following proposition implies that, in the language of \cite[Section 4.2]{PS00}, the natural projection $\Pi^\lam$ belongs to the class $C^{1,\delta}(U)$.

\begin{prop}\label{prop:pi hoelder}
	There exists a constant $C_{\delta} > 0$ such that
	\[ \left| \frac{d}{d\lam} \Pi^{\lam_1}(u) - \frac{d}{d\lam} \Pi^{\lam_2}(u)\right| \leq C_{\delta} |\lam_1 - \lam_2|^\delta \]
	holds for all $\lam_1, \lam_2 \in U$ and $u \in \Omega$.
\end{prop}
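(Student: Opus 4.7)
The plan is to differentiate the identity $\Pi^\lam(u)=f^\lam_{u|_n}(\Pi^\lam(\sigma^n u))$ iteratively to obtain the explicit series representation
\[
\frac{d}{d\lam}\Pi^\lam(u)=\sum_{k=0}^{\infty}\left(\frac{d}{dx}f^\lam_{u|_k}\right)(\Pi^\lam(\sigma^k u))\cdot\left(\frac{d}{d\lam}f^\lam_{u_{k+1}}\right)(\Pi^\lam(\sigma^{k+1}u)).
\]
By \ref{as:hyper} and \ref{as:lam hoelder}, the $k$-th summand is bounded uniformly in $u\in\Om$ and $\lam\in\ov{U}$ by $C\gamma_2^k$, so the series converges absolutely and uniformly; in particular, $\lam\mapsto\Pi^\lam(u)$ is Lipschitz with a constant $M_3$ that is independent of $u$.

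Fix $u\in\Om$ and write $A^\lam_k$ for the $k$-th summand above. I would then estimate $|A^{\lam_1}_k-A^{\lam_2}_k|$ by two successive triangle-inequality splits: first between the two factors of the product, and then within each factor between a ``spatial'' change (in the argument $x$) and a ``parametric'' change (in $\lam$). The spatial change of $(f^\lam_{u|_k})'$ is bounded using \eqref{eq:d2dx bound} and the Lipschitz bound on $\Pi^\lam$, giving a contribution of order $\gamma_2^k|\lam_1-\lam_2|$; the parametric change of $(f^\lam_{u|_k})'$ is handled by the mean value theorem in $\lam$ together with \eqref{eq:dxdlam bound}, yielding at most $C_{52}k\gamma_2^k|\lam_1-\lam_2|$. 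For the factor $\frac{d}{d\lam}f^\lam_{u_{k+1}}$, the spatial change is controlled by the $M_2$-bound from \ref{as:dxdlam}, while the parametric change is controlled by the $\delta$-Hölder estimate $C_3|\lam_1-\lam_2|^\delta$ from \ref{as:lam hoelder}. After absorbing the remaining Lipschitz terms into the Hölder one via $|\lam_1-\lam_2|\le |U|^{1-\delta}|\lam_1-\lam_2|^\delta$, this yields
\[
|A^{\lam_1}_k-A^{\lam_2}_k|\le C(1+k)\gamma_2^k|\lam_1-\lam_2|^\delta
\]
with $C$ independent of $u$ and $k$. Summing in $k$ and using $\sum_{k\ge 0}(1+k)\gamma_2^k<\infty$ gives the desired inequality with some $C_\delta>0$.

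The main obstacle is essentially bookkeeping: each telescoping step must preserve the exponential factor $\gamma_2^k$ (possibly with a polynomial loss in $k$) to ensure summability. This is precisely where Lemma \ref{lem:d2 bounds} is essential, since \eqref{eq:d2dx bound} and \eqref{eq:dxdlam bound} are calibrated so that differentiating $(f^\lam_{u|_k})'$ in either $x$ or $\lam$ costs at most a polynomial factor in $k$. The fact that the final bound is only $\delta$-Hölder (rather than Lipschitz) is forced by \ref{as:lam hoelder}, which only asserts $\delta$-Hölder regularity of $\lam\mapsto \frac{d}{d\lam}f^\lam_j$; this is the bottleneck and cannot be improved without strengthening the hypotheses.
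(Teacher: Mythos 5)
Your argument is correct, and it reaches the same estimate, but it is organized differently from the paper's proof. The paper works with the finite approximants $F_n(\lam)=f^\lam_{u_1\ldots u_n}(y)$: it first proves (Lemma \ref{lem:dlam hoelder}, by induction on the word length in the appendix) that $\lam\mapsto\frac{d}{d\lam}f^\lam_u(x)$ is $\delta$-H\"older uniformly over all finite words $u$, and then shows $\sum_n\|\frac{d}{d\lam}F_{n+1}-\frac{d}{d\lam}F_n\|_\infty<\infty$, so that the uniform H\"older bound passes to the limit $\frac{d}{d\lam}\Pi^\lam$. You instead write $\frac{d}{d\lam}\Pi^\lam(u)$ as the infinite chain-rule series and estimate the H\"older modulus of each summand directly, using exactly the ingredients the paper also relies on: \eqref{eq:d2dx bound} for the spatial change of $(f^\lam_{u|_k})'$, \eqref{eq:dxdlam bound} together with the mean value theorem in $\lam$ for its parametric change, the $M_2$-bound from \ref{as:dxdlam} and the H\"older bound $C_3|\lam_1-\lam_2|^\delta$ from \ref{as:lam hoelder} for the factor $\frac{d}{d\lam}f^\lam_{u_{k+1}}$, and the absorption of Lipschitz terms into the $\delta$-H\"older term via boundedness of $U$; your factor $(1+k)\gamma_2^k$ is the same polynomial-times-geometric decay that drives the paper's telescoping and its appendix induction. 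What your route buys is that it bypasses the separate inductive Lemma \ref{lem:dlam hoelder} entirely and gives the constant in one pass on the limit object; what it owes is a justification of the series representation itself, i.e., that $\lam\mapsto\Pi^\lam(u)$ is differentiable and that the differentiated conjugacy identity can legitimately be iterated into an infinite sum. That justification is precisely the uniform-convergence step the paper makes explicit: the derivatives $\frac{d}{d\lam}F_n$ are finite chain-rule sums whose terms are dominated by $C\gamma_2^k$ and converge uniformly to your series, while $F_n\to\Pi^\lam$ uniformly, so the limit is differentiable with derivative equal to the series. You should state this (one or two sentences) rather than differentiating $\Pi^\lam(u)=f^\lam_{u|_n}(\Pi^\lam(\sigma^n u))$ as if differentiability in $\lam$ were already known; with that addition the proof is complete.
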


\begin{proof}
	Fix $u = (u_1, u_2, \ldots) \in \Om,y \in X$ and let $F_n(\lambda) = f_{u_1}^\lam \circ\cdots\circ f_{u_n}^\lam(y)$ for $\lam \in U$. It is clear from \ref{as:hyper} that $F_n(\lam)$ converge to $\Pi^\lam$ uniformly on $U$. Therefore, by Lemma \ref{lem:dlam hoelder}, it is enough to show that $\frac{d}{d\lam}F_n$ is uniformly convergent. It is sufficient to show
	\be\label{eq:Fn sum}
	\sum \limits_{n=1}^{\infty} \left\|\frac{d}{d\lam}F_{n+1} - \frac{d}{d\lam}F_n\right\|_{\infty} < \infty.
	\ee
	We have
	\[ \frac{d}{d\lam}F_{n+1}(\lam) = \left( \frac{d}{dx}f^\lam_{u_1 \ldots u_n}(f^\lam_{u_{n+1}}(y)) \right) \cdot \left(\frac{d}{d\lam} f^\lam_{u_{n+1}}(y) \right) + \left(\frac{d}{d\lam}f^{\lam}_{u_1 \ldots u_n} \right) \left( f^\lam_{u_{n+1}}(y) \right). \]
	Consequently, by  \ref{as:hyper} and \eqref{eq:dxdlam bound}
	\begin{eqnarray}
	\left| \frac{d}{d\lam}F_{n+1} - \frac{d}{d\lam}F_n \right| &\leq& \left| \left( \frac{d}{dx}f^\lam_{u_1 \ldots u_n}(f^\lam_{u_{n+1}}(y)) \right) \cdot \left(\frac{d}{d\lam} f^\lam_{u_{n+1}}(y)  \right) \right| + \nonumber \\
	&  & \left| \left(\frac{d}{d\lam}f^{\lam}_{u_1 \ldots u_n} \right) \left( f^\lam_{u_{n+1}}(y) \right) - \left(\frac{d}{d\lam}f^{\lam}_{u_1 \ldots u_n} \right) \left( y \right) \right| \nonumber \\
	& \leq & \gamma_2^n \sup \limits_{\lam \in U} \left|\frac{d}{d\lam} f^\lam_{u_{n+1}}(y)\right| + \sup \limits_{\lam \in U} \left\| \frac{d^2}{dx d\lam} f^{\lam}_{u_1 \ldots u_n}\right\|_{\infty}|f^{\lam}_{u_{n+1}}(y) - y| \nonumber \\
	& \leq & \gamma_2^n \sup \limits_{\lam \in U} \left|\frac{d}{d\lam} f^\lam_{u_{n+1}}(y)\right| + 2C_{52}n \sup \limits_{\lam \in U} \left\| \frac{d}{dx} f^{\lam}_{u_1 \ldots u_n}\right\|_{\infty} \nonumber \\
	& \leq & \left(\sup \limits_{\lam \in U} \left|\frac{d}{d\lam} f^\lam_{u_{n+1}}(y)\right| + 2C_{52}\right)n\gamma_2^n. \nonumber
	\end{eqnarray}
	As $\sup \limits_{\lam \in U} \left|\frac{d}{d\lam} f^\lam_{u_{n+1}}(y)\right| < \infty$ by  \ref{as:lam hoelder}, we have proved \eqref{eq:Fn sum}.
\end{proof}

\begin{lem}\label{lem:dx dist comparision}
	For every $\beta>0$ and $\lam_0$ there exist constants $\xi>0$ and ${0 < c_1<1}$ such that
	\[ c_1 d_{\lam_0}(u,v)^{1 + \beta / 4}\leq \left| \frac{d}{dx} f^\lambda_{u \wedge v}(x) \right| \leq \frac{1}{c_1} d_{\lam_0}(u,v)^{1 - \beta/4}  \]
	holds for all $x \in X, u,v \in \Omega$ and $\lam \in U$ with $|\lambda - \lambda_0| < \xi$.
\end{lem}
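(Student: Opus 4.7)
\medskip

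\textbf{Plan of proof.} Let $w = u\wedge v$ and $n = |w|$. The strategy is to compare $\bigl|\tfrac{d}{dx}f^\lam_w(x)\bigr|$ to $d_{\lam_0}(u,v)$ via two applications of the parametric bounded distortion estimate in Lemma~\ref{lem:pbdp}, then absorb the Lyapunov-type factor $e^{c_{62}|\lam - \lam_0|n}$ into the exponent $\beta/4$ by choosing $\xi$ small enough. The case $n=0$ is trivial since both sides equal $1$, so assume $n\ge 1$.

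\textbf{Step 1: Relate $d_{\lam_0}(u,v)$ to $\bigl|\tfrac{d}{dx}f^{\lam_0}_w(\cdot)\bigr|$ via bounded distortion.} By the mean value theorem and the normalization $\diam(X)=1$, there exists $z\in X$ with
\[
d_{\lam_0}(u,v) \;=\; \bigl|f^{\lam_0}_w(X)\bigr| \;=\; \Bigl|\tfrac{d}{dx}f^{\lam_0}_w(z)\Bigr|.
\]
Applying Lemma~\ref{lem:pbdp} with $\lam_1=\lam_2=\lam_0$ and with $x$ arbitrary gives
\[
C_{62}^{-1}\, d_{\lam_0}(u,v) \;\le\; \Bigl|\tfrac{d}{dx}f^{\lam_0}_w(x)\Bigr| \;\le\; C_{62}\, d_{\lam_0}(u,v)\qquad \text{for every } x\in X.
\]

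\textbf{Step 2: Transfer from $\lam_0$ to $\lam$ using parametric bounded distortion.} Applying Lemma~\ref{lem:pbdp} again, this time with $\lam_1=\lam$, $\lam_2=\lam_0$ and the same point $x$, yields
\[
C_{62}^{-1} e^{-c_{62}|\lam-\lam_0|\, n}\, \Bigl|\tfrac{d}{dx}f^{\lam_0}_w(x)\Bigr| \;\le\; \Bigl|\tfrac{d}{dx}f^{\lam}_w(x)\Bigr| \;\le\; C_{62}\, e^{c_{62}|\lam-\lam_0|\, n}\, \Bigl|\tfrac{d}{dx}f^{\lam_0}_w(x)\Bigr|.
\]
Combining with Step~1 gives
\[
C_{62}^{-2}\, e^{-c_{62}|\lam-\lam_0|\, n}\, d_{\lam_0}(u,v) \;\le\; \Bigl|\tfrac{d}{dx}f^{\lam}_w(x)\Bigr| \;\le\; C_{62}^{2}\, e^{c_{62}|\lam-\lam_0|\, n}\, d_{\lam_0}(u,v).
\]

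\textbf{Step 3: Convert $n$ into a power of $d_{\lam_0}(u,v)$.} By \ref{as:hyper} and the chain rule, $d_{\lam_0}(u,v) \le \gamma_2^{\, n}$, hence
\[
n \;\le\; \frac{\log\bigl(1/d_{\lam_0}(u,v)\bigr)}{\log(1/\gamma_2)},
\]
so
\[
e^{c_{62}|\lam-\lam_0|\, n} \;\le\; d_{\lam_0}(u,v)^{-\,c_{62}|\lam-\lam_0|/\log(1/\gamma_2)}.
\]
Now choose $\xi>0$ so small that $c_{62}\,\xi/\log(1/\gamma_2)\le \beta/4$. For all $\lam$ with $|\lam-\lam_0|<\xi$ the bound above becomes $e^{c_{62}|\lam-\lam_0|n}\le d_{\lam_0}(u,v)^{-\beta/4}$, and substituting into Step~2 gives
\[
C_{62}^{-2}\, d_{\lam_0}(u,v)^{\,1+\beta/4} \;\le\; \Bigl|\tfrac{d}{dx}f^{\lam}_w(x)\Bigr| \;\le\; C_{62}^{2}\, d_{\lam_0}(u,v)^{\,1-\beta/4}.
\]
Setting $c_1 = C_{62}^{-2}\in(0,1)$ yields the claim.

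\textbf{Expected obstacle.} There is no real obstacle: the content is bookkeeping with the two bounded distortion estimates of Lemma~\ref{lem:pbdp}. The only place where one must be careful is to ensure that the cost $e^{c_{62}|\lam-\lam_0|n}$ of switching the parameter is dominated by a controlled power of $d_{\lam_0}(u,v)$; this is precisely what the reverse hyperbolicity bound $d_{\lam_0}(u,v)\le \gamma_2^n$ provides, and the freedom to shrink $\xi$ allows us to make that power as small as $\beta/4$.
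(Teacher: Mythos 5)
Your proof is correct and follows essentially the same route as the paper's: the mean value theorem identifies $d_{\lam_0}(u,v)$ with a derivative of $f^{\lam_0}_{u\wedge v}$, the parametric bounded distortion of Lemma \ref{lem:pbdp} transfers this to $\left|\frac{d}{dx}f^\lam_{u\wedge v}(x)\right|$ at the cost of $C_{62}^{\pm 2}e^{\pm c_{62}|\lam-\lam_0|n}$, and the hyperbolicity bound $d_{\lam_0}(u,v)\le\gamma_2^n$ together with a small choice of $\xi$ absorbs the exponential factor into $d_{\lam_0}(u,v)^{\mp\beta/4}$. The only difference is cosmetic bookkeeping (two applications of Lemma \ref{lem:pbdp} and the exponent computed via $n\le\log(1/d)/\log(1/\gamma_2)$ instead of the paper's $\gamma_2^{n\beta/4}$ comparison).
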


\begin{proof}
	Let $n = |u \wedge v|$. Note that by the mean value theorem $d_{\lam_0}(u,v) = |\frac{d}{dx} f^{\lam_0}_{u \wedge v}(y)|$ for some $y \in X$ (recall that we assume $\diam(X)=1$). Therefore, Lemma \ref{lem:pbdp} implies
	\be\label{eq:dist bdp bound}
	\frac{1}{C_{62}}e^{-c_{62}|\lam - \lam_0|n} \leq \left| \frac{ \frac{d}{dx} f^\lambda_{u \wedge v}(x) }{d_{\lam_0}(u,v)} \right| \leq C_{62}e^{c_{62}|\lam - \lam_0|n}.
	\ee
	On the other hand, by  \ref{as:hyper},
	\[ d_{\lam_0}(u,v) \leq \gamma_2^n,  \]
	hence
	\[
	c_1 d_{\lam_0}(u,v)^{\beta/4} \leq c_1 \gamma_2^{n\beta/4} \leq  \frac{1}{C_{62}}e^{-c_{62}|\lam - \lam_0|n},
	\]
	where the second inequality holds for all $n \in \N$ provided that $c_1$ and $|\lambda - \lambda_0|$ are small enough. Combining this with \eqref{eq:dist bdp bound} finishes the proof.
\end{proof}

The following proposition implies that the natural projection $\Pi^\lam$ is $1,\delta$-regular, as defined in \cite[Section 4.2]{PS00}

\begin{prop}\label{prop:C_1_delta_pi}
	For every $\beta>0$ and $\lam_0$ there exist constants $C_{\beta, 1}, C_{\beta, 1, \delta} >0$ such that inequalities
	\be\label{eq:4.21.1}
	\left| \frac{d}{d\lam} \left( \Pi^{\lam}(u) - \Pi^{\lam}(v) \right) \right| \leq C_{\beta, 1}d_{\lam_0}(u,v)^{1 - \beta}
	\ee
	and
	\be\label{eq:4.21.2}
	\left| \frac{d}{d\lam} \left( \Pi^{\lam_1}(u) - \Pi^{\lam_1}(v) \right) - \frac{d}{d\lam} \left( \Pi^{\lam_2}(u) - \Pi^{\lam_2}(v) \right) \right| \leq C_{\beta, 1, \delta}|\lam_1 - \lam_2|^\delta d_{\lam_0}(u,v)^{1 - \beta}
	\ee
	hold for all $u,v \in \Om$ and $\lam, \lam_1, \lam_2 \in U$ close enough to $\lam_0$. 
\end{prop}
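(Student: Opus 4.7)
The plan is to exploit the identity $\Pi^\lam(u) - \Pi^\lam(v) = f^\lam_{u|_n}(\Pi^\lam(\sigma^n u)) - f^\lam_{u|_n}(\Pi^\lam(\sigma^n v))$ with $n := |u \wedge v|$ (this uses $u|_n = v|_n$), and via the fundamental theorem of calculus rewrite the right-hand side as $(\Pi^\lam(\sigma^n u) - \Pi^\lam(\sigma^n v)) \cdot H^\lam(u,v)$, where
$H^\lam(u,v) := \int_0^1 \frac{d}{dx}f^\lam_{u|_n}\bigl(\Pi^\lam(\sigma^n v) + t(\Pi^\lam(\sigma^n u) - \Pi^\lam(\sigma^n v))\bigr)\,dt$.
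Differentiating in $\lam$ then splits the object of interest into two summands: $R_1(\lam) := H^\lam(u,v) \cdot (\frac{d}{d\lam}\Pi^\lam(\sigma^n u) - \frac{d}{d\lam}\Pi^\lam(\sigma^n v))$ and $R_2(\lam) := (\Pi^\lam(\sigma^n u) - \Pi^\lam(\sigma^n v)) \cdot \frac{d}{d\lam} H^\lam(u,v)$.

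For \eqref{eq:4.21.1}, differentiating inside the integral defining $H^\lam$ and applying Lemma \ref{lem:d2 bounds} yields $|\frac{d}{d\lam}H^\lam(u,v)| \leq (C_{52}n + C_{51}D)\|\frac{d}{dx}f^\lam_{u|_n}\|_\infty$, where $D := \sup_{\lam, \om}|\frac{d}{d\lam}\Pi^\lam(\om)|$ is finite (by the same telescoping argument as in Proposition \ref{prop:pi hoelder}). Lemma \ref{lem:dx dist comparision} bounds $\|\frac{d}{dx}f^\lam_{u|_n}\|_\infty$ by $c_1^{-1}d_{\lam_0}(u,v)^{1-\beta/4}$, and since $d_{\lam_0}(u,v) \leq \gam_2^n$ by \ref{as:hyper}, we get $n \leq C\log(1/d_{\lam_0}(u,v))$, so the factor $n$ is absorbed into $d_{\lam_0}(u,v)^{-\beta/4}$. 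Combined with the trivial estimates $|\Pi^\lam(\sigma^n u) - \Pi^\lam(\sigma^n v)| \leq 1$ and $|\frac{d}{d\lam}\Pi^\lam(\sigma^n u) - \frac{d}{d\lam}\Pi^\lam(\sigma^n v)| \leq 2D$, this gives $|R_1(\lam)|, |R_2(\lam)| \leq C \cdot d_{\lam_0}(u,v)^{1-\beta/2}$, which implies \eqref{eq:4.21.1} after absorbing the exponent.

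For \eqref{eq:4.21.2}, each of $R_1$ and $R_2$ is a product of scalar factors, and I would establish $\delta$-H\"older continuity in $\lam$ for every factor, with H\"older constant involving an appropriate power of $d_{\lam_0}(u,v)$. The $\lam$-regularity of $H^\lam(u,v)$ and $\frac{d}{d\lam}H^\lam(u,v)$ follows from \ref{as:C2}, \ref{as:lam hoelder}, \ref{as:dxdlam} combined with a chain-rule and telescoping argument analogous to the proofs of Lemma \ref{lem:pbdp} and Lemma \ref{lem:d2 bounds}; the $\lam$-H\"older regularity of $\frac{d}{d\lam}\Pi^\lam$ is the content of Proposition \ref{prop:pi hoelder}; and that of $\Pi^\lam$ itself follows from $D < \infty$. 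Applying the product rule to $R_i(\lam_1) - R_i(\lam_2)$, each resulting summand contains exactly one difference of factors (bounded by $|\lam_1 - \lam_2|^\delta$ times a power of $d_{\lam_0}(u,v)$) multiplied by uniform absolute-value bounds on the remaining factors, which delivers the estimate.

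The principal technical obstacle is the bookkeeping of polynomial-in-$n$ factors: every application of Lemma \ref{lem:d2 bounds}, Lemma \ref{lem:pbdp}, or the telescoping chain-rule arguments above introduces an extra factor of $n$. These must all be absorbed into $d_{\lam_0}(u,v)^{-\beta}$ via the logarithmic bound $n \lesssim \log(1/d_{\lam_0}(u,v))$, which works provided one spends only a small fraction of the $\beta$-budget at each step. The calculations, while routine, are considerably more delicate for \eqref{eq:4.21.2} than for \eqref{eq:4.21.1} (tracking H\"older-in-$\lam$ dependence of the two second-order partial derivatives of $f^\lam_{u|_n}$, of $\Pi^\lam$, and of $\frac{d}{d\lam}\Pi^\lam$ simultaneously), so the full details would naturally be deferred to an appendix.
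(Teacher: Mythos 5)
Your argument is correct and shares the paper's skeleton: split at the junction word $u\wedge v$, control everything by Lemma \ref{lem:d2 bounds}, Lemma \ref{lem:dx dist comparision}, the boundedness and $\delta$-H\"older continuity of $\frac{d}{d\lam}\Pi^\lam$ (Proposition \ref{prop:pi hoelder}), and absorb the polynomial-in-$n$ factors into $d_{\lam_0}(u,v)^{-\beta}$ via $n\lesssim\log(1/d_{\lam_0}(u,v))$; for \eqref{eq:4.21.1} your two-term factorization through the averaged derivative $H^\lam$ is just a repackaging of the paper's three-term chain-rule decomposition and works verbatim. The genuine difference is in \eqref{eq:4.21.2}. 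The paper keeps the three terms $A_1,A_2,A_3$ and represents the increments $A_1$ and $A_3$ as integrals of $\frac{d^2}{d\lam dx}f^\lam_{u\wedge v}$ and $\frac{d^2}{dx^2}f^\lam_{u\wedge v}$ over the segment between $\Pi^\lam(\sig^n u)$ and $\Pi^\lam(\sig^n v)$; when comparing $\lam_1$ with $\lam_2$ it splits the two segments into their common part, where the $\lam$-H\"older bounds for compositions (\eqref{eq:dx2 hoelder}, \eqref{eq:dxdlam hoelder}) are applied pointwise, and two leftover intervals of length $\lesssim|\lam_1-\lam_2|$, where the size bounds of Lemma \ref{lem:d2 bounds} suffice. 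This trick deliberately avoids ever comparing a second derivative of a composition at two \emph{different points}. Your route, by contrast, differentiates $H^\lam$ under the $t$-integral, so the integrand involves $\frac{d^2}{d\lam dx}f^\lam_{u|_n}$ and $\frac{d^2}{dx^2}f^\lam_{u|_n}$ evaluated at the $\lam$-dependent point $x_t^\lam$; comparing $\lam_1$ with $\lam_2$ then requires, in addition to the composition analogues of \eqref{eq:dx2 hoelder}--\eqref{eq:dxdlam hoelder}, $x$-H\"older estimates of the form $|\frac{d^2}{dx^2}f^\lam_{u}(x)-\frac{d^2}{dx^2}f^\lam_{u}(y)|\lesssim |x-y|^\delta\,\|\frac{d}{dx}f^\lam_u\|_\infty$ (and its mixed-partial counterpart with a polynomial factor in $|u|$). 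These are true and provable by the same telescoping/bounded-distortion induction behind Lemma \ref{lem:d2 bounds} and \eqref{eq:dxdlam bound} (note $f$ is only $C^{2+\delta}$, so you cannot instead differentiate a third time), but they are extra lemmas beyond what the paper establishes; alternatively you could import the paper's segment-splitting device into your $t$-integral and dispense with them. Either way your proposal closes, at the cost of somewhat heavier appendix work than the paper's.
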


\begin{proof} See Appendix \ref{app:proof of C_1_delta_pi}.
\end{proof}

\section{Proof of Theorem \ref{thm:main_hausdorff}}\label{sec:proof of main_hausdorff}

The { argument} follows closely the proof of \cite[Theorem 4.2]{BR18} (note that we do not assume measures $\mu_\lam$ to be quasi-Bernoulli), extending the method of \cite{SSU Parabolic} to the case of parameter dependent measures.

%\begin{lem}\label{lem:chi_cont}
%Let $\{f_j^\lam\}_{j \in \Ak}$ be a parametrized IFS satisfying smoothness assumptions \ref{as:C2} - \ref{as:hyper}. Let $\left\{ \mu_\lam \right\}_{\lam \in \ov{U}}$ be a collection of finite ergodic shift-invariant Borel measures on $\Om$ satisfying \ref{as:measure_cont} and such that $\mu_{\lam}$ depends on $\lam$ weak$^*$ continuously. Then the map $\ov{U} \ni \lam \mapsto \chi_{\mu_{\lam}}$ is continuous.
%\end{lem}
%
%\begin{proof}
%As $\ov{U}$ is compact and $\mu_{\lam}$ depends on $\lam$ weak$^*$ continuously, $\sup \limits_{\lam \in \ov{U}} \mu_{\lam}(\Om) < \infty$. Moreover, by \ref{as:lam hoelder}, \ref{as:dxdlam} and Proposition \ref{prop:pi hoelder}, the map $\ov{U} \ni \lam \mapsto \log \left| \left(f^\lam_{\om_1}\right)' (\Pi^{\lam}(\sigma \om)) \right| \in C(\Om)$ is continuous in the supremum norm. Therefore, the map $\chi_{\mu_{\lam}}$ is continuous in $\lam$.
%\end{proof}

The key step in the proof of Theorem \ref{thm:main_hausdorff} is the following proposition.

\begin{prop}\label{prop:hdim_lam0_main}
Let $\{f_j^\lam\}_{j \in \Ak}$ be a parametrized IFS satisfying smoothness assumptions \ref{as:C2} - \ref{as:hyper} and the transversality condition \ref{as:trans} on $U$. Let $\left\{ \mu_\lam \right\}_{\lam \in \ov{U}}$ be a collection of finite ergodic shift-invariant Borel measures on $\Om$ satisfying \ref{as:measure_cont}, such that $h_{\mu_{\lam}}$ and $\chi_{\mu_{\lam}}$ are continuous in $\lam$. Then for every $\lam_0 \in \ov{U}$ and every $\eps>0$ there exists an open neighbourhood $U'$ of $\lam_0$ such that
\[ \hdim((\Pi^\lam)_*\mu_\lam) \geq \min \left\{ 1 , \frac{h_{\mu_{\lam_0}}}{\chi_{\mu_{\lam_0}}} \right\} - \eps \]
holds for Lebesgue almost every $\lam \in U'$.
\end{prop}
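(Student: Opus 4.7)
The plan is to use the standard transversality/energy method to produce a lower bound on the Hausdorff dimension of $(\Pi^\lam)_*\mu_\lam$, with the main novelty being the handling of the parameter dependence of $\mu_\lam$ via assumption \ref{as:measure_cont}. Fix $\lam_0\in\ov U$ and $\eps>0$, and set $t=\min\{1,h_{\mu_{\lam_0}}/\chi_{\mu_{\lam_0}}\}-\eps$. By continuity of $\lam\mapsto h_{\mu_\lam}$ and $\lam\mapsto\chi_{\mu_\lam}$ and the continuity assumption \ref{as:measure_cont}, choose a small neighbourhood $U'\ni\lam_0$ (on which the transversality constant in \ref{as:trans} is uniform) such that all the error terms we pick up below can be made arbitrarily small. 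It suffices to show that for Lebesgue-a.e. $\lam\in U'$ the $t$-energy
\[
\Ek_t((\Pi^\lam)_*\mu_\lam)=\int\!\!\int |\Pi^\lam(u)-\Pi^\lam(v)|^{-t}\,d\mu_\lam(u)\,d\mu_\lam(v)
\]
is finite on a set of full $\mu_\lam$-measure (after first passing to a suitable ``Egorov subset'' on which the Shannon--McMillan--Breiman and Birkhoff theorems for $\log|(f^{\lam_0}_{\omega_1})'|$ are uniform).

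\textbf{Egorov truncation and the key integral.} First I would fix small $\eps'>0$ and use the ergodic theorems applied to $\mu_{\lam_0}$ to choose $N$ large and a compact set $A_N\subset\Om$ with $\mu_{\lam_0}(A_N)>1-\eps'$ such that for every $u\in A_N$ and $n\geq N$
\[
e^{-n(h_{\mu_{\lam_0}}+\eps')}\leq \mu_{\lam_0}([u|_n])\leq e^{-n(h_{\mu_{\lam_0}}-\eps')},\qquad
e^{-n(\chi_{\mu_{\lam_0}}+\eps')}\leq |(f^{\lam_0}_{u|_n})'(x)|\leq e^{-n(\chi_{\mu_{\lam_0}}-\eps')}.
\]
Let $\mu=\mu_{\lam_0}|_{A_N}$. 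By Fubini it suffices to prove that
\[
I:=\int_{U'}\int\!\!\int_{A_N\times A_N}|\Pi^\lam(u)-\Pi^\lam(v)|^{-t}\,d\mu_\lam(u)\,d\mu_\lam(v)\,d\lam<\infty,
\]
since then $\Ek_t((\Pi^\lam)_*(\mu_\lam|_{A_N}))<\infty$ for a.e. $\lam$, which gives $\hdim((\Pi^\lam)_*\mu_\lam)\geq t$ on a large subset and, upon letting $\eps'\to 0$ together with $N\to\infty$, the desired bound. Assumption \ref{as:measure_cont} lets me replace $\mu_\lam$ by $\mu_{\lam_0}$ on cylinders, losing at most a factor $C^2e^{2\eps'|u\wedge v|}$, which only shifts the exponents by a controllable $\eps'$.

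\textbf{Transversality estimate.} Now I apply the transversality method as in \cite{SSU Parabolic,BR18}. Using the factorisation $\Pi^\lam(u)-\Pi^\lam(v)=f^\lam_{u\wedge v}(\Pi^\lam(\sigma^{|u\wedge v|}u))-f^\lam_{u\wedge v}(\Pi^\lam(\sigma^{|u\wedge v|}v))$ together with the mean value theorem, Lemma \ref{lem:pbdp}, and the fact that $\sigma^{|u\wedge v|}u$ and $\sigma^{|u\wedge v|}v$ start with different symbols, the transversality condition \ref{as:trans} yields a bound of the form
\[
\int_{U'}|\Pi^\lam(u)-\Pi^\lam(v)|^{-t}\,d\lam \leq C\, d_{\lam_0}(u,v)^{-t}\qquad\text{for every }t<1
\]
(with $C=C(t,U')$ independent of $u,v$). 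The proof is the standard change-of-variable argument: after freezing the first $|u\wedge v|$ coordinates, the remaining integral in $\lam$ is controlled by transversality applied to $\sigma^{|u\wedge v|}u,\sigma^{|u\wedge v|}v$ combined with Proposition \ref{prop:pi hoelder}.

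\textbf{Summation and conclusion.} Plugging the transversality bound together with \ref{as:measure_cont} into $I$ gives, up to constants,
\[
I\lesssim \int\!\!\int_{A_N\times A_N} e^{2\eps'|u\wedge v|}\,d_{\lam_0}(u,v)^{-t}\,d\mu_{\lam_0}(u)\,d\mu_{\lam_0}(v)
=\sum_{n\geq N}\sum_{|w|=n} e^{2\eps' n}\iint_{[w]^2\cap A_N^2}d_{\lam_0}(u,v)^{-t}\,d\mu\,d\mu.
\]
On $A_N$ the Egorov bounds give $d_{\lam_0}(u,v)^{-t}\leq e^{tn(\chi_{\mu_{\lam_0}}+\eps')}$ when $|u\wedge v|=n$, and $\mu_{\lam_0}([w])\leq e^{-n(h_{\mu_{\lam_0}}-\eps')}$ when $[w]\cap A_N\neq\varnothing$, so
\[
I\lesssim \sum_{n\geq N}e^{2\eps' n}e^{tn(\chi_{\mu_{\lam_0}}+\eps')}\sum_{|w|=n}\mu([w])^2\lesssim \sum_{n\geq N} e^{-n\bigl(h_{\mu_{\lam_0}}-t\chi_{\mu_{\lam_0}}-O(\eps')\bigr)}.
\]
Since $t<h_{\mu_{\lam_0}}/\chi_{\mu_{\lam_0}}$ by construction, choosing $\eps'$ small enough makes the series converge, so $I<\infty$, which is what we needed. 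The main obstacle (which is precisely what \ref{as:measure_cont} is designed to handle) is the comparability step $\mu_\lam([\omega])\leq Ce^{\eps'|\omega|}\mu_{\lam_0}([\omega])$: without such a bound one could not transfer the SMB estimates at $\lam_0$ to the parameter-dependent measure $\mu_\lam$ appearing in the energy integral, and the argument would break down.
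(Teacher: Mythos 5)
Your overall strategy -- Egorov truncation, a change of measure via \ref{as:measure_cont}, and a transversality integral in $\lam$ -- is the same as the paper's, but the step where the parameter dependence of $\mu_\lam$ is actually handled contains a genuine gap. Assumption \ref{as:measure_cont} compares $\mu_\lam$ and $\mu_{\lam_0}$ only on cylinder sets, whereas the kernel $(u,v)\mapsto|\Pi^\lam(u)-\Pi^\lam(v)|^{-t}$ is neither constant on, nor even bounded on, products of cylinders of depth $|u\wedge v|$ (it blows up where $\Pi^\lam(u)=\Pi^\lam(v)$). Hence your claim that one may ``replace $\mu_\lam$ by $\mu_{\lam_0}$ on cylinders, losing at most a factor $C^2e^{2\eps'|u\wedge v|}$'' does not follow from \ref{as:measure_cont}; and without that replacement you cannot interchange $d\lam$ with $d\mu_\lam\,d\mu_\lam$ to run the transversality estimate, because the measure in the outer integrals depends on $\lam$. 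The paper fixes exactly this point by discretizing the kernel first: it writes $|\Pi^\lam(\sigma^n\om_1)-\Pi^\lam(\sigma^n\om_2)|^{-s}\le\sum_{m\ge0}2^{s(m+1)}\mathds{1}_{B_m^\lam}$, enlarges each $B_m^\lam$ to a set $D_m^\lam$ which is a finite union of cylinders of length $k(m)$ comparable to $m$, and only then applies \ref{as:measure_cont} cylinder-by-cylinder, picking up a factor $e^{2\eps(n+Q(m+1))}$; this $m$-dependent loss is why the paper's summability condition reads $s+Q'\eps<1$ rather than your cleaner $t<1$. Some stratification of this kind is unavoidable: truncating $u,v$ at depth $k$ perturbs $|\Pi^\lam(u)-\Pi^\lam(v)|$ by roughly $\gamma_2^{k}$, which is negligible only when the gap itself is larger, so one must decompose according to the size of the gap before invoking \ref{as:measure_cont}.

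A second, related omission concerns the truncation set: you build $A_N$ from $\mu_{\lam_0}$ alone and then restrict $\mu_\lam$ to it. For the conclusion $\hdim((\Pi^\lam)_*\mu_\lam)\ge t$ you need at least $\mu_\lam(A_N)>0$ (otherwise finiteness of the energy of $\mu_\lam|_{A_N}$ is vacuous), and \ref{as:measure_cont} does not give this: cylinder-wise comparability with factors $Ce^{\eps|\om|}$ is compatible with $\mu_\lam$ and $\mu_{\lam_0}$ being mutually singular (two nearby Bernoulli measures already illustrate this), so a set of $\mu_{\lam_0}$-measure $>1-\eps'$ may well be $\mu_\lam$-null. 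Moreover your letting $\eps'\to0$, $N\to\infty$ at the end presupposes that these sets eventually capture $\mu_\lam$-mass uniformly, which is never checked. The paper's proof handles this by applying Egorov at every $\lam$ (sets $A_\lam$ with $\mu_\lam(A_\lam)>1-\eps'$ and constants $C_\lam$), then Lusin's theorem in the parameter to obtain $U_{\eps'}\subset U'$ with $\Leb(U'\setminus U_{\eps'})<\eps'$ on which $C_\lam\le\tilde C$, and finally a single $\lam$-independent set $A$, defined through $\lam_0$-data, containing every $A_\lam$ (this containment uses \ref{as:measure_cont} together with the parametric bounded distortion Lemma \ref{lem:pbdp}); that is what simultaneously guarantees $\mu_\lam(A)>1-\eps'$ and the cylinder and derivative bounds used in the summation. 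Your plan can be repaired along these lines, but as written both of the places where the $\lam$-dependence of the measure genuinely matters are left unjustified.
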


\begin{proof} Fix $\lam_0 \in \ov{U},\ \eps>0$ and $\eps'>0$. By the Shannon-McMillan-Breiman theorem and Birkhoff's ergodic theorem applied to the function $\Om \ni \om \mapsto -\log \left| f'_{\om_1} (\Pi^{\lam}(\sigma \om)) \right|$, we have that
\[ \frac{1}{n} \log \mu_{\lam}([\om|_n]) \to - h_{\mu_\lam} \text{ for } \mu_{\lam} \text{-a.e. } \om \in \Om  \]
and
\[ \frac{1}{n} \log \left| \left(f^\lam_{\om|_n}\right)' (\Pi^\lam (\sigma^n\om)) \right| \to - \chi_{\mu_\lam} \text{ for } \mu_{\lam} \text{-a.e. } \om \in \Om,  \]
hold for every $\lam \in \ov{U}$.
By Egorov's theorem, for every $\lam \in \ov{U}$ there exists $C_\lam > 0$ and a Borel set $A_{\lam} \subset \Om$ with $\mu_{\lam}(A_{\lam}) > 1 - \eps'$, such that
\be\label{eq:entropy ineq lam} C^{-1}_\lam e^{-n(h_{\mu_{\lam}} + \eps)} \leq \mu_\lam([\om|_n]) \leq C_\lam e^{-n(h_{\mu_{\lam}} - \eps)} 
\ee
and
\be\label{eq:lyap ineq lam} C^{-1}_\lam e^{-n(\chi_{\mu_{\lam}} + \eps)} \leq \left| \left(f^\lam_{\om|_n}\right)' (\Pi^\lam (\sigma^n\om)) \right| \leq C_\lam e^{-n(\chi_{\mu_{\lam}} - \eps)}
\ee
hold for every $\om \in A_\lam$ and $n \geq 1$. Let $\xi>0$ be such that \ref{as:measure_cont} holds and $|h_{\mu_\lam} - h_{\mu_{\lam_0}}| < \eps$, $|\chi_{\mu_{\lam}} - \chi_{\mu_{\lam_0}}| < \eps,\ c_{62}|\lam - \lam_0|<\eps$ for $|\lam - \lam_0| < \xi$ ($c_{62}$ is the constant from Lemma \ref{lem:pbdp}), and set $U' = B(\lam_0, \xi) \cap \ov{U}$. By Lusin's theorem, there exists $\tilde{C}>0$ and a Borel set $U_{\eps'} \subset U'$ containing $\lam _0$ such that \[\Leb(U' \setminus U_{\eps'}) < \eps' \text{ and } C_{\lam} \leq \tilde{C} \text{ for } \lam \in U_{\eps'}.\]
Now let
\[
\begin{split} A = \Bigg\{  \om \in \Om : \underset{n \geq 1}{\forall}\  C^{-1}\tilde{C}^{-1} e^{-n(h_{\mu_{\lam_0}} + 2\eps)} & \leq  \mu_{\lam_0}([\om|_n]) \leq C\tilde{C} e^{-n(h_{\mu_{\lam_0}} - 2\eps)} \text{ and } \\
C_{62}^{-1}\tilde{C}^{-1} e^{-n(\chi_{\mu_{\lam_0}} + 2\eps)} &\leq  \left| \left(f^{\lam_0}_{\om|_n}\right)' (\Pi^{\lam_0} (\sigma^n\om)) \right| \leq C_{62}\tilde{C} e^{-n(\chi_{\mu_{\lam_0}} - 2\eps)}  \Bigg\}.
\end{split}
\]
It follows from \eqref{eq:entropy ineq lam}, \eqref{eq:lyap ineq lam}, the choice of $\xi$ and Lemma \ref{lem:pbdp} that for each $\lam \in U_{\eps'}$ we have $A_\lam \subset A$, hence $\mu_\lam(A) > 1 - \eps'$. Let $\tilde{\mu}_\lam = \mu_\lam|_{A}$. Note that the set $A$ does not depend on $\lam$. Define
\[ A_n = \{ u \in \Ak^n : \text{ there exists } \om \in A \text{ with } u = \om|_n \}. \]
Note that if $u \notin A_n$, then $[u] \cap A = \emptyset$, hence $\tilde{\mu}_\lam([u])=0$. If $u \in A_n$, then
\begin{equation}\label{eq:entropy ineq}
C^{-1}\tilde{C}^{-1} e^{-n(h_{\mu_{\lam_0}} + 2\eps)} \leq  \mu_{\lam_0}([u]) \leq C\tilde{C} e^{-n(h_{\mu_{\lam_0}} - 2\eps)}
\end{equation}
and
\begin{equation}\label{eq:lyap ineq}
\tilde{C}^{-1}C_{62}^{-2} e^{-n(\chi_{\mu_{\lam_0}} + 3\eps)} \leq  \left| \left(f^\lam_{u}\right)' (x) \right| \leq \tilde{C}C^2_{62} e^{-n(\chi_{\mu_{\lam_0}} - 3\eps)}
\end{equation}
hold for any $x \in X$ by Lemma \ref{lem:pbdp}. Fix $0 < s <1$ and consider the integral
\[ \mathcal{I} = \int \limits_{U_{\eps'}} \int \limits_{\Om} \int \limits_{\Om} \left| \Pi^\lam(\om_1) - \Pi^\lam(\om_2) \right|^{-s}\,d\tilde{\mu}_\lam(\om_1)\,d\tilde{\mu}_\lam(\om_2)\,d\lam. \]
If $\mathcal{I} < \infty$, then by Frostman's lemma \cite[Theorem 4.13]{Falconer} we have $\hdim((\Pi^\lam)_*\mu_\lam) \geq \hdim((\Pi^\lam)_*\tilde{\mu}_{\lam}) \geq s$ for Lebesgue almost every $\lam \in U_{\eps'}$. By \eqref{eq:lyap ineq},
\[\begin{split} \mathcal{I} &= \int \limits_{U_{\eps'}} \sum \limits_{n=0}^\infty \sum \limits_{u \in A_n} \sum_{\substack{a,b\in \Ak\\ a \neq b}}\ \iint\limits_{[ua] \times [ub]} \left| f^\lam_u \left(\Pi^\lam(\sigma^n\om_1)\right) - f^\lam_u \left(\Pi^\lam(\sigma^n\om_2)\right) \right|^{-s}\,d\tilde{\mu}_\lam(\om_1)\,d\tilde{\mu}_\lam(\om_2)\,d\lam \\
& \leq \tilde{C}^{s}C_{62}^{2s} \int \limits_{U_{\eps'}} \sum \limits_{n=0}^\infty e^{ns(\chi_{\mu_{\lam_0}} + 3\eps)} \sum \limits_{u \in A_n} \sum_{\substack{a,b\in \Ak\\ a \neq b}}\ \iint\limits_{[ua] \times [ub]} \left| \Pi^\lam(\sigma^n\om_1) - \Pi^\lam(\sigma^n\om_2) \right|^{-s}\,d\tilde{\mu}_\lam(\om_1)\,d\tilde{\mu}_\lam(\om_2)\,d\lam.
\end{split}
\]
For $m \geq 0$ set
\[B^{\lam}_m = \{ (\om_1, \om_2) \in \Om \times \Om : \left| \Pi^\lam(\om_1) - \Pi^\lam(\om_2) \right| \leq 2^{-m} \}\] and note that
\be\label{eq:energy sum}
 \left| \Pi^\lam(\om_1) - \Pi^\lam(\om_2) \right|^{-s} \leq \sum \limits_{m=0}^{\infty}2^{s(m+1)} \mathds{1}_{B_m^\lam}(\om_1, \om_2).
\ee
Indeed, if $\Pi^\lam(\om_1) = \Pi^\lam(\om_2)$, then the right-hand side is divergent. Otherwise, there exists $m \geq 0$ such that $2^{-(m+1)}< \left| \Pi^\lam(\om_1) - \Pi^\lam(\om_2) \right| \leq 2^{-m}$, hence $\left| \Pi^\lam(\om_1) - \Pi^\lam(\om_2) \right|^{-s} \leq 2^{s(m+1)} \mathds{1}_{B_m^\lam}(\om_1, \om_2)$. For $m \geq 0$ let $k = k(m)$ be {minimal}
such that $\gamma_2^k \leq 2^{-(m+1)}$, so $k \leq Q(m+1)$ for a constant $Q = \lceil \frac{\log 2}{- \log \gamma_2} \rceil$. Let
\[D^{\lam}_m = \{ (\om_1, \om_2) \in \Om \times \Om : \left| \Pi^\lam(\om_1|_k 1^{\infty}) - \Pi^\lam(\om_2|_k 1^{\infty}) \right| \leq 2^{-(m-1)} \},\]
where $1^{\infty}$ denotes the infinite sequence in $\Omega$ formed by the symbol $1\in\mathcal{A}$. Note that by \ref{as:hyper} and the choice of $k$, we have $B^{\lam}_m \subset D^{\lam}_m$. Moreover, $D^{\lam}_m$ is a union of cylinders of length $k$. Applying this together with \eqref{eq:energy sum} and \ref{as:measure_cont} for $\lam \in U_{\eps'}$ yields
\[
\begin{split}
\iint\limits_{[ua] \times [ub]} & \left| \Pi^\lam(\sigma^n\om_1) - \Pi^\lam(\sigma^n\om_2) \right|^{-s}d\tilde{\mu}_\lam(\om_1)\,d\tilde{\mu}_\lam(\om_2) \\
& \leq \sum \limits_{m=0}^{\infty} 2^{(m+1)s} \iint\limits_{[ua] \times [ub]}  \mathds{1}_{B^{\lam}_m}(\sigma^n \om_1, \sigma^n \om_2)\,d\tilde{\mu}_\lam(\om_1)\,d\tilde{\mu}_\lam(\om_2) \\
& \leq 2^s \sum \limits_{m=0}^{\infty} 2^{ms} \iint\limits_{[ua] \times [ub]}  \mathds{1}_{D^{\lam}_m}(\sigma^n \om_1, \sigma^n \om_2)\,d\tilde{\mu}_\lam(\om_1)\,d\tilde{\mu}_\lam(\om_2) \\
& = 2^s \sum \limits_{m=0}^{\infty} 2^{ms} \sum \limits_{l, p \in \Ak^{k-1}} \tilde{\mu}_\lam \left( [ual] \right)  \tilde{\mu}_\lam \left( [ubp] \right) \mathds{1}_{D^{\lam}_m}(al 1^\infty, bp1^{\infty}) \\
& \leq {C^2 2^s} \sum \limits_{m=0}^{\infty} 2^{ms}e^{{2}\eps(n+Q(m+1))} \sum \limits_{l, p \in \Ak^{k-1}} \mu_{\lam_0} \left( [ual] \right)  \mu_{\lam_0} \left( [ubp] \right) \mathds{1}_{D^{\lam}_m}(al 1^\infty, bp1^{\infty}) \\
& = {C^2 2^s} e^{2\eps Q} \sum \limits_{m=0}^{\infty} 2^{ms}e^{{2}\eps(n+Qm)} \iint\limits_{[ua] \times [ub]}  \mathds{1}_{D^{\lam}_m}(\sigma^n \om_1, \sigma^n \om_2)\,d\mu_{\lam_0}(\om_1)\,d\mu_{\lam_0}(\om_2).
\end{split}
\]
Moreover, transversality condition \ref{as:trans} implies that for $(\om_1, \om_2) \in [ua] \times [ub]$ with $a \neq b$ we have (we use here an equivalent condition \eqref{tran3}, see Lemma \ref{lem-tran})
\[ \int \limits_{U_{\eps'}} \mathds{1}_{D^{\lam}_m}(\sigma^n \om_1, \sigma^n \om_2)\, d\lam \leq \Lk^1\left\{\lam\in \ov U:\ |\Pi^\lam(\sigma^n \om_1) - \Pi^\lam(\sigma^n \om_2)| \le 2^{-(m-1)} \right\} \leq C_T 2^{-(m-1)}  \]
for some constant $C_T$ (depending only on the IFS). Applying both of the above calculations to $\mathcal{I}$, changing the order of integration, and applying \eqref{eq:entropy ineq}, we obtain, setting $C_{70} = \tilde{C}^{s}C_{62}^{2s}C^{2}C_T2^{s+1}$ and $C_{71} = \tilde{C}CC_{70}$,
\[
\begin{split}
\mathcal{I} & \leq C_{70} e^{{2}\eps Q} \sum \limits_{n=0}^\infty e^{n\left(s(\chi_{\mu_{\lam_0}} + 3\eps) + {2}\eps\right)} \sum \limits_{u \in A_n} \sum_{\substack{a,b\in \Ak\\ a \neq b}} \sum \limits_{m=0}^{\infty} 2^{m(s-1)}e^{{2}\eps Qm} \mu_{\lam_0}\left([ua]\right)\mu_{\lam_0}\left([ub]\right) \\
& \leq C_{70} e^{{2}\eps Q} \sum \limits_{n=0}^\infty e^{n\left(s(\chi_{\mu_{\lam_0}} + 3\eps) + {2}\eps\right)} \sum \limits_{u \in A_n} \mu_{\lam_0}\left([u]\right)^2  \sum \limits_{m=0}^{\infty} 2^{m(s-1)}e^{{2}\eps Qm}  \\
& \leq C_{71} e^{{2}\eps Q} \sum \limits_{n=0}^\infty e^{n\left(s(\chi_{\mu_{\lam_0}} + 3\eps) - h_{\mu_{\lam_0}} + {4}\eps\right)} \sum \limits_{u \in A_n} \mu_{\lam_0}\left([u]\right) \sum \limits_{m=0}^{\infty} 2^{m(s + Q' \eps-1)}\\
& \leq C_{71} e^{{2}\eps Q} \sum \limits_{n=0}^\infty e^{n\left(s(\chi_{\mu_{\lam_0}} + 3\eps) - h_{\mu_{\lam_0}} + {4}\eps\right)} \sum \limits_{m=0}^{\infty} 2^{m(s + Q' \eps-1)},
\end{split}
\]
where ${Q'=2Q\log_2 e}$. Therefore, $\mathcal{I}<\infty$ provided $s+Q'\eps < 1$ and $s < \frac{h_{\mu_{\lam_0}} - {4}\eps}{\chi_{\mu_{\lam_0}} + 3\eps}$. Consequently,
\[ \hdim((\Pi^\lam)_* \mu_\lam) \geq \hdim((\Pi^\lam)_* \tilde{\mu}_\lam) \geq \min \left\{ 1 - Q' \eps, \frac{h_{\mu_{\lam_0}} - {4}\eps}{\chi_{\mu_{\lam_0}} + 3\eps}  \right\} \text{ for } \Leb\text{-a.e.}\ \lam \in U_{\eps'}. \]
As $\eps'$ can be taken arbitrary small, the proof is finished.
\end{proof}

We can now finish the proof of Theorem \ref{thm:main_hausdorff}. {As $\hdim((\Pi^\lam)_*\mu_\lam) \leq \min \left\{ 1, \frac{h_{\mu_\lam}}{\chi_{\mu_\lam}} \right\}$ (see \cite[Theorem 3.1 and Remark 3.2]{U})}, it is enough to prove that $\hdim((\Pi^\lam)_*\mu_\lam) \geq \min \left\{ 1, \frac{h_{\mu_\lam}}{\chi_{\mu_\lam}} \right\}$ holds almost surely. Assume that this is not the case. Then, there exists $\eps>0$ such that the set
\[ A = \left\{ \lam \in \ov{U} : \hdim((\Pi^\lam)_*\mu_\lam) < \min \left\{ 1, \frac{h_{\mu_\lam}}{\chi_{\mu_\lam}} \right\} - \eps \right\}\] has positive Lebesuge measure. Let $\lam_0$ be a density point of $A$. By the continuity of $\lam \mapsto h_{\mu_{\lam}},\ \lam \mapsto \chi_{\mu_{\lam}}$ and $\chi_{\mu_\lam} > 0$ (following from \ref{as:hyper}), we obtain that $\lam \mapsto \min \left\{ 1, \frac{h_{\mu_\lam}}{\chi_{\mu_\lam}} \right\}$ is continuous as well. Therefore, there exists an open neighbourhood $U'$ of $\lam_0$ such that
\[ \min \left\{ 1, \frac{h_{\mu_\lam}}{\chi_{\mu_\lam}} \right\} \leq \min \left\{ 1, \frac{h_{\mu_{\lam_0}}}{\chi_{\mu_{\lam_0}}} \right\} + \frac{\eps}{2}\ \text{ for }\ \lam \in U'.\] By Proposition \ref{prop:hdim_lam0_main} we can also assume that
\[ \hdim((\Pi^\lam)_*\mu_\lam) \geq \min \left\{ 1 , \frac{h_{\mu_{\lam_0}}}{\chi_{\mu_{\lam_0}}} \right\} - \frac{\eps}{2} \text{ for } \Leb \text{-a.e.}\ \lam \in U',\]
hence
\[ \hdim((\Pi^\lam)_*\mu_\lam) \geq \min \left\{ 1, \frac{h_{\mu_\lam}}{\chi_{\mu_\lam}} \right\} - \eps  \text{ for } \Leb \text{-a.e.}\ \lam \in U'. \]
This however means that $\lam_0$ cannot be a density point of $A$, a contradiction. Theorem \ref{thm:main_hausdorff} is proved. 

\section{Transversality of degree $\beta$}

In this section we prove that an IFS satisfying the transversality condition \ref{as:trans}, satisfies also the \textit{transversality of degree $\beta$}, as defined in \cite{PS00}, with arbitrary small $\beta>0$. This will be useful later, as the proof of \ref{thm:main_cor_dim} follows the approach of Peres and Schlag \cite{PS00}, where the transversality of degree $\beta$ is a key concept. In fact, \cite{PS00} uses the term ``transversality of order $\beta$'', but the term ``transversality of degree $\beta$,'' as in Mattila, seems more appropriate.

\begin{prop}\label{prop_beta trans}
Let $\{f_j^\lam\}_{j \in \Ak}$ be a parametrized IFS satisfying smoothness assumptions \ref{as:C2} - \ref{as:hyper} and the transversality condition \ref{as:trans} on $U$. For every $\lam_0 \in U$ and $\beta>0$ there exists $c_\beta>0$ and an open neighbourhood $J$ of $\lam_0$ such that
\be \label{tran beta}
\left|\Pi^\lam(u) - \Pi^\lam(v)\right| < c_\beta\cdot d_{\lam_0}(u,v)^{1+\beta} \implies \left|\textstyle{\frac{d}{d\lam}}(\Pi^\lam(u) - \Pi^\lam(v))\right| \ge c_\beta\cdot d_{\lam_0}(u,v)^{1+\beta}.
\ee
holds for all $u, v\in \Om$ and $\lam \in J$.
\end{prop}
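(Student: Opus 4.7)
The plan is to reduce transversality of degree $\beta$ to the basic transversality condition \ref{as:trans} by ``peeling off'' the common prefix of $u$ and $v$ using the chain rule, and then controlling the resulting error terms via the regularity lemmas from Section 4. Fix $\lam_0\in U$ and $\beta>0$. Given distinct $u,v\in\Om$, let $w=u\wedge v$, $n=|w|$, and set $\tilde u=\sigma^n u$, $\tilde v=\sigma^n v$; note $\tilde u_1\neq \tilde v_1$. Since
\[
\Pi^\lam(u)-\Pi^\lam(v)=f^\lam_w(\Pi^\lam(\tilde u))-f^\lam_w(\Pi^\lam(\tilde v)),
\]
the mean value theorem gives $\Pi^\lam(u)-\Pi^\lam(v)=(f^\lam_w)'(\xi)(\Pi^\lam(\tilde u)-\Pi^\lam(\tilde v))$ for some $\xi\in X$. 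Combining with the lower bound from Lemma \ref{lem:dx dist comparision}, the hypothesis $|\Pi^\lam(u)-\Pi^\lam(v)|<c_\beta d_{\lam_0}(u,v)^{1+\beta}$ implies
\[
|\Pi^\lam(\tilde u)-\Pi^\lam(\tilde v)|\le \frac{c_\beta}{c_1}d_{\lam_0}(u,v)^{3\beta/4}\le \frac{c_\beta}{c_1}.
\]
Thus, taking $c_\beta<c_1\eta$, the hypothesis of \ref{as:trans} is satisfied for the pair $\tilde u,\tilde v$, yielding $\bigl|\tfrac{d}{d\lam}(\Pi^\lam(\tilde u)-\Pi^\lam(\tilde v))\bigr|\ge\eta$.

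Next, I differentiate the factorization above in $\lam$ and write, with $y=\Pi^\lam(\tilde u)$, $z=\Pi^\lam(\tilde v)$,
\[
\frac{d}{d\lam}(\Pi^\lam(u)-\Pi^\lam(v))=(\mathrm{I})+(\mathrm{II})+(\mathrm{III}),
\]
where
\[
(\mathrm{I})=\partial_\lam f^\lam_w(y)-\partial_\lam f^\lam_w(z),\quad (\mathrm{II})=(f^\lam_w)'(y)\Bigl(\tfrac{d}{d\lam}\Pi^\lam(\tilde u)-\tfrac{d}{d\lam}\Pi^\lam(\tilde v)\Bigr),
\]
\[
(\mathrm{III})=\bigl[(f^\lam_w)'(y)-(f^\lam_w)'(z)\bigr]\cdot\tfrac{d}{d\lam}\Pi^\lam(\tilde v).
\]
The term $(\mathrm{II})$ is the main contribution: by Lemma \ref{lem:dx dist comparision} and the transversality consequence above, $|(\mathrm{II})|\ge c_1\eta\, d_{\lam_0}(u,v)^{1+\beta/4}$.

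The plan is then to show that $(\mathrm{I})$ and $(\mathrm{III})$ are absorbed by $(\mathrm{II})$ once $c_\beta$ is small enough. For $(\mathrm{I})$, use $|(\mathrm{I})|\le \sup_X|\tfrac{d^2}{dxd\lam}f^\lam_w|\cdot|y-z|$, bound the first factor using \eqref{eq:dxdlam bound} of Lemma \ref{lem:d2 bounds} and Lemma \ref{lem:dx dist comparision}, and bound $|y-z|$ by the estimate already derived; since $d_{\lam_0}(u,v)\le\gamma_2^n$ makes $n\,d_{\lam_0}(u,v)^{\beta/4}$ uniformly bounded, this gives $|(\mathrm{I})|\le K_1 c_\beta d_{\lam_0}(u,v)^{1+\beta/4}$ for some $K_1$. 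For $(\mathrm{III})$, use $|(f^\lam_w)'(y)-(f^\lam_w)'(z)|\le\sup_X|\tfrac{d^2}{dx^2}f^\lam_w|\cdot|y-z|$, \eqref{eq:d2dx bound} of Lemma \ref{lem:d2 bounds}, and a uniform bound on $|\tfrac{d}{d\lam}\Pi^\lam(\tilde v)|$ (which follows from Proposition \ref{prop:pi hoelder} and a telescoping argument as in its proof); this yields $|(\mathrm{III})|\le K_2 c_\beta d_{\lam_0}(u,v)^{1+\beta/4}$.

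Finally, choose $c_\beta\le \min\{c_1\eta,\ c_1\eta/(2(K_1+K_2))\}$ and let $J$ be the neighborhood of $\lam_0$ dictated by Lemma \ref{lem:dx dist comparision}. Then for $\lam\in J$,
\[
\left|\tfrac{d}{d\lam}(\Pi^\lam(u)-\Pi^\lam(v))\right|\ge|(\mathrm{II})|-|(\mathrm{I})|-|(\mathrm{III})|\ge \tfrac{c_1\eta}{2}d_{\lam_0}(u,v)^{1+\beta/4}\ge c_\beta d_{\lam_0}(u,v)^{1+\beta},
\]
using $d_{\lam_0}(u,v)\le 1$ in the last step. The main obstacle is bookkeeping: one must simultaneously ensure that (a) $|y-z|<\eta$ so \ref{as:trans} applies, (b) $n d_{\lam_0}(u,v)^{\beta/4}$ is uniformly controlled in $n$, and (c) the single small constant $c_\beta$ is compatible with the several bounds needed to dominate the error terms---this is all possible because the loss in the exponent at each step is only of order $\beta/4$, while the hypothesis and conclusion are stated at order $\beta$.
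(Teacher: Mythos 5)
Your proposal is correct and follows essentially the same route as the paper's proof: the same three-term decomposition obtained by peeling off the common prefix, the same use of Lemma \ref{lem:d2 bounds}, Lemma \ref{lem:dx dist comparision} and the boundedness of $\frac{d}{d\lam}\Pi^\lam$ (Proposition \ref{prop:pi hoelder}), and the same reduction to \ref{as:trans} for the shifted pair, with the factor $n$ absorbed via $d_{\lam_0}(u,v)\le\gamma_2^n$. The only cosmetic point is that your final choice of $c_\beta$ should also explicitly include $c_\beta\le c_1\eta/2$ so that $\frac{c_1\eta}{2}\,d_{\lam_0}(u,v)^{1+\beta/4}\ge c_\beta\, d_{\lam_0}(u,v)^{1+\beta}$, exactly as the paper does in its last line.
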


\begin{proof}
For short, let us denote the metric $d_{\lam_0}$ by $d$. Let $n = |u\wedge v|$, so  that $u\wedge v = u_1\ldots u_n$. We have 
\begin{eqnarray}
\textstyle{\frac{d}{d\lam}} (\Pi^\lam(u) - \Pi^\lam(v)) & = & \textstyle{\frac{d}{d\lam}} \left[f_{u_1\ldots u_n}^\lam (\Pi^\lam(\sig^n u)) - f_{u_1\ldots u_n}^\lam (\Pi^\lam(\sig^n v)) \right] \nonumber \\[1.2ex]
& = & \left(\textstyle{\frac{d}{d\lam}} f_{u_1\ldots u_n}^\lam\right)(\Pi^\lam (\sig^n u)) - \left(\textstyle{\frac{d}{d\lam}} f_{u_1\ldots u_n}^\lam\right)(\Pi^\lam (\sig^n v)) + \nonumber \\[1.2ex]
& & \left(\textstyle{\frac{d}{dx}} f_{u_1\ldots u_n}^\lam\right)(\Pi^\lam (\sig^n u))\cdot \textstyle{\frac{d}{d\lam}} \Pi^\lam(\sig^n u) - \left(\textstyle{\frac{d}{dx}} f_{u_1\ldots u_n}^\lam\right)(\Pi^\lam (\sig^n v)) \cdot \textstyle{\frac{d}{d\lam}} \Pi^\lam(\sig^n v) \nonumber \\[1.2ex]
& = & \left(\textstyle{\frac{d}{d\lam}} f_{u_1\ldots u_n}^\lam\right)(\Pi^\lam (\sig^n u)) - \left(\textstyle{\frac{d}{d\lam}} f_{u_1\ldots u_n}^\lam\right)(\Pi^\lam (\sig^n v)) + \nonumber \\[1.2ex]
& & \left(\textstyle{\frac{d}{dx}} f_{u_1\ldots u_n}^\lam\right)(\Pi^\lam (\sig^n u))\cdot \left[ \textstyle{\frac{d}{d\lam}} \left(\Pi^\lam(\sig^n u) - \Pi^\lam(\sig^n v)\right)\right] + \nonumber \\[1.2ex]
& & \left[\left(\textstyle{\frac{d}{dx}} f_{u_1\ldots u_n}^\lam\right)(\Pi^\lam (\sig^n u)) - \left(\textstyle{\frac{d}{dx}}f_{u_1\ldots u_n}^\lam\right)(\Pi^\lam (\sig^n v))\right] \cdot  \textstyle{\frac{d}{d\lam}} \Pi^\lam(\sig^n v) \nonumber  \\[1.2ex]
& =: & A_1 + A_2 + A_3.
\label{est0}
\end{eqnarray}
On the other hand,
\begin{eqnarray} 
\left|\Pi^\lam(u) - \Pi^\lam(v)\right| & = & \left|\textstyle{\frac{d}{dx}}\ f^\lam_{u_1,\ldots u_n}(y)\right|\cdot \left|\Pi^\lam(\sig^n u) - \Pi^\lam(\sig^n v)\right| \nonumber \\[1.2ex]
& \ge & c_1\cdot d(u,v)^{1 + \beta/4} \cdot \left|\Pi^\lam(\sig^n u) - \Pi^\lam(\sig^n v)\right| \label{est2},
\end{eqnarray}
for some $y \in X$, $c_1>0$, and $\lam$ sufficiently close to $\lam_0$, by Lemma \ref{lem:dx dist comparision}. Similarly,
\be \label{est3}
|A_2| \ge c_1\cdot d(u,v)^{1 + \beta/4} \cdot \left|\textstyle{\frac{d}{d\lam}}(\Pi^\lam(\sig^n u) - \Pi^\lam(\sig^n v))\right|.
\ee
Further, by Lemmas \ref{lem:d2 bounds}, \ref{lem:dx dist comparision} and Proposition \ref{prop:pi hoelder} (which implies that $\frac{d}{d\lam}\Pi^\lam$ is bounded) we have
\be \label{est4}
|A_1| \le \left|\Pi^\lam(\sig^n u) - \Pi^\lam(\sig^n v)\right| C'_2 n \cdot d(u,v)^{1-\beta/4}
\ee
and
\be \label{est5}
|A_3| \le \left|\Pi^\lam(\sig^n u) - \Pi^\lam(\sig^n v)\right| C'_2 \cdot d(u,v)^{1-\beta/4}
\ee
for some constant $C_2'$ large enough.
%{I hope that it is not unreasonable to expect (maybe under additional assumptions on the $\lam$-dependence) that for some $k\in \N$,
%\be \label{est6}
%\max\left\{\left|\textstyle{\frac{d^2}{dx\,d\lam}}\ f^\lam_{u_1,\ldots u_n}(\xi_2)\right|, \left|\textstyle{\frac{d^2}{dx^2}}\ f^\lam_{u_1,\ldots u_n}(\xi_3)\right|\right\} \le C_2(n+1)^k \cdot 
%\left\|\textstyle{\frac{d}{dx}}\ f^\lam_{u_1,\ldots u_n}\right\|_\infty
%\le C'_2(n+1)^k \cdot d(u,v)^{1-\beta/4},
%\ee
%using that  $\lam$ is close to $\lam_0$ in the last step.}
Assuming
$$
\left|\Pi^\lam(u) - \Pi^\lam(v)\right| < c_\beta\cdot d(u,v)^{1+\beta},
$$
we obtain from \eqref{est2}:
\be \label{est7}
\left|\Pi^\lam(\sig^n u) - \Pi^\lam(\sig^n v)\right| \le \frac{c_\beta}{c_1}\cdot d(u,v)^{3\beta/4},
\ee
and then, from \eqref{est0}, \eqref{est3}, \eqref{est4}, \eqref{est5}:
\begin{eqnarray*}
	\left|\textstyle{\frac{d}{d\lam}}(\Pi^\lam(u) - \Pi^\lam(v))\right| & \ge & |A_2| - |A_1| - |A_3| \\[1.2ex]
	& \ge & c_1\cdot d(u,v)^{1 + \beta/4} \cdot \left|\textstyle{\frac{d}{d\lam}}(\Pi^\lam(\sig^n u) - \Pi^\lam(\sig^n v))\right| \\[1.2ex]
	&       & - C'_2 (n+1) \cdot d(u,v)^{1-\beta/4} \cdot \left|\Pi^\lam(\sig^n u) - \Pi^\lam(\sig^n v)\right| \\[1.2ex]
	& \ge & c_1\cdot d(u,v)^{1 + \beta/4} \cdot \left|\textstyle{\frac{d}{d\lam}}(\Pi^\lam(\sig^n u) - \Pi^\lam(\sig^n v))\right| \\[1.2ex]
	&       & - \frac{C'_2 c_\beta}{c_1}\cdot (n+1) \cdot  d(u,v)^{1+\beta/2}.
\end{eqnarray*}
Assuming $c_\beta<c_1\eta$, so that we can use transversality assumption \ref{as:trans} for the pair $\sig^n u, \sig^n v$ by \eqref{est7}, keeping in mind that $d(u,v)\le 1$, we obtain
$$
\left|\textstyle{\frac{d}{d\lam}}(\Pi^\lam(\sig^n u) - \Pi^\lam(\sig^n v))\right| \ge \eta,
$$
hence
$$
\left|\textstyle{\frac{d}{d\lam}}(\Pi^\lam(u) - \Pi^\lam(v))\right| \ge c_1\cdot d(u,v)^{1 + \beta/4} \cdot \left[\eta - \frac{C'_2 c_\beta}{c^2_1}\cdot (n+1)\cdot d(u,v)^{\beta/4}\right].
$$
Note that $d(u,v) \le \gam_2^n$, where $\gam_2 < 1$ is from \ref{as:hyper}, and let
$$
C'_3 := \max \{(n+1) \gam_2^{n\beta/4},\ n\ge 0\}.
$$
Choose $$c_\beta < \frac{\eta c_1^2}{2 C_2' C'_3},$$ then
$$
\left|\textstyle{\frac{d}{d\lam}}(\Pi^\lam(u) - \Pi^\lam(v))\right| \ge \frac{c_1\eta}{2} \cdot d(u,v)^{1 + \beta/4} \ge c_\beta\cdot d(u,v)^{1+\beta},
$$
if we also make sure that $c_\beta < c_1\eta/2$, completing the proof of \eqref{tran beta}.
\end{proof}

%%%%%%%%%%%%%%%%%%%%%%%

\section{Energy estimates}
The following theorem is the main result of this section and the main ingredient of the proof of Theorem \ref{thm:main_cor_dim}. {It is modelled after \cite[Theorem 4.9]{PS00}.}

\begin{thm}\label{thm:sobolev integral bound}
Let $\{f_j^\lam\}_{j \in \Ak}$ be a parametrized IFS satisfying smoothness assumptions \ref{as:C2} - \ref{as:hyper} and the transversality condition \ref{as:trans} on $U$. Let $\left\{ \mu_\lam \right\}_{\lam \in \ov{U}}$ be a collection of finite Borel measures on $\Om$ satisfying \ref{as:measure}. Fix $\lambda_0 \in U$, $\beta > 0$, $\gamma>0$, $\eps>0$ and $q>1$ such that $1+2\gamma + \eps<q<1+\min\{\delta, \theta\}$. Then, there exists a (small enough) open interval $J \subset U$ containing $\lam_0$ such that for every smooth function $\rho$ on $\R$ with
{$0\le \rho\le 1$ and} $\supp(\rho) \subset J$ there exist constants $\widetilde{C}_1>0,\ \widetilde{C}_2>0$ such that
	$$
	\int_{J}\|\nu_\lambda\|_{2,\gamma}^2\rho(\lambda)\,d\lambda \leq \widetilde{C}_1\Ek_{q(1+a_0\beta)}(\mu_{\lambda_0}, d_{\lam_0}) + \widetilde{C}_2,
	$$
where $a_0=\frac{8+4\delta}{1+\min\{\delta, \theta\}}$.
\end{thm}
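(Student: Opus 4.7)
The plan is to adapt the Peres--Schlag energy method \cite{PS00} to the present setting, where the measure $\mu_\lam$ itself depends on $\lam$ through \ref{as:measure}. Since \cite{PS00} applies only to a fixed measure, it cannot be used as a black box; instead, we must operate at a discretized (cylinder) level so that \ref{as:measure} can be applied uniformly over the parameter interval. By Fubini and the definition of the Sobolev norm,
\begin{equation*}
\int_J \|\nu_\lam\|_{2,\gamma}^2\, \rho(\lam)\,d\lam = \int_\R |\xi|^{2\gamma} \Bigl(\int_J \rho(\lam)\, |\hat\nu_\lam(\xi)|^2 \,d\lam\Bigr) d\xi.
\end{equation*}
The range $|\xi|\le 1$ contributes a bounded quantity, which is absorbed into $\widetilde C_2$; the entire work is in bounding the dyadic pieces $|\xi|\asymp 2^k$ for $k\ge 0$.

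For each such $k$, I would introduce a cylinder level $n=n(k)\asymp k$ chosen so that $\gamma_2^n \ll 2^{-k}$ and split the double integral defining $|\hat\nu_\lam(\xi)|^2$ as a double sum over pairs $(w,w')\in \Ak^n\times \Ak^n$. On each cylinder $[w]$, I replace $\Pi^\lam(\omega)$ by $\Pi^\lam(\omega_w)$ for a chosen representative $\omega_w\in[w]$; the resulting phase error is of order $|\xi|\,\gamma_2^n$, negligible by the choice of $n$. Using \ref{as:measure}, $\mu_\lam([w])\le e^{cn|\lam-\lam_0|^\theta}\mu_{\lam_0}([w])$ uniformly on $J$, so shrinking the diameter of $J$ (in terms of $\beta$ and $\theta$) lets us replace $\mu_\lam\otimes\mu_\lam$ by $\mu_{\lam_0}\otimes\mu_{\lam_0}$ at the cost of a factor $|\xi|^{O(\beta)}$. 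Crucially, this factor is independent of $\lam$, so the $\mu_{\lam_0}$-integral can be pulled outside of the $\lam$-integral.

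What remains is to bound the oscillatory integral
\begin{equation*}
I(w,w',\xi) := \left|\int_J \rho(\lam)\,e^{i\xi\,\Phi^\lam(\omega_w,\omega_{w'})}\,d\lam\right|,\qquad \Phi^\lam(u,v):=\Pi^\lam(u)-\Pi^\lam(v).
\end{equation*}
If $w_1=w'_1$, factor out $f^\lam_{w_1}$ and reduce to a pair with smaller common prefix. Otherwise, Proposition \ref{prop_beta trans} yields transversality of degree $\beta$, i.e.\ $|\partial_\lam \Phi^\lam|\ge c_\beta\, d_{\lam_0}(\omega_w,\omega_{w'})^{1+\beta}$ whenever the phase itself is small, while Proposition \ref{prop:C_1_delta_pi} supplies the $C^{1,\delta}$ control of $\partial_\lam\Phi^\lam$ needed for a Peres--Schlag-style (fractional) integration by parts. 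Putting these together gives $I(w,w',\xi)\lesssim \min\{1,\,(|\xi|\, d_{\lam_0}(\omega_w,\omega_{w'})^{1+\beta})^{-q}\}$ for any $q<1+\min\{\delta,\theta\}$.

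Assembling these ingredients, the dominant contribution to the original integral takes the form
\begin{equation*}
\sum_{k\ge 0} 2^{k(1+2\gamma - q)}\cdot 2^{k\cdot O(\beta)}\sum_{w,w'\in\Ak^{n(k)}}\mu_{\lam_0}([w])\mu_{\lam_0}([w'])\, d_{\lam_0}(\omega_w,\omega_{w'})^{-q(1+\beta)},
\end{equation*}
which, once $J$ is sufficiently small, is bounded by $\widetilde C_1\, \Ek_{q(1+a_0\beta)}(\mu_{\lam_0},d_{\lam_0})+\widetilde C_2$; convergence of the $k$-series is guaranteed by $1+2\gamma+\varepsilon<q$. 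The main obstacle is the simultaneous balancing of three sources of loss --- the cylinder-discretization error from Lemma \ref{lem:dx dist comparision} (which already introduces $\beta/4$-exponents), the \ref{as:measure}-correction $e^{cn|\lam-\lam_0|^\theta}$, and the transversality exponent $1+\beta$ of Proposition \ref{prop_beta trans} --- each of which depends on both $n$ and $k$. It is the interaction of these three losses, together with the cost of reducing to the case $w_1\neq w'_1$, that dictates the explicit constant $a_0=(8+4\delta)/(1+\min\{\delta,\theta\})$ in the final energy exponent.
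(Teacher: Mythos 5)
Your overall architecture (dyadic decomposition in $\xi$, discretization at cylinder level $n\asymp k$, degree-$\beta$ transversality from Proposition \ref{prop_beta trans}, and the $C^{1,\delta}$ control from Proposition \ref{prop:C_1_delta_pi}) matches the paper's strategy, but there is a genuine gap at the step where you dispose of the parameter dependence of the measure. You claim that \ref{as:measure} lets you ``replace $\mu_\lam\otimes\mu_\lam$ by $\mu_{\lam_0}\otimes\mu_{\lam_0}$ at the cost of a factor $|\xi|^{O(\beta)}$'' and that, this factor being independent of $\lam$, the $\mu_{\lam_0}$-integral can be pulled outside the $\lam$-integral, leaving only $I(w,w',\xi)=\bigl|\int_J\rho(\lam)e^{i\xi\Phi^\lam}\,d\lam\bigr|$ to estimate. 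This is not legitimate: for a fixed pair $(w,w')$ the quantity you must control is $\int_J\rho(\lam)\,\mu_\lam([w])\mu_\lam([w'])\,e^{i\xi\Phi^\lam}\,d\lam$, and you cannot bound its modulus by $\sup_\lam\bigl(\mu_\lam([w])\mu_\lam([w'])\bigr)\cdot I(w,w',\xi)$, because replacing a non-constant positive weight by its supremum is only valid for integrands of a fixed sign. After the replacement you would be relying on the oscillatory cancellation in $\int_J\rho\,e^{i\xi\Phi^\lam}d\lam$, which the discarded $\lam$-dependence of the weight can destroy (the weight varies on the same scale in $\lam$ as the phase oscillates). If instead you take absolute values pairwise before replacing the weights, you lose all oscillation and the sum $\sum_{w,w'}\mu_{\lam_0}([w])\mu_{\lam_0}([w'])$ gives no decay in $|\xi|$ at all. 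This is precisely the obstruction that makes the parameter-dependent case delicate (and the source of the error in \cite{BB15} that the paper corrects); it cannot be waved away by shrinking $J$.

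The paper's resolution is to keep the exact ratio $e_j(\omega_1,\omega_2,\lambda)=\frac{\mu_\lambda([\omega_1|_n])\mu_\lambda([\omega_2|_n])}{\mu_{\lambda_0}([\omega_1|_n])\mu_{\lambda_0}([\omega_2|_n])}$ \emph{inside} the $\lam$-integral and to prove, in the analogue of your oscillatory estimate (Proposition \ref{prop:trans int}), not only the uniform bound $e_j\le 2^{Q\xi\tilde c j}$ but also a quantitative $\theta$-H\"older estimate in $\lam$ for this factor (the $F_3$-estimate), derived from \ref{as:measure}. That modulus-of-continuity bound is exactly what allows the cancellation argument near the zeros of $\Phi_\lam$ (the paper's mean-zero/linearization step, the counterpart of your ``fractional integration by parts'') to survive the presence of the $\lam$-dependent weight; it is also the reason the admissible range is $q<1+\min\{\delta,\theta\}$ and that $a_0=\frac{8+4\delta}{1+\min\{\delta,\theta\}}$ involves $\theta$. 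Your proposal attributes the exponent $\min\{\delta,\theta\}$ solely to the $C^{1,\delta}$ regularity of $\partial_\lam\Phi^\lam$, which only accounts for $\delta$; without the H\"older control of the measure ratio inside the oscillatory integral, the decay $(|\xi|\,d_{\lam_0}(\omega_w,\omega_{w'})^{1+\beta})^{-q}$ with $q>1$ cannot be obtained, and the final energy bound does not follow.
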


The rest of this section is devoted to the proof of the above theorem and we assume throughout  that all the assumptions of Theorem \ref{thm:sobolev integral bound} hold. We follow the approach of \cite{PS00}, with suitable modifications coming from the fact that measures $\mu_{\lam}$ depend on the parameter.

Throughout the section $x \lesssim y$ will mean $x \leq A y + B$, while $x \asymp y$ will mean $\frac{x}{A} \leq y \leq A \cdot x$, with positive constants $A,B$ being possibly dependent on all the parameters on which constants $\widetilde{C}_1, \widetilde{C}_2$ depend in Theorem \ref{thm:sobolev integral bound}.

Let $\psi$ be a Littlewood-Paley function on $\R$ from \cite[Lemma 4.1]{PS00}, that is, $\psi$ is of Schwarz class, $\what\psi\ge 0$, 
$$
\supp(\what\psi) \subset \{\xi:\ 1\le |\xi|\le 4\},\ \ \ \ \ \sum_{j\in \Z}\what\psi(2^{-j}\xi) = 1\ \ \mbox{for all}\ \xi\ne 0.
$$
It is known that such a function exists. We will need that $\psi$ decays faster than any power, that is, for any $q>0$ there is $C_q$ such that
\be\label{eq:psi decay}
|\psi(\xi)| \le C_q(1 + |\xi|)^{-q}.
\ee
We will also use that
\be \label{meanzero}
\int_\R \psi(\xi)\,d\xi = \what\psi(0) = 0.
\ee
In fact, all higher moments of $\psi$ also vanish, but this will not be needed for our purposes.  As $\psi$ has bounded derivative on $\R$, there exists $L>0$ such that
\begin{equation}\label{eq:psi_lipschitz}
|\psi(x) - \psi(y)|\leq L |x - y| \text{ for all } x,y \in \R.
\end{equation}
We have (see \cite[Lemma 4.1]{PS00}):
\be\label{eq:int to sum} \int_{\R}\|\nu_{\lambda}\|^2_{2,\gamma}\rho(\lambda)d\lambda \asymp \int_{\R} \sum \limits_{j=-\infty}^\infty 2^{2j\gamma} \int_{\R} (\psi_{2^{-j}} * \nu_\lambda)(x)d\nu_\lambda(x)\rho(\lambda)d\lambda,
\ee
where $\psi_{2^{-j}}(x) = 2^j \psi(2^j x)$. Let $\kappa = - \log_2 \gamma_1,\ Q = \log_2 e$ and choose $\xi > 0 $ small enough to have $2(4+Qc)\xi < \eps$ and
\begin{equation}\label{eq:eta} 0<\frac{4+2\gamma}{\kappa - Q\xi} < \frac{\eps}{2(4+Qc)\xi}.
\end{equation}
Choose an open interval $J$ containing $\lam_0$ so small that $2c|J|^\theta\leq\xi$ (with $c,\theta$ as in \ref{as:measure}) and \eqref{tran beta} hold. In order to prove Theorem \ref{thm:sobolev integral bound}, it is enough to consider in \eqref{eq:int to sum} the sum over $j\geq 0$, as $(\psi_{2^{-j}} * \nu_\lambda)(x)$ is bounded by $2^j \|\psi\|_\infty$, hence the sum over $j < 0$ converges to a bounded function. We now calculate for $\lambda \in B(\lambda_0, \xi),\ j \geq 0$ and $n \in \N$ (we will set later $n = n(j) = \tc j$ for suitable $\tc$):
\[
\begin{split} 
&\int_{\R} (\psi_{2^{-j}} * \nu_\lambda)(x)\,d\nu_\lambda(x) \\
&\quad= 2^j \int_{\R} \int_{\R} \psi(2^j(x-y))\, d\nu_\lambda(y)\,d\nu_\lambda(x) \\
&\quad = 2^j \int\limits_{\Omega} \int\limits_{\Omega} \psi\bigl(2^j(\Pi^\lam(\omega_1) - \Pi^\lam(\omega_2))\bigr) \,d\mu_\lambda(\omega_1)\,d\mu_\lambda(\omega_2)\\
&\quad \leq 2^j \int\limits_{\Omega} \int\limits_{\Omega} \psi\bigl(2^j(\Pi^\lam(\omega_1|_n 1^\infty) - \Pi^\lam(\omega_2|_n 1^\infty))\bigr) \,d\mu_\lambda(\omega_1)\,d\mu_\lambda(\omega_2) \,+  \\
&\quad + 2^j \int\limits_{\Omega} \int\limits_{\Omega} \bigl|\psi\bigl(2^j(\Pi^\lam(\omega_1) - \Pi^\lam(\omega_2))\bigr) - \psi\bigl(2^j(\Pi^\lam(\omega_1|_n 1^\infty) - \Pi^\lam(\omega_2|_n 1^\infty))\bigr)\bigr|\, d\mu_\lambda(\omega_1)\,d\mu_\lambda(\omega_2) \leq
\end{split}
\]
Using \eqref{eq:psi_lipschitz} we get that the last expression is
\[
\begin{split}
\leq\, & 2^j \sum \limits_{i \in \Ak^n} \sum \limits_{k \in \Ak^n} \psi\bigl(2^j(\Pi^\lam(i 1^\infty) - \Pi^\lambda(k 1^\infty))\bigr)\,\mu_{\lambda}([i])\,\mu_{\lambda}([k]) \,+  \\
& + 2^j \int\limits_{\Omega} \int\limits_{\Omega} L2^j\big( |\Pi^\lam(\omega_1) -\Pi^\lam(\omega_1|_n 1^\infty)| + |\Pi^\lam(\omega_2) -\Pi^\lam(\omega_2|_n 1^\infty)| \big)\, d\mu_\lambda(\omega_1)\,d\mu_\lambda(\omega_2) \leq
\end{split}
\]
Applying \ref{as:hyper} to the integral, we obtain (recall that we assume $\diam(X) =1$):
\[ \leq  2^j \sum \limits_{i \in \Ak^n} \sum \limits_{k \in \Ak^n} \psi\bigl(2^j(\Pi^\lam( i 1^\infty) - \Pi^\lam(k 1^\infty))\bigr)\,\mu_{\lambda}([i])\,\mu_{\lambda}([k]) + 2L2^{2j-\kappa n}=(*) \]
%	\[ \leq 2^{j} \sum \limits_{i \in \mS^n} \sum \limits_{k \in \mS^n} \psi(2^j(\pi(\lambda, i 1^\infty) - \pi(\lambda, k 1^\infty)))\frac{\mu_\lambda([i])\mu_\lambda([k])}{\mu_{\lambda_0}([i])\mu_{\lambda_0}([k])}\mu_{\lambda_0}([i])\mu_{\lambda_0}([k]) + 2L2^{2j-\kappa n} =(*) \]
%	
	Choose $\tc\geq 1$ such that
	\begin{equation}\label{eq:forn} \frac{4+2\gamma}{\kappa - Q\xi} \leq \tc \leq \frac{\eps}{2Q(2+c)\xi}\end{equation}
	(it exists due to \eqref{eq:eta}) and set $n = \tc j$.	Let us define a map $e_j\colon\Om\times\Om\times J\mapsto\R$ by
	\begin{equation}\label{eq:ej def}
	e_j(\omega_1,\omega_2,\lambda):= \begin{cases}\frac{\mu_\lambda([\omega_1|_n])\mu_\lambda([\omega_2|_n])}{\mu_{\lambda_0}([\omega_1|_n])\mu_{\lambda_0}([\omega_2|_n])}, &\text{ if } \mu_{\lambda_0}([\omega_1|_n])\mu_{\lambda_0}([\omega_2|_n]) \neq 0,\\
	1, & \text{ otherwise}.
	\end{cases}
	\end{equation}
	By \eqref{eq:forn}, \ref{as:measure} and the choice of $J$,
	\begin{equation}\label{eq:errorest}
	e_j(\omega_1,\omega_2,\lambda)\leq e^{2c|\lambda-\lambda_0|^\theta n}\leq e^{\xi \tc j} = 2^{Q \xi \tc j}\text{ for all $\omega_1,\omega_2$ and $\lambda\in B(\lambda_0,\xi)$.}
	\end{equation}
	Note also that by \ref{as:measure}, if $i \in \Om^*$ is a fixed finite word, then  $\mu_{\lam_0}([i]) = 0$ if and only if $\mu_{\lam}([i]) = 0$ for all $\lam \in \ov{U}$ (in other words: $\supp(\mu_{\lam_0}) = \supp(\mu_\lam)$). {Denote $\wt\Ak^n:= \{i\in\Ak^n:\ \mu_{\lam_0}([i])\ne 0\}$.}  We have, therefore, (note that now the integral is with respect to $\mu_{\lambda_0}$),
\[
\begin{split}
(*)& = 2^j \sum \limits_{i \in \wt\Ak^n} \sum \limits_{k \in \wt\Ak^n} %\mathds{1}_{(0,\infty)}\left( \mu_{\lam_0}\left([i]\right)\mu_{\lam_0}\left([k]\right)\right) 
\psi\bigl(2^j(\Pi^\lam(i 1^\infty) - \Pi^\lam(k 1^\infty))\bigr)\,\frac{\mu_\lambda([i])\mu_\lambda([k])}{\mu_{\lambda_0}([i])\mu_{\lambda_0}([k])}\,\mu_{\lambda_0}([i])\,\mu_{\lambda_0}([k]) + 2L2^{2j-\kappa \tc j} \\
&
= 2^{j} \int\limits_{\Omega} \int\limits_{\Omega} \psi\bigl(2^j(\Pi^\lam(\omega_1|_n 1^\infty) - \Pi^\lam(\omega_2|_n 1^\infty))\bigr)\,e_j(\omega_1,\omega_2,\lambda)\, d\mu_{\lambda_0}(\omega_1)\,d\mu_{\lambda_0}(\omega_2) + 2L2^{2j-\kappa \tc j} \\
&\leq 2^{j} \int\limits_{\Omega} \int\limits_{\Omega} \psi\bigl(2^j(\Pi^\lam(\omega_1) - \Pi^\lam(\omega_2))\bigr)\,e_j(\omega_1,\omega_2,\lambda) \,d\mu_{\lambda_0}(\omega_1)\,d\mu_{\lambda_0}(\omega_2) \\
&\quad + 2^{j}\int\limits_{\Omega} \int\limits_{\Omega} \bigl|\psi\bigl(2^j(\Pi^\lam( \omega_1) - \Pi^\lam(\omega_2))\bigr) - \psi\bigl(2^j(\Pi^\lam(\omega_1|_n 1^\infty) - \Pi^\lam(\omega_2|_n 1^\infty))\bigr)\bigr| \ \times \\
&\qquad\ \ \  \times e_j(\omega_1,\omega_2,\lambda)\,d\mu_{\lambda_0}(\omega_1)\,d\mu_{\lambda_0}(\omega_2) + 2L 2^{2j-\kappa \tc j} = (**)
\end{split}
\]
Estimating the second integral, similarly as before, by $2L 2^{j-\kappa \tc j}2^{Q\xi\tc j}$ we get
\[
(**) \leq 2^{j} \int\limits_{\Omega} \int\limits_{\Omega} \psi\bigl(2^j(\Pi^\lam(\omega_1) - \Pi^\lam(\omega_2))\bigr)\,e_j(\omega_1,\omega_2,\lambda)\, d\mu_{\lambda_0}(\omega_1)\,d\mu_{\lambda_0}(\omega_2) { + 2L 2^{2j-\kappa \tc j}(1+2^{Q \xi \tc j})}.
\]
Finally,
\begin{equation}\label{eq:discretization_bound}
\begin{split}
\int_{\R} (\psi_{2^{-j}} &* \nu_\lambda)(x) \, d\nu_\lambda(x) \leq \\
&\quad \leq 2^{j} \int\limits_{\Omega} \int\limits_{\Omega} \psi\bigl(2^j(\Pi^\lam(\omega_1) - \Pi^\lam(\omega_2))\bigr)\,e_j(\omega_1,\omega_2,\lambda)\, d\mu_{\lambda_0}(\omega_1)\,d\mu_{\lambda_0}(\omega_2) + 4L 2^{(2 + Q\tc\xi-\tc\kappa)j}.
\end{split}
\end{equation}
For $j$ large enough, we have, { in view of (\ref{eq:forn})},
\begin{equation}\label{eq:error_bound}
\begin{aligned}
2^{2j\gamma}4L 2^{2j + (Q\xi - \kappa) \tc j} &= 4L  2^{j(2+2\gamma + \tc(Q\xi - \kappa))} \leq 2^{j(3+2\gamma + \tc(Q\xi - \kappa))} \\ &= 2^{-j}2^{j(4+2\gamma + \tc(Q\xi - \kappa))} \leq 2^{-j}.
\end{aligned}
\end{equation}
Combining \eqref{eq:discretization_bound}, and \eqref{eq:error_bound} we obtain, recalling that the sum over $j<0$ in \eqref{eq:int to sum} converges:
\[\begin{split} 
\int_{J}\|&\nu_{\lambda}\|^2_{2,\gamma}\rho(\lambda)\,d\lambda\lesssim \int \limits_{\R} \sum \limits_{j=0}^\infty 2^{2j\gamma} \int\limits_{\R} (\psi_{2^{-j}} * \nu_\lambda)(x)\,d\nu_\lambda(x)\,\rho(\lambda)\,d\lambda\\
%&\lesssim  \int \limits_{\R} \sum \limits_{j=0}^\infty 2^{2j\gamma} \int\limits_{\R} (\psi_{2^{-j}} * \nu_\lambda)(x)\,d\nu_\lambda(x)\,\rho(\lambda)\,d\lambda\\
&\leq \int \limits_{\R} \sum \limits_{j=0}^\infty 2^{2j\gamma} \Bigl( 2^{j} \int\limits_{\Omega} \int\limits_{\Omega} \psi\bigl(2^j(\Pi^\lam(\omega_1) - \Pi^\lam (\omega_2))\bigr)\,e_j(\omega_1,\omega_2,\lambda)\, d\mu_{\lambda_0}(\omega_1)\,d\mu_{\lambda_0}(\omega_2)\\
&\qquad \qquad \qquad + 4L 2^{(2 + Q\tc\xi-\tc\kappa)j} \Bigr)\rho(\lambda)\,d\lambda\\
&\leq \int \limits_{\R} \sum \limits_{j=0}^\infty 2^{2j\gamma} 2^{j} \int\limits_{\Omega} \int\limits_{\Omega} \psi\bigl(2^j(\Pi^\lam(\omega_1) - \Pi^\lam(\omega_2))\bigr)\, e_j(\omega_1,\omega_2,\lambda) \,d\mu_{\lambda_0}(\omega_1)\, d\mu_{\lambda_0}(\omega_2)\,\rho(\lambda)\,d\lambda\\
&\qquad \qquad \qquad \qquad +\int_\R\sum\limits_{j=0}^\infty4L 2^{-j} \rho(\lambda)\,d\lambda\\
&\lesssim  \sum \limits_{j=0}^\infty 2^{j(2\gamma+1)} \int\limits_{\Omega} \int\limits_{\Omega} \int \limits_{\R}\psi\bigl(2^j(\Pi^\lam(\omega_1) - \Pi^\lambda (\omega_2))\bigr)\,e_j(\omega_1,\omega_2,\lambda)\,\rho(\lambda)\,d\lambda\, d\mu_{\lambda_0}(\omega_1)\, d\mu_{\lambda_0}(\omega_2).
\end{split}
\]
To finish the proof of Theorem \ref{thm:sobolev integral bound}, it is enough to show the following proposition (with notation as in Theorem \ref{thm:sobolev integral bound}). Recall that $\xi$ is chosen by requiring \eqref{eq:eta} and $J$ is an open interval containing $\lam_0$ so small that $2c|J|^\theta\leq\xi$ (with $c,\theta$ as in \ref{as:measure}) and \eqref{tran beta} hold.

\begin{prop}\label{prop:trans int}
	There exists {$ C_7 > 0$} such that for any distinct $\om_1,\om_2\in \Om$, any $j\in \N$  we have %and any positive $q$ with $1+2\gamma+6\xi c\leq 1+2\gamma+\eps<q< 1 + \min\{\delta,\theta\}$,
	\begin{equation}\label{claim1}
	%\begin{split}
	\int\limits_{\R}\psi\bigl(2^j(\Pi^\lam(\omega_1) - \Pi^\lam(\omega_2))\bigr)\,e_j(\omega_1,\omega_2,\lambda)\,\rho(\lambda)\,d\lambda \leq  %C_{q, \rho,\beta}
	C_7\cdot \tc j2^{Q(2+c)\xi\tc j} \left( 1 + 2^j d(\om_1,\om_2)^{1+a_0\beta}\right)^{-q},\\
	%& \qquad\leq  C_{q, \rho,\beta}\tc j2^{Q(2+c)\xi\tc j} \left( 1 + 2^j d(\om_1,\om_2)^{1+a_0\beta}\right)^{-q}
	%\end{split}
	\end{equation}
	where %$ C_{q, \rho,\beta}$ 
	$C_7$ depends only on $q, \rho$, and $\beta$, and $a_0 = \frac{8 + 4\delta}{1 + \min\{\delta, \theta\}}$, and $d(\om_1,\om_2) = d_{\lam_0}(\om_1,\om_2)$ is the metric defined in \eqref{eq:metric_def_lam}.
\end{prop}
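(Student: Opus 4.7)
Setting $F(\lam) := \Pi^\lam(\om_1) - \Pi^\lam(\om_2)$ and $d := d_{\lam_0}(\om_1,\om_2)$, I would first dispose of the easy case $2^j d^{1+a_0\beta} \le 1$, where the right-hand side is bounded below by a constant times $\tc j\cdot 2^{Q(2+c)\xi\tc j}$, so that the trivial upper bound $\|\psi\|_\infty\cdot |J|\cdot\|\rho\|_\infty\cdot 2^{Q\xi\tc j}$ suffices. In the main case $2^j d^{1+a_0\beta} > 1$, I would partition $J = J_1\cup J_2$, with $J_1 = \{\lam\in J : |F(\lam)|\ge c_\beta d^{1+\beta}\}$ and $J_2 = \{\lam\in J : |F(\lam)|<c_\beta d^{1+\beta}\}$. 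The $J_1$ contribution is immediate from the pointwise decay $|\psi(t)|\le C_q(1+|t|)^{-q}$: since $d^{1+\beta}\ge d^{1+a_0\beta}$ for $d\le 1$ and $a_0\ge 1$, one gets $(1+2^j d^{1+\beta})^{-q}\le (1+2^j d^{1+a_0\beta})^{-q}$, while $e_j\le 2^{Q\xi\tc j}$ is absorbed into $2^{Q(2+c)\xi\tc j}$.

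The heart of the proof is on $J_2$. By Proposition \ref{prop_beta trans}, $|F'(\lam)| \ge c_\beta d^{1+\beta}$ on $J_2$, so $F$ is strictly monotonic on each component; combined with the upper bound $|F'(\lam)| \le C_{\beta,1} d^{1-\beta}$ from \eqref{eq:4.21.1}, each interior component has length at least $2 c_\beta d^{2\beta}/C_{\beta,1}$, hence the number of components satisfies $N\lesssim d^{-2\beta}$. On an interior component $J_2^{(i)}$, $F$ maps bijectively onto $(-c_\beta d^{1+\beta}, c_\beta d^{1+\beta})$, and I would change variables $u = F(\lam)$ to convert the integral to $\int_{A_i}^{B_i} \psi(2^j u) h_i(u)\,du$ with $h_i(u) = e_j(F^{-1}(u))\rho(F^{-1}(u))/|F'(F^{-1}(u))|$. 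Writing $h_i(u)=h_i(0)+(h_i(u)-h_i(0))$ (noting $0\in(A_i,B_i)$), the main term $h_i(0)\int_{A_i}^{B_i}\psi(2^j u)\,du$ will be controlled by the mean-zero cancellation $\big|\int_{-R}^R\psi(v)\,dv\big|\le 2C_q R^{1-q}/(q-1)$, valid by the decay of $\psi$, yielding a per-component contribution of order $2^{Q\xi\tc j}(2^j d^{1+\beta})^{-q}$.

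For the remainder $\int_{A_i}^{B_i}\psi(2^j u)(h_i(u)-h_i(0))\,du$, I plan to use $\min\{\delta,\theta\}$-H\"older continuity of $h_i$: the factor $1/|F'|$ is $\delta$-H\"older via \eqref{eq:4.21.2} and the lower bound on $|F'|$; $\rho$ is smooth; and crucially $e_j$ is $\theta$-H\"older in $\lam$ with constant $\lesssim \tc j\cdot 2^{Q\xi\tc j}$ by \ref{as:measure} -- this is what produces the $\tc j$ prefactor in the final estimate. Combined with the moment bound $\int |\psi(2^j u)||u|^s\,du \lesssim 2^{-j(1+s)}$, valid for any $s > 0$ because $\psi$ decays faster than any polynomial (see \eqref{eq:psi decay}), this will control the error. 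Summing the main and error contributions over the $N\lesssim d^{-2\beta}$ interior components (plus at most two boundary components, where a similar but simpler argument applies) and absorbing the resulting $d$-powers should yield the desired bound.

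The hardest part will be the precise exponent-balancing: verifying that $a_0 = (8+4\delta)/(1+\min\{\delta,\theta\})$ is exactly calibrated to fit all the contributions -- the $d^{-(1+\beta)}$ from $1/|F'|$, the $d^{-(1+\beta)(1+\min\{\delta,\theta\})}$ from the H\"older constant of $h_i$, the $d^{-2\beta}$ from $N$, and the factors $2^{-qj}$ (main term) and $2^{-j(1+\min\{\delta,\theta\})}$ (error term) from $\psi$ -- inside the promised bound $2^{Q(1+c)\xi\tc j}\cdot d^{-q(1+a_0\beta)}\cdot 2^{-qj}$. The assumption $q < 1 + \min\{\delta,\theta\}$ is what makes the residual $2^{(q-1-\min\{\delta,\theta\})j}$ factor decay, ensuring that the sub-exponential prefactor $2^{Q(1+c)\xi\tc j}$ is enough to absorb it.
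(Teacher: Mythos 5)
Your plan is essentially the paper's argument in a cutoff-free packaging: degree-$\beta$ transversality \eqref{tran beta} to isolate the set where $F(\lam)=\Pi^\lam(\om_1)-\Pi^\lam(\om_2)$ is small and to count its components ($\lesssim d^{-2\beta}$, via the derivative upper bound \eqref{eq:4.21.1}), rapid decay of $\psi$ elsewhere, an exact change of variables near each zero, the mean-zero property of $\psi$ for the main term, and H\"older control of $\rho$, of $e_j$ via \ref{as:measure}/\eqref{eq:errorest}, and of $1/F'$ via \eqref{eq:4.21.2} for the error term. The paper instead uses smooth cutoffs $\phi,\chi$, subtracts $F(0)$ under an integral over all of $\R$, and splits into $|u|<(2^jr)^{-1+\eps'}$ and its complement; your variant (subtract $h_i(0)$, use $\bigl|\int_{-R}^{R}\psi\bigr|\lesssim(1+R)^{1-q}$ on the exactly symmetric image of an interior component, and global moment bounds $\int|\psi(2^ju)||u|^s\,du\lesssim 2^{-j(1+s)}$) is legitimate and even avoids the $\eps'$ loss. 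I checked your exponent bookkeeping on interior components: the constraints it generates ($a_0\ge 1+2/q$ from the main term, and $a_0\ge(5+\delta)/q$, $(3+\theta)/q$, $4/q$ from the three H\"older contributions) are comfortably implied by $a_0=\frac{8+4\delta}{1+\min\{\delta,\theta\}}$, and the subexponential factors (at most $2^{2Q\xi\tc j}$, since the H\"older constant of $e_j$ is really $\lesssim \tc j\,2^{2Q\xi\tc j}$, not $\tc j\,2^{Q\xi\tc j}$) fit under the claimed $2^{Q(2+c)\xi\tc j}$; your "$2^{Q(1+c)\xi\tc j}$" in the last paragraph is a harmless slip.

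The genuine gap is the dismissal of the boundary components with "a similar but simpler argument applies." For the (at most two) components of $J_2$ meeting $\partial J$, the premise $0\in(A_i,B_i)$ can fail: such a component may meet $\supp\rho$ while containing no zero of $F$ (the zero lies just beyond $\partial J$), or contain a zero so close to $\partial J$ that the image is truncated essentially at $0$ on one side. In either case your symmetric cancellation is unavailable, and the obvious fallback $\|h_i\|_\infty\int|\psi(2^ju)|\,du\lesssim 2^{Q\xi\tc j}d^{-(1+\beta)}2^{-j}$ does \emph{not} suffice: its ratio to the target $\tc j\,2^{Q(2+c)\xi\tc j}2^{-qj}d^{-q(1+a_0\beta)}$ grows like $2^{(q-1)j}2^{-Q(2+c)\xi\tc j}$ (for fixed $d$), which diverges because $q-1>\eps$ while $Q(2+c)\xi\tc\le\eps/2$ by \eqref{eq:forn}. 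What saves this case is an ingredient absent from your proposal, namely $\dist(\supp\rho,\partial J)>0$: if $\lam\in\supp\rho$ lies in a band component and $|F(\lam)|<2K^{-1}c_\beta d^{1+\beta}$ (with $2K^{-1}\le\dist(\supp\rho,\partial J)$), then monotonicity and $|F'|\ge c_\beta d^{1+\beta}$ produce a zero of $F$ inside $J$ at distance $<2K^{-1}$ from $\lam$; hence on $\supp\rho$ inside a zero-free component one has $|F|\gtrsim d^{1+\beta}$ and pointwise decay of $\psi$ applies, and zeros that do occur are far enough from $\partial J$ to carry a full symmetric window. This is exactly the role of the constant $K$ in the paper's Lemma \ref{lem-trans}, where the exceptional band is shrunk by the factor $K^{-1}$ so that every component meeting $\supp\rho$ contains a zero together with the symmetric neighbourhood \eqref{inter0}. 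Your proof goes through once you incorporate this device (shrink your threshold defining $J_2$ accordingly, or add the case split above); as written, the boundary-component step would fail.
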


Indeed, if \eqref{claim1} holds, then, {recalling the definition of energy \eqref{eq:energy}}, in view of \eqref{eq:forn},
\[
\begin{split}
\int_{J}\|&\nu_{\lambda}\|^2_{2,\gamma}\,\rho(\lambda)\,d\lambda \\
& \lesssim \sum \limits_{j=0}^\infty 2^{j(2\gamma+1)} \int\limits_{\Omega} \int\limits_{\Omega} \int \limits_{\R}\psi\bigl(2^j(\Pi^\lam(\omega_1) - \Pi^\lambda (\omega_2))\bigr)\,e_j(\omega_1,\omega_2,\lambda)\,\rho(\lambda)\,d\lambda\, d\mu_{\lambda_0}(\omega_1)\,d\mu_{\lambda_0}(\omega_2)\\
&\leq  %C_{q, \rho,\beta}
C_7\cdot\tc\sum_{j=0}^\infty2^{j(2\gamma+1)}j2^{Q(2+c)\xi \tc j}\int_\Omega\int_\Omega\left( 1 + 2^j d(\om_1,\om_2)^{1+a_0\beta}\right)^{-q}\,d\mu_{\lambda_0}(\omega_1)\,d\mu_{\lambda_0}(\omega_2)\\
&\leq  %C_{q, \rho,\beta}
C_7\cdot\tc\sum_{j=0}^\infty j2^{j(2\gamma+Q(2+c)\xi \tc+1-q)}\Ek_{q(1+a_0\beta )}(\mu_{\lambda_0}, d_{\lam_0})\\
&\leq  %C_{q, \rho,\beta}
C_7\cdot \tc \sum_{j=0}^\infty j2^{j(1 + 2\gamma+\frac{\eps}{2}-q)}\Ek_{q(1+a_0\beta)}(\mu_{\lambda_0}, d_{\lam_0}) \leq  %C_{q, \rho,\beta}
C_7\cdot \tc\sum_{j=0}^\infty j2^{-\frac{\eps}{2}j}\Ek_{q(1+a_0\beta)}(\mu_{\lambda_0}, d_{\lam_0}),
\end{split}\]
and Theorem \ref{thm:sobolev integral bound} is proved.
\begin{proof}[Proof of Proposition \ref{prop:trans int}] {The proof is similar to that of \cite[Lemma 4.6]{PS00} in the case of limited regularity; however, some technical issues are treated here 
differently and in more detail, especially, since \cite{PS00} leaves much to the reader.}

Fix distinct $\om_1,\om_2\in \Om$ and denote $r = d(\om_1,\om_2)$. For short, let $e_j(\lambda):=e_j(\omega_1,\omega_2,\lambda)$. 
Let $\ov I = \supp(\rho)\subset J$. Since $J$ is open, there exists $K=K(\rho)\ge 1$ such that the $(2K^{-1})$-neighborhood of $\ov I$ is contained in $J$.

We can assume without loss of generality that $2^j r>1$, and later that $2^j r^{1+a_0\beta} > 1$ for a fixed $a_0$, which is stronger, since $r\le 1$.
Indeed, the integral in \eqref{claim1} is bounded above by $|J|\cdot \|\psi\|_\infty\cdot 2^{Q \xi \tc j}$, { in view of \eqref{eq:errorest}}, hence if $2^j r^{1+a_0\beta} \le 1$, then the inequality \eqref{claim1} holds with 
$%C_{q,\rho,\beta} 
C_7= |J|\cdot \|\psi\|_\infty \cdot 2^q$.

Let 
$$
\phi\in C^\infty(\R),\ \ \ {0\le \phi\le 1},\ \ \phi\equiv 1\ \mbox{on}\ \ [-1/2,1/2],\ \ \supp(\phi)\subset (-1,1),
$$
and denote
$$\Phi_\lam = \Phi_\lam(\om_1,\om_2) := \frac{\Pi^\lam(\om_1) - \Pi^\lam(\om_2)}{d(\om_1,\om_2)}{= \frac{\Pi^\lam(\om_1) - \Pi^\lam(\om_2)}{r}.}$$
{The idea, roughly speaking, is to separate the contribution of the zeros of $\Phi_\lam$, which are simple by transversality. Outside of a neighborhood of these zeros, we get an estimate using the rapid decay of $\psi$ at infinity, and near the zeros we linearize and use the fact that $\psi$ has zero mean. The details are quite technical, however.}
We have
\begin{eqnarray*}
	\int_\R \rho(\lam)\, \psi\!\left(2^j [\Pi^\lam(\om_1) - \Pi^\lam(\om_2)]\right)e_j(\lambda) d\lam  & = & \int \rho(\lam) \,\psi\!\left(2^j r \Phi_\lam \right)e_j(\lambda)\, \phi(K c_\beta^{-1} r^{-\beta} \Phi_\lam)\,d\lam \nonumber
	\\
	& + & \int \rho(\lam) \,\psi\!\left(2^j r \Phi_\lam\right)e_j(\lambda) \left[1-\phi(K c_\beta^{-1} r^{-\beta} \Phi_\lam)\right] d\lam  \label{integ1} \\[1.1ex]
	& =: & A_1 + A_2, \nonumber
\end{eqnarray*}
where $c_\beta$ is the constant from \eqref{tran beta}.
The integrand in $A_2$ is constant zero when $|K c_\beta^{-1} r^{-\beta} \Phi_\lam| \le \half$, hence by the rapid decay of $\psi$ {(see \eqref{eq:psi decay}) and  \eqref{eq:errorest}},
$$
|A_2| \le %C_{q,\beta}
C_q \int |\rho(\lam)| |e_j(\lambda)| \bigl(1 + 2^j r \cdot {\textstyle{\half}} K^{-1} c_\beta r^\beta)^{-q}\, d\lam \le %C_{q,\rho,\beta}
{\r\const\cdot} 2^{Q\xi \tc j} \bigl(1 + 2^j r^{1+\beta}\bigr)^{-q},
$$
{for some constant depending on $q, \rho$ and $\beta$}, as desired. Thus it remains to estimate $A_1$. 

Next comes the classical ``transversality lemma''. It is a variant of \cite[Lemma 4.3]{PS00} and similar to \cite[Lemma 18.12]{Mattila Fourier}. Let $c_\beta$ be the constant from Proposition \ref{prop_beta trans}.

\begin{lem}\label{lem-trans} 
	Under the assumptions and notation above, let
	$$
	\Jk:= \Bigl\{\lam\in J: \ |\Phi_\lam| < K^{-1} c_\beta r^{\beta} \Bigr\},
	$$
	which is a union of open disjoint intervals. Let $I_1,\ldots, I_{N_\beta}$ be the intervals of $\Jk$ intersecting $\ov I = \supp(\rho)$, enumerated in the order of $\R$. Then each $I_k$ contains a unique zero $\ov\lam_k$ of $\Phi_\lam$ and
	\be \label{inter0}
	[\ov\lam_k - d_\beta r^{2\beta}, \ov\lam_k + d_\beta r^{2\beta}]\subset I_k, \ \ \mbox{where}\ d_\beta =K^{-1} C_{\beta,1}^{-1} \cdot c_\beta,
	\ee
	{ with $C_{\beta,1}$ from \eqref{eq:4.21.1}.}
	For all intervals,
	\be \label{inter1}
	2d_\beta\cdot r^{2\beta}\le |I_k| \le 2K^{-1},
	\ee
	hence
	\be \label{up1}
	N_\beta \le 2 + \textstyle{\half}d_\beta^{-1}|J| \cdot r^{-2\beta}.
	\ee
	Moreover,
	\be \label{low2}
	|\Phi_\lam| \le \textstyle{\half} K^{-1} c_\beta r^\beta\ \ \ \mbox{for all}\ \ \ \lam\in [\ov\lam_k - \textstyle{\half} d_\beta r^{2\beta}, \ov\lam_k + \textstyle{\half} d_\beta r^{2\beta}].
	\ee
	\end{lem}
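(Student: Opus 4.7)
The plan is to exploit the strict monotonicity of $\Phi_\lam$ on each component of $\Jk$, which is the decisive consequence of transversality of degree $\beta$. By Proposition~\ref{prop:pi hoelder}, $\lam\mapsto \Pi^\lam(\om_1)-\Pi^\lam(\om_2)$ is continuous, hence $\Jk$ is open and decomposes as a disjoint union of open intervals. On each component $I_k$, the defining inequality $|\Phi_\lam|<K^{-1}c_\beta r^\beta$ rewrites as $|\Pi^\lam(\om_1)-\Pi^\lam(\om_2)|<K^{-1}c_\beta r^{1+\beta}\le c_\beta r^{1+\beta}$ (since $K\ge 1$), which is precisely the hypothesis of the transversality of degree $\beta$ inequality \eqref{tran beta}. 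That inequality delivers $|\frac{d}{d\lam}\Phi_\lam|\ge c_\beta r^\beta$ throughout $I_k$, so $\Phi_\lam$ is strictly monotonic on $I_k$ and any zero of $\Phi_\lam$ there is automatically unique.

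The upper bound $|I_k|\le 2K^{-1}$ in \eqref{inter1} is then immediate: the total variation of $\Phi_\lam$ on $I_k$ is at most $2K^{-1}c_\beta r^\beta$ while the derivative is bounded below by $c_\beta r^\beta$, so dividing yields the bound. To produce the zero $\ov\lam_k$, observe that for $I_k$ with both endpoints interior to $J$, continuity of $\Phi_\lam$ and the maximality of the component force $|\Phi|$ to equal $K^{-1}c_\beta r^\beta$ at both endpoints, and strict monotonicity forces those two boundary values to have opposite signs; the intermediate value theorem then produces the unique zero $\ov\lam_k\in I_k$. The bound $|I_k|\le 2K^{-1}$ combined with the $(2K^{-1})$-buffer between $\ov I$ and $\partial J$ confines every $I_k$ intersecting $\ov I$ to the interior of $J$, so in fact every such $I_k$ admits a zero, apart from at most two ``boundary'' intervals adjacent to $\partial J$ which can be safely absorbed into the $+2$ in \eqref{up1}.

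For the lower bound in \eqref{inter1}, the inclusion \eqref{inter0}, and the sharper estimate \eqref{low2}, I will invoke the complementary \emph{upper} bound on $|\frac{d}{d\lam}\Phi_\lam|$ coming from the $C^{1+\delta}$-regularity of $\Pi^\lam$: dividing \eqref{eq:4.21.1} by $r$ gives $|\frac{d}{d\lam}\Phi_\lam|\le C_{\beta,1}\,r^{-\beta}$. The mean value theorem centred at $\ov\lam_k$ then yields
\[
|\Phi_\lam| \le C_{\beta,1}\,r^{-\beta}\,|\lam-\ov\lam_k|,
\]
which is at most $K^{-1}c_\beta r^\beta$ as soon as $|\lam-\ov\lam_k|\le d_\beta r^{2\beta}$ with $d_\beta=K^{-1}C_{\beta,1}^{-1}c_\beta$. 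This proves \eqref{inter0} and hence $|I_k|\ge 2d_\beta r^{2\beta}$; the same computation with $d_\beta$ replaced by $d_\beta/2$ gives \eqref{low2}. Finally, \eqref{up1} follows from disjointness of the $I_k$'s: $\sum_k |I_k|\le |J|$, and at least $N_\beta-2$ of them satisfy $|I_k|\ge 2d_\beta r^{2\beta}$, so $(N_\beta-2)\cdot 2d_\beta r^{2\beta}\le |J|$ and rearranging gives the stated bound.

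The only mildly subtle point is the bookkeeping at $\partial J$: transversality of degree $\beta$ supplies no zero for a component abutting $\partial J$, and the lower size bound can fail there as well, since $\Phi_\lam$ may only graze the threshold $\pm K^{-1}c_\beta r^\beta$ within $J$. The joint use of the upper bound $|I_k|\le 2K^{-1}$ and the $(2K^{-1})$-buffer provided by the choice of $K=K(\rho)$ limits the number of such exceptional intervals to at most two, which is precisely the content of the additive ``$+2$'' in \eqref{up1}; everything else reduces to the transparent comparison of the two-sided derivative bounds on $\Phi_\lam$.
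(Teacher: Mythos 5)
Your argument rests on the same two ingredients as the paper's proof: the degree-$\beta$ transversality \eqref{tran beta} gives $|\frac{d}{d\lam}\Phi_\lam|\ge c_\beta r^{\beta}$ on $\Jk$ (hence strict monotonicity and, by the total-variation comparison, $|I_k|\le 2K^{-1}$), while \eqref{eq:4.21.1} gives $|\frac{d}{d\lam}\Phi_\lam|\le C_{\beta,1}r^{-\beta}$, which via the mean value theorem yields \eqref{inter0}, \eqref{low2}, the lower bound in \eqref{inter1}, and then \eqref{up1} from disjointness. The genuine difference is how you produce $\ov\lam_k$: you show the endpoints of $I_k$ lie in $J$, sit exactly at the threshold $K^{-1}c_\beta r^{\beta}$ with opposite signs, and invoke the intermediate value theorem, whereas the paper starts from a point $\lam\in I_k\cap \ov I$ and runs the derivative lower bound until $\Phi$ hits zero. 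Both work, but the paper's route has a by-product you actually need: it gives $|\lam-\ov\lam_k|\le K^{-1}$ with $\lam\in\ov I$, so (since $d_\beta r^{2\beta}\le K^{-1}$, because $c_\beta r^{2\beta}\le C_{\beta,1}$) the interval $[\ov\lam_k-d_\beta r^{2\beta},\ov\lam_k+d_\beta r^{2\beta}]$ is guaranteed to lie in $J$ — a fact you use implicitly when applying the MVT around $\ov\lam_k$ and when concluding that this interval sits inside $I_k$ in \eqref{inter0}. Your cruder estimate $\dist(\ov\lam_k,\ov I)<|I_k|\le 2K^{-1}$ does not quite give this; the fix is one line: since $|\Phi_{\lam^*}|<K^{-1}c_\beta r^{\beta}$ for any $\lam^*\in I_k\cap\ov I$ and $|\frac{d}{d\lam}\Phi_\lam|\ge c_\beta r^{\beta}$ on $I_k$, one gets $|\ov\lam_k-\lam^*|<K^{-1}$.

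You should also drop the hedge about ``at most two boundary intervals adjacent to $\partial J$'' being ``absorbed into the $+2$ in \eqref{up1}''. Taken literally it would not prove the lemma: the statement asserts the zero, \eqref{inter0} and \eqref{low2} for every enumerated interval, and this is needed later, since the estimate of $A_1=\sum_{k=1}^{N_\beta}A_1^{(k)}$ uses $\ov\lam_k$ for each $k$; the $+2$ in \eqref{up1} is only slack in the counting, not a licence to discard intervals. Fortunately the hedge is vacuous by your own buffer argument: any component of $\Jk$ meeting $\ov I=\supp(\rho)$ has both endpoints at distance strictly less than $|I_k|\le 2K^{-1}$ from $\ov I$, hence in $J$, so no enumerated interval can abut $\partial J$ and your endpoint/IVT argument applies to all of them. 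With that sentence removed and the $K^{-1}$-proximity of $\ov\lam_k$ to $\ov I$ recorded, your proof is correct and essentially equivalent to the paper's.
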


\begin{proof}[Proof Lemma \ref{lem-trans}]
	Since $\Phi_\lam$ is continuous, the intervals $I_k$ are well-defined. Since $K\ge 1$, on each of the intervals we have $|\frac{d}{d\lam}\Phi_\lam| \ge c_\beta r^{\beta}$ by the transversality condition \eqref{tran beta} {of degree $\beta$}.
	Thus $\Phi_\lam$ is strictly monotonic on each of the intervals. Let $\lam\in I_k {\cap} I$, where $\ov  I = \supp(\rho)$. Then $|\Phi_\lam| < K^{-1} c_\beta r^\beta$, and using the lower bound on the derivative we obtain that there exists unique
	$\ov \lam_k\in I_k$, such that $\Phi_{\ov \lam_k} = 0$, and it satisfies $|\lam - \ov \lam_k| \le K^{-1}$. The same argument then shows that $I_k \subseteq [\ov \lam_k  - K^{-1}, \ov \lam_k + K^{-1}]$, since
	the $K^{-1}$ change in $\lam$ results in at least $K^{-1} c_\beta r^\beta$ change in $\Phi_\lam$. Note that even for $k=1$ and $k=N_\beta$ we have this inclusion, because $\lam\in I$ and
	the $2 K^{-1}$-neighborhood of $I$ is contained in $J$ by construction.
	This proves the upper bound in \eqref{inter1}.
	
	On the other hand, for any $\lam\in J$ we have $|\frac{d}{d\lam}\Phi_\lam| \le C_{\beta,1} r^{-\beta}$ by \eqref{eq:4.21.1}. 
	Therefore, at least a distance of $C^{-1}_{\beta,1} r^{\beta} t$ is required for the graph of $\Phi_\lam$ to reach the level of $t$ from zero. This implies \eqref{inter0}, \eqref{low2} and the lower bound in \eqref{inter1}.
	Then \eqref{up1} is immediate.
\end{proof}

Now let $\chi \in C^\infty(\R)$ be such that $\supp(\chi) \subset (-\half d_\beta, \half d_\beta)$, {$0\le \chi\le 1$}, and $\chi\equiv 1$ on $[-\frac{1}{4} d_\beta, \frac{1}{4} d_\beta]$. 
We apply Lemma~\ref{lem-trans} and write
\begin{eqnarray*}
	A_1 & = & \int \rho(\lam) \,\psi\!\left(2^j r \Phi_\lam \right)e_j(\lambda) \phi(Kc_\beta^{-1} r^{-\beta} \Phi_\lam)\,d\lam \\
	& = & \sum_{k=1}^{N_\beta} \int \rho(\lam) \,\chi\bigl(r^{-2\beta} (\lam- \ov\lam_k)\bigr) \,\psi(2^j r\Phi_\lam)e_j(\lambda) \,\phi(Kc_\beta^{-1} r^{-\beta} \Phi_\lam)\,d\lam \\
	& + & \int \rho(\lam) \left[ 1- \sum_{k=1}^{N_\beta} \chi\bigl(r^{-2\beta} (\lam- \ov\lam_k)\bigr)\right]e_j(\lambda) \psi(2^j r\Phi_\lam) \, \phi(Kc_\beta^{-1} r^{-\beta} \Phi_\lam)\,d\lam\\
	& = & \sum_{k=1}^{N_\beta} A^{(k)}_1 + B.
\end{eqnarray*}

Let us first estimate $B$. Notice that $\sum_{k=1}^{N_\beta} \chi\bigl(r^{-2\beta} (\lam- \ov\lam_k)\bigr)\equiv 1$ on the $\frac{1}{4} d_\beta\,r^{2\beta}$-neighborhood of every $\ov \lam_k$, as by \eqref{inter0}, functions $\chi\bigl(r^{-2\beta} (\lam- \ov\lam_k)\bigr)$ have disjoint supports for distinct $k$.
On the other hand, $\phi(Kc_\beta^{-1} r^{-\beta} \Phi_\lam)$ is supported on $\Jk$, so by the transversality condition we have $|\frac{d}{d\lam}\Phi_\lam| \ge c_\beta r^\beta$ on the support of the integrand. Combining these two claims, we obtain that $|\Phi_\lam| \ge \frac{1}{4} d_\beta c_\beta r^{3\beta}$ on the support of the integrand in $B$.
It follows that on this support,
\be \label{add1}
|\psi(2^j r\Phi_\lam)| \le C_q \bigl( 1 + {(d_\beta c_\beta/4)}\cdot 2^j r^{1 + 3\beta}\bigr)^{-q},
\ee
by the rapid decay of $\psi$, and using \eqref{eq:errorest} we obtain {$|B| \le \const\cdot 2^{Q\xi \wt c j}\bigl( 1 + 2^j r^{1 + 3\beta}\bigr)^{-q}$ for some constant depending on $q$ and $\beta$.}

Now we turn to estimating the integrals $A^{(k)}_1$.
For simplicity, we  assume $k=1$ and let $\ov\lam= \ov\lam_1$. In view of the bound \eqref{up1} on the number of intervals, 
the desired inequality will follow from this. 
Observe that
\be \label{cond5}
\chi\bigl(r^{-2\beta} (\lam- \ov\lam)\bigr) = \chi\bigl(r^{-2\beta} (\lam- \ov\lam)\bigr) \phi(K c_\beta^{-1} r^{-\beta} \Phi_\lam).
\ee
We are using here that 
$\phi\equiv 1$ on $[-\half,\half]$, so $$\phi(Kc_\beta^{-1} r^{-\beta} \Phi_\lam)\equiv 1\ \ \mbox{on}\  \
\bigl\{\lam\in J:\ |\Phi_\lam| \le  {\textstyle{\half}} K^{-1} c_\beta r^\beta\bigr\},$$
which holds on the support of $\chi\bigl(r^{-2\beta} (\lam- \ov\lam)\bigr)$ by construction and \eqref{low2}.

By \eqref{cond5} we have
$$
A_1^{(1)} = \int \rho(\lam) \,\chi\bigl(r^{-2\beta} (\lam- \ov\lam)\bigr)e_j(\lambda) \,\psi(2^j r\Phi_\lam) \,d\lam.
$$
It will be convenient to make a change of variable, so we define a function $H$ via
\be \label{def-H}
\Phi_\lam = u \iff \lam = \ov \lam +H(u),\ \ \mbox{provided}\ \ \chi\bigl(r^{-2\beta} (\lam- \ov\lam)\bigr)\ne 0.
\ee
Note that $\chi\bigl(r^{-2\beta} (\lam- \ov\lam)\bigr)\ne 0$ implies $|\lam - \ov\lam| < \frac{1}{2} d_\beta r^{2\beta}$, so $\lam\in I_1$ by \eqref{inter0}, and by transversality, 
\be\label{trans on chi}
\Bigl|\frac{d}{d\lam} \Phi_\lam\Bigr| \ge c_\beta r^\beta\ \text{ if }\ \chi\bigl(r^{-2\beta} (\lam- \ov\lam)\bigr)\ne 0.
\ee
Therefore, $H$ is well defined. We have
\begin{eqnarray*}
	A_1^{(1)} & = & \int \rho\bigl(\ov\lam + H(u)\bigr)\,\chi\bigl(r^{-2\beta} H(u) \bigr)e_j(\ov\lam+H(u)) \,\psi(2^j ru) \,H'(u)\,du \\
	& = & \int F(u)\,\psi(2^j ru) \,du,
\end{eqnarray*}
where
\be \label{def-F}
F(u) = \rho\bigl(\ov\lam + H(u)\bigr)\,\chi\bigl(r^{-2\beta} H(u) \bigr)e_j(\ov\lam+H(u))  \,H'(u).
\ee
Observe that $H'(u) = [\frac{d}{d\lam} \Phi_\lam]^{-1}$, hence \eqref{trans on chi} gives $|H'(u)|\le c_\beta^{-1} r^{-\beta}$ on the domain of $F$. Since $\rho$ and $\chi$ are bounded {by one}, we obtain by \eqref{eq:errorest}
\be \label{Fbound}
\|F\|_\infty \le {c^{-1}_\beta} \cdot r^{-\beta}2^{Q\xi \tc j}.
\ee

Recall that $\Phi_{\ov\lam} = 0$, so that $H(0)=0$.
Since $\int_\R \psi(\xi)\,d\xi = 0$ by \eqref{meanzero}, we can subtract $F(0)$ from $F(u)$ under the integral sign; we then split the integral as follows:
\begin{eqnarray}
A_1^{(1)} & = & \int [F(u) - F(0)]\, \psi(2^j ru) \,du \nonumber \\
& = & \int_{|u| < (2^j r)^{-1+\eps'}} [F(u) - F(0)]\, \psi(2^j ru) \,du + \int_{|u| \ge (2^j r)^{-1+\eps'} }[F(u) - F(0)]\, \psi(2^j ru) \,du \label{wehave} \\[1.2ex]
& =: & B_1 + B_2, \nonumber
\end{eqnarray}
where $\eps' \in (0, \frac{1}{2})$ is a small fixed number. Recall that our goal is to show
$$
|A^{(1)}_1| \le {C_7' \cdot \tc j2^{Q(2+c)\xi\tc j} } \cdot \bigl(1 + 2^j r^{1 + a_0\beta}\bigr)^{-q},
$$
for some constants $a_0\ge 1$ and {$C_7'$ depending only on $q$, $\rho$, and $\beta$. }We can assume that $2^j r^{1 + a_0\beta} \ge 1$, otherwise, the estimate is trivial by increasing the constant. To estimate $B_2$,  note that  for any $M>0$ we have by the rapid decay of $\psi$:
$$
|\psi(2^j ru)| \le C_M\bigl( 1 + 2^j r|u|\bigr)^{-M},
$$
hence, by \eqref{Fbound},
\begin{eqnarray*}
	|B_2|  & \le & C_{\beta,M} \cdot r^{-\beta} \cdot 2^{Q\xi \tc j}(2^j r)^{-1} \int_{|x|\ge (2^j r)^{\eps'}} (1 + |x|)^{-M}\,dx  \\[1.1ex]
	& \le &  C_{\beta,M}' \cdot  r^{-\beta} \cdot 2^{Q\xi \tc j}(2^j r)^{-1} (2^j r)^{-\eps'(M-1)} \\[1.1ex]
	& \le & C_{\beta,M}''\cdot 2^{Q\xi \tc j} \cdot (2^j r^{1+2\beta})^{-q},
\end{eqnarray*}
for $M = M(q,\eps')$ sufficiently large, as desired. Here we used that $2^j r \ge 2^j r^{1 + 2\beta} \geq 1$.

\smallskip

In order to estimate $B_1$, we show that that the function $F$ from \eqref{def-F} is $\delta$-H\"older by our assumptions; we also need to estimate the constant in the H\"older bound. 
We write
$$
F(u) = \rho\bigl(\ov\lam + H(u)\bigr)\,\chi\bigl(r^{-2\beta} H(u) \bigr)e_j(\ov\lam+H(u))  \,H'(u) =: F_1(u) F_2(u)F_3(u) H'(u),
$$
and then 
\begin{multline*}
F(u) - F(0) = \bigl(F_1(u) - F_1(0)\bigr) F_2(u)F_3(u) H'(u) + F_1(0) \bigl(F_2(u) - F_2(0)\bigr)F_3(u) H'(u) \\
+ F_1(0)F_2(0) \bigl(F_3(u) - F_3(0)\bigr) H'(u) + F_1(0) F_2(0)F_3(0) \bigl(H'(u) - H'(0)\bigr).
\end{multline*}
We have
$$
|F_1(u) - F_1(0)| = |\rho\bigl(\ov\lam + H(u)\bigr) - \rho\bigl(\ov\lam + H(0 )\bigr)| \le \|\rho'\|_\infty \cdot |H(u) - H(0)|.
$$
Observe that
%Since $\rho$ and $\chi$ are smooth, it is enough to consider $H(u)$, $r^{-2\beta}H(u)$, and $H'(u)$ separately. We have
\be \label{eta1}
|H(u) - H(0)| = |H(u)| = |\lam - \ov\lam| \le c^{-1}_\beta r^{-\beta} |\Phi_\lam-\Phi_{\ov\lam}| = c^{-1}_\beta r^{-\beta} |\Phi_\lam| = c^{-1}_\beta r^{-\beta} |u|,
\ee
by transversality, which applies since $\supp(F)\subset I_1$.
Then, of course,
\be \label{eta2}
|F_2(u) - F_2(0)| \le \|\chi'\|_\infty \cdot r^{-2\beta} |H(u) - H(0)| \le C^{-1}_\beta \|\chi'\|_\infty \cdot r^{-3\beta} |u|.
\ee

For $F_3$ it is enough to assume that $\mu_{\lam_0}([\omega_1|_{\tc j}])\mu_{\lam_0}([\omega_2|_{\tc j}]) \neq 0$ (hence the same is true for $\mu_{\ov\lam}$ by \ref{as:measure}), as otherwise $e_j \equiv 1$ and then \eqref{eq:F_3 hoelder}, which is the goal of the calculation below, holds trivially. In this case we have
\[
\begin{split}
|F_3(u)-F_3(0)|&=\frac{\mu_{\ov\lam}([\omega_1|_{\tc j}])\mu_{\ov\lam}([\omega_2|_{\tc j}])\left|\frac{\mu_{\ov\lam+H(u)}([\omega_1|_{\tc j}])\mu_{\ov\lam+H(u)}([\omega_2|_{\tc j}])}{\mu_{\ov\lam}([\omega_1|_{\tc j}])\mu_{\ov\lam}([\omega_2|_{\tc j}])}-1\right|}{\mu_{\lambda_0}([\omega_1|_{\tc j}])\mu_{\lambda_0}([\omega_2|_{\tc j}])}\\[1.1ex]
&\leq 2^{Q\xi \tc j}\left|\frac{\mu_{\ov\lam+H(u)}([\omega_1|_{\tc j}])\mu_{\ov\lam+H(u)}([\omega_2|_{\tc j}])}{\mu_{\ov\lam}([\omega_1|_{\tc j}])\mu_{\ov\lam}([\omega_2|_{\tc j}])}-1\right|\\[1.1ex]
&\leq 2^{Q\xi \tc j}\frac{\mu_{\ov\lam+H(u)}([\omega_1|_{\tc j}])}{\mu_{\ov\lam}([\omega_1|_{\tc j}])}\left|\frac{\mu_{\ov\lam+H(u)}([\omega_2|_{\tc j}])}{\mu_{\ov\lam}([\omega_2|_{\tc j}])}-1\right|+2^{Q\xi \tc j}\left|\frac{\mu_{\ov\lam+H(u)}([\omega_1|_{\tc j}])}{\mu_{\ov\lam}([\omega_1|_{\tc j}])}-1\right|\\[1.1ex]
&\leq 2^{2Q\xi \tc j}\left|\frac{\mu_{\ov\lam+H(u)}([\omega_2|_{\tc j}])}{\mu_{\ov\lam}([\omega_2|_{\tc j}])}-1\right|+2^{Q\xi \tc j}\left|\frac{\mu_{\ov\lam+H(u)}([\omega_1|_{\tc j}])}{\mu_{\ov\lam}([\omega_1|_{\tc j}])}-1\right|
\end{split}
\]
But for both $\omega_1|_{\tc j}$ and $\omega_2|_{\tc j}$, setting ${c_3 = Qc \tc}$, we obtain
\[
\begin{split}
\left|\frac{\mu_{\ov\lam+H(u)}([\omega|_{\tc j}])}{\mu_{\ov\lam}([\omega|_{\tc j}])}-1\right|&\leq\max\{2^{{c_3} j|H(u)|^\theta}-1,1-2^{-{c_3} j|H(u)|^\theta}\}\\
&=\max\{{c_3} j2^{{c_3} j y_1},{c_3} j2^{-{c_3} j y_2}\}|H(u)|^\theta,\\
& \qquad \qquad \text{ with }y_1\in(0,|H(u)|),y_2\in(-|H(u)|,0)\\
&\leq {c_3} j2^{{c_3} j \xi}|H(u)-H(0)|^\theta.
\end{split}\]	
Thus, for ${c_4 =Q(2+c)\tc \xi}$
\begin{equation}\label{eq:F_3 hoelder}
|F_3(u)-F_3(0)|\leq 2 {c_3} j2^{{c_4} j}c_\beta^{-\theta}r^{-\theta\beta}|u|^{\theta}.
\end{equation}

Finally, we need to estimate the term $|H'(u) - H'(0)|$. We have $H'(u) = [\frac{d}{d\lam} \Phi_\lam]^{-1}$, hence
\begin{eqnarray*}
	|H'(u) - H'(0)| & = & \left| \frac{1}{\frac{d}{d\lam} \Phi_\lam} - \frac{1}{\frac{d}{d\lam} \Phi_{\ov \lam}}\right| \\[1.2ex]
	& \le  & \frac{|\frac{d}{d\lam} \Phi_\lam -  \frac{d}{d\lam} \Phi_{\ov \lam}|}{(c_\beta r^{\beta})^2}\ \ \ \hspace{10mm} \mbox{by $\beta$-transversality \eqref{tran beta}} \\[1.1ex]
	& \le  & \frac{C_{\beta,1}|\lam-\ov\lam|^\delta r^{-\beta(1+\delta)}}{(c_\beta r^\beta)^2}  \ \ \hspace{20mm} \mbox{by \eqref{eq:4.21.2}} \\[1.1ex]
	& \le & \wt c_\beta r^{-\beta(3 + 2\delta)} |u|^\delta \hspace{31mm} \mbox{by \eqref{eta1}.}
\end{eqnarray*}
{Below, writing ``$\const$'' means constants depending on $q,\rho$, and $\beta$, which may be different from line to line.}
Using all of the above and $\|H'\|_\infty \le c_\beta^{-1}\cdot r^{-\beta}$ yields
$$
|F(u) - F(0)| \le \const\cdot {c_3} j2^{{c_4} j}\cdot \left( |u|^\delta r^{-\beta(3+2\delta)} + |u| r^{-4\beta}+|u|^{\theta}r^{-\beta(1+\theta)}\right),
$$
hence by \eqref{wehave} {and recalling that $(2^j r)\ge 1$ and $r\le 1$},
\begin{eqnarray*}
	|B_1| & \le & \const\cdot {c_3} j2^{{c_4} j} \int_{|u| < (2^j r)^{-1+\eps'}} \left( |u|^\delta r^{-\beta(3+2\delta)} + |u| r^{-4\beta}+|u|^{\theta}r^{-\beta(1+\theta)}\right)\,du \\[1.2ex]
	& \le & \const\cdot {c_3} j2^{{c_4} j}\left( r^{-\beta(3+2\delta)} (2^j r)^{-(1-\eps')(1+\delta)} + (2^j r)^{-2(1-\eps')} r^{-4\beta}+(2^j r)^{-(1-\eps')(1+\theta)}r^{-\beta(1+\theta)}\right)\\
	 & \leq & {\const\cdot {c_3} j2^{{c_4} j}r^{-\beta(4+2\delta)} \left( (2^j r)^{-(1-\eps')(1+\delta)} + (2^j r)^{-2(1-\eps')} +(2^j r)^{-(1-\eps')(1+\theta)}\right)} \\
	& \leq & \const \cdot {c_3}  j2^{{c_4} j}r^{-\beta(4+2\delta)}(2^jr)^{-(1-\eps')(1+\min\{\delta, \theta\})},
\end{eqnarray*}
{as $\min\{\delta, \theta\} \leq 1$. We therefore obtain}
%The first term in the parentheses dominates, since $(2^j r)\ge 1$ and $r\le 1$, and we obtain
$$
|B_1| \le \const\cdot {c_3} j2^{{c_4} j}\left(2^j r^{1+a_0\beta}\right)^{-(1-\eps')(1+\min\{\delta, \theta\})},
$$
for appropriate $a_0= \frac{8+4\delta}{1+\min\{\delta, \theta\}} \geq \frac{4+2\delta}{(1-\eps')(1+\min\{\delta, \theta\})}$.

Since $\eps'>0$ can be chosen arbitrarily small, we obtain
\[|B_1| \leq \const \cdot {c_3} j2^{{c_4} j}\left(1+2^j r^{1+a_0\beta}\right)^{-q} \text{ for any } q< 1+\min\{\delta,\theta\},\]
since as already mentioned, we can assume $2^j r^{1+a_0\beta} \ge 1$ without loss of generality.
\end{proof}

%%%%%%%%%%%%%%%%%%%%%%%

\section{The case of Gibbs measures}

In this section we deal with case of Gibbs measures and develop tools for the derivation of Theorem \ref{thm:main_gibbs} from Theorem \ref{thm:main_cor_dim}. Throughout this section, we assume that $\left\{ \mu_\lam \right\}_{\lam \in \ov{U}}$ is a family of shift-invariant Gibbs measures on $\Om$ corresponding to a family of continuous potentials $\phi^\lambda\colon\Om\mapsto\R$ satisfying \eqref{eq:unifvar} and \eqref{eq:assong}; $\alpha,\ b,\ c_0$ and $\theta$ denote constants from \eqref{eq:unifvar} and \eqref{eq:assong}.
\subsection{Proving \ref{as:measure} for Gibbs measures}\label{subsec:M for gibbs}

Let $L_\lambda$ be the Perron operator on the Banach space $C(\Omega)$ of continuous functions on $\Omega$, defined as 
$$
(L_{\lambda}h)(\omega)=\sum_{i\in\Ak}e^{\phi^\lambda(i\omega)}h(i\omega).
$$
Let $C_r$ be the set of functions which are constant over cylinder sets of length $r$, that is,
$$
C_r(\Omega)=\{f\in C(\Om):\mathrm{var}_r(f)=0\}.
$$
Let $\omega\in\Omega$ be arbitrary but fixed and denote the pressure by 
$$
P_\lambda=\lim_{n\to\infty}\frac{1}{n}\log\sum_{|\ii|=n}\exp\left(S_n\phi^\lambda(\ii\omega)\right),
$$
where $S_n\phi(\omega)=\phi(\omega)+\phi(\sigma\omega)+\cdots+\phi(\sigma^{n-1}\omega)$. Note that the value of $P_\lambda$ is independent of the choice of $\omega\in\Omega$.

\begin{thm}\label{thm:bowen}
	There exists $c_2>0$ such that for every $\lambda\in \ov{U}$ there is a unique $h_\lambda\in C(\Om)$ with $h_\lambda>c_2>0$ and $\nu_\lambda\in\mathcal{P}(\Om)$ such that
	$$
	L_\lambda h_\lambda=\gamma_\lambda h_\lambda, \ \ (L_\lambda)_*\nu_\lambda=\gamma_\lambda\nu_\lambda,\ \text{ and }\int h_\lambda d\nu_\lambda=1,
	$$
	where $\gamma_\lambda=\exp(P_\lambda)$.	Moreover, for every $\om_1,\om_2\in\Omega$ and $\lambda\in \ov{U}$,
	$$
	h_\lambda(\om_1)\leq B_{\om_1\wedge\om_2} h_\lambda(\om_2),
	$$
	where $B_m=\exp\left(\sum_{k=m+1}^\infty 2b\alpha^k\right)$.
	
	Furthermore, there exist $A>0$ and $0<\beta<1$ such that for every $f\in C_r(\Omega)$,
	$$
	\Bigl\|\gamma_{\lambda}^{-n-r}L_\lambda^{n+r}f-\int fd\nu_\lambda\cdot h_\lambda\Bigr\|\leq A\beta^n\int fd\nu_\lambda\ \text{ for every }\ \lambda\in \ov{U}\text{ and }n\geq1.
	$$
\end{thm}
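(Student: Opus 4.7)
The plan is to establish Theorem \ref{thm:bowen} as a uniform-in-$\lambda$ version of the classical Ruelle--Perron--Frobenius (Bowen) theorem for potentials of summable variation. For each fixed $\lambda \in \overline{U}$, the hypothesis \eqref{eq:unifvar} (in fact stronger than needed, being exponential) guarantees via the classical theorem of \cite{Bow} the existence and uniqueness of a triple $(\gamma_\lambda, h_\lambda, \nu_\lambda)$ satisfying $L_\lambda h_\lambda = \gamma_\lambda h_\lambda$, $(L_\lambda)_* \nu_\lambda = \gamma_\lambda \nu_\lambda$, and $\int h_\lambda\,d\nu_\lambda = 1$, with $h_\lambda$ strictly positive continuous. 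The substantive content is the \emph{uniformity} of every constant ($c_2, B_m, A, \beta$) over $\lambda$. Two uniform facts make this possible: \eqref{eq:unifvar} provides a uniform exponential bound on $\mathrm{var}_k(\phi^\lambda)$, and \eqref{eq:assong} combined with compactness of $\overline{U}$ yields a uniform sup-norm bound $M := \sup_{\lambda \in \overline{U}}\|\phi^\lambda\|_\infty < \infty$.

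For the Hölder-type bound on $h_\lambda$, I would use the standard representation $h_\lambda = \lim_n \gamma_\lambda^{-n} L_\lambda^n \mathbf{1}$ (up to normalization), writing $(L_\lambda^n \mathbf{1})(\omega) = \sum_{|u|=n}\exp\bigl(S_n\phi^\lambda(u\omega)\bigr)$. Observing that for $|u|=n$ and $m = |\omega_1\wedge\omega_2|$ the points $\sigma^j(u\omega_1)$ and $\sigma^j(u\omega_2)$ agree in their first $(n-j)+m$ symbols, one obtains the telescoping estimate
\[
|S_n\phi^\lambda(u\omega_1) - S_n\phi^\lambda(u\omega_2)| \leq \sum_{j=0}^{n-1} b\alpha^{(n-j)+m} \leq \sum_{k=m+1}^\infty b\alpha^k,
\]
uniformly in $\lambda,u,n$. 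Exponentiating, passing to the limit, and using a two-sided reference-point argument (which accounts for the factor $2b$ in $B_m$) gives $h_\lambda(\omega_1)/h_\lambda(\omega_2) \leq B_m$, with $B_m$ depending only on $b,\alpha$. Taking $m=0$ yields $\sup h_\lambda \leq B_0 \inf h_\lambda$; together with $\int h_\lambda\,d\nu_\lambda = 1$ this forces $\inf h_\lambda \geq B_0^{-1} =: c_2 > 0$ uniformly in $\lambda$.

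For the exponential convergence I would use the Birkhoff--Hilbert projective cone contraction. Define the cone
\[
\mathcal{K} = \bigl\{g \in C(\Omega) : g > 0,\ g(\omega_1) \leq B_m g(\omega_2) \text{ whenever } |\omega_1 \wedge \omega_2| = m\bigr\}.
\]
The same Birkhoff-sum estimate shows that $L_\lambda$ maps $\mathcal{K}$ into itself (this is precisely the role of the extra slack $2b$ vs.\ $b$), and moreover $L_\lambda(\mathcal{K})$ is contained in a subcone of finite projective (Hilbert) diameter bounded uniformly in terms of $b, \alpha$, and $M$. Birkhoff's theorem then furnishes a contraction rate $\beta \in (0,1)$ in the Hilbert metric, independent of $\lambda$, which transfers to geometric convergence in sup norm. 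For $f \in C_r(\Omega)$, one preliminary application of $L_\lambda^r$ transforms $f$ (after splitting $f = f_+ - f_-$ or adding a large constant to make it strictly positive) into an element of $\mathcal{K}$ with sup norm comparable to $\int f\,d\nu_\lambda$; the remaining $n$ iterations contribute the $A\beta^n$ decay toward $h_\lambda \int f\,d\nu_\lambda$.

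The main obstacle is keeping the contraction rate $\beta$ genuinely uniform in $\lambda$. This reduces to verifying that the projective diameter of $L_\lambda(\mathcal{K})$ inside $\mathcal{K}$ is controlled only by the uniform data $(b,\alpha,M)$ and not by $\lambda$ directly, which is a matter of careful bookkeeping inside the standard Birkhoff cone argument. An alternative abstract route is to invoke analytic perturbation theory: since $\gamma_\lambda$ is an isolated simple eigenvalue of $L_\lambda$ acting on a suitable Banach space of Hölder functions, $(\gamma_\lambda, h_\lambda, \nu_\lambda)$ depend continuously on $\lambda$, and compactness of $\overline{U}$ then converts pointwise nontriviality of the spectral gap into a uniform gap, hence into uniform $A,\beta$.
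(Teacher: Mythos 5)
Your proposal is correct in substance, but it takes a different route from the paper: the paper gives no self-contained argument at all, it simply points to \cite[Theorem 1.7, Lemmas 1.8 and 1.12]{Bow} \emph{and their proofs}, the whole content being the observation that every constant produced there ($B_m$, the lower bound $c_2$, and $A,\beta$ in the exponential convergence) depends only on the variation data $b,\alpha$ of \eqref{eq:unifvar} (together with a uniform sup-norm bound on $\phi^\lambda$, which \eqref{eq:assong} and compactness of $\ov U$ provide), and is therefore automatically uniform in $\lambda$. You instead reconstruct the theorem: the telescoping estimate $\sum_{j<n} b\alpha^{(n-j)+m}\le\sum_{k>m}b\alpha^k$ giving the ratio bound (in fact with $b$ rather than $2b$, so the stated $B_m$ has slack), the deduction $c_2=B_0^{-1}$ from $\int h_\lambda\,d\nu_\lambda=1$, and a Birkhoff--Hilbert cone contraction to get a $\lambda$-independent rate $\beta$ — this is a legitimate alternative to Bowen's original iteration/equicontinuity argument, and it buys an explicit, manifestly uniform contraction rate at the cost of the cone-diameter bookkeeping you acknowledge. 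Two caveats: the convergence estimate with right-hand side $A\beta^n\int f\,d\nu_\lambda$ is really a statement for $f\ge 0$ in $C_r(\Omega)$ (as in Bowen, and as it is used later for indicators); splitting a signed $f$ or adding a constant does not reproduce that particular bound, so you should simply restrict to nonnegative $f$. Also, your fallback via analytic perturbation theory is shakier than it looks: \eqref{eq:assong} gives closeness of $\phi^\lambda$ only in sup norm, so $L_\lambda$ need not vary continuously in operator norm on a H\"older space (the H\"older seminorm of $e^{\phi^\lambda}-e^{\phi^\tau}$ is merely bounded), and one would need a Keller--Liverani-type perturbation argument rather than classical analytic perturbation; the cone (or Bowen's own) argument avoids this issue entirely.
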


\begin{proof}
See \cite[Theorem 1.7, Lemmas 1.8 and 1.12]{Bow} and their proofs.
\end{proof}

The measure $d\mu_\lambda=h_\lambda d\nu_\lambda$ is a left-shift invariant ergodic Gibbs measure with respect to the potential $\phi^\lambda$, see \cite[Theorem~1.16, Proposition~1.14]{Bow}.

We will show that $\gamma_\lambda, h_\lambda$ and $\nu_\lambda$ depend uniformly continuously on the parameters in the following sense: 
\begin{lem}\label{lem:eigen}
For every $0 < \theta' < \theta$, there exists $c_{\theta'}>0$ such that for every $\lambda,\tau\in U$, 
	$$
	\frac{\gamma_\lambda}{\gamma_\tau},\ \frac{h_\lambda(\omega)}{h_\tau(\omega)}\leq e^{c_{\theta'}|\lambda-\tau|^{\theta'}}\text{ for every $\omega\in\Omega$}.
	$$
	For every $\ii\in\Om^*$,
	$$
	\frac{\nu_\lambda([\ii])}{\nu_\tau([\ii])}\leq e^{c_{\theta'}|\lambda-\tau|^{\theta'}|\ii|}.
	$$
Moreover, the constant $C_G$ in the definition of the Gibbs measure can be chosen uniformly for $\lam \in \ov{U}$.
\end{lem}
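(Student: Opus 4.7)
The plan is to prove the four assertions in the order \emph{eigenvalue, eigenfunction, eigenmeasure, uniformity of $C_G$}, using the two basic Hölder ingredients in our hypotheses: the pointwise bound $|\phi^\lambda(\om)-\phi^\tau(\om)|\le c_0|\lambda-\tau|^\theta$ from \eqref{eq:assong}, and the uniform variation bound \eqref{eq:unifvar}. Throughout I will combine these with the spectral convergence rate in Theorem~\ref{thm:bowen} and optimize an iteration parameter $n$ against $|\lambda-\tau|$ to lose a small exponent and obtain $\theta'<\theta$. Note first that the hypothesis $h_\lambda\ge c_2>0$ together with $h_\lambda(\om_1)\le B_0 h_\lambda(\om_2)$ and $\int h_\lambda d\nu_\lambda=1$ immediately gives $1/B_0\le h_\lambda(\om)\le B_0$ uniformly in $\lambda$ and $\om$, which I will use repeatedly.

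\textbf{Pressure.} Summing \eqref{eq:assong} along orbits yields $|S_n\phi^\lambda(\om)-S_n\phi^\tau(\om)|\le nc_0|\lambda-\tau|^\theta$, so from the definition of $P_\lambda$ we get $|P_\lambda-P_\tau|\le c_0|\lambda-\tau|^\theta$, hence $\gamma_\lambda/\gamma_\tau\le e^{c_0|\lambda-\tau|^\theta}$. Since $|\lambda-\tau|\le |U|$ is bounded, this in particular gives the stated bound with exponent $\theta'$.

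\textbf{Eigenfunction.} The pointwise comparison $e^{-nc_0|\lambda-\tau|^\theta}L_\tau^n\mathds{1}\le L_\lambda^n\mathds{1}\le e^{nc_0|\lambda-\tau|^\theta}L_\tau^n\mathds{1}$, combined with the pressure bound, gives $\gamma_\lambda^{-n}L_\lambda^n\mathds{1}(\om)\le e^{2nc_0|\lambda-\tau|^\theta}\gamma_\tau^{-n}L_\tau^n\mathds{1}(\om)$. Using the convergence estimate $\|\gamma_\lambda^{-n}L_\lambda^n\mathds{1}-h_\lambda\|\le A\beta^n$ (from Theorem~\ref{thm:bowen} with $f=\mathds{1}$, $r=0$), applied to both $\lambda$ and $\tau$, I obtain
\[
\frac{h_\lambda(\om)}{h_\tau(\om)}\le e^{2nc_0|\lambda-\tau|^\theta}\bigl(1+2AB_0\beta^n\bigr).
\]
Now I \emph{choose} $n=\lceil|\lambda-\tau|^{-(\theta-\theta')}\rceil$. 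Then $nc_0|\lambda-\tau|^\theta\asymp|\lambda-\tau|^{\theta'}$, while $\beta^n$ decays super-polynomially in $|\lambda-\tau|$ and so is absorbed into an $e^{O(|\lambda-\tau|^{\theta'})}$ factor. This gives $h_\lambda/h_\tau\le e^{c_{\theta'}|\lambda-\tau|^{\theta'}}$. For $|\lambda-\tau|$ bounded below the estimate is trivial by the uniform bounds $1/B_0\le h_\lambda\le B_0$, after enlarging $c_{\theta'}$.

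\textbf{Eigenmeasure.} I apply Theorem~\ref{thm:bowen} with $f=\mathds{1}_{[\ii]}$ (so $r=|\ii|$) at an arbitrary point $\om_0$ to write
\[
\bigl(h_\lambda(\om_0)-A\beta^n\bigr)\nu_\lambda([\ii])\le \gamma_\lambda^{-n-r}L_\lambda^{n+r}\mathds{1}_{[\ii]}(\om_0)\le\bigl(h_\lambda(\om_0)+A\beta^n\bigr)\nu_\lambda([\ii]),
\]
and similarly for $\tau$. Since $L_\lambda^{n+r}\mathds{1}_{[\ii]}(\om_0)=\sum_{|\kk|=n}e^{S_{n+r}\phi^\lambda(\ii\kk\om_0)}$, the pointwise Hölder bound on $\phi^\lambda$ gives
\[
\frac{\gamma_\lambda^{-n-r}L_\lambda^{n+r}\mathds{1}_{[\ii]}(\om_0)}{\gamma_\tau^{-n-r}L_\tau^{n+r}\mathds{1}_{[\ii]}(\om_0)}\le e^{2(n+r)c_0|\lambda-\tau|^\theta}.
\]
Dividing the two sandwich estimates and combining with the previous step yields
\[
\frac{\nu_\lambda([\ii])}{\nu_\tau([\ii])}\le \frac{h_\tau(\om_0)+A\beta^n}{h_\lambda(\om_0)-A\beta^n}\cdot e^{2(n+r)c_0|\lambda-\tau|^\theta}.
\]
Choosing the same $n\asymp|\lambda-\tau|^{-(\theta-\theta')}$ as before, $A\beta^n$ is super-polynomially small, the $h$-ratio is $\le e^{c_{\theta'}|\lambda-\tau|^{\theta'}}$, and $(n+r)c_0|\lambda-\tau|^\theta\le C(1+r)|\lambda-\tau|^{\theta'}$. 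For $r=|\ii|\ge 1$ this absorbs to $e^{c'_{\theta'}r|\lambda-\tau|^{\theta'}}$; for $r=0$ both sides equal~$1$. The case $|\lambda-\tau|$ bounded below is again handled by a uniform estimate and enlarging $c_{\theta'}$.

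\textbf{Uniformity of $C_G$.} This is essentially a review of Bowen's proof of the Gibbs property in \cite{Bow}: the constant in the Gibbs bound depends only on the bounded-distortion constant $\sum_k 2b\alpha^k$ for $\phi^\lambda$ and on the uniform bounds $1/B_0\le h_\lambda\le B_0$. Since \eqref{eq:unifvar} makes the former uniform in $\lambda$, and the eigenfunction bounds are already uniform, the Gibbs constant may be chosen independent of $\lambda\in \ov U$.

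The main obstacle is the eigenmeasure step, because the Gibbs-property approach would introduce an unavoidable multiplicative constant $C_G^2$ that cannot fit inside $e^{c_{\theta'}|\lambda-\tau|^{\theta'}|\ii|}$ in the regime of both $|\ii|$ and $|\lambda-\tau|$ small. Getting genuinely multiplicative bounds therefore forces me to go through the operator-convergence machinery with an $|\lambda-\tau|$-dependent choice of iteration length, which is the key technical move shared by the eigenfunction and eigenmeasure parts.
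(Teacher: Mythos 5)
Your proposal is correct and follows essentially the same route as the paper: bound $|P_\lambda-P_\tau|$ directly from \eqref{eq:assong}, compare $L^n_\lambda$ with $L^n_\tau$ pointwise, invoke the exponential convergence in Theorem \ref{thm:bowen} (with $f=\ind_\Omega$ and $f=\ind_{[\ii]}$), and then choose the iteration length $n$ as a function of $|\lambda-\tau|$ so that the error is absorbed into $e^{c_{\theta'}|\lambda-\tau|^{\theta'}}$, finally citing Bowen for the uniform $C_G$. The only deviations are cosmetic bookkeeping: you take $n\asymp|\lambda-\tau|^{-(\theta-\theta')}$ instead of the paper's logarithmic choice \eqref{eq:choosen}, and you organize the measure estimate as a two-sided sandwich rather than the paper's four-factor product, both of which yield the same conclusion.
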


\begin{proof}
	Simple calculations show that $|P_\lambda-P_\tau|\leq c_0|\lambda-\tau|^\theta$ by \eqref{eq:assong}, hence the claim on $\gamma_\lambda$. Now let us turn to the claim on the eigenfunctions $h_\lambda$. Denote by $\ind_\Omega$ the constant $1$ map over $\Omega$. 
	
	If $\lambda=\tau$, then there is nothing to prove. Suppose that $\lambda\neq\tau$. Then by Theorem~\ref{thm:bowen}, 
	\be\label{eq:eigenfunction bound}
	\left\|\frac{\gamma^{-n}_\lam L_\lambda^n\ind_\Omega}{h_\lambda}-1\right\|\leq c_2^{-1}\left\|\gamma^{-n}_\lam L_\lambda^n\ind_\Omega-h_\lambda\right\|\leq c_2^{-1}A\beta^n=:A'\beta^n.
	\ee
	Note that $L_\lam^n \mathds{1}_{\Om} (\om) = \sum \limits_{i_1, \ldots, i_n \in \Ak} e^{\phi^\lam(i_1 \ldots i_n \om)}$, hence \eqref{eq:assong} gives
	\[ \frac{L^n_\lam \mathds{1}_\Om (\om)}{L^n_\tau \mathds{1}_\Om (\om)} \leq e^{c_0|\lam - \tau|^\theta}.\]
	Combining this with \eqref{eq:eigenfunction bound} gives for every $n\geq1$,
	\[
	\begin{split}
	\frac{h_\lambda(\omega)}{h_\tau(\omega)}&=\frac{h_\lambda(\omega)}{\gamma_\lambda^{-n}(L_\lambda^n\ind_\Omega)(\omega)}\cdot\frac{\gamma_\tau^{-n}(L_\tau^n\ind_\Omega)(\omega)}{h_\tau(\omega)}\cdot\frac{\gamma_\lambda^{-n}(L_\lambda^n\ind_\Omega)(\omega)}{\gamma_\tau^{-n}(L_\tau^n\ind_\Omega)(\omega)}\\
	&\leq\frac{1+A'\beta^n}{1-A'\beta^n}\cdot\frac{\gamma_\tau^n}{\gamma_\lambda^n}\cdot\frac{(L_\lambda^n\ind_\Omega)(\omega)}{(L_\tau^n\ind_\Omega)(\omega)}\\
	&\leq\frac{1+A'\beta^n}{1-A'\beta^n}e^{2c_0|\lambda-\tau|^\theta n}.
	\end{split}
	\]
	Let $n$ be minimal such that $\frac{1+A'\beta^n}{1-A'\beta^n}\leq e^{2c_0|\lambda-\tau|^\theta}$, that is, let 
	\begin{equation}\label{eq:choosen}
	n=\left\lceil\frac{\log\left(1-2(e^{2c_0|\lambda-\tau|^\theta}+1)^{-1}\right)-\log A'}{\log\beta}\right\rceil.
	\end{equation}
	It is easy to see that for any $0<\theta'<\theta$,
	\begin{equation}\label{eq:forbound}
	\lim_{x\to0+}x^{\theta-\theta'}\log\Bigl(1-\frac{2}{e^{2c_0x^\theta}+1}\Bigr)=0,
	\end{equation}
	thus, there exists $c_{\theta'}>0$ such that
	$$
	2c_0|\lambda-\tau|^\theta \left(\frac{\log\left(1-2(e^{2c_0|\lambda-\tau|^\theta}+1)^{-1}\right)-\log A'}{\log\beta}+2\right)\leq c_{\theta'}|\lambda-\tau|^{\theta'}
	$$
	for every $\lambda,\tau\in U$. Hence,
	\[
	\begin{split}
	\frac{h_\lambda(\omega)}{h_\tau(\omega)}&\leq \frac{1+A'\beta^n}{1-A'\beta^n}e^{2c_0|\lambda-\tau|^\theta n}\\
	&\leq \exp\left( 2c_0|\lambda-\tau|^\theta (n+1) \right)\\
	&\leq \exp\left( 2c_0|\lambda-\tau|^\theta \left(\frac{\log\left(1-2(e^{2c_0|\lambda-\tau|^\theta}+1)^{-1}\right)-\log A'}{\log\beta}+2\right)\right)\\
	&\leq \exp\left( c_{\theta'}|\lambda-\tau|^{\theta'} \right).
	\end{split}
	\]
	
	The proof for the measure is similar. In fact, suppose that $\lambda\neq\tau$. Using Theorem~\ref{thm:bowen}, we get for every $n\geq1$ and every $\omega\in\Omega$,
	\[
	\begin{split}
	\frac{\nu_\lambda([\ii])}{\nu_\tau([\ii])}&=\frac{\nu_\lambda([\ii])h_\lambda(\omega)}{\gamma_\lambda^{-(n+|\ii|)}(L_\lambda^{n+|\ii|}\ind_{[\ii]})(\omega)}\cdot\frac{\gamma_\lambda^{-(n+|\ii|)}(L_\lambda^{n+|\ii|}\ind_{[\ii]})(\omega)}{\gamma_\tau^{-(n+|\ii|)}(L_\tau^{n+|\ii|}\ind_{[\ii]})(\omega)}\cdot\frac{\gamma_{\tau}^{-(n+|\ii|)}(L_\tau^{n+|\ii|}\ind_{[\ii]})(\omega)}{\nu_\tau([\ii])h_\tau(\omega)}\cdot\frac{h_\tau(\omega)}{h_\lambda(\omega)}\\
	&\leq \frac{1+A'\beta^n}{1-A'\beta^n}\cdot \exp \left( 2c_0|\lambda-\tau|^\theta(n+|\ii|) +  c_{\theta'}|\lambda-\tau|^{\theta'} \right).
	\end{split}
	\]
	Now, choose again $n\geq1$ as in \eqref{eq:choosen}. Then
	\[
	\begin{split}
	\frac{\nu_\lambda([\ii])}{\nu_\tau([\ii])}&\leq \exp \left(2c_0|\lambda-\tau|^\theta|\ii|+c_{\theta'}|\lambda-\tau|^{\theta'} + 2c|\lambda-\tau|\left(\frac{\log\left(1-2(e^{2c_0|\lambda-\tau|^\theta}+1)^{-1}\right)-\log A'}{\log\beta}+2\right)\right) \\
	&\leq \exp \left( 2c_0|\lambda-\tau|^\theta|\ii|+2c_{\theta'}|\lambda-\tau|^{\theta'}\right)\leq \exp \left( \widetilde{m}(2c+2c_{\theta'})|\lambda-\tau|^{\theta'}|\ii|\right)
	\end{split}
	\]
	for some constant $\widetilde{m} = \widetilde{m}(\theta, \theta')$.
	
	The claim on the Gibbs constant $C_G$ follows from the proof of \cite[Theorem 1.16]{Bow}, combined with uniform bounds on $h_\lam$ and $\gamma_{\lam}$.
\end{proof}

The following proposition concludes the proof of the property $\ref{as:measure}$ for Gibbs measures satisfying assumptions of Theorem \ref{thm:main_gibbs}.

\begin{prop}\label{eq:regeq}
For every $0 < \theta' < \theta$ there exists $c>0$ such that for every $\lambda,\tau\in U$ and for every $\ii\in\Om^*$,
	$$
	\frac{\mu_\lambda([\ii])}{\mu_\tau([\ii])}\leq e^{c|\lambda-\tau|^{\theta'}|\ii|}.
	$$
\end{prop}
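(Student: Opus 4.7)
The plan is to adapt the ``optimize $m$ in terms of $|\lam-\tau|$'' iterative trick that Lemma \ref{lem:eigen} used for $h_\lam$ and $\nu_\lam$ to the Gibbs measure $\mu_\lam$ itself. The natural tool is the normalized transfer operator $\tilde L_\lam f:=\gamma_\lam^{-1}h_\lam^{-1}L_\lam(h_\lam f)$, which satisfies $\tilde L_\lam\ind_\Om=\ind_\Om$, $(\tilde L_\lam)^*\mu_\lam=\mu_\lam$, and by induction $\tilde L_\lam^n f=\gamma_\lam^{-n}h_\lam^{-1}L_\lam^n(h_\lam f)$. The first step is to prove a spectral gap approximation of the form
\[
\tilde L_\lam^{m+|\ii|}\ind_{[\ii]}(\omega)=\mu_\lam([\ii])(1+\delta),\qquad |\delta|\le A'\beta^m,
\]
uniformly in $\lam\in\ov U$, $\ii\in\Om^*$ and $\omega\in\Om$.

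\noindent\textbf{Comparison across $\lam$.} Using the explicit formula for $\tilde L_\lam^n$ together with the above approximation (applied for both $\lam$ and $\tau$), the ratio factorizes as
\[
\frac{\mu_\lam([\ii])}{\mu_\tau([\ii])}\le\frac{1+A'\beta^m}{1-A'\beta^m}\cdot\Bigl(\frac{\gamma_\tau}{\gamma_\lam}\Bigr)^{m+|\ii|}\cdot\frac{h_\tau(\omega)}{h_\lam(\omega)}\cdot\frac{L_\lam^{m+|\ii|}(h_\lam\ind_{[\ii]})(\omega)}{L_\tau^{m+|\ii|}(h_\tau\ind_{[\ii]})(\omega)}.
\]
The eigenvalue ratio is bounded by $e^{c_0|\lam-\tau|^\theta(m+|\ii|)}$ and the eigenfunction ratio by $e^{c_{\theta'}|\lam-\tau|^{\theta'}}$, both via Lemma \ref{lem:eigen}. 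For the last ratio, expand $L_\lam^{m+|\ii|}(h_\lam\ind_{[\ii]})(\omega)=\sum_{\jj'\in\Ak^m}e^{S_{m+|\ii|}\phi^\lam(\ii\jj'\omega)}h_\lam(\ii\jj'\omega)$ and bound each summand's ratio using \eqref{eq:assong} together with $h_\lam/h_\tau\le e^{c_{\theta'}|\lam-\tau|^{\theta'}}$, obtaining $e^{c_0(m+|\ii|)|\lam-\tau|^\theta}e^{c_{\theta'}|\lam-\tau|^{\theta'}}$ for the whole sum. Next, choose $m$ optimally as in \eqref{eq:choosen}, so that $(1+A'\beta^m)/(1-A'\beta^m)\le e^{c_0|\lam-\tau|^{\theta'}}$, i.e.\ $m=O(\log(1/|\lam-\tau|^{\theta'})/\log(1/\beta))$. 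Then $|\lam-\tau|^\theta m=O(|\lam-\tau|^{\theta'})$ (exactly as in \eqref{eq:forbound}) and $|\lam-\tau|^\theta\le D^{\theta-\theta'}|\lam-\tau|^{\theta'}$ with $D=\mathrm{diam}(U)$, yielding $\mu_\lam([\ii])/\mu_\tau([\ii])\le e^{c|\lam-\tau|^{\theta'}(1+|\ii|)}\le e^{2c|\lam-\tau|^{\theta'}|\ii|}$ for $|\ii|\ge 1$.

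\noindent\textbf{Main obstacle.} The hard part is establishing the spectral gap approximation above, because $h_\lam\ind_{[\ii]}\notin C_{|\ii|}$ in general, so Theorem \ref{thm:bowen} does not apply directly to this function. The intended fix is to decompose $h_\lam\ind_{[\ii]}=h_\lam(\omega_\ii)\ind_{[\ii]}+r_{\lam,\ii}$ with $\omega_\ii\in[\ii]$ and $\|r_{\lam,\ii}\|_\infty\le(B_{|\ii|}-1)\|h_\lam\|_\infty=O(\alpha^{|\ii|})$, apply Theorem \ref{thm:bowen} to the cylindrically constant summand $h_\lam(\omega_\ii)\ind_{[\ii]}\in C_{|\ii|}$, and absorb the remainder through $\mu_\lam([\ii])=h_\lam(\omega_\ii)\nu_\lam([\ii])(1+O(\alpha^{|\ii|}))$. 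This introduces an extra error of size $O(\alpha^{|\ii|})$, which is harmless for $|\ii|\ge N_0$ for a suitably chosen threshold $N_0$. For $|\ii|\le N_0$, the set of relevant cylinders is finite, and one instead obtains the bound from uniform H\"older-$\theta'$ continuity of $\mu_\lam([\ii])$ in $\lam$, which follows by combining Lemma \ref{lem:eigen} applied to both $h_\lam$ and $\nu_\lam$ with compactness of $\ov U$. Taking the larger of the two resulting constants finishes the argument.
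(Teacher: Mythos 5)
Your comparison-across-$\lambda$ step and the final optimization of the depth in terms of $|\lambda-\tau|$ (trading $|\lambda-\tau|^{\theta}\log(1/|\lambda-\tau|)$ for $|\lambda-\tau|^{\theta'}$) are sound and mirror the technique of Lemma \ref{lem:eigen}. The genuine gap is in your treatment of the error coming from the fact that $h_\lambda\ind_{[\ii]}\notin C_{|\ii|}$. Freezing $h_\lambda$ at a single point of $[\ii]$ produces a relative error of size $O(\alpha^{|\ii|})$ that is independent of $|\lambda-\tau|$ and cannot be reduced by increasing $m$. After taking the ratio $\mu_\lambda([\ii])/\mu_\tau([\ii])$, your estimate therefore carries a factor of roughly $\exp(C\alpha^{|\ii|})$, bounded away from $1$, while the bound to be proved, $e^{c|\lambda-\tau|^{\theta'}|\ii|}$, tends to $1$ as $\tau\to\lambda$ for each fixed $\ii$. (The true ratio does tend to $1$, but your bound discards the cancellation between the $O(\alpha^{|\ii|})$ errors in numerator and denominator; quantifying that cancellation is essentially the statement being proved.) Hence the error is \emph{not} harmless for $|\ii|\ge N_0$ with a fixed threshold: it is absorbed only when $\alpha^{|\ii|}\lesssim |\lambda-\tau|^{\theta'}|\ii|$, i.e.\ when $|\ii|\gtrsim\log(1/|\lambda-\tau|)$, so the threshold must depend on $|\lambda-\tau|$ and the complementary range is not a fixed finite family of cylinders. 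Your fallback for short cylinders has the same problem one level down: Lemma \ref{lem:eigen} gives pointwise ratio bounds for $h_\lambda$ and cylinder-wise ratio bounds for $\nu_\lambda$, but to deduce H\"older continuity of $\lambda\mapsto\mu_\lambda([\ii])=\int_{[\ii]}h_\lambda\,d\nu_\lambda$ you must compare $\int_{[\ii]}h_\tau\,d\nu_\lambda$ with $\int_{[\ii]}h_\tau\,d\nu_\tau$ for the non--locally-constant integrand $h_\tau$, which again forces an approximation at a depth tied to $|\lambda-\tau|$; compactness of $\ov U$ does not supply this. So, as written, the argument is circular at exactly the delicate point.

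The fix is to perform the locally-constant approximation at depth $|\ii|+n$ rather than at depth $|\ii|$: replace $h_\lambda\ind_{[\ii]}$ by $\sum_{|\jj|=n}h_\lambda(\ii\jj\omega)\ind_{[\ii\jj]}$ (equivalently, refine the integral $\int_{[\ii]}h_\lambda\,d\nu_\lambda$ over the sub-cylinders $[\ii\jj]$), so that the non-cancelling error is $B_{n+|\ii|}^{2}\le B_n^{2}$ and is controlled by $n$ alone, and then choose $n$ minimal with $B_n^{2}\le e^{c_{\theta''}|\lambda-\tau|^{\theta''}}$ for some $\theta'<\theta''<\theta$, exactly as in the choice \eqref{eq:choosen}. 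This is how the paper argues; once the approximation depth is decoupled from $|\ii|$ in this way, the transfer-operator/spectral-gap machinery of your first step becomes unnecessary: the decomposition $d\mu_\lambda=h_\lambda\,d\nu_\lambda$, the Harnack-type bound $h_\lambda(\omega_1)\le B_{|\omega_1\wedge\omega_2|}h_\lambda(\omega_2)$ and Lemma \ref{lem:eigen} already suffice.
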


\begin{proof} Fix $\theta'' \in (\theta' ,\theta)$.
	By the definition of $\mu_{\lam}$, Theorem \ref{thm:bowen} and Lemma~\ref{lem:eigen},
	\[
	\begin{split}
	\frac{\mu_\lambda([\ii])}{\mu_\tau([\ii])}&=\frac{\int_{[\ii]}h_\lambda(\omega)d\nu_\lambda(\omega)}{\int_{[\ii]}h_\tau(\omega)d\nu_\tau(\omega)}\\
	&\leq B_{n+|\ii|}^2\frac{\sum_{\jj:|\jj|=n}h_\lambda(\ii\jj\omega)\nu_\lambda([\ii\jj])}{\sum_{\jj:|\jj|=n}h_\tau(\ii\jj\omega)\nu_\tau([\ii\jj])}\\
	&\leq B_{n+|\ii|}^2\exp\left( c_{\theta''}|\lambda-\tau|^{\theta''}+c_{\theta''}(n+|\ii|)|\lambda-\tau|^{\theta''}\right).
	\end{split}\]
	Choose $n\geq1$ minimal such that
	$$
	B_{n+|\ii|}^2\leq B_n^2\leq e^{c_{\theta''}|\lambda-\tau|^{\theta''}},
	$$
	that is,
	$$
	n=\left\lceil\theta''\frac{\log|\lambda-\tau|}{\log\alpha}+\frac{(1-\alpha)c_{\theta''}(4b)^{-1}}{\log\alpha}\right\rceil.
	$$
	Then
	\[
	\begin{split}
	\frac{\mu_\lambda([\ii])}{\mu_\tau([\ii])}&\leq \exp \left( 2c_{\theta''}|\lambda-\tau|^{\theta''}+c_{\theta''}(\theta''\frac{\log|\lambda-\tau|}{\log\alpha}+\frac{(1-\alpha)c_{\theta''}(4b)^{-1}}{\log\alpha}+|\ii|+1)|\lambda-\tau|^{\theta''} \right).
	\end{split}\]
	Since for every $\theta'<\theta''<1$ the map $(\lambda,\tau)\mapsto |\lambda-\tau|^{\theta''-\theta'}\log|\lambda-\tau|$ is bounded, the claim follows.
\end{proof}

\subsection{Large submeasures}\label{subsec:regular egorov}

The goal of this subsection is to prove the following proposition, required to deduce Theorem \ref{thm:main_gibbs} from Theorem \ref{thm:main_cor_dim}.

\begin{prop}\label{prop:stable cor bound}
Let $\{f_j^\lam\}_{j \in \Ak}$ be a parametrized IFS satisfying the smoothness assumptions \ref{as:C2} - \ref{as:hyper}. Let $\left\{ \mu_\lam \right\}_{\lam \in \ov{U}}$ be a family of shift-invariant Gibbs measures on $\Om$ corresponding to a family of continuous potentials $\phi^\lambda\colon\Om\mapsto\R$ satisfying \eqref{eq:unifvar} and \eqref{eq:assong}. Then for every $\lam_0 \in U,\ \eps>0,\ \eps'>0$ and $\theta' \in (0, \theta)$ there exist $\xi>0,\ c>0$, and a set $A \subset \Om$ such that for every $\lam \in B_{\xi}(\lam_0)$ {we have} $\mu_\lam(A) \geq 1 - \eps'$ and the measures $\tilde{\mu}_\lam = \mu_\lam|_A$ satisfy
\begin{equation}\label{eq:dim cor entropy} \dim_{cor}(\tilde{\mu}_\lam, d_\lam) \geq \frac{h_{\mu_{\lam}}}{\chi_{\mu_{\lam}}} - \eps
\end{equation}
and
\begin{equation}\label{eq:tilde mu bounds}	e^{-c|\lambda-\lambda_0|^{\theta'}|\omega|} \tilde{\mu}_{\lambda}([\omega]) \leq\tilde{\mu}_{\lambda_0}([\omega])\leq e^{c|\lambda-\lambda_0|^{\theta'}|\omega|} \tilde{\mu}_{\lambda}([\omega])
\end{equation}
for all $\om \in \Om^*$.
\end{prop}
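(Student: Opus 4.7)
The plan is to construct $A$ as a multi-scale ``good set'' at $\lam_0$ and propagate its properties to nearby $\lam$ by combining large deviation estimates for Gibbs measures (\cite{Young90, DK, OP}) with \ref{as:measure} from Proposition \ref{eq:regeq}. For fixed $\eps>0$, I would apply these to the H\"older observables $\phi^{\lam_0}$ and $\psi^{\lam_0}(\om):=-\log|(f^{\lam_0}_{\om_1})'(\Pi^{\lam_0}(\sigma\om))|$ to obtain $\alpha_\eps,C>0$ with $\mu_{\lam_0}(B_n^c)\le Ce^{-\alpha_\eps n}$ for
$$B_n := \bigl\{\om:\ \bigl|-\tfrac{1}{n}\log\mu_{\lam_0}([\om|_n])-h_{\mu_{\lam_0}}\bigr|<\eps \text{ and } \bigl|\tfrac{1}{n}S_n\psi^{\lam_0}(\om)-\chi_{\mu_{\lam_0}}\bigr|<\eps\bigr\}.$$
Using \eqref{eq:unifvar} and the bounded distortion in Lemma \ref{lem:d2 bounds}, each $B_n$ may be taken as a union of length-$n$ cylinders (after enlarging $\eps$ slightly). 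Set $A:=\bigcap_{n\ge N_0}B_n$ with $N_0$ so large that $\sum_{n\ge N_0}Ce^{-\alpha_\eps n}<\eps'/2$, and choose $\xi>0$ with $c\xi^\theta<\alpha_\eps/2$, where $c$ is the constant from Proposition \ref{eq:regeq}. Applying \ref{as:measure} cylinder-wise then gives $\mu_\lam(A^c)\le\sum_{n\ge N_0}e^{cn|\lam-\lam_0|^\theta}\mu_{\lam_0}(B_n^c)<\eps'$ for every $\lam\in B_\xi(\lam_0)$.

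The correlation dimension bound \eqref{eq:dim cor entropy} follows from splitting the energy $\iint d_\lam(\om,\om')^{-s}\,d\tilde\mu_\lam\otimes d\tilde\mu_\lam$ according to $n:=|\om\wedge\om'|$. For $n\ge N_0$ the common prefix $\om|_n$ of pairs $(\om,\om')\in A\times A$ lies in $B_n$, and combining \ref{as:measure}, the parametric bounded distortion of Lemma \ref{lem:pbdp}, and the continuity of $\lam\mapsto h_{\mu_\lam}$ and $\lam\mapsto\chi_{\mu_\lam}$ (after possibly shrinking $\xi$) yields
$$\tilde\mu_\lam([\om|_n])\leq Ce^{-n(h_{\mu_\lam}-O(\eps))} \quad\text{and}\quad d_\lam(\om,\om')\geq c\,e^{-n(\chi_{\mu_\lam}+O(\eps))}.$$
Summing in $n$ (and using $\sum_{w\in\Ak^n}\mu_\lam([w])\le 1$) gives convergence of the energy for every $s<(h_{\mu_\lam}-O(\eps))/(\chi_{\mu_\lam}+O(\eps))$, and hence \eqref{eq:dim cor entropy} upon letting $\eps\downarrow 0$ and shrinking $\xi$ accordingly.

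The main difficulty is \eqref{eq:tilde mu bounds}. The strategy is to show that for every cylinder $[\om]$ with $[\om]\cap A\neq\emptyset$, the restriction carries a definite fraction of the full mass: there exists $c_0>0$ (depending only on $\eps,\eps'$) with $c_0\mu_\lam([\om])\le\mu_\lam([\om]\cap A)\le\mu_\lam([\om])$. The lower bound would use the quasi-Bernoulli property of Gibbs measures, $\mu_\lam([\om_1\om_2])\asymp\mu_\lam([\om_1])\mu_\lam([\om_2])$ with uniform constants coming from the Gibbs inequality and \eqref{eq:unifvar}, together with the additive Birkhoff decomposition $S_{n_1+n_2}\phi=S_{n_1}\phi+S_{n_2}\phi\circ\sigma^{n_1}$: if $\om$ satisfies the goodness conditions at all levels $N_0\le n\le|\om|$ and $\tau\in A$, then $\om\tau\in A$ up to a slight enlargement of $\eps$, so $[\om]\cap A\supset\om\cdot A$ and quasi-Bernoulli gives $\mu_\lam(\om\cdot A)\gtrsim\mu_\lam([\om])\mu_\lam(A)\ge(1-\eps')\mu_\lam([\om])/K_0$, where $K_0$ denotes the quasi-Bernoulli constant. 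Granted this, \ref{as:measure} for the Gibbs family produces $\tilde\mu_\lam([\om])/\tilde\mu_{\lam_0}([\om])\le c_0^{-1}e^{c|\lam-\lam_0|^\theta|\om|}$, which is absorbed into $e^{c'|\lam-\lam_0|^{\theta'}|\om|}$ for $|\om|\ge 1$ by further shrinking $\xi$, exploiting $\theta'<\theta$. The hard part that I expect to consume most of the technical work is the absorption step at parameters very close to $\lam_0$, where $c_0^{-1}>1$ while the target $e^{c'|\lam-\lam_0|^{\theta'}|\om|}\to 1$ for fixed $|\om|$; this requires a more delicate bookkeeping that tracks the ratio of quasi-Bernoulli constants between $\lam$ and $\lam_0$ through the H\"older dependence \eqref{eq:assong} of the potential, producing an estimate of the form $\mu_\lam([\om]\cap A)/\mu_{\lam_0}([\om]\cap A)=(1+O(|\lam-\lam_0|^\theta))\,\mu_\lam([\om])/\mu_{\lam_0}([\om])$, together with a separate treatment of the finitely many short cylinders $|\om|<N_1$ via the continuity of $\lam\mapsto\mu_\lam([\om]\cap A)$ guaranteed by Theorem \ref{thm:bowen} and Lemma \ref{lem:eigen}.
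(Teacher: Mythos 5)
There is a genuine gap, and it sits exactly at the heart of the proposition, namely \eqref{eq:tilde mu bounds}. Your main mechanism is to show $c_0\,\mu_\lam([\om])\le\mu_\lam([\om]\cap A)\le\mu_\lam([\om])$ with a \emph{fixed} constant $c_0>0$ and then ``absorb'' $c_0^{-1}$ into $e^{c'|\lam-\lam_0|^{\theta'}|\om|}$ by shrinking $\xi$. This cannot work: the inequality must hold for every $\lam\in B_\xi(\lam_0)$, and for a fixed cylinder $[\om]$ the target bound tends to $1$ as $\lam\to\lam_0$ while $c_0^{-1}>1$ stays put; no choice of $\xi$ or use of $\theta'<\theta$ repairs this. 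You acknowledge the problem and then assert, without proof, the estimate $\mu_\lam([\om]\cap A)/\mu_{\lam_0}([\om]\cap A)=(1+O(|\lam-\lam_0|^{\theta}))\,\mu_\lam([\om])/\mu_{\lam_0}([\om])$ uniformly over all cylinders meeting $A$ --- but that statement \emph{is} the proposition's essential content (it is Proposition \ref{prop:unifegorov} in the paper), and obtaining it is where all the work lies: one needs a decomposition of $[\om]\cap A$ into the full cylinder minus ``bad tails'', the Gibbs property to bound the conditional bad-mass ratios, the exponential large-deviation decay (uniform in $\lam$ near $\lam_0$, cf.\ Proposition \ref{prop:largedev}) to sum the series $\sum_k(m_{L+k}+|\om|)b_{L+k}$, and a mean-value argument showing the correction factor is itself of the form $e^{\mathrm{const}\cdot|\lam-\lam_0|^{\theta'}}$ rather than a fixed constant. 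None of this appears in your sketch.

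A second, related problem is the construction $A=\bigcap_{n\ge N_0}B_n$ with goodness imposed at \emph{every} level. Your lower bound rests on the claim that $\om$ good at all levels up to $|\om|$ and $\tau\in A$ imply $\om\tau\in A$ ``up to a slight enlargement of $\eps$''. At intermediate levels $n=|\om|+k$ with $k$ small relative to $|\om|$ but not relative to $N_0$ (e.g.\ $|\om|\asymp N_0$, $k\asymp N_0/2$), the Birkhoff average picks up an error of order $\frac{k}{n}\cdot 2M$, which is of order $M$, not $\eps$; so concatenation stability fails for this choice of $A$. This is precisely why the paper builds $A=\Xi_K$ in \eqref{eq:Xi def} from \emph{sparse} blocks of lengths $n_k\approx(1+\eps)^k$ with cumulative lengths $m_k$: the ratio $n_{L+1}/m_L\approx\eps$ makes intermediate-level deviations $O(\eps M)$ (Lemma \ref{lem:unifegorov deviation}), and the product structure over disjoint blocks is what allows $[\om]\cap\Xi_K$ to be written as a cylinder minus unions of longer cylinders, which in turn makes the ratio estimate of Proposition \ref{prop:unifegorov} possible. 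So while your opening steps (uniform large deviations transferred via \ref{as:measure}, the energy estimate giving \eqref{eq:dim cor entropy} once \eqref{eq:tilde mu bounds}-type control is available) are in the right spirit, the set you construct does not have the structure needed for the key estimate, and the key estimate itself is left unproved.
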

A standard approach for proving \eqref{eq:dim cor entropy} is applying Egorov's theorem, similarly as in the proof of Proposition \ref{prop:hdim_lam0_main}. In our case the difficulty is to obtain  \eqref{eq:tilde mu bounds} simultaneously. This requires a more quantitative approach in constructing ``Egorov-like'' set. For this purpose we need certain large deviations results, uniform with respect to the parameter, which we state in a slightly more general setting.

We assume now that $\left\{ \mu_\lam \right\}_{\lam \in \ov{U}}$ is a family of measures satisfying assumptions of Proposition \ref{prop:stable cor bound} and $\{ g_\ell^\lambda\colon\Om\mapsto\R\}_{\lam \in \ov{U}},\ \ell = 1, \ldots, p$, is a finite collection of families of potentials, each of them satisfying properties \eqref{eq:unifvar} and \eqref{eq:assong}. 

\begin{prop}\label{prop:largedev}
	Let $\lambda_0\in U$ be arbitrary but fixed. Then for every $\varepsilon>0$ there exists $\xi_D>0$, $C_D>0$ and $s>0$ such that for every $\lambda\in B_{\xi_D}(\lambda_0)$ and every $n\geq1,\ \ell = 1, \ldots, p$,
	$$
	\mu_\lambda\left(\left\{\omega\in\Om:\left|\frac{1}{n}S_ng_\ell^\lambda(\omega)-\int g_\ell^\lambda d\mu_\lambda\right|>\varepsilon\right\}\right)\leq C_{D} e^{-sn}.
	$$
\end{prop}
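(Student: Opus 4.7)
The plan is to run a uniform-in-$\lambda$ Chernoff argument based on the Perron--Frobenius theory of Theorem~\ref{thm:bowen} applied to twisted transfer operators of the form
$$L_{\lambda,t,\ell}h(\omega) \;:=\; \sum_{i\in\Ak} e^{\phi^\lambda(i\omega)+t g_\ell^\lambda(i\omega)}\, h(i\omega).$$
Fix $\ell$ and set $\bar g^\lambda := g_\ell^\lambda - \int g_\ell^\lambda\,d\mu_\lambda$, which still satisfies \eqref{eq:unifvar} and \eqref{eq:assong} (with possibly different constants) and has zero $\mu_\lambda$-mean. By Chebyshev's inequality, for every $t\ge 0$,
$$\mu_\lambda\bigl(\{\omega: S_n \bar g^\lambda(\omega)>n\varepsilon\}\bigr) \le e^{-tn\varepsilon}\int_\Om e^{tS_n\bar g^\lambda}\,d\mu_\lambda.$$
Writing $d\mu_\lambda=h_\lambda\,d\nu_\lambda$, using $(L_\lambda)^{*}\nu_\lambda=\gamma_\lambda\nu_\lambda$, and noting the identity $L_\lambda^n(e^{tS_n\bar g^\lambda}F)=L_{\lambda,t,\ell}^n F$, the moment generating function becomes
$$\int e^{tS_n\bar g^\lambda}\,d\mu_\lambda \;=\; \gamma_\lambda^{-n}\int L_{\lambda,t,\ell}^n h_\lambda\,d\nu_\lambda.$$

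Next, the family $\{L_{\lambda,t,\ell}\}$ still satisfies the hypotheses of Theorem~\ref{thm:bowen} for $|t|$ small and $\lambda\in\ov U$, with bounds on H\"older variation and on oscillation in $\lambda$ that are uniform on a compact neighbourhood of $(0,\lambda_0)$. Consequently each such operator has a simple leading eigenvalue $\gamma_{\lambda,t,\ell}=e^{P(\phi^\lambda+t\bar g^\lambda)}$ with a uniform spectral gap, and the conclusion of Theorem~\ref{thm:bowen} yields a constant $C>0$, independent of $n$, $t$, $\lambda$, such that
$$\int e^{tS_n\bar g^\lambda}\,d\mu_\lambda \;\le\; C\exp\bigl(n\,[P(\phi^\lambda+t\bar g^\lambda)-P(\phi^\lambda)]\bigr).$$
Set $\Psi_\ell(t,\lambda):=P(\phi^\lambda+t\bar g^\lambda)-P(\phi^\lambda)$. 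Standard thermodynamic formalism gives $\Psi_\ell(0,\lambda)=0$ and $\partial_t\Psi_\ell(0,\lambda)=\int\bar g^\lambda\,d\mu_\lambda=0$, while the uniform spectral gap above implies that $\partial_t^2\Psi_\ell$ is bounded by some $M>0$ on a set $(-t_0,t_0)\times B_{\xi_D}(\lambda_0)$. Hence $\Psi_\ell(t,\lambda)\le Mt^2/2$ there, and optimizing at $t=\varepsilon/M$ (assumed in range, which is ensured by taking $\varepsilon$ small and then using that the original $\varepsilon$ is fixed) yields
$$\mu_\lambda\bigl(\{S_n\bar g^\lambda>n\varepsilon\}\bigr) \;\le\; C e^{-n\varepsilon^2/(2M)}.$$
The symmetric lower tail is handled by $t<0$, and a finite union bound over $\ell=1,\dots,p$ concludes the proof with $s:=\varepsilon^2/(2M)$ and $C_D:=2pC$.

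The chief technical obstacle is establishing the uniform-in-$\lambda$ spectral gap and the uniform quadratic bound on $\Psi_\ell(\cdot,\lambda)$ near $t=0$. This can be extracted by repeating Bowen's argument (which gives Theorem~\ref{thm:bowen}) with the perturbed potential $\phi^\lambda+t\bar g^\lambda$, observing that all constants in the proof depend only on $\|\phi\|_\infty$ and on the H\"older variation bounds, both of which are uniformly controlled by \eqref{eq:unifvar} and \eqref{eq:assong} together with $\sup_\lambda \|g_\ell^\lambda\|_\infty<\infty$; alternatively, one may appeal directly to the large deviation principles of \cite{Young90, DK, OP} after verifying that their hypotheses hold with constants independent of $\lambda\in B_{\xi_D}(\lambda_0)$.
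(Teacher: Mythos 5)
Your route is genuinely different from the paper's. The paper never introduces twisted transfer operators: it applies Young's theorem \cite{Young90} only at the single parameter $\lambda_0$, obtaining constants $C_D,s$ for the one fixed system $(\mu_{\lambda_0},g_\ell^{\lambda_0})$, and then transfers the estimate to nearby $\lambda$ via Lemma \ref{lem:largedev_ineq}: after replacing $g_\ell^\lambda$ by a locally constant approximation, the deviation set becomes a union of cylinders of length $n+k$, so the cylinder comparison of Proposition \ref{eq:regeq} (i.e.\ property \ref{as:measure}) together with Lemma \ref{lem:contint} gives $\mu_\lambda(\cdot)\le C_{22}e^{cn|\lambda-\lambda_0|^{\theta'}}\mu_{\lambda_0}(\cdot)$ for a slightly smaller deviation threshold, and the exponential factor is absorbed by shrinking $\xi_D$ so that $c\,\xi_D^{\theta'}<s$. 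Your uniform Chernoff/RPF argument, if completed, would bypass Lemma \ref{lem:largedev_ineq} altogether and yield an explicit rate $s=\varepsilon^2/(2M)$; the paper's scheme instead pushes all the uniformity in $\lambda$ onto the already-proved measure comparison, so that no perturbation theory for transfer operators is needed and Young's result can be used as a black box at one parameter.

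The concrete gap in your write-up is that the two load-bearing uniformity claims are asserted rather than proved. The claim that the twisted family $L_{\lambda,t,\ell}$ satisfies the conclusions of Theorem \ref{thm:bowen} with constants uniform in $(t,\lambda)$ is plausible (variations add, and \eqref{eq:unifvar}, \eqref{eq:assong} are uniform), but the bound $\partial_t^2\Psi_\ell\le M$ on $(-t_0,t_0)\times B_{\xi_D}(\lambda_0)$ does not follow from Theorem \ref{thm:bowen} as stated: it requires analytic perturbation of the leading eigenvalue or a uniform asymptotic-variance estimate, a genuine piece of work that is nowhere in the paper's toolkit. Likewise, the fallback of citing \cite{Young90,DK,OP} ``with constants independent of $\lambda$'' is not available off the shelf: those results concern a single fixed system, and making their constants uniform in the parameter is exactly the difficulty that Lemma \ref{lem:largedev_ineq} was designed to circumvent. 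A cheaper way to close your argument, avoiding second derivatives entirely, is to note that for the fixed $\varepsilon$ you only need one $t>0$ and $s>0$ with $t\varepsilon-\Psi_\ell(t,\lambda)\ge s$ for all $\lambda$ near $\lambda_0$: convexity of $t\mapsto P(\phi^{\lambda_0}+t\bar g^{\lambda_0})$, its differentiability at $t=0$ with derivative $\int\bar g^{\lambda_0}\,d\mu_{\lambda_0}=0$, give some $t$ with $t\varepsilon-\Psi_\ell(t,\lambda_0)>0$, and then the Lipschitz dependence of pressure on the potential in sup norm, combined with \eqref{eq:assong} and Lemma \ref{lem:contint} (for the centering constants), gives continuity of $\lambda\mapsto\Psi_\ell(t,\lambda)$ at this fixed $t$, hence the required uniform bound on a small ball. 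With that modification, and with the uniform upper Gibbs bound $\|L_{\lambda,t,\ell}^n\mathbf{1}\|_\infty\lesssim\gamma_{\lambda,t}^n$ justified by bounded distortion, your proof becomes a valid alternative to the one in the paper.
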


The proof is based on two lemmas.

\begin{lem}\label{lem:contint}
	For every $\theta' \in (0, \theta)$ and $\lam_0 \in \ov{U}$ there exist $\xi_{21}>0$ and $C_g = C_g(g_1, \ldots, g_l, \theta')>0$ such that
	$$
	\left|\int g_\ell^\lambda d\mu_\lambda-\int g_\ell^\tau d\mu_\tau\right|\leq C_g|\lambda-\tau|^{\theta'}.
	$$
	holds for every $\ell = 1, \ldots, p$ and $\lam \in B_{\xi_{21}}(\lam_0)$.
\end{lem}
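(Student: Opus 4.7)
The plan is to split the difference in the standard way:
$$
\int g_\ell^\lambda\,d\mu_\lambda - \int g_\ell^\tau\,d\mu_\tau \;=\; \underbrace{\int (g_\ell^\lambda - g_\ell^\tau)\,d\mu_\lambda}_{I} \;+\; \underbrace{\int g_\ell^\tau\,d(\mu_\lambda - \mu_\tau)}_{II}.
$$
The first term $I$ is the easy one: since $g_\ell^\lambda$ satisfies the analogue of \eqref{eq:assong}, we have $\|g_\ell^\lambda - g_\ell^\tau\|_\infty \leq c_0|\lambda-\tau|^\theta$, so $|I|\le c_0|\lambda-\tau|^\theta \le c_0|\lambda-\tau|^{\theta'}$ provided $\xi_{21}\le 1$. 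So everything comes down to estimating $II$.

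To handle $II$, I would fix some auxiliary exponent $\theta''$ with $\theta' < \theta'' < \theta$ and approximate $g_\ell^\tau$ by a function constant on cylinders of length $N$ (to be chosen). Concretely, pick any $\omega^{(\ii)} \in [\ii]$ for each $\ii \in \mathcal{A}^N$ and set $\tilde g_N = \sum_{|\ii|=N} g_\ell^\tau(\omega^{(\ii)})\,\mathds{1}_{[\ii]}$. By \eqref{eq:unifvar}, $\|g_\ell^\tau - \tilde g_N\|_\infty \leq b\alpha^N$, so splitting $II$ as $\int(g_\ell^\tau-\tilde g_N)\,d(\mu_\lambda-\mu_\tau) + \int \tilde g_N\,d(\mu_\lambda-\mu_\tau)$ gives
$$
|II| \;\le\; 2b\alpha^N \;+\; \sum_{|\ii|=N} |g_\ell^\tau(\omega^{(\ii)})|\cdot |\mu_\lambda([\ii]) - \mu_\tau([\ii])|.
$$
Since $g_\ell^\tau$ is bounded (uniformly in $\tau \in \overline{U}$, by continuity on the compact space $\Omega$), and Proposition \ref{eq:regeq} (applied with $\theta''$) yields
$$
|\mu_\lambda([\ii]) - \mu_\tau([\ii])| \leq \bigl(e^{c|\lambda-\tau|^{\theta''} N} - 1\bigr)\,\mu_\tau([\ii]),
$$
summing over $|\ii|=N$ gives $|II| \leq 2b\alpha^N + \|g_\ell^\tau\|_\infty \bigl(e^{c|\lambda-\tau|^{\theta''} N}-1\bigr)$.

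The remaining step is to optimize over $N$. Take $N = \lceil K\log(1/|\lambda-\tau|) \rceil$ with $K$ large enough so that $\alpha^N \leq |\lambda-\tau|^{\theta''}$ (i.e.\ $K \geq \theta''/\log(1/\alpha)$). For $|\lambda-\tau|$ sufficiently small the exponent $c|\lambda-\tau|^{\theta''}N$ stays bounded, so $e^{c|\lambda-\tau|^{\theta''} N}-1 \leq C'|\lambda-\tau|^{\theta''} N \leq C'' |\lambda-\tau|^{\theta''}\log(1/|\lambda-\tau|)$. Combining, $|II| \lesssim |\lambda-\tau|^{\theta''}\log(1/|\lambda-\tau|) \leq C|\lambda-\tau|^{\theta'}$ for $|\lambda-\tau|$ small, which we ensure by choosing $\xi_{21}$ small enough. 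Adding the estimates for $I$ and $II$ completes the proof.

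The main (minor) technical point is the logarithmic loss in the optimization, which forces us to apply Proposition \ref{eq:regeq} with an intermediate exponent $\theta'' \in (\theta', \theta)$ rather than with $\theta'$ itself; everything else is bookkeeping. The uniformity over $\ell = 1,\ldots,p$ comes for free since there are only finitely many potentials, and we can take the worst of the finitely many constants.
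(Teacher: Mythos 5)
Your proof is correct and follows essentially the same route as the paper's: both discretize at a cylinder depth of order $\log(1/|\lambda-\tau|)$, use \eqref{eq:unifvar} for the discretization error, \eqref{eq:assong} for the potential swap, Proposition \ref{eq:regeq} with an intermediate exponent $\theta''\in(\theta',\theta)$ for the measure swap, and absorb the logarithmic loss using $\theta'<\theta''$. The differences (explicit splitting into the two terms $I$ and $II$, and choosing $N=\lceil K\log(1/|\lambda-\tau|)\rceil$ instead of the paper's minimal $n$ with $(\alpha e^{c|\lambda-\tau|^{\theta''}})^n\le|\lambda-\tau|$) are only organizational.
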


\begin{proof}
	Fix $\theta'' \in (\theta', \theta)$ and let $c$ be the constant from Proposition \ref{eq:regeq} corresponding to $\theta''$. Let $\lambda\in U$ be arbitrary, and let $\tau\in B_{\xi_{21}}(\lambda)$ where $\xi_{21}$ is chosen such that $\alpha e^{c\xi_{21}^{\theta''}}<1$. Choose $n\geq1$ {minimal} such that $(\alpha e^{c|\lambda-\tau|^{\theta''}})^n\leq |\lambda-\tau|$. Then
	\[
	\begin{split}
	\int g_\ell^\lambda d\mu_\lambda&\leq b\alpha^n+\sum_{|\ii|=n}g_\ell^\lambda(\ii\omega)\mu_\lambda([\ii])\\
	&\leq b\alpha^n+c_0|\lambda-\tau|^\theta+\sum_{|\ii|=n}g_\ell^\tau(\ii\omega)\mu_\lambda([\ii])\\
	&\leq b\alpha^n+c_0|\lambda-\tau|^\theta+e^{cn|\lambda-\tau|^{\theta''}}\sum_{|\ii|=n}g_\ell^\tau(\ii\omega)\mu_\tau([\ii])\\
	&\leq b\alpha^n+c_0|\lambda-\tau|^\theta+e^{cn|\lambda-\tau|^{\theta''}}\left(\int g_\ell^\tau d\mu_\tau+b\alpha^n\right).
	\end{split}\]
	Thus,
	$$
	\left|\int g_\ell^\lambda d\mu_\lambda-\int g_\ell^\tau d\mu_\tau\right|\leq\left(e^{cn|\lambda-\tau|^{\theta''}}-1\right)M+b\alpha^n\left(e^{cn|\lambda-\tau|^{\theta''}}+1\right)+c_0|\lambda-\tau|^\theta.
	$$
	where $M=\max_{\lambda\in U,\omega\in\Om}|g_\ell^{\lambda}(\omega)|$. Hence, by the choice of $n$,
	$$
	\left|\int g_\ell^\lambda d\mu_\lambda-\int g_\ell^\tau d\mu_\tau\right|\leq\left(\exp \left(c|\lambda-\tau|^{\theta''}\log|\lambda-\tau|/(\log\alpha+c|\lambda-\tau|^{\theta''})\right)-1\right)M+(c_0+2)|\lambda-\tau|^\theta.
	$$
	The map $x\mapsto \frac{x^{\theta''-\theta'}\log x}{\log\alpha+cx^{\theta''}}$ is continuous, hence bounded, on $[0,\xi_{21}]$, say, by $B$. Further, there exists a constant $\widetilde{C}_1>0$ such that $|e^x-1|\leq \widetilde{C}_1|x|$ for every $|x|\leq B\xi_{21}^{\theta'}$. Hence, 
	$$
	\left|\int g_\ell^\lambda d\mu_\lambda-\int g_\ell^\tau d\mu_\tau\right|\leq (\widetilde{C}_1M+c_0+2)|\lambda-\tau|^{\theta'},
	$$
	as desired.
\end{proof}

\begin{lem}\label{lem:largedev_ineq}
	Fix $\lambda_0\in U$ and $\theta' \in (0, \theta)$. For every $\varepsilon>0$ there exist $\xi_{22}>0$ and $C_{22}>0$ such that for every $\lambda\in B_{\xi_{22}}(\lambda_0)$ and every $n\geq1,\ \ell = 1, \ldots, p$,
	\[
	\begin{split} \mu_\lambda & \left( \left\{\omega\in\Om:\left|\frac{1}{n}S_ng_\ell^\lambda(\omega)-\int g_\ell^\lambda d\mu_\lambda\right|>\varepsilon\right\}\right) \\
	& \leq C_{22} e^{cn|\lambda - \lambda_0|^{\theta'}}\mu_{\lambda_0}\left(\left\{\omega\in\Om:\left|\frac{1}{n}S_ng_\ell^{\lambda_0}(\omega)-\int g_\ell^{\lambda_0} d\mu_{\lambda_0}\right|>\frac{\varepsilon}{5}\right\}\right),
	\end{split} \]
	with $c = c(\theta')$ as in Lemma \ref{lem:contint}.
\end{lem}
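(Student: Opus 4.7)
The plan is to reduce the claim in three steps to a comparison of $\mu_\lambda$ and $\mu_{\lambda_0}$ on sets which are finite unions of cylinders of bounded length, where Proposition~\ref{eq:regeq} applies directly. First, I would use the triangle inequality together with assumption \eqref{eq:assong} applied to $g_\ell^\lambda$ (which yields $\tfrac{1}{n}|S_n(g_\ell^\lambda - g_\ell^{\lambda_0})(\omega)| \leq c_0|\lambda-\lambda_0|^\theta$ pointwise) and Lemma~\ref{lem:contint} (which gives $|\int g_\ell^\lambda\,d\mu_\lambda - \int g_\ell^{\lambda_0}\,d\mu_{\lambda_0}| \leq C_g|\lambda-\lambda_0|^{\theta'}$). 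Choosing $\xi_{22}$ small enough that the sum of both error terms is at most $\tfrac{2\varepsilon}{5}$ for $|\lambda-\lambda_0|<\xi_{22}$, one obtains the set inclusion
\[
E_\lambda := \Bigl\{\omega:\Bigl|\tfrac{1}{n}S_ng_\ell^{\lambda}(\omega)-\int g_\ell^{\lambda}\,d\mu_{\lambda}\Bigr|>\varepsilon\Bigr\} \subseteq \Bigl\{\omega:\Bigl|\tfrac{1}{n}S_ng_\ell^{\lambda_0}(\omega)-\int g_\ell^{\lambda_0}\,d\mu_{\lambda_0}\Bigr|>\tfrac{3\varepsilon}{5}\Bigr\} =: G.
\]

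Next, I would thicken $E_\lambda$ to a union of cylinders using \eqref{eq:unifvar}. Fix $r\in\N$ independent of $n$ and $\lambda$ with $b\alpha^r < \tfrac{2\varepsilon}{5}$. If $\omega|_{n+r-1} = \omega'|_{n+r-1}$, then for every $0 \leq k \leq n-1$ the shifted words $\sigma^k\omega$ and $\sigma^k\omega'$ agree on their first $r$ symbols, so $|g_\ell^{\lambda_0}(\sigma^k\omega) - g_\ell^{\lambda_0}(\sigma^k\omega')| \leq b\alpha^r$ and hence $|\tfrac{1}{n}S_n g_\ell^{\lambda_0}(\omega) - \tfrac{1}{n}S_n g_\ell^{\lambda_0}(\omega')| \leq b\alpha^r < \tfrac{2\varepsilon}{5}$. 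Letting $\widetilde E_\lambda := \bigcup_{\omega\in E_\lambda}[\omega|_{n+r-1}]$, this variation bound combined with the inclusion $E_\lambda \subseteq G$ forces
\[
\widetilde E_\lambda \subseteq \Bigl\{\omega:\Bigl|\tfrac{1}{n}S_ng_\ell^{\lambda_0}(\omega)-\int g_\ell^{\lambda_0}\,d\mu_{\lambda_0}\Bigr|>\tfrac{\varepsilon}{5}\Bigr\} =: F,
\]
where $F$ is precisely the set appearing on the right-hand side of the claimed inequality.

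Finally, since $\widetilde E_\lambda$ is a disjoint union of cylinders of length $n+r-1$, Proposition~\ref{eq:regeq} applied cylinder-by-cylinder (with H\"older exponent $\theta'$) yields
\[
\mu_\lambda(E_\lambda) \leq \mu_\lambda(\widetilde E_\lambda) \leq e^{c|\lambda-\lambda_0|^{\theta'}(n+r-1)}\mu_{\lambda_0}(\widetilde E_\lambda) \leq e^{c|\lambda-\lambda_0|^{\theta'}(n+r-1)}\mu_{\lambda_0}(F).
\]
Absorbing the constant $e^{c(r-1)\xi_{22}^{\theta'}}$ into a new $C_{22}$ gives the asserted bound. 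The only mild subtlety is that the H\"older exponents furnished by Lemma~\ref{lem:contint}, \eqref{eq:assong}, and Proposition~\ref{eq:regeq} must all be taken to be the same; since Lemma~\ref{lem:contint} and Proposition~\ref{eq:regeq} hold for every $\theta'\in(0,\theta)$, shrinking $\theta'$ once if necessary lets all three ingredients be applied with a common exponent, after which no genuine obstacle remains.
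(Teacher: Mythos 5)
Your argument is correct and is essentially the paper's proof: the paper discretizes the potential by setting $\vphi_\ell^\lam(\omega)=g_\ell^\lam(\omega|_{n+k}1^\infty)$ so that the relevant sets become unions of cylinders of length $n+k$, while you equivalently thicken the exceptional set to cylinders of length $n+r-1$; in both cases the ingredients are the same error budget from \eqref{eq:unifvar}, \eqref{eq:assong} and Lemma \ref{lem:contint}, followed by a cylinder-by-cylinder application of Proposition \ref{eq:regeq}. Your closing caveat about shrinking $\theta'$ is unnecessary (indeed $\theta'$ is fixed in the statement and cannot be shrunk), but it is also not needed: Proposition \ref{eq:regeq} and Lemma \ref{lem:contint} are applied at the given $\theta'$, and \eqref{eq:assong} with exponent $\theta$ only contributes an error term made small by choosing $\xi_{22}$, so no common exponent is required.
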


\begin{proof}
	Fix $\lambda_0 \in U$ and $\eps>0$. Fix $k \in \N$ large enough to have $b\alpha^k \leq \frac{\eps}{5}$. For a given $n \in \N$, let $\vphi_\ell^\lam(\omega) = g_\ell^{\lam}(\omega|_{n+k}1^\infty)$. Note that by \eqref{eq:unifvar} for $g_\ell^\lam$ we have $\left\| \frac{1}{n} S_ng_\ell^\lam - \frac{1}{n}S_n\vphi_\ell^\lam \right\|_{\infty} \leq \frac{\eps}{5}$, whereas \eqref{eq:assong} for $g_\ell^\lam$ yields $\left\| \frac{1}{n} S_n\vphi_\ell^\lam - \frac{1}{n}S_n\vphi_\ell^{\lam_0} \right\|_{\infty} \leq \frac{\eps}{5}$ if $\lam$ is close enough to $\lam_0$. Moreover, functions $\om \mapsto S_n \vphi_\ell^\lam(\om)$ are constant on cylinders of length $n+k$. Therefore, applying Proposition \ref{eq:regeq} and Lemma \ref{lem:contint} gives for $\lam \in B(\lam_0, \xi_{22})$ with $\xi_{22}$ small enough:
	\[
	\begin{split}
	\mu_\lambda & \left( \left\{\omega\in\Om:\left|\frac{1}{n}S_ng_\ell^\lambda(\omega)-\int g_\ell^\lambda d\mu_\lambda\right|>\varepsilon\right\}\right) \\
	& \leq \mu_\lambda \left( \left\{\omega\in\Om:\left|\frac{1}{n}S_n\vphi_\ell^\lam(\omega)-\int g_\ell^\lambda d\mu_\lambda\right|>\frac{4\eps}{5}\right\}\right) \\
	& = \sum \limits_{|i| = n+k} \mu_\lam\left( [i] \right) \mathds{1}_{\left\{ \left|\frac{1}{n}S_n\vphi_\ell^\lam(i 1^\infty)-\int g_\ell^\lambda d\mu_\lambda\right|>\frac{4\eps}{5} \right\}}(i) \\
	& \leq e^{c(n+k)|\lam - \lam_0|^{\theta'}}\sum \limits_{|i| = n+k} \mu_{\lam_0}\left( [i] \right) \mathds{1}_{\left\{ \left|\frac{1}{n}S_n\vphi_\ell^\lam(i 1^\infty)-\int g_\ell^\lambda d\mu_\lambda\right|>\frac{4\eps}{5} \right\}}(i) \\
	& \leq C_{22}e^{cn|\lam - \lam_0|^{\theta'}}\sum \limits_{|i| = n+k} \mu_{\lam_0}\left( [i] \right) \mathds{1}_{\left\{ \left|\frac{1}{n}S_n\vphi_\ell^{\lam_0}(i 1^\infty)-\int g_\ell^{\lambda_0} d\mu_{\lambda_0}\right|>\frac{2\eps}{5} \right\}}(i) \\
	& \leq C_{22}e^{cn|\lam - \lam_0|^{\theta'}} \mu_{\lambda_0}\left(\left\{\omega\in\Om:\left|\frac{1}{n}S_ng_\ell^{\lambda_0}(\omega)-\int g_\ell^{\lambda_0} d\mu_{\lambda_0}\right|>\frac{\varepsilon}{5}\right\}\right),
	\end{split}
	\]
	where $C_{22} = \exp(ck\xi_{22}^{\theta'})$.
\end{proof}

\begin{proof}[Proof of Proposition~\ref{prop:largedev}]
	Fix $\lam_0 \in U$ and $\eps>0$. By \cite[Theorem 6]{Young90}, there exist $C_D > 0$ and $s>0$ such that
	\[ \mu_{\lambda_0}\left(\left\{\omega\in\Om:\left|\frac{1}{n}S_ng_\ell^{\lambda_0}(\omega)-\int g_\ell^{\lambda_0} d\mu_{\lambda_0}\right|>\frac{\varepsilon}{5}\right\}\right) \leq C_D e^{-2sn}\]
	for every $n \in \N$. Combining this with Lemma \ref{lem:largedev_ineq} finishes the proof.
\end{proof}

Fix $\theta' \in (0,\theta),\ \lam_0 \in \ov{U}$ and $\eps>0$. For every $n\geq \log(B_0)/\varepsilon$ let
$$
\Omega_n^c:=\left\{\ii\in\Ak^n:\text{ there exist $\omega\in[\ii]$ and $\ell \in [1,p]$ such that }\left|\frac{1}{n}S_ng_\ell^{\lambda_0}(\omega)-\int g_\ell^{\lambda_0}d\mu_{\lambda_0}\right|>4\varepsilon\right\}.
$$
We define $\Omega_n:=\Ak^n\setminus\Omega_n^c$. Choose  $$\xi \leq \min \{\xi_D, \xi_{21}\}$$ such that $c_0|\lambda-\lambda_0|^{\theta} < \eps$ and $C_g|\lam - \lam_0|^{\theta'}<\varepsilon$ for $\lambda\in B_{\xi_{21}}(\lambda_0)$. Then, for such $\lam$, Lemma \ref{lem:contint} gives that for every $\ii\in\Omega_n^c,\ \omega\in[\ii],\ \ell = 1, \ldots, p$
\begin{equation}\label{eq:defomega}
\left|\frac{1}{n}S_ng_\ell^{\lambda}(\omega)-\int g_\ell^{\lambda}d\mu_{\lambda}\right|>\varepsilon.
\end{equation}
Let us define two sequences $n_k=\lfloor(1+\varepsilon)^k\rfloor$ and $m_k=\lfloor 1+(1+\varepsilon)+\cdots+(1+\varepsilon)^k\rfloor$. For every $K\geq1$ with $m_K\geq\log(B_0)/\varepsilon$ we let
\begin{equation}\label{eq:Xi def}
\Xi_K:=\Omega_{m_K}\times\Omega_{n_{K+1}}\times\Omega_{n_{K+2}}\times\cdots\subset\Om.
\end{equation}
For $k\geq K$, denote  $\Gamma_{m_k}:=\Omega_{m_K}\times\Omega_{n_{K+1}}\times\cdots\times\Omega_{n_k}$. By Proposition \ref{prop:largedev},
\begin{equation}\label{eq:Xi measure}
\begin{split}
\mu_\lambda(\Xi_K^c)&=\sum_{\jj\in\Omega_{m_{K}}^c}\mu_{\lambda}([\jj])+\sum_{k=1}^\infty\sum_{\ii_0\in\Omega_{m_{K}}}\sum_{\ii_1\in\Omega_{n_{K+1}}}\cdots\sum_{\ii_{k-1}\in\Omega_{n_{K+k-1}}}\sum_{\jj\in\Omega_{n_{K+k}}^c}\mu_{\lambda}([\ii_0\ii_1\ldots\ii_{k-1}\jj])\\
&\leq C_D pe^{-sm_K}+\sum_{k=1}^\infty C_D pe^{-sn_{K+k}}\to0\text{ as }K\to\infty.
\end{split}
\end{equation}

\begin{prop}\label{prop:unifegorov}
For every $K$ with $n_K\geq\log(B_0)/\varepsilon$ there exists $c' = c'(\eps, K)>0$ such that the inequality 
	\be\label{eq:regeq_egorov}
	\mu_\lambda([\ii]\cap\Xi_K)\leq e^{c'|\lambda-\tau|^{\theta'}|\ii|} \mu_\tau([\ii]\cap\Xi_K)
	\ee
	holds for every $i \in \Om^*$ and every $\lambda,\tau\in B_{\xi}(\lambda_0)$ (with $\xi$ defined above).
\end{prop}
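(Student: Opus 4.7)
The plan is to establish the bound by decomposing $[\ii]\cap\Xi_K$ according to the block structure of $\Xi_K$ into cylinders whose length is proportional to $|\ii|$, then applying Proposition~\ref{eq:regeq} to each such cylinder together with the super--exponential large deviation tail already exploited in \eqref{eq:Xi measure}.

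First, given $\ii$ with $n:=|\ii|\geq 1$, choose $k=k(n)\geq K$ to be the smallest index with $L_k:=m_K+\sum_{j=K+1}^k n_j\geq n$, and set $N:=L_k$. Since $n_j=\lfloor(1+\varepsilon)^j\rfloor$ grows geometrically, one has $N\leq C_{K,\varepsilon}\cdot n$ for a constant depending only on $\varepsilon$ and $K$: for $n>m_K$, $L_{k-1}<n$ forces $n_k\leq(1+\varepsilon)n_{k-1}+1$, hence $N=L_{k-1}+n_k\leq(2+\varepsilon)n+1$; for $n\leq m_K$, $N=m_K\leq m_K\cdot n$. Now disjointly decompose
\[
[\ii]\cap\Xi_K=\bigsqcup_{\jj}[\jj]\cap\sigma^{-N}\bigl(\Xi_K^{(>k)}\bigr),
\]
where $\jj$ ranges over all words of length $N$ in $\Omega_{m_K}\times\Omega_{n_{K+1}}\times\cdots\times\Omega_{n_k}$ satisfying $\jj|_n=\ii$, and $\Xi_K^{(>k)}:=\Omega_{n_{k+1}}\times\Omega_{n_{k+2}}\times\cdots\subset\Om$.

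Next, the uniform Gibbs constant $C_G$ from Lemma~\ref{lem:eigen} together with the uniform variation bound \eqref{eq:unifvar} yields a \emph{uniform} quasi--Bernoulli inequality for Gibbs measures: there exists $C_P\geq 1$ independent of $\lambda\in\ov{U}$ such that
\[
C_P^{-1}\mu_\lambda([\jj])\mu_\lambda(A)\leq\mu_\lambda([\jj]\cap\sigma^{-|\jj|}A)\leq C_P\mu_\lambda([\jj])\mu_\lambda(A)
\]
holds for every $\jj\in\Om^*$ and Borel $A\subseteq\Om$ (verified first for cylinder $A$ by telescoping the Gibbs inequality, then extended by monotone approximation to the countable intersection $\Xi_K^{(>k)}$). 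Applying this with $A=\Xi_K^{(>k)}$, summing over $\jj$, and invoking Proposition~\ref{eq:regeq} on each of the length-$N$ cylinders to conclude $\sum_{\jj}\mu_\lambda([\jj])\leq e^{cN|\lambda-\tau|^{\theta'}}\sum_{\jj}\mu_\tau([\jj])$, gives
\[
\frac{\mu_\lambda([\ii]\cap\Xi_K)}{\mu_\tau([\ii]\cap\Xi_K)}\leq C_P^2\cdot\frac{\mu_\lambda(\Xi_K^{(>k)})}{\mu_\tau(\Xi_K^{(>k)})}\cdot e^{cN|\lambda-\tau|^{\theta'}}.
\]
Repeating the tail computation of \eqref{eq:Xi measure} at level $k$ yields, provided $K$ is sufficiently large (achievable by enlarging $K$ at the expense of $c'$), a uniform lower bound $\mu_\tau(\Xi_K^{(>k)})\geq c_4>0$ for every $k\geq K$ and $\tau\in B_\xi(\lambda_0)$, and trivially $\mu_\lambda(\Xi_K^{(>k)})\leq 1$.

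The main technical obstacle is absorbing the spurious multiplicative constant $C_P^2/c_4$ into the exponential $e^{c'|\lambda-\tau|^{\theta'}n}$ uniformly as $|\lambda-\tau|\to 0$. This is resolved by refining the comparison in \emph{relative} form rather than bounding the quasi--Bernoulli constants separately for $\lambda$ and $\tau$: writing $\mu_*=h_*\nu_*$ and invoking the pointwise H\"older control $h_\lambda/h_\tau\leq e^{c_{\theta'}|\lambda-\tau|^{\theta'}}$ and $\nu_\lambda([\jj])/\nu_\tau([\jj])\leq e^{c_{\theta'}|\lambda-\tau|^{\theta'}|\jj|}$ from Lemma~\ref{lem:eigen}, one shows that the ratio of quasi--Bernoulli defects between $\lambda$ and $\tau$ is itself of the form $\exp\bigl(O(|\lambda-\tau|^{\theta'}\cdot N)\bigr)$, so the contribution of the constant $C_P^2$ collapses to a factor of the same exponential type. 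Combined with $N\leq C_{K,\varepsilon}\cdot n$, this produces the desired bound with $c'=c'(\varepsilon,K)$.
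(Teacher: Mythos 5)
Your set-up is fine up to the point you yourself flag as the main obstacle, but the proposed resolution of that obstacle does not work, and this is a genuine gap. The block decomposition of $[\ii]\cap\Xi_K$ at the boundary $N$, the uniform quasi-Bernoulli property of the Gibbs measures, and Proposition \ref{eq:regeq} applied to the length-$N$ cylinders are all correct and give $\mu_\lambda([\ii]\cap\Xi_K)\le C_P^2\,\frac{\mu_\lambda(\Xi_K^{(>k)})}{\mu_\tau(\Xi_K^{(>k)})}\,e^{cN|\lambda-\tau|^{\theta'}}\,\mu_\tau([\ii]\cap\Xi_K)$. But $C_P$ measures the oscillation in $\omega$ of the weight $\omega\mapsto e^{S_N\phi^\lambda(\jj\omega)}h_\lambda(\jj\omega)$ at a \emph{fixed} parameter; it does not tend to $1$ as $\lambda\to\tau$, so no bound with a fixed multiplicative constant $C>1$ can yield \eqref{eq:regeq_egorov} (let $\lambda\to\tau$). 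Your claimed fix --- that the ratio of quasi-Bernoulli defects is $\exp\bigl(O(|\lambda-\tau|^{\theta'}N)\bigr)$ --- is exactly where the difficulty sits and is not substantiated: writing $\mu_{\ast}([\jj]\cap\sigma^{-N}A)=\gamma_\ast^{-N}\int_A e^{S_N\phi^{\ast}(\jj\omega)}h_{\ast}(\jj\omega)\,d\nu_{\ast}(\omega)$ and using \eqref{eq:assong} and Lemma \ref{lem:eigen} to compare $\gamma_\lambda,h_\lambda$ and the Birkhoff sums with their $\tau$-counterparts, you are still left with comparing $\int_A(\cdot)\,d\nu_\lambda$ against $\int_A(\cdot)\,d\nu_\tau$ for $A=\Xi_K^{(>k)}$, a set of infinite coding depth. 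Lemma \ref{lem:eigen} and Proposition \ref{eq:regeq} compare only cylinder measures, with exponent proportional to the cylinder length, so decomposing $A$ into cylinders produces unbounded lengths and no bound of the form $e^{O(N|\lambda-\tau|^{\theta'})}$; the termwise comparison you would need, $\mu_\lambda([\jj]\cap\sigma^{-N}\Xi_K^{(>k)})\le e^{O(N|\lambda-\tau|^{\theta'})}\mu_\tau([\jj]\cap\sigma^{-N}\Xi_K^{(>k)})$, is essentially the proposition itself. (A secondary point: the uniform lower bound $\mu_\tau(\Xi_K^{(>k)})\ge c_4>0$ needs $K$ large, while the statement is claimed for every admissible $K$, and enlarging $K$ changes $\Xi_K$.)

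What makes the paper's argument go through is that it never compares the two measures on a non-cylinder set. For $|\ii|=m_L$ it writes $\mu_\lambda([\ii]\cap\Xi_K)$ as $\mu_\lambda([\ii])$ minus the series of cylinder masses of the ``first bad block'' events, so every term is a finite cylinder measure to which Proposition \ref{eq:regeq} applies with exponent proportional to its length; the mean value theorem then reduces the correction to a factor $\exp\bigl(c|\lambda-\tau|^{\theta'}\cdot\frac{\sum_k(m_{L+k}+|\ii|)b_{L+k}(\tau)}{1-\sum_k b_{L+k}(\tau)}\bigr)$, and the large-deviation decay $b_{L+k}(\tau)\lesssim e^{-sn_{L+k}}$ from Proposition \ref{prop:largedev} beats the linear growth of the lengths, making this a uniform constant; general $|\ii|$ is then handled by passing to the next block boundary, much as you do. To rescue your scheme you would have to establish the comparison of the conditional tail masses by exactly this complementation-plus-large-deviations mechanism, i.e.\ you would be reproducing the paper's proof inside your final step rather than bypassing it.
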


\begin{proof} First, we shall prove \eqref{eq:regeq_egorov} for $\ii \in \Om^*$ with $|\ii| = m_L$ for $L\geq K$. Note that if  $\ii\notin\Gamma_{m_L}$, then $[\ii] \cap \Xi_K = \emptyset$, hence it suffices to prove the inequality for $\ii\in\Gamma_{m_L}$. By definition,
	$$
	\mu_\lambda([\ii]\cap\Xi_K)=\mu_{\lambda}([\ii])-\sum_{\jj\in\Omega_{n_{L+1}}^c}\mu_{\lambda}([\ii\jj])
	-\sum_{k=1}^\infty\sum_{\ii_1\in\Omega_{n_{L+1}}}\cdots\sum_{\ii_k\in\Omega_{n_{L+k}}}\sum_{\jj\in\Omega_{n_{L+k+1}}^c}\mu_{\lambda}([\ii\ii_1\ldots\ii_k\jj]).
	$$
	For short, denote 
	\begin{eqnarray*}
		b_{L+1}(\lambda)&:=&\frac{1}{\mu_\lambda([\ii])}\sum_{\jj\in\Omega_{n_{L+1}}^c}\mu_{\lambda}([\ii\jj]);\\
		b_{L+k+1}(\lambda)&:=&\frac{1}{\mu_\lambda([\ii])}\sum_{\ii_1\in\Omega_{n_{L+1}}}\cdots\sum_{\ii_k\in\Omega_{n_{L+k}}}\sum_{\jj\in\Omega_{n_{L+k+1}}^c}\mu_{\lambda}([\ii\ii_1\ldots\ii_k\jj]),\ \ k\ge 1.
	\end{eqnarray*}
	Hence, by Proposition \ref{eq:regeq},
	\begin{equation}\label{eq:rat1}
	\frac{\mu_\lambda([\ii]\cap\Xi_K)}{\mu_\tau([\ii]\cap\Xi_K)}\leq\frac{\mu_\lambda([\ii])}{\mu_{\tau}([\ii])}\cdot\frac{1-\sum_{k=1}^\infty e^{-c|\lambda-\tau|^{\theta'} (m_{L+k}+|\ii|)}b_{L+k}(\tau)}{1-\sum_{k=1}^\infty b_{L+k}(\tau)}.
	\end{equation}
	By  the Mean Value Theorem, there exists $\rho\in(e^{-c|\lambda-\tau|^{\theta'}},1)$ such that
	\begin{eqnarray*}
 & &	\log\left(1-\sum_{k=1}^\infty e^{-c|\lambda-\tau|^\theta (m_{L+k}+|\ii|)}b_{L+k}(\tau)\right)-\log\left(1-\sum_{k=1}^\infty b_{L+k}(\tau)\right)\\
	&=& \frac{\sum_{k=1}^\infty (m_{L+k}+|\ii|)\rho^{m_{L+k}+|\ii|-1}b_{L+k}(\tau)}{1-\sum_{k=1}^\infty \rho^{m_{L+k}+|\ii|}b_{L+k}(\tau)}\left(1-e^{c|\lambda-\tau|^{\theta'}}\right)\\
	&\leq &\frac{\sum_{k=1}^\infty (m_{L+k}+|\ii|)b_{L+k}(\tau)}{1-\sum_{k=1}^\infty b_{L+k}(\tau)}c|\lambda-\tau|^{\theta'}.
	\end{eqnarray*}
	By the Gibbs property of $\mu_\tau$ we have
	\[
	\begin{split}
	b_{L+k}(\tau)\leq C_G\mu_{\tau}\left(\bigcup_{\ii\in\Omega_{n_{L+k}}^c}[\jj]\right) & \leq C_G\mu_\tau\left(\left\{ \underset{1 \leq \ell \leq p}{\exists}\ \left|\frac{1}{n_{L+k}}S_{n_{L+k}}g_\ell^{\tau}-\int g_\ell^\tau d\mu_\tau\right|>\varepsilon\right\}\right) \\ 
	&\leq p C_G C_De^{-sn_{L+k}},
	\end{split}
	\]
	where in the last two inequalities we used \eqref{eq:defomega} and Proposition~\ref{prop:largedev}. Hence,
	\begin{eqnarray*}
	\frac{\sum_{k=1}^\infty (m_{L+k}+|\ii|)b_{L+k}(\tau)}{1-\sum_{k=1}^\infty b_{L+k}(\tau)} & \leq& \frac{p C_G C_D\sum_{k=1}^\infty (m_{L+k}+|\ii|)e^{-sn_{L+k}}}{1-p C_G C_D\sum_{k=1}^\infty e^{-sn_{L+k}}}\\
	& \leq & \frac{2p C_G C_D\sum_{k=1}^\infty m_{L+k}e^{-sn_{L+k}}}{1-p C_G C_D\sum_{k=1}^\infty e^{-sn_{L+k}}},
	\end{eqnarray*}
	which is a uniform constant. Combining this with \eqref{eq:rat1} and Proposition~\ref{eq:regeq}, we get
	$$
	\frac{\mu_\lambda([\ii]\cap\Xi_K)}{\mu_\tau([\ii]\cap\Xi_K)}\leq e^{c|\lambda-\tau|^{\theta'}(|\ii|+1)}.
	$$
	Now let us extend \eqref{eq:regeq_egorov} to all $\ii \in \Om^*$ with $|\ii| \geq m_K$. Let $m_L \leq |\ii| < m_{L+1}$ for $L \geq K$. Then
	\[
	\begin{split}
	\mu_\lambda([\ii]\cap\Xi_K) & = \sum \limits_{\jj \in \Ak^{m_{L+1} - |\ii|}} \mu_{\lam}([\ii \jj] \cap \Xi_k) \leq \sum \limits_{\jj \in \Ak^{m_{L+1} - |\ii|}} e^{c|\lam - \tau|^{\theta'}m_{L+1}} \mu_{\tau}([\ii \jj] \cap \Xi_k) \\
	& \leq e^{c|\lam - \tau|^{\theta'}m_{L+1}} \mu_{\tau}([\ii]\cap\Xi_K) \leq e^{c|\lam - \tau|^{\theta'}|\ii| \frac{m_{L+1}}{m_L}} \mu_{\tau}([\ii]\cap\Xi_K)\\
	& \leq e^{(3+\eps)c|\lam - \tau|^{\theta'}|\ii|} \mu_{\tau}([\ii]\cap\Xi_K).
	\end{split}
	\]
	Finally, for $\ii \in \Om^*$ with $|\ii| < m_K$, the same calculation as above shows
	\[ \mu_\lambda([\ii]\cap\Xi_K) \leq e^{c m_K |\lam - \tau|^{\theta'}|\ii|}\mu_\tau([\ii]\cap\Xi_K). \]
\end{proof}

\begin{lem}\label{lem:unifegorov deviation}
	For every $\ell = 1, \ldots p,\ K\geq\log(B_0)/\varepsilon,\ n\geq m_K$, and every $\ii\in\Om_*$ with $|\ii|=n$ and $[\ii]\cap\Xi_K \neq \emptyset$, every $\omega\in[\ii]$ and every $\lambda\in B_\xi(\lambda_0)$, the following holds:
	$$
	\left|\frac{1}{n}S_ng_\ell^\lambda(\omega)-\int g_\ell^\lambda d\mu_\lambda\right|<(6+4M)\varepsilon,
	$$
	where $M=\max_{\lambda\in \ov{U},\omega\in\Om, 1 \leq \ell \leq p}|g_\ell^\lambda(\omega)|$.
\end{lem}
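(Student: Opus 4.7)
The plan is to bound the Birkhoff sum block by block, using the structure of $\Xi_K$ to control the bulk and a trivial estimate for the small remainder. Fix $\ii\in\Om^*$ with $|\ii|=n\ge m_K$ and $[\ii]\cap\Xi_K\ne\emptyset$, fix $\omega\in[\ii]$, $\ell\in\{1,\dots,p\}$, and $\lambda\in B_\xi(\lambda_0)$. Set $M_0:=m_K$, $M_k:=M_{k-1}+n_{K+k}$ for $k\ge 1$, and let $j\ge 0$ be the unique integer with $M_j\le n<M_{j+1}$; write $r:=n-M_j\in[0,n_{K+j+1})$. The first observation is that, because $\omega|_n=\tilde\omega|_n$ for any $\tilde\omega\in[\ii]\cap\Xi_K$, the prefix $\omega|_{m_K}$ lies in $\Omega_{m_K}$ and for each $1\le k\le j$ the block $(\omega_{M_{k-1}+1},\ldots,\omega_{M_k})$ lies in $\Omega_{n_{K+k}}$.

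By the definition of $\Omega_N$, the Birkhoff sum of $g_\ell^{\lambda_0}$ of length $N$ starting at the beginning of any of these blocks is within $4\varepsilon N$ of $N\!\int g_\ell^{\lambda_0}d\mu_{\lambda_0}$. Using \eqref{eq:assong} with $c_0\xi^\theta<\varepsilon$ to transfer the Birkhoff sum from $\lambda_0$ to $\lambda$, and Lemma \ref{lem:contint} with $C_g\xi^{\theta'}<\varepsilon$ to replace $\int g_\ell^{\lambda_0}d\mu_{\lambda_0}$ by $\int g_\ell^{\lambda}d\mu_{\lambda}$, one loses at most $2\varepsilon$ per unit of length, so
$$
\bigl|S_{m_K}g_\ell^\lambda(\omega)-m_K\!\!\int\! g_\ell^\lambda d\mu_\lambda\bigr|\le 6\varepsilon m_K
$$
and, for each $1\le k\le j$,
$$
\bigl|S_{n_{K+k}}g_\ell^\lambda(\sigma^{M_{k-1}}\omega)-n_{K+k}\!\!\int\! g_\ell^\lambda d\mu_\lambda\bigr|\le 6\varepsilon n_{K+k}.
$$
Decomposing $S_n g_\ell^\lambda(\omega)$ into these pieces plus the tail $S_r g_\ell^\lambda(\sigma^{M_j}\omega)$ and using the trivial bound $|S_r g_\ell^\lambda(\sigma^{M_j}\omega)-r\!\int g_\ell^\lambda d\mu_\lambda|\le 2Mr$, I arrive at
$$
\left|\tfrac{1}{n}S_n g_\ell^\lambda(\omega)-\int g_\ell^\lambda d\mu_\lambda\right|\le 6\varepsilon\,\tfrac{M_j}{n}+2M\,\tfrac{r}{n}\le 6\varepsilon+2M\,\tfrac{r}{n}.
$$

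The only remaining, and main, step is to verify $r/n\le 2\varepsilon$, from which the desired $(6+4M)\varepsilon$ follows immediately. This is a direct arithmetic computation with $n_k=\lfloor(1+\varepsilon)^k\rfloor$ and $m_k=\lfloor\sum_{i=0}^k(1+\varepsilon)^i\rfloor$: on the one hand $r<n_{K+j+1}\le(1+\varepsilon)^{K+j+1}$, while summing the geometric series gives
$$
n\ge M_j\ge\frac{(1+\varepsilon)^{K+j+1}-1}{\varepsilon}-(j+2).
$$
Since the hypothesis $m_K\ge\log(B_0)/\varepsilon$ forces $(1+\varepsilon)^{K+1}$, and hence $(1+\varepsilon)^{K+j+1}$, to dominate $\varepsilon(j+2)$, the lower bound on $n$ beats the floor correction and one obtains $r/n\le(1+O(\varepsilon))\varepsilon\le 2\varepsilon$, shrinking $\xi$ further if necessary to absorb the $O(\varepsilon)$ error. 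Everything else in the argument is bookkeeping; this remainder estimate is the only real content.
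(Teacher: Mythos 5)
Your argument is correct and follows essentially the same route as the paper's proof: the paper likewise splits at the last block boundary $m_L\le n<m_{L+1}$, bounds the averaged deviation over the full blocks by $6\varepsilon$ (the $4\varepsilon$ from the definition of the sets $\Omega_N$ plus $\varepsilon+\varepsilon$ from transferring the Birkhoff sum via \eqref{eq:assong} and the integral via Lemma \ref{lem:contint}, using the choice of $\xi$), bounds the tail trivially by $2M$ per symbol, and uses the geometric growth of the block lengths to make the tail's proportion at most $2\varepsilon$. One cosmetic slip: the residual $O(\varepsilon)$ error in $r/n$ is absorbed by $K$ being large (exactly as the paper remarks), not by shrinking $\xi$, which plays no role in that ratio.
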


\begin{proof}
	Let $L\geq K$ be such that $m_L\leq n<m_{L+1}$. Then
	\[
	\begin{split}
	&\left|\frac{1}{n}S_ng_\ell^\lambda(\omega)-\int g_\ell^\lambda d\mu_\lambda\right|\\&\leq\frac{m_L}{n}\left|\frac{1}{m_L}S_{m_L}g_\ell^\lambda(\omega)-\int g_\ell^\lambda d\mu_\lambda\right|+\left|\frac{1}{n}(S_ng_\ell^\lambda(\omega)-S_{m_L}g_\ell^\lambda(\omega))-\frac{n-m_L}{n}\int g_\ell^\lambda d\mu_\lambda\right|\\
	&\leq \frac{m_L}{n}6\varepsilon+\frac{n-m_L}{n}2M\leq 6\varepsilon+ \frac{n_{L+1}}{m_L}\cdot 2M\leq 6\varepsilon+\frac{(1+\varepsilon)^{L+1}}{(1+\varepsilon)^{L}-1}\varepsilon\cdot 2M.
	\end{split}
	\]
	Since $K$ is large, the claim follows.
\end{proof}
Now we are ready to prove Proposition \ref{prop:stable cor bound}.

\begin{proof}[Proof of Proposition \ref{prop:stable cor bound}]
Let $g_1^\lam(\om) = P_\lam - \phi^\lam(\om)$ and $g_2^\lam (\om) = -\log \left| \left(f^\lam_{\om_1}\right)' (\Pi^{\lam}(\sigma \om))\right|$. Then $h_{\mu_{\lam}} = \int g_1^\lam d\mu_{\lam}$ and $\chi_{\mu_\lam} = \int g_2^\lam d\mu_{\lam}$.
Fix $\eps>0,\ \eps'>0$, and $\theta' \in [0,\theta)$. Let $\xi>0$ be small enough, so that Proposition \ref{prop:unifegorov} and Lemma \ref{lem:unifegorov deviation} hold. Let $A = \Xi_K$ be defined as in \eqref{eq:Xi def} for fixed $K \geq \log(B_0)/\eps$, large enough to have $\mu_\lam(A) \geq 1 - \eps'$ for $\lam \in B_{\xi}(\lam_0)$ by \eqref{eq:Xi measure}. Then $\tilde{\mu}_\lam = \mu_\lam |_A$ satisfies \eqref{eq:tilde mu bounds} by Proposition \ref{prop:unifegorov}. By the Gibbs property and Lemma \ref{lem:unifegorov deviation}, for $u \in \Om^*$ satisfying $[u] \cap A \neq \emptyset$ with $|u| = n \geq m_K$ and any $\om \in [u]$, we have
\[ \tilde{\mu}_{\lam}([u]) \leq \mu_\lam([u]) \leq C_G \exp(-P_\lam n + S_n\phi^\lam(\om)) = C_G \exp(-S_n g_1^\lam(\om)) \leq C_G e^{-n(h_{\mu_\lam} - (6+4M)\eps) }\]
and
\[ \left| \left(f^\lam_{u}\right)' (\Pi^{\lam}(\sigma^n \om))\right| \geq e^{-n(\chi_{\mu_\lam} + (6 + 4M)\eps)}. \]
Therefore, setting $A_n = \{ u \in \Ak^n : [u] \cap A \neq \emptyset \}$ and applying Lemma \ref{lem:pbdp}, we obtain for $\alpha > 0$,
\[
\begin{split} \Ek_{\alpha}(\tilde{\mu}_{\lam} , d_{\lam}) & = \sum \limits_{n=0}^{\infty} \sum \limits_{u \in A_n} \sum_{\substack{i,j\in \Ak\\ i \neq j}} \left|f_{u}^{\lam} (X)\right|^{-\alpha} \tilde{\mu}_\lam([ui])\tilde{\mu}_\lam([uj]) \\ & \leq C_{61}^{\alpha} C_G \sum \limits_{n=0}^{\infty} \sum \limits_{u \in A_n} \sum_{\substack{i,j\in \Ak\\ i \neq j}} e^{-n(h_{\mu_\lam} - (6+4M)\eps - \alpha(\chi_{\mu_\lam} + (6 + 4M)\eps))} \tilde{\mu}_{\lam}([uj]) \\
& \leq C_{61}^{\alpha} C_G \# \Ak \sum \limits_{n=0}^{\infty} e^{-n(h_{\mu_\lam} - (6+4M)\eps - \alpha(\chi_{\mu_\lam} + (6 + 4M)\eps))} < \infty,
\end{split}
\]
provided $\alpha < \frac{h_{\mu_\lam} - (6+4M)\eps}{\chi_{\mu_\lam} + (6 + 4M)\eps}$. This shows $\dim_{cor}(\tilde{\mu}_\lam, d_\lam) \geq \frac{h_{\mu_\lam} - (6+4M)\eps}{\chi_{\mu_\lam} + (6 + 4M)\eps}$.
\end{proof}

%%%%%%%%%%%%%%%%%%

\section{Proofs of Theorems \ref{thm:main_cor_dim} and \ref{thm:main_gibbs}}

\begin{lem}\label{lem:dim_cor_continuity}
	Let $\{f_j^\lam\}_{j \in \Ak}$ be a parametrized IFS satisfying smoothness assumptions \ref{as:C2} - $\ref{as:hyper}$. Let $\left\{ \mu_\lam \right\}_{\lam \in \ov{U}}$ be a collection of finite Borel measures on $\Om$ satisfying \ref{as:measure}. Then the map
	\[ \ov{U} \ni \lam \mapsto \dim_{cor}(\mu_\lam, d_\lam) \]
	is continuous.
\end{lem}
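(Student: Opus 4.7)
The plan is to show that the energy $\mathcal{E}_\alpha(\mu_\lam, d_\lam)$ depends on $\lam$ in a controlled way so that the supremum of $\alpha$ for which it is finite varies continuously. First I would decompose the energy over cylinders: since $d_\lam(u,v) = |f^\lam_{u \wedge v}(X)|$ for $u \neq v$, one has
\[
\mathcal{E}_\alpha(\mu_\lam, d_\lam) = \sum_{n=0}^{\infty} \sum_{w \in \Ak^n} |f^\lam_w(X)|^{-\alpha} \sum_{\substack{i,j \in \Ak \\ i \neq j}} \mu_\lam([wi])\,\mu_\lam([wj]).
\]

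Next I would compare $d_\lam$ and $d_{\lam_0}$ on cylinders using Lemma \ref{lem:pbdp} (the parametric bounded distortion), which yields
\[
|f^\lam_w(X)|^{-\alpha} \leq C_{62}^\alpha \, e^{\alpha c_{62}|\lam - \lam_0||w|}\, |f^{\lam_0}_w(X)|^{-\alpha},
\]
while assumption \ref{as:measure} gives $\mu_\lam([wi])\mu_\lam([wj]) \leq e^{2c|\lam-\lam_0|^\theta(|w|+1)}\mu_{\lam_0}([wi])\mu_{\lam_0}([wj])$. Combining these with the key observation that $|f^{\lam_0}_w(X)| \leq \gamma_2^{|w|}$ by \ref{as:hyper} (recall $\diam X = 1$), any exponential factor $e^{\epsilon(\lam)|w|}$ with $\epsilon(\lam) := \alpha c_{62}|\lam-\lam_0| + 2c|\lam-\lam_0|^\theta$ can be absorbed into $|f^{\lam_0}_w(X)|^{-\epsilon'}$ with $\epsilon' := \epsilon(\lam)/(-\log\gamma_2)$. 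This produces the clean bound
\[
\mathcal{E}_\alpha(\mu_\lam, d_\lam) \leq C_{62}^\alpha\, e^{2c|\lam-\lam_0|^\theta}\,\mathcal{E}_{\alpha + \epsilon'(\lam)}(\mu_{\lam_0}, d_{\lam_0}).
\]

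From this, whenever $\mathcal{E}_{\alpha + \epsilon'(\lam)}(\mu_{\lam_0}, d_{\lam_0}) < \infty$ one has $\mathcal{E}_\alpha(\mu_\lam, d_\lam) < \infty$, hence $\dim_{cor}(\mu_\lam, d_\lam) \geq \dim_{cor}(\mu_{\lam_0}, d_{\lam_0}) - \epsilon'(\lam)$. The symmetric argument (swapping the roles of $\lam$ and $\lam_0$) gives the reverse inequality, and since $\epsilon'(\lam) \to 0$ as $\lam \to \lam_0$, continuity at $\lam_0$ follows. A minor technical point is that $\epsilon'(\lam)$ a priori depends on $\alpha$ through the $\alpha c_{62}|\lam-\lam_0|$ term; however, $\dim_{cor}(\mu_\lam, d_\lam)$ is uniformly bounded in $\lam$ (e.g.\ by $\log(\#\Ak)/|\log\gamma_2|$, since $\mu_\lam$ has bounded mass and cylinders of length $n$ have $d_\lam$-diameter at most $\gamma_2^n$), so we may restrict attention to $\alpha$ in a fixed bounded range. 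I do not anticipate any serious obstacle: the argument is a direct comparison using the two regularity hypotheses \ref{as:measure} and Lemma \ref{lem:pbdp}, and the only care needed is the absorption trick that exploits the uniform contraction.
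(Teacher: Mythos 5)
Your proposal is correct and follows essentially the same route as the paper's proof: decompose the energy over cylinders, compare $d_\lam$ with $d_{\lam_0}$ via the parametric bounded distortion Lemma \ref{lem:pbdp}, compare the measures via \ref{as:measure}, and absorb the resulting factor $e^{\const\cdot|w|}$ into $|f^{\lam_0}_w(X)|^{-\eps}$ using the uniform contraction bound $|f^{\lam_0}_w(X)|\le\gamma_2^{|w|}$, then conclude by symmetry. The paper fixes $\alpha$ and $\eps$ first and shrinks $|\lam-\lam'|$ accordingly (which already suffices, since the correlation dimension is finite), whereas you make the loss $\eps'(\lam)$ explicit and handle its $\alpha$-dependence by the uniform bound $\dim_{cor}\le\log(\#\Ak)/|\log\gamma_2|$; this is a cosmetic difference only.
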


\begin{proof}
	Fix arbitrary $\alpha>0,\ \eps > 0$. It is enough to prove that there exists a constant $\widehat{C}>0$ such that inequality
	\[ \Ek_{\alpha}(\mu_{\lam} , d_{\lam}) \leq \widehat{C} \Ek_{\alpha + \eps}(\mu_{\lam'} , d_{\lam'}) \]
	holds provided $\lam$ and $\lam'$ are close enough. By \ref{as:measure} and the parametric bounded distortion property (Lemma \ref{lem:pbdp}), 
	\[
	\begin{split} \Ek_{\alpha}(\mu_{\lam} , d_{\lam}) & = \sum \limits_{n=0}^{\infty} \sum \limits_{u \in \Ak^n} \sum_{\substack{i,j\in \Ak\\ i \neq j}} \left|f_{u}^{\lam} (X)\right|^{-\alpha} \mu_\lam([ui])\mu_\lam([uj]) \\ 
	& \leq C_{62} \sum \limits_{n=0}^{\infty} \sum \limits_{u \in \Ak^n} \sum_{\substack{i,j\in \Ak\\ i \neq j}} e^{(c+c_{62}) n|\lam - \lam'|^{\theta}} \left|f_{u}^{\lam'} (X)\right|^{-\alpha} \mu_{\lam'}([ui])\mu_{\lam'}([uj]) \\
	& \leq C_{62} \sum \limits_{n=0}^{\infty} \sum \limits_{u \in \Ak^n} \sum_{\substack{i,j\in \Ak\\ i \neq j}} \left|f_{u}^{\lam'} (X)\right|^{-(\alpha+\eps)} \mu_{\lam'}([ui])\mu_{\lam'}([uj]) \\
	& = C_{62} \Ek_{\alpha + \eps}(\mu_{\lam'} , d_{\lam'}),
	\end{split}
	\]
	where the last inequality holds provided $|\lam - \lam'|$ is small enough, as $\left|f_{u}^{\lam'} (X)\right|^{-\eps} \geq \gamma_2^{-\eps n}$ by \ref{as:hyper}.
\end{proof}

\subsection{Proof of Theorem \ref{thm:main_cor_dim}}
Fix $\lam_0 \in U$ with $\dim_{cor}(\mu_{\lam_0}, d_{\lam_0}) > 1$. Let $\eps>0$ be small enough to have 
\[\gamma := \frac{\min \left\{\dim_{cor}(\mu_{\lam_0}, d_{\lam_0}), 1 + \min\{\delta, \theta \} \right\} - 4\eps - 1}{2} > 0.\]
Let $q = 1 + 2\gamma + 2\eps$. Then
\[ 1 + 2\gamma + \eps < q \leq \min \left\{\dim_{cor}(\mu_{\lam_0}, d_{\lam_0}), 1 + \min\{\delta, \theta \} \right\} - 2\eps.  \]
Let $\beta>0$ be small enough to have
\[ q(1 + a_0\beta) \leq \min \left\{\dim_{cor}(\mu_{\lam_0}, d_{\lam_0}), 1 + \min\{\delta, \theta \} \right\} - \eps, \]
where $a_0$ is as in Theorem \ref{thm:sobolev integral bound}. By Theorem \ref{thm:sobolev integral bound}, there exists an neighbourhood $J$ of $\lambda_0$ in $U$, interval $I$ containing $\lam_0$ and compactly supported in $J$ and smooth function $\rho$ with $0 \leq \rho \leq 1,\ \supp(\rho) \subset J$ and $\rho \equiv 1$ on $I$, such that
\[ \int_{I} \|\nu_\lam\|^2_{2,\gamma}d\lam \leq \int_{J}\|\nu_\lambda\|_{2,\gamma}^2\rho(\lambda)d\lambda\le \widetilde{C}_1\Ek_{q(1+a_0\beta)}(\mu_{\lambda_0}, d_{\lam_0}) + \tilde{C}_2 < \infty \]
as $q(1 + a_0\beta) \leq \dim_{cor}(\mu_{\lam_0}, d_{\lam_0}) - \eps$. Therefore, $\|\nu_\lam\|^2_{2,\gamma} < \infty$ for Lebesgue almost every $\lam \in {I}$, hence
%It follows from Theorem \ref{thm:sobolev integral bound} (which we can apply with arbitrary small $\beta>0,\ \eps>0$, hence $2\gamma$ arbitrary close to $\min\{\delta, \theta\}$ from below and $q$ arbitrary close to $1 + \min\{\delta, \theta \}$ from below) that for every $\lambda_0 \in U$ and $\eps'>0$ there exists a small enough neighbourhood $J$ of $\lambda_0$ in $U$ such that
%\[ \int \limits_{J} \|\nu_\lam\|^2_{2,} \]
\[\dim_S((\Pi^\lam)_* \mu_\lam) \geq 1 + 2\gamma \geq \min \left\{\dim_{cor}(\mu_{\lam_0}, d_{\lam_0}), 1 + \min\{\delta, \theta \} \right\} - 4\eps\]
holds almost surely on ${I}$. As $\eps$ can be taken arbitrary small and the function $\lam \mapsto \dim_{cor}(\mu_\lam, d_\lam)$ is continuous by Lemma \ref{lem:dim_cor_continuity}, we can conclude the result in the same way as in the proof of Theorem \ref{thm:main_hausdorff} (see the last paragraph of Section \ref{sec:proof of main_hausdorff}).

\subsection{Proof of Theorem \ref{thm:main_gibbs}}
As Proposition \ref{eq:regeq} implies that measures $\{ \mu_\lam\}_{\lam \in \ov{U}}$ satisfy \ref{as:measure} with $\theta'$ arbitrarily close to $\theta$, the first assertion of Theorem \ref{thm:main_gibbs} follows from Theorem \ref{thm:main_cor_dim}. For the absolute continuity part, fix $\eps>0$ and $\eps'>0$ and let $\tilde{\mu}_\lam$ be as in Proposition \ref{prop:stable cor bound}. By Theorem \ref{thm:main_cor_dim} we have $\dim_S((\Pi^\lam)_*\tilde{\mu}_\lam) > 1$ for Lebesgue almost every $\lam$ with $\frac{h_{\mu_{\lam}}}{\chi_{\mu_{\lam}}} > 1 + \eps$. As any measure on $\R$ with Sobolev dimension greater than $1$ is absolutely continuous (with $L^2$ density), passing with $\eps'$ and $\eps$ to zero finishes the proof.

\section{Applications}

\subsection{Place-dependent Bernoulli convolutions}\label{sec:Bern}

Our first application is the place-dependent Bernoulli convolution studied in \cite{BB15}. Let $0<\rho<\frac{1}{2}$ and $0.5<\lambda<1$ and let us consider the following dynamical system $f:[-1,1]\times[0,1]\mapsto[-1,1]\times[0,1]$, where
\[
f(x,y)=\left\{\begin{array}{cc}
\left(\lambda x-(1-\lambda),\frac{2y}{1+2\rho x}\right) & \text{if }0\leq y<\frac{1}{2}+\rho x \\
\left(\lambda x+(1-\lambda),\frac{2y-2\rho x-1}{1-2\rho x}\right) & \text{if } \frac{1}{2}+\rho x\leq y\leq1.
\end{array}\right.
\]
For the action of $f$ on the rectangle $[-1,1]\times[0,1]$ see Figure~\ref{ffunction}.
\begin{figure}
	% Requires \usepackage{graphicx}
	\includegraphics[width=170mm]{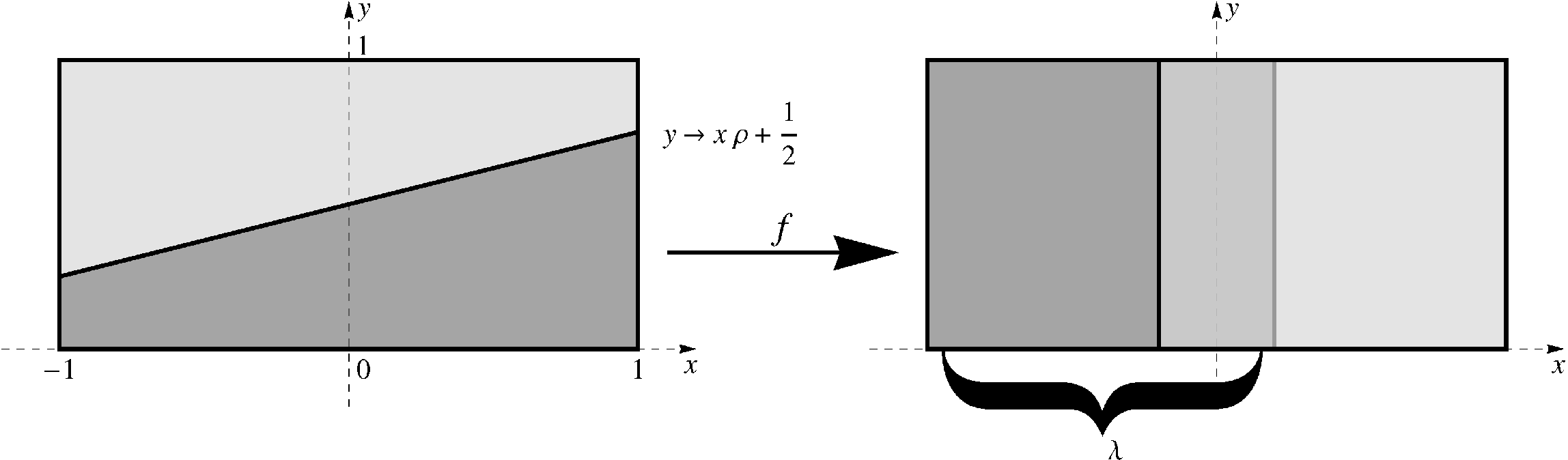}\\
	\caption{The map $f$ acting on the rectangle $[-1,1]\times[0,1]$.}\label{ffunction}
\end{figure}

Let $\nu_{\lambda,\rho}$ be the place-dependent invariant measure of the IFS on $[-1,1]$ $$\Psi_{\lambda}=\left\{\psi_0^{\lambda}(x)=\lambda x-(1-\lambda),\psi_1^{\lambda}(x)=\lambda x+(1-\lambda)\right\}$$ with probabilities $\left\{p_0(x)=\frac{1}{2}+\rho x,p_1(x)=\frac{1}{2}-\rho x\right\}$. That is, $\nu_{\lambda,\rho}$ is the unique probability measure of the dual operator $L^*$, where
$$
Lg(x)=\left(\frac{1}{2}+\rho x\right)g(\lambda x-(1-\lambda))+\left(\frac{1}{2}-\rho x\right)g(\lambda x+(1-\lambda)),
$$
for any continuous test function $g\colon[0,1]\mapsto\R$. In fact, by \cite[Theorem~1.1]{FL}, 
\begin{equation}\label{eq:convplacedep}
\lim_{n\to\infty}L^ng(x)=\int gd\nu_{\lambda,\rho}\text{ uniformly on $[0,1]$.}
\end{equation}
Applying \eqref{eq:convplacedep} and the bounded convergence theorem, simple calculations show that
\[
\frac{1}{n}\sum_{k=0}^{n-1}\overline{\mathcal{L}}_2\circ f^{-k}\rightarrow \nu_{\lambda,\rho}\times\mathcal{L}_1\text{ weakly},
\]
where $\overline{\mathcal{L}}_2$ is the normalized Lebesgue measure on the rectangle. Hence, by the results of Schmeling and Troubetzkoy~\cite[Section~2, 3]{SchTro}, the measure $\nu_{\lambda,\rho}\times\mathcal{L}_1$ is the unique SBR-measure of the map $f$. Therefore, the property $\nu_{\mathrm{SBR}}\ll\mathcal{L}_2$ is equivalent to $\nu_{\lambda,\rho}\ll\mathcal{L}_1$ and moreover $\dim_{\rm H}\nu_{SBR}=1+\dim_{\rm H}\nu_{\lambda,\rho}$. 

Clearly, the IFS $\Psi_\lambda$ satisfies the conditions \ref{as:C2}-\ref{as:hyper} for $\lambda$ in an arbitrary compact subinterval of $(0,1)$. Moreover, it is easy to see that $\nu_{\lambda,\rho}$ is a push-forward measure of a parameter-dependent Gibbs measure $\mu_{\lambda,\rho}$. More precisely, let $\Omega=\{-1,1\}^\N$ and
$$
\Pi^\lambda(\omega)=\sum_{k=1}^\infty\omega_k\lambda^{k-1},
$$
and let $\phi^\lambda(\omega)=\log\left(p_{\omega_1}(\Pi^\lambda(\sigma\omega))\right)$. It is easy to see that $\phi^\lambda$ satisfies \eqref{eq:unifvar} and \eqref{eq:assong} for every fixed $\rho\in[0,1/2)$. Moreover,
\begin{eqnarray*}
	&\chi_{\mu_{\lambda,\rho}}=-\log\lambda;\\
	&h_{\mu_{\lambda,\rho}}=-\int_{\R}\left(\frac{1}{2}+\rho x\right)\log\left(\frac{1}{2}+\rho x\right)+\left(\frac{1}{2}-\rho x\right)\log\left(\frac{1}{2}-\rho x\right)d\nu_{\lambda,\rho}(x).
\end{eqnarray*}
Shmerkin and Solomyak~\cite[Theorem~2.6]{SS2} showed that $\Psi_\lambda$ satisfies the transversality condition \ref{as:trans} on the interval $\lambda\in(0,0.6684755)$. Hence we can apply Theorem~\ref{thm:main_gibbs} and verify the claim \cite[Theorem~4.1]{BB15}.

\begin{thm}\label{th:app1}
	For every $0\leq\rho<0.5$ and Lebesgue almost every $\lambda\in(0.5,0.6684755)$,
	\[\dim_{\rm H}\nu_{\lambda,\rho}=\min\Bigl\{1,\frac{h_{\mu_{\lambda,\rho}}}{-\log\lambda}\Bigr\}
	\]
	Moreover, $\nu_{\lambda,\rho}$ is absolutely continuous for Lebesgue almost every
	\[
	\lambda\in\left\{\lambda\in(0.5,0.6684755):h_{\mu_{\lambda,\rho}}>-\log\lambda\right\}.
	\]
	In particular, the region contains the quadrilateral formed by the points $(0,0.5)$, $(0.45,0.55)$, $(0.45,0.668)$, $(0,0.668)$.
\end{thm}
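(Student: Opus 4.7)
The plan is to reduce the theorem to an application of Theorem~\ref{thm:main_hausdorff} and Theorem~\ref{thm:main_gibbs}, once all the hypotheses are verified. First, I would check the smoothness and hyperbolicity assumptions \ref{as:C2}--\ref{as:hyper} for the IFS $\Psi_\lambda = \{\psi_0^\lambda, \psi_1^\lambda\}$: these are affine in $x$ with $C^\infty$ dependence on $\lambda$, so every second and mixed partial derivative is automatic; and uniform contraction $|\frac{d}{dx}\psi_j^\lambda| = \lambda \le 0.6684755$ is clear on any compact subinterval of $(0,1)$. The transversality condition \ref{as:trans} is precisely the content of \cite[Theorem~2.6]{SS2} applied on $(0,0.6684755)$.

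Next I would verify the Gibbs hypotheses \eqref{eq:unifvar} and \eqref{eq:assong} for the potential $\phi^\lambda(\omega)=\log p_{\omega_1}(\Pi^\lambda(\sigma\omega))$. Since $p_0(x)=\frac12+\rho x$ and $p_1(x)=\frac12-\rho x$ are Lipschitz and uniformly bounded away from $0$ and $1$ for $\rho\le 0.45$, $\log p_i$ is Lipschitz in $x$. Combined with the exponential decay $|\Pi^\lambda(\omega_1)-\Pi^\lambda(\omega_2)|\lesssim \lambda^{|\omega_1\wedge\omega_2|}$ (from uniform contraction) and the Hölder dependence of $\Pi^\lambda$ on $\lambda$ coming from Proposition~\ref{prop:pi hoelder}, we get \eqref{eq:unifvar} with $\alpha=\lambda$ and \eqref{eq:assong} with $\theta=\delta=1$. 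Hence Theorem~\ref{thm:main_gibbs} (and its dimension counterpart Theorem~\ref{thm:main_hausdorff}) applies and yields, for a.e.\ $\lambda$ in the transversality region,
\[
\hdim \nu_{\lambda,\rho} = \min\Bigl\{1,\frac{h_{\mu_{\lambda,\rho}}}{-\log\lambda}\Bigr\},
\]
together with absolute continuity of $\nu_{\lambda,\rho}$ whenever $h_{\mu_{\lambda,\rho}}>-\log\lambda$.

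It remains to show that the open set
\[
R:=\bigl\{(\rho,\lambda)\in[0,0.5)\times(0.5,0.6684755): h_{\mu_{\lambda,\rho}}>-\log\lambda\bigr\}
\]
contains the claimed quadrilateral. The map $(\rho,\lambda)\mapsto h_{\mu_{\lambda,\rho}}$ is continuous: this follows from the weak-$*$ continuity of $\nu_{\lambda,\rho}$ (itself a consequence of \eqref{eq:convplacedep} together with the continuous dependence of the Perron operator on parameters) combined with the continuity and boundedness of the integrand $p\mapsto-p\log p-(1-p)\log(1-p)$ on $[0.05,0.95]$. Hence $R$ is open. On the edge $\rho=0$ the measure $\mu_{\lambda,0}$ is the uniform Bernoulli measure with $h_{\mu_{\lambda,0}}=\log 2$, so the whole segment $\{(0,\lambda):\lambda\in(0.5,0.6684755)\}$ lies in $R$; in particular $(0,0.5)$ and $(0,0.668)$ are limit points of $R$.

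The nontrivial task, which I expect to be the main technical obstacle, is extending this to $\rho$ up to $0.45$, i.e.\ verifying $h_{\mu_{\lambda,\rho}}>-\log\lambda$ at the other two vertices and, by continuity and convexity arguments, on the quadrilateral. The naive pointwise bound $h_{\mu_{\lambda,\rho}}\ge h(\tfrac12-\rho)\approx 0.1985$ at $\rho=0.45$ is far too weak to beat $-\log 0.55\approx 0.598$. Instead I would argue that $\nu_{\lambda,\rho}$ is not concentrated near $\pm 1$, so most of its mass sees $h(\tfrac12+\rho x)$ close to $\log 2$. Quantitatively, for $\lambda\in(0.5,0.668)$ and $\rho\le 0.45$, the invariant density $\nu_{\lambda,\rho}$ can be bounded (via iterated application of $L^*$ starting from Lebesgue) to have at most a fixed fraction $\kappa(\lambda,\rho)<1$ of mass in any neighborhood of the endpoints; combining this with the concavity of $h$ and direct numerical inspection at the four vertices suffices. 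As both $\lambda\mapsto -\log\lambda$ and $(\rho,\lambda)\mapsto h_{\mu_{\lambda,\rho}}$ are continuous and the four vertex inequalities are strict, the inequality propagates to the entire closed quadrilateral by joint continuity, completing the proof.
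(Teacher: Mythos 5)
Your reduction of the theorem to Theorems \ref{thm:main_hausdorff} and \ref{thm:main_gibbs} — checking \ref{as:C2}--\ref{as:hyper} for the affine system $\Psi_\lambda$, invoking \cite[Theorem~2.6]{SS2} for transversality on $(0,0.6684755)$, and verifying \eqref{eq:unifvar}, \eqref{eq:assong} for the potential $\phi^\lambda(\omega)=\log p_{\omega_1}(\Pi^\lambda(\sigma\omega))$ — is exactly the route the paper takes, and that part is fine (the paper treats the verification of \eqref{eq:unifvar} and \eqref{eq:assong} as routine; your restriction to $\rho\le 0.45$ there is harmless since the constants may depend on the fixed $\rho<0.5$).

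The genuine gap is in the final claim that the quadrilateral with vertices $(0,0.5)$, $(0.45,0.55)$, $(0.45,0.668)$, $(0,0.668)$ lies in the region $\{h_{\mu_{\lambda,\rho}}>-\log\lambda\}$. Your argument has two problems. First, the key quantitative input — that iterating $L^*$ from Lebesgue measure bounds the mass of $\nu_{\lambda,\rho}$ near the endpoints by some $\kappa(\lambda,\rho)<1$ sufficient to push the average of $h(\tfrac12+\rho x)$ above $-\log\lambda\approx 0.6$ at $\rho=0.45$ — is only asserted, not carried out, and it is not at all clear it yields the required margin. Second, the concluding step is logically invalid: strict inequality at the four vertices plus joint continuity does \emph{not} propagate the inequality to the whole (closed or open) quadrilateral; the set $\{h_{\mu_{\lambda,\rho}}>-\log\lambda\}$ is open but need not be convex, and you have established no monotonicity or convexity of $(\rho,\lambda)\mapsto h_{\mu_{\lambda,\rho}}+\log\lambda$ that would let you interpolate between vertices (note also that at the vertex $(0,0.5)$ one has equality $h=\log 2=-\log 0.5$, not strict inequality). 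What the paper actually does is produce an explicit, computable lower bound for $h_{\mu_{\lambda,\rho}}$ valid throughout the parameter region, namely the truncated series estimate \eqref{eentbernbound} from \cite{BB15}, $h_{\mu_{\lambda,\rho}}\ge \log 2-\sum_{n=1}^{N}\frac{(2\rho)^{2n}}{2n(2n-1)}F_n-\frac{(2\rho)^{N+1}}{(2N+2)(2N+1)(1-(2\rho)^2)}$, with the moments $F_n=\int x^{2n}d\mu_{\lambda,\rho}$ computed by the stated induction; comparing this bound with $-\log\lambda$ over the quadrilateral is what certifies the containment. To close your proof you would need either to carry out such an explicit entropy estimate or to supply a genuinely uniform (in $(\rho,\lambda)$ over the quadrilateral) lower bound on $h_{\mu_{\lambda,\rho}}$, not just vertex checks plus continuity.
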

\begin{figure}
	% Requires \usepackage{graphicx}
	\includegraphics[width=100mm]{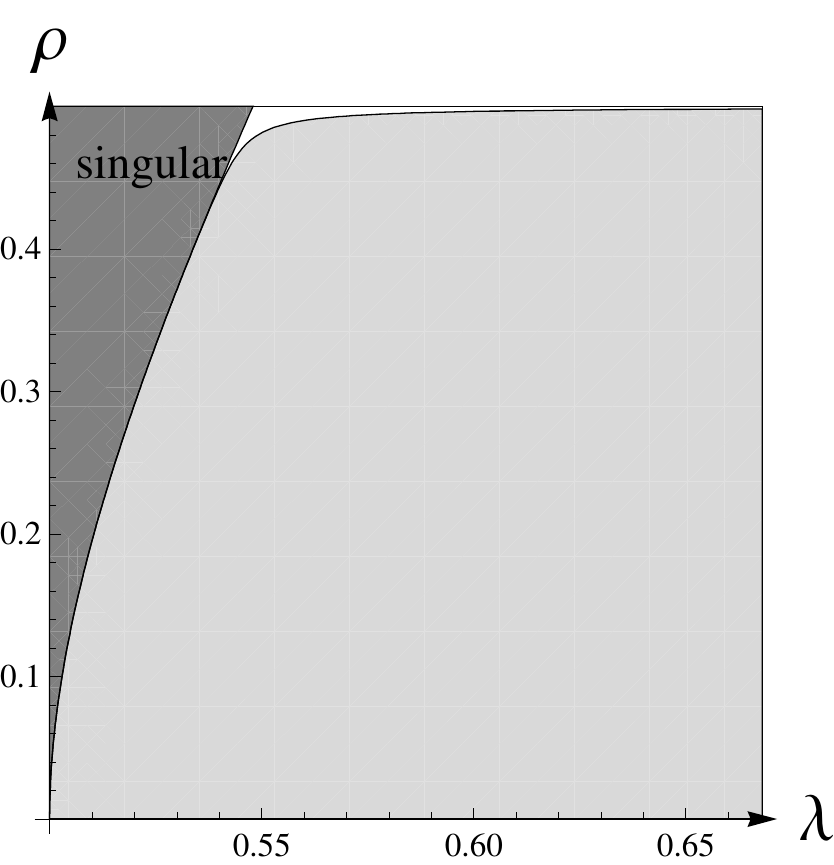}\\
	\caption{The singularity and absolute continuity region of the measure $\mu_{\lambda,\rho}$.}\label{fregion}
\end{figure}

It follows from the calculations in \cite{BB15}, that for every $N\geq1$,
\begin{multline}\label{eentbernbound}
\log2-\sum_{n=1}^{N}\frac{(2\rho)^{2n}}{2n(2n-1)}F_n-\frac{(2\rho)^{N+1}}{(2N+2)(2N+1)(1-(2\rho)^2)}\leq h_{\mu_{\lambda,\rho}}\leq\log2-\sum_{n=1}^{N}\frac{(2\rho)^{2n}}{2n(2n-1)}F_n.
\end{multline}
where $F_n=\int x^{2n}d\mu_{\lambda,\rho}(x)$. The quantities $F_n$ can be expressed inductively by 
\begin{multline*}
F_n=\frac{(1-\lambda)^{2n}}{1+\lambda^{2n-1}(4n\rho(1-\lambda)-\lambda)}+\\
\sum_{m=1}^{n-1}\frac{2m(1-\lambda)^{2n-2m}\lambda^{2m-1}}{1+\lambda^{2n-1}(4n\rho(1-\lambda)-\lambda)}\binom{2n}{2m}\left(\frac{\lambda}{2m}-\frac{2\rho(1-\lambda)}{2n-2m+1}\right)F_{m}.
\end{multline*}
Using the estimates \eqref{eentbernbound}, we can approximate the region in Theorem \ref{th:app1}, see Figure~\ref{fregion}.

\subsection{Blackwell measure for binary channel} Our second application is the absolute continuity of the Blackwell measure for a binary symmetric channel with a noise. Let us first introduce the basic notations, following B\'ar\'any, Pollicott and Simon \cite{BPS} and B\'ar\'any and Kolossv\'ary \cite{BK}. Let $X:=\left\{X_i\right\}_{i=-\infty }^{\infty }$ be a binary, symmetric, stationary, ergodic Markov chain  source $X_i\in \left\{0,1\right\}$, with a probability transition matrix
\begin{equation*}\label{111}
\Pi :=\left[\begin{array}{cc}
p  & 1-p \\
1-p &  p  \\
\end{array}
\right].
\end{equation*}
By adding to $X$ a binary independent and identically distributed (i.i.d.) noise $E=\left\{E_i\right\}_{i=-\infty }^{\infty }$ independent of $X$ with
$$
\mathbb{P}(E_i=0)=1-\varepsilon ,\qquad \mathbb{P}(E_i=1)=\varepsilon,
$$
we get a Markov chain $Y:=\left\{Y_i\right\}_{i=-\infty }^{\infty }$, $Y_i=(X_i,E_i)$ with states $\{(0,0), (0,1), (1,0), \\(1,1) \}$ and transition probabilities:
\begin{equation*}\label{112}
M:=\left[\begin{array}{cccc}
p(1-\varepsilon )        & p\varepsilon  &  (1-p)(1-\varepsilon )& (1-p)\varepsilon  \\
p(1-\varepsilon )        &  p\varepsilon &  (1-p)(1-\varepsilon )&  (1-p)\varepsilon \\
(1-p)(1-\varepsilon )   & (1-p)\varepsilon  & p(1-\varepsilon ) & p\varepsilon \\
(1-p)(1-\varepsilon )   & (1-p)\varepsilon  & p(1-\varepsilon ) &  p\varepsilon\\
\end{array}
\right].
\end{equation*}
Let $\Psi:\left\{(0,0),(0,1),(1,0),(1,1)\right\}\mapsto \{0,1\}$ be a surjective map such that
\[
\Psi(0,0)=\Psi(1,1)=0\text{ and }\Psi(0,1)=\Psi(1,0)=1.
\]
We consider the ergodic stationary process $Z=\left\{Z_i=\Psi(Y_i)\right\}_{i=-\infty}^{\infty}$, which is the corrupted output of the channel. Equivalently, $Z$ is the stationary stochastic process
$Z_i=X_i\bigoplus E_i$, where $\bigoplus$ denotes the binary addition.

According to \cite[Example~4.1]{HManal} and \cite[Example~1]{BPS}, the entropy of $Z$ can be expressed as follows. Consider the 3-dimensional simplex $$W:=\Bigl\{\underline{w}\in\R^4:w_i\geq0, \ \sum_{1\le i \le 4} w_i=1\Bigr\}$$ and define $W_0,W_1\subset W$ by
$$W_0:=\left\{\underline{w}\in W:w_2=w_3=0\right\},\ W_1:=\left\{\underline{w}\in W:w_1=w_4=0\right\}.$$
Consider two matrices
\begin{equation*}
M_0:=\left[
\begin{array}{cccc}
p(1-\varepsilon) & 0 & 0 & (1-p)\varepsilon \\
p(1-\varepsilon) & 0 & 0 & (1-p)\varepsilon \\
(1-p)(1-\varepsilon) & 0 & 0 & p\varepsilon \\
(1-p)(1-\varepsilon) & 0 & 0 & p\varepsilon \\
\end{array}\ \ 
\right]\text{ and }\ \  
M_1:=\left[
\begin{array}{cccc}
0 & p\varepsilon & (1-p)(1-\varepsilon) & 0 \\
0 & p\varepsilon & (1-p)(1-\varepsilon) & 0 \\
0 & (1-p)\varepsilon & p(1-\varepsilon) & 0 \\
0 & (1-p)\varepsilon & p(1-\varepsilon) & 0 \\
\end{array}
\right],
\end{equation*}
and let $(r_0(\underline{w}),r_1(\underline{w}))$ be the {\em place-dependent} probability vector of the form
\begin{equation*}\label{eprobsimp}
r_i(\underline{w})=\|\underline{w}^TM_i\|_1,
\end{equation*}
where $\|.\|_1$ denotes the $l_1$ norm and $\underline{w}\in W$. Introduce two functions $f_0:W\mapsto W_0$ and $f_1:W\mapsto W_1$ such that
\begin{equation*}\label{efuncsimp}
f_i(\underline{w})=\frac{\underline{w}^TM_i}{\|\underline{w}^TM_i\|_1}.
\end{equation*}
Then the entropy of $Z$ can be expressed as follows:
\begin{equation*}\label{eent}
H(Z)=-\int_{W_0\cup W_1} \bigl[r_0(\underline{w})\log r_0(\underline{w})+r_1(\underline{w})\log r_1(\underline{w})\bigr]dQ(\underline{w}),
\end{equation*}
where the Blackwell measure $Q$ is the unique measure with $\mathrm{supp}(Q)\subseteq W_0\cup W_1$, such  that for every continuous function $h\colon W_0\cup W_1\mapsto\R$,
\begin{equation*}\label{ebalckwellsimpl}
\int h(\underline{w})dQ(\underline{w})=\int r_0(\underline{w})h(f_0(\underline{w}))+r_1(\underline{w})h(f_1(\underline{w}))dQ(\underline{w}).
\end{equation*}
It was shown in \cite[Section~3.1, 3.2]{BPS} that for the binary symmetric channel, the measure $Q$ on $W_0\cup W_1$ is conjugated to the place-dependent invariant probability measure $\nu_{\varepsilon,p}$ on $[0,1]$ for the IFS $\Psi_{\varepsilon,p}=\left\{S_0^{\varepsilon,p},S_1^{\varepsilon,p}\right\}$:
\begin{align*}
S_0^{\varepsilon,p}(x) &:=\dfrac{x\cdot p\cdot (1-\varepsilon )+(1-x)\cdot (1-p)\cdot(1-\varepsilon)}{x\cdot
	\left[p(1-\varepsilon )+(1-p)\cdot \varepsilon \right]+(1-x)\cdot\left[(1-p)(1-\varepsilon )+p\cdot \varepsilon \right]},\\ %\label{20}
S_1^{\varepsilon,p}(x) &:=\dfrac{x\cdot p\cdot \varepsilon +(1-x)\cdot (1-p)\cdot\varepsilon}{x\cdot\left[p\varepsilon +(1-p)\cdot(1- \varepsilon) \right]  +(1-x)\cdot  \left[(1-p)\varepsilon  +p\cdot (1-\varepsilon) \right]}.%\label{20b}
\end{align*}
and the place-dependent probability vector $(p_0^{\varepsilon,p}(x),p_1^{\varepsilon,p}(x))$:
\begin{align*}
p_0^{\varepsilon,p}(x) &:=  x\cdot  \left[p(1-\varepsilon )+  (1-p)\cdot \varepsilon \right]  +(1-x)\cdot  \left[(1-p)(1-\varepsilon )  +p\cdot \varepsilon \right],\\ %\label{15}
p_1^{\varepsilon,p}(x) &:=  x\cdot  \left[p\varepsilon +  (1-p)\cdot(1- \varepsilon) \right]  +(1-x)\cdot  \left[(1-p)\varepsilon  +p\cdot (1-\varepsilon) \right]. %\label{16}
\end{align*}
In particular, $Q\ll\mathcal{L}_1|_{W_0\cup W_1}$ if and only if $\nu_{\varepsilon,p}\ll\mathcal{L}_1$.

Observe that for $\varepsilon=1/2$, $S_0^{\varepsilon,p}(x)=S_1^{\varepsilon,p}(x)=(2p-1)x+1-p$ and so $\nu_{\varepsilon,p}$ is the Dirac mass on the point $1/2$. Hence, we may assume that $\varepsilon\neq1/2$.

For every fixed $\varepsilon\in(0,1)\setminus\{1/2\}$, the IFS $\Psi_{\varepsilon,p}$ satisfies the conditions \ref{as:C2}-\ref{as:hyper} for $p$ in an arbitrary compact subinterval of $(0,1)$; and $\nu_{\varepsilon,p}$ is a push-forward measure of the Gibbs measure $\mu_{\varepsilon,p}$ with respect to the potential $\phi^{\varepsilon,p}(\omega)=\log\left(p_{\omega_1}^{\varepsilon,p}(\Pi^{\varepsilon,p}(\sigma\omega))\right)$ satisfying \eqref{eq:unifvar} and \eqref{eq:assong}, where $\Pi^{\varepsilon,p}$ is the natural projection of the IFS $\Psi_{\varepsilon,p}$.

B\'ar\'any and Kolossv\'ary \cite{BK} showed that for every fixed $\varepsilon\neq1/2$ the IFS $\Psi_{\varepsilon,p}$ satisfies the transversality condition \ref{as:trans} with respect to the parameter $p$ and has $\frac{h_{\mu_{\varepsilon,p}}}{\chi_{\mu_{\varepsilon,p}}}>1$ on every interval $I$ for which $\{\varepsilon\}\times I$ is contained in the red region in Figure~\ref{blackabsreg}. Thus, the main theorem of the present paper applies and \cite[Theorem~1.1]{BK} remains correct:

\begin{thm}
	For every fixed $\varepsilon\in(0,1)\setminus\{1/2\}$ and for Lebesgue-almost every $p$ such that $(\varepsilon,p)\in R$ is in the red region of Figure~\ref{blackabsreg}, the measure $\nu^{\varepsilon,p}$ is absolutely continuous. For instance, the red region contains two quadrilaterals formed by $(0.5,0.75)$, $(0.37,0.775)$, $(0.5,0.795)$, $(0.63,0.775)$ and $(0.5,0.25), (0.37,0.225),(0.5,0.205),(0.63,0.225)$. 
\end{thm}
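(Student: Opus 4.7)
The strategy is to verify the hypotheses of Theorem~\ref{thm:main_gibbs} for the one-parameter family $\{\Psi_{\varepsilon,p}\}$ indexed by $p$ (with $\varepsilon$ fixed), and then to identify the red region as the set where the Lyapunov dimension exceeds one. Concretely, I would first fix $\varepsilon\in(0,1)\setminus\{1/2\}$ and a compact subinterval $U\subset(0,1)$ for the parameter $p$, and observe, as noted in the excerpt, that the maps $S_0^{\varepsilon,p}$ and $S_1^{\varepsilon,p}$ are analytic fractional linear maps of $[0,1]$ into itself, jointly smooth in $(x,p)$; for $\varepsilon\neq 1/2$ the derivatives $\partial_x S_i^{\varepsilon,p}$ are uniformly bounded in $(0,1)$ and uniformly bounded away from $0$. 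Thus \ref{as:C2}--\ref{as:hyper} hold on any compact $U$ avoiding the degeneration, which gives the IFS regularity.

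Next I would identify $\nu^{\varepsilon,p}$ with a push-forward of a Gibbs measure. Let $\Pi^p=\Pi^{\varepsilon,p}$ be the natural projection of $\Psi_{\varepsilon,p}$, and set
\[
\phi^{p}(\omega)=\log p_{\omega_1}^{\varepsilon,p}\!\left(\Pi^{p}(\sigma\omega)\right),\qquad \omega\in\Omega=\{0,1\}^{\mathbb N}.
\]
Since $p_i^{\varepsilon,p}$ is smooth and uniformly bounded away from $0$ (for $\varepsilon\neq 1/2$ and $p$ in a compact subinterval of $(0,1)$), and $\Pi^{p}$ is $1,\delta$-regular by Proposition~\ref{prop:C_1_delta_pi}, the potential $\phi^p$ satisfies the uniform summable variation estimate \eqref{eq:unifvar} (from the contraction of cylinders under the IFS) and the H\"older continuity in parameter estimate \eqref{eq:assong}. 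The unique Gibbs measure $\mu_{\varepsilon,p}$ for $\phi^p$ then satisfies $(\Pi^p)_*\mu_{\varepsilon,p}=\nu^{\varepsilon,p}$ by \cite{FL}, and $h_{\mu_{\varepsilon,p}},\chi_{\mu_{\varepsilon,p}}$ are the entropy and Lyapunov exponent governing the Lyapunov dimension of $\nu^{\varepsilon,p}$.

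With these verifications in hand, I invoke the transversality statement established by B\'ar\'any--Kolossv\'ary \cite{BK}: for every fixed $\varepsilon\in(0,1)\setminus\{1/2\}$, the family $\{\Pi^{p}\}$ satisfies the transversality condition \ref{as:trans} on an explicit $p$-interval containing the red region of Figure~\ref{blackabsreg}. Combined with the numerical estimates in \cite{BK} (or by running the same Taylor-series estimates for $h_{\mu_{\varepsilon,p}}$ against the explicit $\chi_{\mu_{\varepsilon,p}}$), the red region is exactly
\[
R=\Bigl\{(\varepsilon,p)\in(0,1)^2\setminus\{\varepsilon=1/2\}\ :\ \ref{as:trans}\text{ holds at }(\varepsilon,p)\ \text{and}\ \tfrac{h_{\mu_{\varepsilon,p}}}{\chi_{\mu_{\varepsilon,p}}}>1\Bigr\}.
\]
Then Theorem~\ref{thm:main_gibbs}, applied to the one-parameter family $\{\mu_{\varepsilon,p}\}_{p\in U}$ with $U$ any compact interval such that $\{\varepsilon\}\times U\subset R$, yields that $\nu^{\varepsilon,p}=(\Pi^p)_*\mu_{\varepsilon,p}$ is absolutely continuous for Lebesgue-a.e.\ $p\in U$, which is the claim.

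The only non-routine step is the verification that the two explicit quadrilaterals lie inside $R$. For this I would reuse the calculation in \cite{BK}: transversality on the horizontal slices was proved there by a direct estimate on $\frac{d}{dp}(\Pi^p(u)-\Pi^p(v))$ using the fractional-linear form of $S_i^{\varepsilon,p}$, while the inequality $h_{\mu_{\varepsilon,p}}>\chi_{\mu_{\varepsilon,p}}=-\int\log|(S^{\varepsilon,p}_{\omega_1})'(\Pi^p\sigma\omega)|\,d\mu_{\varepsilon,p}$ on the quadrilaterals follows from entropy bounds of the type used in Theorem~\ref{th:app1} (series estimates in the small parameter $2p-1$, analogous to \eqref{eentbernbound}). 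The main obstacle, therefore, is not the new technology of this paper but the quantitative tracking of these two numerical estimates on the specified quadrilaterals; in the context of the present work this reduces to quoting \cite{BK} now that Theorem~\ref{thm:main_gibbs} provides a correct substitute for the previously flawed \cite[Theorem~4.1]{BB15}.
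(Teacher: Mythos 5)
Your proposal is correct and follows essentially the same route as the paper: check the regularity assumptions \ref{as:C2}--\ref{as:hyper} for $\Psi_{\varepsilon,p}$, realize $\nu_{\varepsilon,p}$ as the push-forward of the Gibbs measure for the potential $\phi^{\varepsilon,p}(\omega)=\log p_{\omega_1}^{\varepsilon,p}(\Pi^{\varepsilon,p}(\sigma\omega))$ satisfying \eqref{eq:unifvar} and \eqref{eq:assong}, quote \cite{BK} for transversality \ref{as:trans} and for $h_{\mu_{\varepsilon,p}}/\chi_{\mu_{\varepsilon,p}}>1$ on the red region (including the explicit quadrilaterals), and then apply Theorem~\ref{thm:main_gibbs}. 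The paper does exactly this, also deferring the quantitative identification of the region to \cite{BK} rather than re-deriving the entropy/Lyapunov estimates.
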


It was shown by B\'ar\'any, Pollicott and Simon \cite{BPS} that $\mu_{\varepsilon,p}$ is singular in the blue region of Figure~\ref{blackabsreg}.

\begin{figure}
	% Requires \usepackage{graphicx}
	\includegraphics[width=120mm]{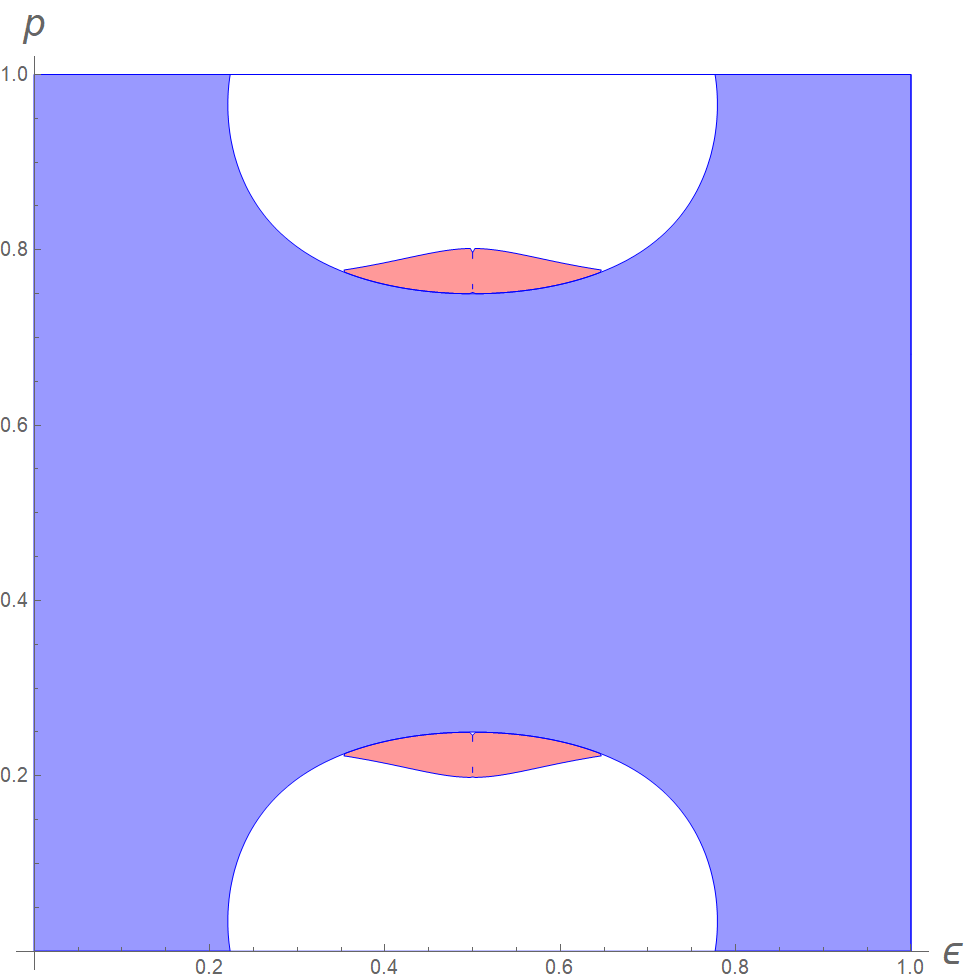}\\
	\caption{The singularity (blue) and transversality region with $\frac{h_{\mu_{\varepsilon,p}}}{\chi_{\mu_{\varepsilon,p}}}>1$ (red) of the measure $\nu_{\varepsilon,p}$, \cite[Figure~1]{BK}.}\label{blackabsreg}
\end{figure}

\subsection{Absolute continuity of equilibrium measures for hyperbolic IFS with overlaps}\label{sec:ac equilib}

\thispagestyle{empty}

First we recall briefly the notion of equilibrium measure in the setting of IFS.
Let $\Ak = \{1,\ldots,m\}$ and suppose we have an IFS $\Psi=\{f_j\}_{j \in \Ak}$ of the class $C^{1+\theta}$ on a compact interval $X\subset \R$.
We assume that
that the system $\{f_j\}_{j \in \Ak}$ is uniformly hyperbolic and contractive:
\be \label{hyper}
0 < \gam_1 \le |f_j'(x)| \le \gam_2 < 1\ \ \mbox{for all}\ j \in \Ak,\ x\in X.
\ee
%We also need some conditions of regularity in $\lam$-dependence (less than in other steps), which I omit at the moment.
As before, $\Om = \Ak^\N$ and $\sigma$ denotes the left shift on $\Om$. We write $\Pi:\Om\to \R$ for the natural projection map associated with the IFS.
%Let $\Ak^* = \bigcup \limits_{n \geq 0} \Ak^n$ and let $|u| = n$ for $u \in \Ak^n$. For $u = (u_1, \ldots u_n) \in \Ak^*$ denote
%\[ f_{u} = f_{u_1 \ldots u_n} := f_{u_1} \circ \ldots \circ f_{u_n} \]
%(with $f_{u} = \mathrm{id}$ if $u$ is an empty word).
Consider the pressure function, defined by
\be \label{eq-pressure}
P_\Ak(t) = P_\Psi(t)=\lim_{n\to \infty} n^{-1} \log \sum_{u\in \Ak^n} \|f'_u\|^t.
\ee
It is well-known that this limit exists, $t\mapsto P_\Ak(t)$ is continuous and strictly decreasing. According to the general theory of thermodynamical formalism (see e.g., \cite{PU}),
$$
P_\Psi(t) = P(\sig,t \phi),
$$
where  $\phi(\om) = \log|f_{\om_1}'(\Pi(\sig \om))|$ is the potential associated with the IFS {and $P(\sigma, \cdot)$ is the topological pressure}. 
The {\em equilibrium state} for the potential $t\phi$ is a Borel probability measure $\mu$ on $\Om$ satisfying
$$
P_\Psi(t) = h_\mu + t\int \phi\,d\mu,
$$
where $h_\mu = h_\mu(\sig)$, see \cite[3.5]{PU}. Observe that $\int \phi\,d\mu=-\chi_\mu$ by the definition of the Lyapunov exponent.
Denote by $s=s(\Psi)$ the solution of the Bowen's equation:
\be \label{Bowen}
s=s(\Psi):\ \ P_\Psi(s) = 0.
\ee
It is well-known  that $s(\Psi)$ is the upper bound for the Hausdorff dimension of the attractor.
We say that $\mu$ is an {\em equilibrium measure} for the IFS $\Psi$ if it is the equilibrium state for the potential $s(\Psi)\cdot \phi$. Thus, by definition,
$$
\mu\ \mbox{is an equilibrium measure}\ \implies\ s(\Psi) = \frac{h_\mu}{\chi_\mu}\,.
$$ 
The equilibrium measure is the {\em dimension-maximizing measure} for the IFS in the symbolic space. Under our assumptions, the equilibrium measure $\mu$ is the unique Gibbs measure for the
potential $s\phi = s(\Psi)\cdot \phi$, which implies that 
$$
\mu([u]) \asymp \diam([u])^s,
$$
for any cylinder set $[u]$ in $\Om$. Here $\diam([u])$ is the diameter in the metric associated with the IFS: $d(\om,\tau) = |X_{\om\wedge\tau}|$. It follows that $\mu$ has local dimension $s$ at 
{\em every point} in $\Om$; in particular, the correlation dimension $\dim_{\rm cor}(\mu) = s$.

Given a family of hyperbolic IFS $\Psi^\lam$ (with overlaps) depending on a parameter $\lam\in \ov U$, with equilibrium measure $\mu_\lam$, 
we expect that {\em typically}, in the sense of almost every parameter, the projection of the equilibrium measure $(\Pi^\lam)_* \mu_\lam$ has Hausdorff dimension
$\min\{1, s(\Psi^\lam)\}$, and is absolutely continuous when $s(\Psi^\lam)>1$. This is what we prove under the assumptions of regularity and transversality. It is a simple consequence of 
Theorem~\ref{thm:main_gibbs}, but we state it as a theorem because of its importance.

\begin{thm} \label{th-equilib}
	Let $\Psi^\lam=\{f_j^\lam\}_{j \in \Ak}$ be a parametrized IFS satisfying smoothness assumptions \ref{as:C2} - \ref{as:hyper} and the transversality condition \ref{as:trans} on $U$.
	Let $\mu_\lam$ be the equilibrium measure for $\Psi^\lam$ and $s(\Psi^\lam)$ the solution of the Bowen's equation \eqref{Bowen}. Then $\dim_H((\Pi^\lam)_* \mu_\lam) = \min\{1, s(\Psi^\lam)\}$
	for a.e.\ $\lam\in U$ and $(\Pi^\lam)_* \mu_\lam$ is absolutely continuous with a density in $L^2$ for Lebesgue almost every $\lam$ in the set 
	$\{ \lam \in U : s(\Psi^\lam) > 1 \}$.
\end{thm}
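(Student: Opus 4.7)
The plan is to deduce Theorem~\ref{th-equilib} directly from Theorems~\ref{thm:main_gibbs} and \ref{thm:main_hausdorff} by recognizing $\mu_\lam$ as the Gibbs measure for the parameter-dependent potential
\[
\Phi^\lam(\om) := s(\Psi^\lam)\cdot \phi^\lam(\om),\qquad \phi^\lam(\om) := \log\bigl|(f^\lam_{\om_1})'(\Pi^\lam(\sig\om))\bigr|,
\]
and then verifying the regularity hypotheses \eqref{eq:unifvar} and \eqref{eq:assong} for the family $\{\Phi^\lam\}_{\lam\in \ov U}$. The identity $h_{\mu_\lam}/\chi_{\mu_\lam}=s(\Psi^\lam)$, recorded in the discussion preceding the theorem, converts the condition $s(\Psi^\lam)>1$ into the condition $h_{\mu_\lam}/\chi_{\mu_\lam}>1$ appearing in the hypothesis of Theorem~\ref{thm:main_gibbs}.

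I would first handle the factor $\phi^\lam$ itself. For \eqref{eq:unifvar} one shows $\mathrm{var}_k(\phi^\lam)\lesssim \gamma_2^{k\delta}$ uniformly in $\lam\in\ov U$, using the $C^{1+\delta}$-smoothness of $(f^\lam_j)'$ in $x$ from \ref{as:C2} together with the exponential contraction \ref{as:hyper} applied to $|\Pi^\lam(\sig\om_1)-\Pi^\lam(\sig\om_2)|$, in the spirit of \eqref{eq:nonpar bdp}. For \eqref{eq:assong} I would split
\[
\phi^\lam(\om)-\phi^{\lam'}(\om) = \log\frac{|(f^\lam_{\om_1})'(\Pi^\lam(\sig\om))|}{|(f^{\lam'}_{\om_1})'(\Pi^\lam(\sig\om))|} + \log\frac{|(f^{\lam'}_{\om_1})'(\Pi^\lam(\sig\om))|}{|(f^{\lam'}_{\om_1})'(\Pi^{\lam'}(\sig\om))|},
\]
bounding the first term via \ref{as:lam hoelder} and \ref{as:hyper}, and the second via \ref{as:C2} combined with the uniform bound $|\Pi^\lam(\om)-\Pi^{\lam'}(\om)|\lesssim |\lam-\lam'|$ that follows from Proposition~\ref{prop:pi hoelder}. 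Together these give $|\phi^\lam(\om)-\phi^{\lam'}(\om)|\lesssim |\lam-\lam'|^\delta$ uniformly in $\om$.

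The more delicate step, which I expect to be the main obstacle, is the local H\"older continuity of the Bowen root $\lam\mapsto s(\Psi^\lam)$. I would obtain it by an implicit function argument applied to the two-parameter pressure $(s,\lam)\mapsto P(\sig, s\phi^\lam)$. On the one hand, a repetition of the argument of Lemma~\ref{lem:eigen}, based on the spectral gap of the Ruelle operator provided by Theorem~\ref{thm:bowen}, gives H\"older dependence of the pressure on $\lam$, uniformly for $s$ in a compact set, once \eqref{eq:unifvar} and \eqref{eq:assong} are known for $\phi^\lam$. On the other hand, for the equilibrium state $\mu_{s,\lam}$ associated with $s\phi^\lam$,
\[
\partial_s P(\sig, s\phi^\lam) = \int \phi^\lam\, d\mu_{s,\lam} = -\chi_{\mu_{s,\lam}} \le \log\gamma_2 < 0
\]
by \ref{as:hyper}, so the implicit function theorem yields the desired H\"older continuity of $s(\Psi^\lam)$. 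Consequently $\Phi^\lam=s(\Psi^\lam)\phi^\lam$ inherits both \eqref{eq:unifvar} and \eqref{eq:assong}, possibly with a smaller H\"older exponent.

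Once this is established, Theorem~\ref{thm:main_gibbs} applies to $\{\mu_\lam\}_{\lam\in\ov U}$ and delivers the absolute continuity claim on the set $\{\lam\in U:s(\Psi^\lam)>1\}$, together with the $L^2$ density via the fact that Sobolev dimension greater than one implies an $L^2$ density. For the Hausdorff-dimension statement I would apply Theorem~\ref{thm:main_hausdorff}: \ref{as:measure_cont} follows from the already-established \ref{as:measure}; continuity of $\lam\mapsto\chi_{\mu_\lam}$ follows from Lemma~\ref{lem:contint} applied to the family $-\phi^\lam$; and continuity of $\lam\mapsto h_{\mu_\lam}=s(\Psi^\lam)\chi_{\mu_\lam}$ then follows from the continuity of $s(\Psi^\lam)$. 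Since the matching upper bound $\dim_H((\Pi^\lam)_*\mu_\lam)\le\min\{1,s(\Psi^\lam)\}$ is classical, this completes the deduction.
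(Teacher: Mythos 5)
Your proposal follows essentially the same route as the paper's proof: the whole content is to check \eqref{eq:unifvar} and \eqref{eq:assong} for the family $\Phi^\lam=s(\Psi^\lam)\phi^\lam$, and the only nontrivial point is the continuity of the Bowen root $\lam\mapsto s(\Psi^\lam)$. The paper obtains it by exactly the quantitative monotonicity you describe, written out elementarily: $-\log\gamma_2\,|s-t|\le |P_{\Psi^\lam}(s)-P_{\Psi^\lam}(t)|\le -\log\gamma_1\,|s-t|$, combined with $|P_{\Psi^\lam}(s(\Psi^\tau))-P_{\Psi^\tau}(s(\Psi^\tau))|\lesssim|\lam-\tau|$ coming from Lemma~\ref{lem:eigen} applied to the potential family $s(\Psi^\tau)\phi^\lam$. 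Your appeal to the implicit function theorem should be understood in this quantitative sense: since the pressure is only H\"older in $\lam$, the classical IFT does not literally apply, but the uniform bound $\partial_s P\le\log\gamma_2<0$ (equivalently the two-sided Lipschitz estimate above) is precisely what closes the argument, so this is a difference of phrasing rather than of substance. Your verification of \eqref{eq:unifvar} and \eqref{eq:assong} for $\phi^\lam$, and of the continuity of $\chi_{\mu_\lam}$ and $h_{\mu_\lam}=s(\Psi^\lam)\chi_{\mu_\lam}$ needed for Theorem~\ref{thm:main_hausdorff}, matches the paper and is, if anything, spelled out more carefully.

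The one genuine (though easily repaired) gap is the $L^2$-density claim. You route absolute continuity through Theorem~\ref{thm:main_gibbs}, but its second conclusion asserts only absolute continuity on $\{h_{\mu_\lam}/\chi_{\mu_\lam}>1\}$: it is proved by exhausting $\mu_\lam$ by submeasures whose push-forwards have $L^2$ densities, so no $L^2$ bound for the full measure is claimed, and the Sobolev-dimension bound of Theorem~\ref{thm:main_cor_dim} by itself only gives $\dim_S>1$ where $\dim_{cor}(\mu_\lam,d_\lam)>1$, which you have not related to $s(\Psi^\lam)$. The missing ingredient is the observation, recorded in the paper just before the theorem, that the Gibbs property $\mu_\lam([u])\asymp \diam([u])^{s(\Psi^\lam)}$ (diameters in the metric $d_\lam$) forces $\dim_{cor}(\mu_\lam,d_\lam)=s(\Psi^\lam)$ exactly; with this, Theorem~\ref{thm:main_cor_dim} applies directly on $\{s(\Psi^\lam)>1\}$ and yields the density in $L^2$. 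This is in fact how the paper argues, and it also makes the large-deviation machinery behind Theorem~\ref{thm:main_gibbs} unnecessary for this application.
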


\begin{proof} As noted above, the equilibrium measure $\mu_\lam$ satisfies $\dim_{cor}(\mu_\lam) = s(\Psi^\lam)$. By Theorem~\ref{thm:main_hausdorff} and Theorem~\ref{thm:main_cor_dim}, it is enough to show that the equillibrium measure $\mu_\lam$ satisfies \ref{as:measure}. By Proposition~\ref{eq:regeq}, it is enough to show that potential $\phi^\lambda(\omega)=s(\Psi^\lambda)\log\left|(f_{\omega_1}^\lambda)'(\Pi^\lambda(\sigma\omega))\right|$ satisfies \eqref{eq:unifvar} and \eqref{eq:assong}. 
	
	The condition \eqref{eq:unifvar} is straightforward, since by assumption $\gamma_1<\left|(f_{\omega_1}^\lambda)'(\Pi^\lambda(\sigma\omega))\right|<\gamma_2$ on $U$ and trivially $s(\Psi^\lambda)\leq\frac{\log m}{-\log\gamma_2}.$ On the other hand,
	\[
	\begin{split}
	|\phi^{\lambda}(\omega)-\phi^\tau(\omega)|&=\Big|s(\Psi^\lambda)\log\left|(f_{\omega_1}^\lambda)'(\Pi^\lambda(\sigma\omega))\right|\log\left|(f_{\omega_1}^\lambda)'(\Pi^\lambda(\sigma\omega))\right|-s(\Psi^\tau)\log\left|(f_{\omega_1}^\tau)'(\Pi^\tau(\sigma\omega))\right|\Big|\\
	&\leq -\log\gamma_1|s(\Psi^\lambda)-s(\Psi^\tau)|+\frac{\log m}{-\log\gamma_2}\Big|\log|(f_{\omega_1}^\lambda)'(\Pi^\lambda(\sigma\omega))|-\log\left|(f_{\omega_1}^\tau)'(\Pi^\tau(\sigma\omega))\right|\Big|\\
	&\leq -\log\gamma_1|s(\Psi^\lambda)-s(\Psi^\tau)|+\frac{\log m}{-\gamma_1\log\gamma_2}\Big|(f_{\omega_1}^\lambda)'(\Pi^\lambda(\sigma\omega))-(f_{\omega_1}^\tau)'(\Pi^\tau(\sigma\omega))\Big|.
	\end{split}
	\]
	By the assumptions \ref{as:C2} - \ref{as:hyper}, simple manipulation shows that $\lambda\mapsto(f_{\omega_1}^\lambda)'(\Pi^\lambda(\sigma\omega))$ is a Lipschitz map with Lipschitz constant independent of $\omega$. Hence, it is enough to show that $\lambda\mapsto s(\Psi^\lambda)$ is Lipschitz. But clearly,
	$$
	-\log\gamma_2|s-t|\leq|P_{\Phi^\lambda}(t)-P_{\Phi^\lambda}(s)|\leq-\log\gamma_1|s-t|,
	$$
	and so 
	\[
	\begin{split}
	|s(\Psi^\lambda)-s(\Psi^\tau)|&\leq(-\log\gamma_2)^{-1}|P_{\Phi^\lambda}(s(\Psi^\lambda))-P_{\Phi^\lambda}(s(\Psi^\tau))|\\
	&=(-\log\gamma_2)^{-1}|P_{\Phi^\lambda}(s(\Psi^\tau))-P_{\Phi^\tau}(s(\Psi^\tau))|\\
	&\leq (-\log\gamma_2)^{-1}c|\lambda-\tau|,
	\end{split}
	\]
	where the last inequality follows by Lemma~\ref{lem:eigen} since $\lambda\mapsto s(\Psi^\tau)\log\left|(f_{\omega_1}^\lambda)'(\Pi^\lambda(\sigma\omega))\right|$ satisfies \eqref{eq:assong}.
\end{proof}

\subsection{Natural measures for non-homogeneous self-similar IFS} 
Consider a self-similar IFS on the line $\Fk=\{f_j(x) = r_j x + a_j\}_{j\in \Ak}$, where $r_j \in (0,1)$ and $a_j\in \R$. Recall that the
similarity dimension is the number $s=s(\Fk)$, such that $\sum_{j\in \Ak} r_j^s = 1$. Assume that the IFS is non-degenerate, in the sense that the fixed points of $f_j$ are all distinct. In this case the equilibrium measure is the Bernoulli product measure $(\pb^\N)$ on $\Om$, where $\pb = (r_1^s,\ldots,r_m^s)$ is the vector of probability weights associated with the similarity dimension. We focus on the question of absolute continuity for the {\em natural} self-similar measure $\nu_\Fk = \Pi_*(\pb^\N)$. (For the Hausdorff dimension $\dim_H(\nu_\Fk)$ Hochman \cite{Hoch} obtained results that are much sharper than what we get with our method, so we don't discuss the latter.) For non-homogeneous self-similar measures results on absolute continuity for a typical parameter in a ``transversality region'' were obtained by Neunh\"auserer \cite{Neun} and Ngai and Wang \cite{NW} independently. However,
in their results the probabilities in the definition of self-similar measure are fixed, and so nothing can be claimed for the natural measure for a.e.\ parameter. More recently, Saglietti, Shmerkin, and Solomyak \cite{SSS} proved absolute continuity for a.e.\ parameter in the entire ``super-critical region'' (i.e., where $h_\mu/\chi_\mu>1$), however, there also, probabilities are fixed, and an application of Fubini's Theorem doesn't yield anything for the natural measure. The following is an immediate consequence of
Theorem~\ref{th-equilib}.

\begin{cor} \label{cor-nonhom}
	Let $\Fk_\lam = \{r_j(\lam) x + a_j(\lam)\}_{j\in \Ak}$ be a family of non-degenerate self-similar IFS satisfying smoothness assumptions \ref{as:C2} - \ref{as:hyper} and the transversality condition \ref{as:trans} on $U$. Then the natural self-similar measure $\nu_\lam$ is absolutely continuous with a density in $L^2$ for a.e.\ $\lam\in U$ such that the similarity dimension is strictly greater than 1.
\end{cor}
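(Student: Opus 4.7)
The plan is to reduce the statement directly to Theorem \ref{th-equilib}. The key observation is that for a similarity IFS $\Fk_\lam = \{r_j(\lam) x + a_j(\lam)\}_{j\in \Ak}$, the derivative $|(f_j^\lam)'(x)| = r_j(\lam)$ is constant in $x$, so the potential $\phi^\lam(\om) = \log|(f^\lam_{\om_1})'(\Pi^\lam(\sig\om))| = \log r_{\om_1}(\lam)$ depends only on the first coordinate $\om_1$. Consequently, the pressure defined in \eqref{eq-pressure} reduces to
\[
P_{\Psi^\lam}(t) = \log \sum_{j\in \Ak} r_j(\lam)^t,
\]
and the Bowen equation \eqref{Bowen} $P_{\Psi^\lam}(s)=0$ becomes exactly $\sum_{j\in \Ak} r_j(\lam)^s = 1$, so $s(\Psi^\lam)$ coincides with the similarity dimension of $\Fk_\lam$.

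Next I would identify the equilibrium measure. Since $\phi^\lam$ depends only on $\om_1$, the unique equilibrium state for $s(\Psi^\lam)\cdot\phi^\lam$ is the Bernoulli product measure $\mu_\lam = \pb(\lam)^{\N}$ on $\Om$ with probability vector $\pb(\lam) = (r_1(\lam)^{s(\Psi^\lam)},\ldots,r_m(\lam)^{s(\Psi^\lam)})$. Indeed, this measure is clearly shift-invariant and its entropy and Lyapunov exponent equal
\[
h_{\mu_\lam} = -\sum_{j\in\Ak} r_j(\lam)^{s(\Psi^\lam)} \log r_j(\lam)^{s(\Psi^\lam)}, \quad \chi_{\mu_\lam} = -\sum_{j\in\Ak} r_j(\lam)^{s(\Psi^\lam)} \log r_j(\lam),
\]
so $h_{\mu_\lam}/\chi_{\mu_\lam} = s(\Psi^\lam)$, verifying the equilibrium property. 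By definition of the natural self-similar measure, $\nu_\lam = (\Pi^\lam)_*\mu_\lam$.

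Finally, I would apply Theorem \ref{th-equilib}. All smoothness assumptions \ref{as:C2}--\ref{as:hyper} and the transversality condition \ref{as:trans} are assumed to hold on $U$ (note the non-degeneracy hypothesis guarantees that distinct fixed points keep the IFS genuinely contractive with the required bounds). Theorem \ref{th-equilib} then yields that $\nu_\lam = (\Pi^\lam)_*\mu_\lam$ is absolutely continuous with a density in $L^2$ for Lebesgue-a.e.\ $\lam \in U$ in the set $\{\lam\in U : s(\Psi^\lam)>1\}$, which by the identification above is exactly the set where the similarity dimension exceeds $1$. Since every step is essentially bookkeeping, there is no substantial obstacle; the only minor point worth double-checking is the uniqueness of the equilibrium state for the locally constant potential $\phi^\lam$, which is classical (e.g., via uniqueness of Gibbs measures for H\"older potentials, as used in Section 8).
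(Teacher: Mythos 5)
Your proposal is correct and follows exactly the route the paper takes: the paper states the corollary as an immediate consequence of Theorem \ref{th-equilib}, having already noted that for a self-similar IFS the Bowen root $s(\Psi^\lam)$ is the similarity dimension and the equilibrium measure is the Bernoulli measure with weights $(r_1^s,\ldots,r_m^s)$, whose push-forward is the natural measure. (Only a cosmetic remark: the non-degeneracy hypothesis is not what gives contractivity — that is \ref{as:hyper} via $r_j\in(0,1)$ — it merely rules out the degenerate case of coinciding fixed points; this does not affect the validity of your argument.)
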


Specific regions where the transversality condition holds were found in \cite{Neun,NW}. In particular, we have the following for the family of the IFS $\{\lam_1 x , \lam_2 + x\}$, where the
1-parameter family is obtained by assuming $\lam = \lam_1,\ \lam_2 = c\lam$ for a fixed $c>0$.

\begin{cor} \label{cor-nonhom2}
	Let $\nu_{\lam_1,\lam_2}$ be the natural self-similar measure for the IFS $\{\lam_1 x , \lam_2x+1\}$. Then $\nu_{\lam_1,\lam_2}$ is 
	absolutely continuous with a density in $L^2$ for a.e.\ $(\lam_1,\lam_2)$ such that $\lam_1 + \lam_2 >1$ and $\max\{\lam_1,\lam_2\} \le 0.668$.
\end{cor}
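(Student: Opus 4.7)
The plan is to deduce this from Corollary \ref{cor-nonhom} by a fiberwise application combined with Fubini's theorem. I would fix $\lam_2 \in [0, 0.668]$ and consider the one-parameter family of IFS $\Fk_{\lam_1,\lam_2} = \{\lam_1 x,\, \lam_2 x + 1\}$ with $\lam_1 \in (0, 0.668]$ as the parameter, treating $\lam_2$ as a constant. Assumptions \ref{as:C2}--\ref{as:hyper} are immediate: the maps are affine in $x$ with coefficients affine in $\lam_1$, and the contractions are uniformly bounded away from $0$ and $1$ on any compact sub-interval of $(0,0.668]$, so all the required bounds and H\"older constants are trivially verified.

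Next I would verify the transversality condition \ref{as:trans} on $(0,0.668]$. The natural projection can be expanded as
\[
\Pi^{\lam_1,\lam_2}(\om) = \sum_{n\ge 1} c_{\om_n} \prod_{k<n} \lam_{\om_k}, \qquad c_1 = 0,\ c_2 = 1,
\]
so that for distinct $\om, \tau \in \Om$ with $\om_1 \ne \tau_1$ the difference $\Pi^{\lam_1,\lam_2}(\om) - \Pi^{\lam_1,\lam_2}(\tau)$, after factoring out a suitable leading monomial, becomes (up to sign) an analytic function of $\lam_1$ with coefficients in $\{-1, 0, 1\}$, depending only on the tail indices and on $\lam_2$ treated as a fixed parameter. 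This is precisely the setup in which the classical Solomyak--Peres--Solomyak transversality lemma, as employed in \cite{Neun} and \cite{NW} for non-homogeneous Bernoulli convolutions, yields transversality (of ``degree zero'' in the sense of \ref{as:trans}) uniformly in $\lam_1 \in (0, 0.668]$, provided $\lam_2 \le 0.668$ as well. The key point is that transversality is a property of the IFS alone and does not depend on the Bernoulli weights, so the standard regions transfer directly.

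Having verified the hypotheses, Corollary \ref{cor-nonhom} applies and shows that, for each such fixed $\lam_2$, the natural measure $\nu_{\lam_1,\lam_2}$ is absolutely continuous with density in $L^2$ for Lebesgue-a.e.\ $\lam_1$ in $\{\lam_1 \in (0, 0.668]: s(\Fk_{\lam_1,\lam_2}) > 1\}$. Since the similarity dimension $s = s(\Fk_{\lam_1,\lam_2})$ is defined by $\lam_1^s + \lam_2^s = 1$ and the map $s \mapsto \lam_1^s + \lam_2^s$ is strictly decreasing, $s > 1$ is equivalent to $\lam_1 + \lam_2 > 1$. A final application of Fubini's theorem, sweeping $\lam_2$ over $[0, 0.668]$, upgrades these fiberwise a.e.\ statements to an a.e.\ statement on the two-dimensional parameter region $\{(\lam_1,\lam_2): \lam_1 + \lam_2 > 1,\ \max\{\lam_1,\lam_2\}\le 0.668\}$.

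The only nontrivial step is the second one, namely importing the transversality bound $0.668$ from the one-parameter Bernoulli convolution literature to the present two-parameter family with one coordinate frozen; this is essentially routine, but requires care to check that the classical power-series estimates leading to transversality are uniform in the frozen parameter $\lam_2 \in [0, 0.668]$. The remaining ingredients (similarity dimension monotonicity, Fubini, and the application of Corollary \ref{cor-nonhom}) are mechanical.
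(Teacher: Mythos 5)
Your overall scheme (slice the two-dimensional parameter set into one-parameter families, apply Corollary \ref{cor-nonhom} on each slice, then use Fubini and the equivalence $s>1 \iff \lam_1+\lam_2>1$) is the same as the paper's, but the paper slices along rays $\lam_1=\lam$, $\lam_2=c\lam$ with $c>0$ fixed, because those are the one-parameter families for which transversality regions are actually available in \cite{Neun,NW}; your slices freeze $\lam_2$ and move only $\lam_1$, and this is where your argument has a genuine gap. The transversality condition \ref{as:trans} involves $\frac{d}{d\lam}\bigl(\Pi^\lam(u)-\Pi^\lam(v)\bigr)$, so it is a property of the chosen one-parameter \emph{family}, not of the individual IFSs: knowing that some parametrization of the region is transversal says nothing about a different direction of motion in the $(\lam_1,\lam_2)$-plane. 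Your justification that ``transversality is a property of the IFS alone and does not depend on the Bernoulli weights, so the standard regions transfer directly'' conflates independence from the measure (true, and irrelevant here) with independence from the parametrization (false).

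Concretely, the power-series structure you invoke is wrong for the frozen-$\lam_2$ slices. Writing $\Pi(\om)=\sum_{n\ge1}c_{\om_n}\lam_1^{a_n}\lam_2^{b_n}$ and grouping by powers of $\lam_1$ with $\lam_2$ fixed, the coefficient of $\lam_1^{a}$ in $\Pi(\om)-\Pi(\tau)$ is a signed sum of \emph{several} powers of $\lam_2$ (one for each position $n$ with $a_n=a$), so after normalization the coefficients lie only in an interval of size comparable to $1/(1-\lam_2)$, which is about $3$ when $\lam_2$ is near $0.668$; they are not in $\{-1,0,1\}$. Hence the bound $0.668$, which comes from the smallest double zero of $\{-1,0,1\}$-power series (\cite{SS2}), does not apply to your slices, and a $(*)$-function argument for coefficients bounded by $1/(1-\lam_2)$ would at best yield a strictly smaller region. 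To repair the proof you should either follow the paper and take the one-parameter families along rays $(\lam, c\lam)$, for which the transversality on the stated region is imported from \cite{Neun,NW} (the Fubini step then runs over $c$, using that $(c,\lam)\mapsto(\lam,c\lam)$ maps null sets to null sets away from $\lam=0$), or else supply an actual transversality proof for the frozen-$\lam_2$ families, which is not in the literature and cannot be obtained by the $\{-1,0,1\}$ reduction you propose.
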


\subsection{Some random continued fractions}\label{sec:rcf}

Consider the IFS $\Fk_{\alpha,\beta}=\{f_1,f_2\} =: \{\frac{x+\alpha}{x+\alpha+1},\frac{x+\beta}{x+\beta+1}\}$ on the real line, for $0 \le \alpha < \beta$. Applying the maps randomly (not necessarily independently), we obtain a random continued fraction $[1,Y_1, 1, Y_2, 1, Y_3,\ldots]$ where $Y_i \in \{\alpha,\beta\}$ and we are using the notation
$$
[a_1,a_2,a_3,\ldots] :=
\frac{1}{a_1+ \displaystyle{\frac{1}{a_2 +
			\displaystyle{\frac{1}{a_3+ \ldots}}}}}
$$
In the case $\alpha=0$ the IFS is {\em parabolic}; it was first studied by Lyons \cite{Lyons}, motivated by a problem from the theory of Galton-Watson trees. In \cite{SSU Parabolic} it was shown that the invariant measure
for the IFS corresponding to $Y_i$ applied i.i.d., with probabilities $(\half,\half)$ is absolutely continuous for a.e.\ $\beta\in (0.215, \beta_c)$, where $\beta_c\in (0.2688, 0.2689)$ is the ``critical value'',
such that
$$
\frac{\log 2}{\chi_{\beta_c}}=1,
$$
where $\chi_{\beta_c}$ is the Lyapunov exponent of the measure $(\half,\half)^\N$. 
Note that the IFS $\Fk_{0,\beta}$ is overlapping, i.e., its two cylinder intervals have
non-trivial intersection, for $\beta\in (0,0.5)$.

\medskip

In this paper we restrict ourselves to smooth {\em hyperbolic} IFS, so we need to take $\alpha>0$. However, we can take a very small positive $\alpha$ and expect somewhat similar behavior.
The convex hull of the attractor for $\Fk_{\alpha,\beta}$ is the closed interval having the attracting fixed points of $f_1, f_2$ as its endpoints; it is
$
X_{\alpha,\beta} = \Bigl[\frac{\sqrt{\alpha^2 + 4\alpha} - \alpha}{2}, \frac{\sqrt{\beta^2 + 4\beta} - \beta}{2}\Bigr].
$
It is easy to check that the condition for the IFS to be overlapping, i.e., $\Lk^1\bigl(f_1(X_{\alpha,\beta}) \cap f_2(X_{\alpha,\beta}) \bigr)>0$ is
$$
\beta + \alpha + 4 > 3\bigl(\sqrt{\beta^2+4\beta} + \sqrt{\alpha^2 + 4\alpha}\bigr).
$$
It is satisfied, e.g., when $\alpha \in (0, 10^{-4}]$ and $\beta \in (\alpha,0.485)$. 

\begin{example} \label{ex-cf}
	Denote by $\Pi^{\alpha,\beta}$ the natural projection from $\Om=\{1,2\}^\N$ to the attractor and consider the equilibrium Gibbs measure $\mu_{\alpha,\beta}$ for the IFS.
	Fix $\alpha \in (0, 10^{-4}]$ and $\beta = \sqrt{2}-1 = 0.41421\ldots$ Denote 
	$\eta_{\alpha,\beta}:=\Pi^{\alpha,\beta}_* \mu_{\alpha,\beta}$. Then $\eta_{\alpha, \beta+\lam}$ is absolutely continuous with a density in $L^2$ for 
	a.e.\ $\lam\in U = (0,0.485-\beta)\approx 
	(0,0.077)$.
\end{example}

\begin{sloppypar}
In order to derive this claim from Theorem~\ref{th-equilib} we need to check transversality and that $h_{\mu_{\alpha,\beta}}/\chi_{\mu_{\alpha,\beta}}>1$ holds. (The regularity assumptions are obviously satisfied.) It is well-known  
that as soon as there is an overlap, the condition $s(\Psi_{\alpha,\beta})=h_{\mu_{\alpha,\beta}}/\chi_{\mu_{\alpha,\beta}}>1$ is satisfied, but for the reader's convenience we provide a short proof in Appendix \ref{app:pressure drop}, see Corollary \ref{cor-drop2}. Checking transversality is non-trivial; we indicate it in the next subsection. (In fact, we could get a larger interval of transversality 
$(\approx 0.215, 0.485)$ for $\alpha \in (0, 10^{-4}]$ with the method of 
\cite[Section 6]{SSU Parabolic}, which is more delicate.) 
\end{sloppypar}

\subsection{Checking transversality}\label{sec:tran}
Sometimes slightly different forms of the transversality conditions are used. Here they are:

\begin{align}
\begin{split}\label{tran1}
%\text{there exists } $\eta>0$ & \text{ such that for all } $u,v \in \Om,\ u_1 \neq v_1,\ \lam \in \ov U$\\
\exists&\,\eta>0:\ \forall\, u,v\in \Om,\ \ u_1 \ne v_1, \  \lam\in \ov U\\
&\qquad\left|\Pi^\lam(u) - \Pi^\lam(v)\right| \le  \eta \implies \left|\textstyle{\frac{d}{d\lam}}(\Pi^\lam(u) - \Pi^\lam(v))\right| \ge \eta;
\end{split}
\\[8pt]
\begin{split}\label{tran2}
\exists&\,\eta>0:\ \forall\, u,v\in \Om,\ \ u_1 \ne v_1, \ \lam\in \ov U\\
&\qquad \Pi^\lam(u) = \Pi^\lam(v) \implies \left|\textstyle{\frac{d}{d\lam}}(\Pi^\lam(u) - \Pi^\lam(v))\right| \ge \eta;
\end{split}
\\[8pt]
\begin{split}\label{tran3}
\exists&\, C_T>0:\  \forall\, u,v\in \Om,\ \ u_1 \ne v_1,\ r>0 \\
&\qquad \Lk^1\left\{\lam\in \ov U:\ |\Pi^\lam(u) - \Pi^\lam(v)| \le r \right\} \le C_T \cdot r.
\end{split}
\end{align}

\begin{lem} \label{lem-tran}
	Under regularity assumptions \ref{as:C2} - \ref{as:hyper}, all three conditions \eqref{tran1} - \eqref{tran3} are equivalent.
\end{lem}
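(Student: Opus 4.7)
My plan is to prove the equivalence by closing a cycle: \eqref{tran1} $\Rightarrow$ \eqref{tran3} $\Rightarrow$ \eqref{tran2} $\Rightarrow$ \eqref{tran1}. The implication \eqref{tran1} $\Rightarrow$ \eqref{tran2} is immediate (take $\eta$ the same and apply \eqref{tran1} with $r=0$), so after completing the cycle all four conditions line up.

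For \eqref{tran1} $\Rightarrow$ \eqref{tran3}, fix $u,v\in\Om$ with $u_1\ne v_1$ and set $F(\lam)=\Pi^\lam(u)-\Pi^\lam(v)$. Proposition \ref{prop:pi hoelder} gives a uniform bound $L=\sup_{u,v,\lam}|F'(\lam)|<\infty$. By \eqref{tran1}, $F'$ does not vanish on $\{|F|\le \eta\}$, so on each connected component of that (closed) set, $F$ is monotonic with $|F'|\ge \eta$, hence its length is at most $2\eta/\eta = 2$, and moreover at least $2\eta/L$ unless it abuts the boundary of $\ov U$. Thus the number of components is bounded by $|\ov U|L/(2\eta)+2$, and on each component the subset $\{|F|\le r\}$ (for $r\le\eta$) is a subinterval of length at most $2r/\eta$. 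For $r>\eta$ the trivial bound $|\ov U|\le |\ov U|r/\eta$ suffices, so \eqref{tran3} holds with a constant $C_T$ depending only on $\eta, L, |\ov U|$.

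For \eqref{tran3} $\Rightarrow$ \eqref{tran2}, argue by contradiction: if \eqref{tran2} fails, there are $u_n,v_n\in\Om$ with distinct first symbols and $\lam_n\in\ov U$ such that $F_n(\lam_n):=\Pi^{\lam_n}(u_n)-\Pi^{\lam_n}(v_n)=0$ and $|F_n'(\lam_n)|<1/n$. Proposition \ref{prop:pi hoelder} provides a uniform $\delta$-Hölder bound $|F_n'(\lam)-F_n'(\lam_n)|\le C_\delta |\lam-\lam_n|^\delta$, so for $\lam\in B(\lam_n,r)\cap\ov U$,
\[
|F_n(\lam)| \le r\bigl(1/n + C_\delta r^\delta\bigr).
\]
Choosing $r_n=(nC_\delta)^{-1/\delta}$ balances the two terms and gives $|F_n(\lam)|\le 2r_n/n$ on $B(\lam_n,r_n)\cap\ov U$. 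Since this set has length at least $r_n/2$ (worst case near the boundary), condition \eqref{tran3} yields $r_n/2\le C_T\cdot 2r_n/n$, i.e.\ $n\le 4C_T$, a contradiction for large $n$. The delicate point, which I expect to be the main obstacle, is precisely this Taylor-type step: it is what forces us to invoke the $C^{1,\delta}$ regularity of $\Pi^\lam$ rather than mere continuity of the derivative.

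Finally, for \eqref{tran2} $\Rightarrow$ \eqref{tran1}, I also argue by contradiction. If \eqref{tran1} fails, there are sequences $u_n,v_n,\lam_n$ with $u_{n,1}\ne v_{n,1}$, $|F_n(\lam_n)|\to 0$ and $|F_n'(\lam_n)|\to 0$. By compactness of $\Om\times\Om\times\ov U$ and the fact that the set $\{(u,v):u_1\ne v_1\}$ is a finite union of clopen subsets, along a subsequence the first symbols stabilize to some $i\ne j$, and $u_n\to u$, $v_n\to v$, $\lam_n\to\lam$ with $u_1=i$, $v_1=j$. Joint continuity of $(\lam,u)\mapsto\Pi^\lam(u)$ (from \ref{as:hyper} and uniform convergence of the defining iterates) together with joint continuity of $(\lam,u)\mapsto \tfrac{d}{d\lam}\Pi^\lam(u)$ (which follows from the uniform convergence of $\tfrac{d}{d\lam}F_N$ in the proof of Proposition \ref{prop:pi hoelder}) pass the limit inside, giving $\Pi^\lam(u)=\Pi^\lam(v)$ and $\tfrac{d}{d\lam}(\Pi^\lam(u)-\Pi^\lam(v))=0$, contradicting \eqref{tran2}. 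This closes the cycle.
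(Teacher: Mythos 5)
Your proof is correct and takes essentially the same route as the paper: the same cycle of implications, with the $C^{1,\delta}$ regularity of $\Pi^\lam$ from Proposition \ref{prop:pi hoelder} powering \eqref{tran3}$\Rightarrow$\eqref{tran2} and compactness of $\Om\times\ov U$ together with joint continuity of $\Pi^\lam$ and $\frac{d}{d\lam}\Pi^\lam$ powering \eqref{tran2}$\Rightarrow$\eqref{tran1}. The only differences are cosmetic: you write out the counting argument for \eqref{tran1}$\Rightarrow$\eqref{tran3}, which the paper simply cites from \cite{SSU1}, and in \eqref{tran3}$\Rightarrow$\eqref{tran2} you run the H\"older estimate quantitatively at approximate zeros with an explicit radius $r_n$ instead of first extracting an exact double zero by compactness (only note that the H\"older constant for $F_n'$ is $2C_\delta$ rather than $C_\delta$, which changes nothing).
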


\begin{proof}
	The implication \eqref{tran1}$\implies$\eqref{tran2} is trivial.
	
	\begin{sloppypar}
		The implication \eqref{tran1} $\implies$\eqref{tran3} is the usual transversality argument, see \cite[Lemma~7.3]{SSU1}.%I omit (from the lower bound on the derivative we know that each connected component of the set $\left\{\lam\in \ov U:\ |\Pi^\lam(u) - \Pi^\lam(v)| \le r \right\}$ has length bounded above by $2r/\delta$, and the number of such components is bounded above by something like $2\max_{u\in \Om} \|\frac{d}{d\lam} \Pi^\lam(u)\|_\infty/|U|$).
	\end{sloppypar}
	
	\smallskip
	
	Let us prove \eqref{tran3}$\implies$\eqref{tran2}. We argue by contradiction. If \eqref{tran2} does not hold, we can use compactness of $\Om$ and $\ov U$ and find $u,v\in \Om$ with
	$u_1 \ne v_1$, and $\lam_0 \in \ov U$ such that $F(\lam) = \Pi^\lam(u) - \Pi^\lam(v)$ satisfies
	$$
	F(\lam_0) = \frac{d}{d\lam} F(\lam_0) = 0.
	$$
	Using that $\Pi^\lam\in C^{1,\delta}$ (Proposition \ref{prop:pi hoelder}), we can write
	\begin{eqnarray*}
		|F(\lam_0 + t)| & = & |F(\lam_0 + t) - F(\lam_0) - F'(\lam_0) t| \\
		& = & | F'(\lam_0 + \tau)t  - F'(\lam_0) t| \ \ \ \ \mbox{for some $\tau\in (0,t)$ by the Lagrange Theorem}\\
		& = & |t|\cdot |F'(\lam_0 + \tau) - F'(\lam_0)| \le |t| \cdot C_\delta|\tau|^{\delta} < C_\delta|t|^{1+\delta},
	\end{eqnarray*}
	which clearly contradicts \eqref{tran3} for $r$ sufficiently small.
	
	\smallskip
	
	It remains to show \eqref{tran2}$\implies$\eqref{tran1}, but this again follows by compactness of $\Om$ and $\ov U$ and continuity of  $\lam\mapsto \Pi^\lam$ and
	$\lam\mapsto \frac{d}{d\lam}\Pi^\lam$.
\end{proof}

Next we consider two 1-parameter families of IFS for which it is possible to verify the transversality condition, under appropriate assumptions. They are variants and modifications of the
parametrized families of IFS from \cite{SSU1,SSU Parabolic}.

\begin{proof}[Proof of transversality in Example~\ref{ex-cf}]
	Let $f(x) = \frac{x}{x+1}$, so that $\Fk^\lam = \{f(x+\alpha), f(x + \beta+\lam)\}$, and let $\Pi^\lam$ be the corresponding natural projection map. We can consider this IFS on 
	$X = [0,0.5]$ for all these parameters.
	Here it is more convenient to verify the transversality condition in the form \eqref{tran2}. Let $u,v\in \Om$ with $u_1\ne v_1$. 
	Without loss of generality we can assume that $u_1=2$ and $v_1=1$. Then we have by the Lagrange Theorem,
	\begin{eqnarray*}
		\Pi^\lam(u) - \Pi^\lam(v) & = & f\bigl(\beta + \lam + \Pi^\lam(\sig u)\bigr) - f\bigl(\alpha + \Pi^\lam(\sig v)\bigr) \\
		& = & f'(c)\cdot \left[\beta-\alpha+\lam + \Pi^\lam(\sig u) - \Pi^\lam(\sig v)\right] \\
		& =: & f'(c)\cdot \Psi^\lam(u,v).
	\end{eqnarray*}
	Since $f'(c)\ge \gam_1>0$, we obtain that
	$$
	\left\{\lam\in \ov U:\ |\Pi^\lam(u) - \Pi^\lam(v)| \le r \right\} \subset \left\{\lam\in \ov U:\ |\Psi^\lam(u,v)| \le r/\gam_1 \right\}.
	$$
	In order to verify \eqref{tran3}, it suffices to show that $\frac{d}{d\lam} \Psi^\lam(u,v)\ge\delta>0$.
	% whenever $\Psi^\lam(u,v)$ is small.
	We have
	\be \label{ineqa}
	\textstyle{\frac{d}{d\lam}} \Psi^\lam(u,v) = 1 + \frac{d}{d\lam} \Pi^\lam(\sig u) - \frac{d}{d\lam} \Pi^\lam(\sig v) \ge 1 - \frac{d}{d\lam} \Pi^\lam(\sig v),
	\ee
	using monotonicity. 
	%Note that 
	%$$
	%\Psi^\lam(u,v) \ge \beta - \Pi^\lam(\sig v),
	%$$
	%hence we can assume that $v_2=2$. Indeed, if $v_2 = 1$, then $\Pi^\lam(\sig v) \le \frac{0.5 + \alpha}{1.5+\alpha} < \beta= \sqrt{2}-1$, and so $\Psi^\lam(u,v)$ is bounded away from zero.
	We can write
	$$
	\Pi^\lam(\sig v) = f_1^{i_0} f_2^{\lam} f_1^{i_1} f_2^\lam f_1^{i_2} f_2^\lam\ldots
	$$
	for some $i_n\ge 0$, where we write $f_1 \equiv f_1^\lam = f(x+\alpha)$ and $f_2^\lam = f(x + \beta + \lam)$, so that
	$$
	\Pi^\lam(\sig v)  = f_1^{i_0}f\bigl(\beta + \lam + f_1^{i_1} f(\beta + \lam + f_1^{i_2}\ldots) \bigr).
	$$
	Then simply using that $\|f_1'\|_\infty < 1$ and the maximum of the derivative is attained at the left endpoint by concavity, yields
	$$
	{\textstyle{\frac{d}{d\lam}}} \Pi^\lam(\sig v) < f'(\beta + \lam)\Bigl(1 + f'(\beta + \lam)\bigl(1 + f'(\beta+\lam)(1 + \cdots)\bigr)\Bigr) = \frac{f'(\beta+\lam)}{1 - f'(\beta+\lam)}.
	$$
	It remains to note that $f'(\beta+\lam) < f'(\beta) = 1/2$, hence ${\textstyle{\frac{d}{d\lam}}} \Pi^\lam(\sig v) < 1$, which implies the desired claim, in view of \eqref{ineqa}.
\end{proof}

\subsection{``Vertical'' translation family}
Next we consider a class of 1-parameter families of IFS for which it is possible to verify the transversality condition, under appropriate assumptions. This is also a modification of the
parametrized families of IFS from \cite{SSU1,SSU Parabolic}.

Let $\{f_j\}_{j\in \Ak}$ be a $C^{1+\delta}$ IFS on $X$ and consider a ``translation perturbation'' $\{f_j^\lam\}_{j \in \Ak}$, satisfying \ref{as:hyper},
of the following form:
assume that
$$
\{f^\lam_j(x) = f_j(x) + a_j(\lam)\}_{j\in \Ak},
$$
and assume that it is well-defined on $X$ for $\lam\in \ov U$. We call it ``vertical'' because the graphs are translated vertically.
Sometimes it is useful to consider IFS consisting of ``horizontal'' shifts of the same function, that is, IFS of the form $\{f(x+c_j)\}_{j=1}^m$, like Example~\ref{ex-cf}.
Such families may be treated in a way similar to the ``vertical''
translation families with a few modifications, see \cite[Section 7]{SSU1} and \cite[Section 6]{SSU Parabolic}. Instead of treating this case in full generality, we focused on a specific example of random continued fractions above.

Denote for $i\ne j$ in $\Ak$:
\be \label{def-I}
X_{ij}:= \left\{x\in X:\ \exists\,\lam\in \ov U,\ \exists\,y\in X\ \mbox{such that} \ f^\lam_i(x) = f^\lam_j(y)\right\}.
\ee
Note that $X_{ij}$ is empty if the corresponding 1st order cylinders never overlap.
We further define, for $i\ne j$ in $\Ak$ such that $X_{ij}\ne \es$:
\be \label{def-eta}
{\|f'_i\|}_{X_{ij}}:= {\|f'_i|_{X_{ij}}\|}_\infty,\ \ \ \ \eta_{ij} := \min\Bigl|\textstyle{\frac{d}{d\lam}}\bigl[a_i(\lam) - a_j(\lam)\bigr]\Bigr|.
\ee
Let
\be \label{dmax}
D_{\max}:= \max_i\Bigl(\frac{{\|\frac{d}{d\lam}a_i\|}_\infty}{1 - {\|f_i'\|}_\infty}\Bigr)\,.
\ee

\begin{prop} \label{prop-trans-ex}
	{\bf (i)} If
	\be \label{cond1}
	\eta_{ij} - \bigl(\|f'_i\|_{X_{ij}} +\|f'_j\|_{X_{ji}} \bigr)\cdot D_{\max}>0\ \ \ \mbox{for all $i\ne j$ such that $X_{ij}\ne \es$},
	\ee
	then the transversality condition holds on $U$.
	
	{\bf (ii)} Assume, in addition, that $f_j'(x) > 0$ and $\frac{d}{d\lam}a_j\ge 0$ for all $j\in \Ak$. If
	\be \label{cond2}
	\eta_{ij} - \|f'_j\|_{X_{ji}} \cdot D_{\max}>0\ \ \ \mbox{for all $i\ne j$ such that $X_{ij}\ne \es$},
	\ee
	then the transversality condition holds on $U$.
\end{prop}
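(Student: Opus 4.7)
My plan is to verify the transversality condition in the form \eqref{tran2}, which by Lemma \ref{lem-tran} is equivalent to the standard condition \ref{as:trans}. Since $\Pi^\lam(u) = f_{u_1}(\Pi^\lam(\sigma u)) + a_{u_1}(\lam)$, iterating and differentiating yields the absolutely convergent series
\[
\frac{d}{d\lam} \Pi^\lam(u) = \sum_{n=0}^{\infty} \prod_{k=1}^{n} f'_{u_k}(\Pi^\lam(\sigma^k u)) \cdot \frac{d}{d\lam} a_{u_{n+1}}(\lam).
\]
The key a priori bound I intend to prove is $|\frac{d}{d\lam}\Pi^\lam(u)| \le D_{\max}$ for every $u \in \Om$ and $\lam \in \ov U$. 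The argument is a telescoping estimate: replace each $|\frac{d}{d\lam}a_{u_{n+1}}|$ by $D_{\max}(1-\|f'_{u_{n+1}}\|_\infty)$ and each $|f'_{u_k}(\cdot)|$ by $\|f'_{u_k}\|_\infty$; writing $p_k := \|f'_{u_k}\|_\infty$, the series is bounded by $D_{\max} \sum_{n\ge 0} (p_1\cdots p_n - p_1\cdots p_{n+1}) = D_{\max}$, since $p_1\cdots p_n \to 0$ by \ref{as:hyper}.

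Now fix $u,v \in \Om$ with $u_1 = i \ne j = v_1$ and suppose $\Pi^\lam(u) = \Pi^\lam(v)$, so $f_i^\lam(\Pi^\lam(\sigma u)) = f_j^\lam(\Pi^\lam(\sigma v))$. By the definition \eqref{def-I}, this forces $\Pi^\lam(\sigma u) \in X_{ij}$ and $\Pi^\lam(\sigma v) \in X_{ji}$. Differentiating $\Pi^\lam(u) - \Pi^\lam(v) = f_i(\Pi^\lam(\sigma u)) - f_j(\Pi^\lam(\sigma v)) + a_i(\lam) - a_j(\lam)$ gives
\[
\tfrac{d}{d\lam}(\Pi^\lam(u) - \Pi^\lam(v)) = f'_i(\Pi^\lam(\sigma u))\tfrac{d}{d\lam}\Pi^\lam(\sigma u) - f'_j(\Pi^\lam(\sigma v))\tfrac{d}{d\lam}\Pi^\lam(\sigma v) + \bigl(a'_i(\lam) - a'_j(\lam)\bigr).
\]
For part (i), the triangle inequality together with $|f'_i(\Pi^\lam(\sigma u))| \le \|f'_i\|_{X_{ij}}$, $|f'_j(\Pi^\lam(\sigma v))| \le \|f'_j\|_{X_{ji}}$, the a priori bound, and $|a'_i(\lam) - a'_j(\lam)| \ge \eta_{ij}$ yields
\[
\bigl|\tfrac{d}{d\lam}(\Pi^\lam(u) - \Pi^\lam(v))\bigr| \ge \eta_{ij} - (\|f'_i\|_{X_{ij}} + \|f'_j\|_{X_{ji}})D_{\max},
\]
which is uniformly positive by the hypothesis \eqref{cond1} (taking the minimum over the finitely many pairs $(i,j)$).

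For part (ii), the extra sign assumptions $f'_j > 0$ and $a'_j(\lam) \ge 0$ propagate through the series formula to give $\frac{d}{d\lam}\Pi^\lam(w) \in [0, D_{\max}]$ for every $w$. Consequently $f'_i(\Pi^\lam(\sigma u))\frac{d}{d\lam}\Pi^\lam(\sigma u) \ge 0$ and $f'_j(\Pi^\lam(\sigma v))\frac{d}{d\lam}\Pi^\lam(\sigma v) \in [0, \|f'_j\|_{X_{ji}}D_{\max}]$, so one of the two terms works in our favor. Splitting by the sign of $a'_i(\lam)-a'_j(\lam)$: if positive, then it is $\ge \eta_{ij}$ and only $-f'_j(\cdot)\frac{d}{d\lam}\Pi^\lam(\sigma v)$ can reduce the estimate, giving $\frac{d}{d\lam}(\Pi^\lam(u) - \Pi^\lam(v)) \ge \eta_{ij} - \|f'_j\|_{X_{ji}} D_{\max}$; if negative, swap the roles of $i$ and $j$ and apply the hypothesis \eqref{cond2} to the pair $(j,i)$ (using $\eta_{ij} = \eta_{ji}$ and the symmetric nonemptiness $X_{ij}\ne\emptyset \iff X_{ji}\ne\emptyset$). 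Either way the transversality follows. The only mildly delicate point is the telescoping a priori bound and the cancellation argument in (ii); everything else is a direct unfolding of definitions.
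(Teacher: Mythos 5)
Your proposal is correct and follows essentially the same route as the paper: verify transversality in the form \eqref{tran2}, use the relation $\frac{d}{d\lam}\Pi^\lam(u)=a_{u_1}'(\lam)+f_{u_1}'(\Pi^\lam(\sigma u))\frac{d}{d\lam}\Pi^\lam(\sigma u)$ together with the a priori bound $\bigl|\frac{d}{d\lam}\Pi^\lam\bigr|\le D_{\max}$, note that a coincidence $\Pi^\lam(u)=\Pi^\lam(v)$ forces $\Pi^\lam(\sigma u)\in X_{ij}$, $\Pi^\lam(\sigma v)\in X_{ji}$, and conclude by the triangle inequality in (i) and the sign/one-sided argument (with the $i\leftrightarrow j$ swap) in (ii). The only cosmetic difference is that you obtain the bound $D_{\max}$ via the explicit telescoping series, whereas the paper gets it from the inequality $M_i\le\|\frac{d}{d\lam}a_i\|_\infty+\|f_i'\|_\infty M_\infty$ with $M_\infty=\max_i M_i$; both are fine.
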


Before the proof we present a more familiar special case.
Let $\{f_j^\lam\}_{j \in \Ak}$ be a $C^{1+\delta}$ IFS on $X$, satisfying \ref{as:hyper}.
Consider the translation family
$$
\{f^\lam_1(x) = f_1(x) + \lam,\ f^\lam_j(x)= f_j(x),\ j> 1\},
$$
and assume that it is well-defined on $X$ for $\lam\in \ov U$.
Note that only $f_1^\lam$ changes with $\lam$.
Moreover, we assume that only the cylinder $f^\lam_1(X)$ can intersect other 1-st order cylinders, that is
$$
i\ne j,\ f_i(X) \cap f_j(X) \ne \es \implies 1 \in \{i,j\}.
$$

\begin{cor}
	{\bf (i)} If
	$$
	2{\|f'_1\|}_\infty +  {\|f'_j\|}_{X_{j1}} < 1\  \ \mbox{for all}\ \ 1<j \le m,
	$$
	then the transversality condition  holds on $U$.
	
	{\bf (ii)} Assume, in addition, that $f_j'(x) > 0$ for all $j\in \Ak$. If
	$$
	{\|f'_1\|}_\infty +  {\|f'_j\|}_{X_{j1}} < 1\ \ \mbox{for all}\  \ 1<j \le m,
	$$
	then the transversality condition holds on $U$.
\end{cor}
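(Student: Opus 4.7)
The plan is to derive the corollary as a direct specialization of Proposition~\ref{prop-trans-ex} to the one-parameter translation family at hand. The only real content is a short bookkeeping reduction; the substantive transversality work has already been done in the proof of Proposition~\ref{prop-trans-ex}.

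First I would compute the quantities appearing in that proposition for this family. Since $a_1(\lam)=\lam$ and $a_j(\lam)\equiv 0$ for $j>1$, we have $\frac{d}{d\lam}a_1\equiv 1$ and $\frac{d}{d\lam}a_j\equiv 0$ for $j>1$. Substituting into \eqref{dmax}, only $i=1$ contributes a nonzero numerator, so
\[
D_{\max}\;=\;\frac{1}{1-\|f_1'\|_\infty}.
\]
By the overlap assumption, $X_{ij}\neq\emptyset$ only when $1\in\{i,j\}$, and for each such ordered pair one reads off from \eqref{def-eta} that $\eta_{ij}=|1-0|=1$.

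For part (i), plugging these values into condition \eqref{cond1} of Proposition~\ref{prop-trans-ex}(i) and rearranging gives, for every $j>1$ with $X_{1j}\neq\emptyset$,
\[
\|f_1'\|_{X_{1j}}+\|f_j'\|_{X_{j1}}\;<\;1-\|f_1'\|_\infty.
\]
Using the trivial inclusion $X_{1j}\subseteq X$ and hence $\|f_1'\|_{X_{1j}}\le\|f_1'\|_\infty$, the hypothesis $2\|f_1'\|_\infty+\|f_j'\|_{X_{j1}}<1$ is sufficient, and transversality then follows from the proposition. For part (ii), the two extra requirements of Proposition~\ref{prop-trans-ex}(ii) hold for free: $f_j'>0$ is assumed, and $\frac{d}{d\lam}a_j\ge 0$ is immediate since $a_1'\equiv 1$ and $a_j'\equiv 0$ otherwise. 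Condition \eqref{cond2} for each relevant pair then reduces to
\[
\|f_j'\|_{X_{j1}}\;<\;1-\|f_1'\|_\infty,
\]
which is exactly the stated hypothesis, and Proposition~\ref{prop-trans-ex}(ii) delivers the transversality condition \ref{as:trans} on $U$.

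The only slightly delicate point, which I would double-check rather than grind through, is that the asymmetric form of condition \eqref{cond2} in Proposition~\ref{prop-trans-ex}(ii)—which in principle involves both ordered pairs $(1,j)$ and $(j,1)$—really collapses to the single inequality used here under the present monotonicity setup; this is where the positivity assumption $f_j'>0$ together with $\frac{d}{d\lam}a_j\ge 0$ is used to rule out cancellation in $\frac{d}{d\lam}\Pi^\lam(u)$ and ensure that only the ``$(1,j)$'' direction of the condition is binding. Apart from this verification, the argument is purely a substitution into Proposition~\ref{prop-trans-ex}.
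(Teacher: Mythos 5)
Your proposal is correct and takes essentially the same route as the paper, whose own derivation is exactly this one-line substitution: $\eta_{1j}=1$ for $j>1$ and $D_{\max}=(1-\|f_1'\|_\infty)^{-1}$, plugged into Proposition \ref{prop-trans-ex}. The point you flagged about the ordered pairs in \eqref{cond2} is indeed harmless: in the proof of Proposition \ref{prop-trans-ex}(ii) the pair $(i,j)$ is ordered so that $\frac{d}{d\lam}\bigl[a_i(\lam)-a_j(\lam)\bigr]>0$, which in the present family forces $i=1$, so only the $(1,j)$ inequality is ever invoked and it reduces precisely to the stated hypothesis.
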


The derivation of the corollary from the proposition is immediate, since in this case we have $\eta_{1j} = 1$ for $j>1$ and $D_{\max} = (1 - {\|f'_1\|}_\infty)^{-1}$.

\begin{proof}[Proof of Proposition~\ref{prop-trans-ex}]
	Consider the symbolic cylinder sets $[i]\subset \Om$ and let
	$$
	M_\infty:= \max_{u\in \Om} \Bigl\|\textstyle{\frac{d}{d\lam}}\Pi^\lam(u)\Bigr\|_\infty,\ \ \ M_i: = \max_{u\in [i]} \Bigl\|\frac{d}{d\lam} \Pi^\lam(u)\Bigr\|_\infty,\ \ i\in \Ak.
	$$
	We have
	$$
	u\in [i] \implies \Pi^\lam(u) = a_i(\lam) + f_i(\Pi^\lam(\sig u)),
	$$
	hence
	\be \label{deriva}
	\frac{d}{d\lam} \Pi^\lam(u) = \frac{d}{d\lam} a_i(\lam) + f_i'(\Pi^\lam(\sig u))\cdot \frac{d}{d\lam} \Pi^\lam(\sig u)\ \ \ \mbox{for}\ \ u\in [i].
	\ee
	It follows that
	$$
	M_i \le \Bigl\|\textstyle{\frac{d}{d\lam}} a_i(\lam) \Bigr\|_\infty + {\|f'_i\|}_\infty \cdot M_\infty,
	$$
	and since $M_\infty = \max_i M_i$, we obtain from \eqref{dmax} that
	\be \label{ineq1}
	M_\infty \le D_{\max}.
	\ee
	
	Now we verify the transversality condition in the form \eqref{tran2}. If $\Pi^\lam(u) = \Pi^\lam(v)$ and $u_1\ne v_1$, then $u\in [i]$ and $v\in [j]$ for some $i\ne j$ such that $X_{ij}\ne \es$.
	Without loss of generality we can assume that $\frac{d}{d\lam}\bigl[a_i(\lam) - a_j(\lam)\bigr]>0$ in the definition of $\eta_{ij}$, otherwise, exchange $i$ and $j$.
	Then \eqref{deriva} yields
	\be \label{ura1}
	\frac{d}{d\lam} \left (\Pi^\lam(u) - \Pi^\lam(v)\right)  = \frac{d}{d\lam}\bigl[a_i(\lam) - a_j(\lam)\bigr]
	+ f_i'(\Pi^\lam(\sig u))\cdot \frac{d}{d\lam} \Pi^\lam(\sig u) - f_j'(\Pi^\lam(\sig v))\cdot \frac{d}{d\lam} \Pi^\lam(\sig v).
	\ee
	Note that
	$$
	\Pi^\lam(u) = f_i(\Pi^\lam(\sig u)) = f_j(\Pi^\lam(\sig v)) = \Pi^\lam(v),
	$$
	hence $\Pi^\lam(\sig u)\in X_{ij}$ and  $\Pi^\lam(\sig v)\in X_{ji}$.
	Therefore, \eqref{ura1} yields
	$$
	\left|\frac{d}{d\lam} \left (\Pi^\lam(u) - \Pi^\lam(v)\right)\right| \ge \eta_{ij}  - \bigl(\|f'_i\|_{X_{ij}} +\|f'_j\|_{X_{ji}} \bigr)\cdot D_{\max}>0,
	$$
	assuming  \eqref{cond1}.
	This proves part (i) of the proposition.
	
	In order to verify part (ii), note that if all $f_j$ and $\lam\mapsto a_j(\lam)$ are monotone increasing, we also get that $\frac{d}{d\lam} \Pi^\lam(u)\ge 0$ for all $u\in \Om$, hence \eqref{ura1} implies
	$$
	\left|\frac{d}{d\lam} \left (\Pi^\lam(u) - \Pi^\lam(v)\right)\right| \ge \eta_{ij}  - \|f'_j\|_{X_{ji}} \cdot D_{\max}>0,
	$$
	which is bounded away from zero under the assumption \eqref{cond1}. This concludes the proof of \eqref{tran2}
\end{proof}

%%%%%%%%%%%%%%%%%%%%%%%%%%%%%

\begin{example}\label{z94}
	Let $\Psi:= \left\{ f_i \right\}_{i=1}^{m }$ be a $C^{1+\delta}$ IFS on $X$. We assume that there exists a partition
	$\mathcal{A}=\mathcal{I}_{-1}\cup\mathcal{I}_1$ such that
	for very $i,j\in\mathcal{I}_k$, we have
	\begin{equation}
	\label{z98}
	f _{i } (X)\cap f _{j}(X)=\emptyset,
	\quad
	i\ne j,\ i,j\in\mathcal{I}_k, k=-1,1
	\end{equation}
	Recall the definition of $\gamma _2$ from
	\ref{as:hyper}. Besides \eqref{z98}, our second assumption is as follows:
	\begin{equation}\label{z96}
	\gamma  _2<\frac{1}{2}.
	\end{equation}
	
	We define $\kappa(i)=k$ if $i\in \mathcal{I}_k$, $k=-1,1$.
	Then we introduce the family $\Psi^\lambda =\left\{ f _{ i}^{ \lambda } \right\}_{i=1}^{m }$ with a parameter interval $\lambda \in U$, where
	\begin{equation}
	\label{z97}
	f _{ i}^{ \lambda }(x):=f_i(x)+\kappa(i)\cdot \lambda.
	\end{equation}
	Together with \eqref{z96}, this yields
	\begin{equation}
	\label{z70}
	\left|
	\frac{d}{d\lambda }(a_i(\lambda )-a_j(\lambda ))
	\right|\equiv
	\left\{
	\begin{array}{ll}
	2
	,&
	\hbox{if $\kappa (i)\ne\kappa (j)$;}
	\\
	0   
	,&
	\hbox{if $\kappa (i)=\kappa (j)$.}
	\end{array}
	\right.
	\text{ and }
	D_{\max}\leq\frac{1}{1-\gamma _2}<2.
	\end{equation}
	The parameter interval $U$ is an open interval centered at $0$, and $U$ is so small that 
	\begin{equation}
	\label{z75}
	f _{ i}^{ \lambda }(X)\subset \text{int}(X),\text{ and }
	f^\lambda _{i } (X)\cap f^\lambda  _{j}(X)=\emptyset,
	\quad
	i\ne j,\ i,j\in\mathcal{I}_k,\  k=-1,1,\ \lambda \in \overline{U }.
	\end{equation}
	The (first level) cylinder intervals are $X _{i}^{ \lambda }:=f _{ i}^{ \lambda }(X)$, $i\in\mathcal{A}$ and $\lambda \in U$.
	Observe that
	\begin{equation}
	\label{z71}
	X_{ij}\ne \emptyset \Longleftrightarrow
	\exists \lambda \in \overline{U },\ X_{i}^{\lambda }\cap X_{j}^{\lambda }\ne \emptyset.
	\end{equation}
	Using this and  \eqref{z70} we obtain 
	\begin{equation}
	\label{z95}
	X_{ij}\ne \emptyset
	\Longrightarrow
	%\left( 
	\text{ either }
	\left(  
	i\in \mathcal{I}_{-1}\  \& \ j\in \mathcal{I}_{1}\right)
	\text{ or }
	\left(  
	j\in \mathcal{I}_{-1}\  \&\  i\in \mathcal{I}_{1}\right)
	%\right)
	\Longrightarrow
	\eta _{ij}=2.
	\end{equation}
	Putting together this and the second part of \eqref{z70} we obtain that 
	\eqref{cond1} holds and consequently
	the transversality condition holds on $U$.
\end{example}

\bigskip

\begin{rem}\label{z91}
	The partition $\mathcal{A}=\mathcal{I}_{-1}\cup \mathcal{I}_1$
	satisfying \eqref{z98} exists, for example, if every point in $X$ is covered by at most two level-1 cylinder intervals. That is
	\begin{equation}
	\label{z93}
	\sum_{i=1}^{m}\ind_{f_i(X)}\leq 2.
	\end{equation}
	In fact,
	let $[a_j,b_j]:=X_j:=f_j(X)$. Without loss of generality, we may assume that the cylinder intervals $X_j$  are ordered in such a way 
	that the left endpoints are in increasing order. If two level-1 cylinder intervals share the same left endpoint, that is,  $a_j=a_{j+1}$, then  we set $|X_j|\geq|X_{j+1}|$.
	Define $\mathcal{I}_1$ inductively, as follows:
	$1\in\mathcal{I}_1$. 
	If the set $\mathcal{I}_1$ already contains $1=n_1<n_2<\cdots <n_\ell$, then we let
	$n_{\ell +1}:=\min \left\{ j\in\mathcal{A}:\, b_{\ell  }<a_{j } \right\}$, if such  $a_j$ exists; otherwise, we stop and
	%and for all $\ell \in\mathcal{A}$ we have $a_{\ell }\leq b_{n_k}$. 
	set
	$\mathcal{I}_{-1}:=\mathcal{A}\setminus \mathcal{I}_1$.
	It is easy to see that \eqref{z98} holds.
\end{rem}

\begin{rem}\label{z69}
	If we consider an IFS like in Example \ref{z94} but  allow that every point is covered by at most $2\ell +1$ cylinder intervals for
	$\ell \geq 1$ and  assume that  $\gamma _2<\frac{1}{2\ell +1}$, then we get that the transversality condition holds in the same way. Namely, we can partition $\mathcal{A}$ into $2\ell +1$  families $\mathcal{I}_{-\ell} ,\dots  \mathcal{I}_\ell $
	in such a way that
	there are no intersections between distinct cylinder intervals  from the same family.
	For all functions corresponding to the family $\mathcal{I}_k$
	the translation is defined to be $k\cdot \lambda $.    
	Then the minimal value of $\eta _{ij}$ is equal to 1 and $D_{\max}\leq\frac{\ell}{1-\gamma _2}$. This implies that \eqref{cond1} holds if $\gamma _2<\frac{1}{2\ell+1}$.
\end{rem}

\begin{definition}\label{z82}
	We say that $\mathfrak{A}$ is a transversality-typical property of sufficiently smooth IFSs if the following holds: Whenever $\left\{ \Psi^\lambda  \right\}_{\lambda \in U}$ is a
	one-parameter family of sufficiently smooth IFSs for which the transversality condition holds then for $\mathcal{L}_1$ almost all $\lambda \in U$
	the IFS $\Psi^\lambda$ has property $\mathfrak{A}$.     
\end{definition}

We use the notation of Example \ref{z94}. In particular, we are
given a compact interval $X\subset \mathbb{R}$ and a $C^{1+\delta}$ IFS
$\left\{ f_i \right\}_{i=1}^{m }$ on $X$ such that 
\begin{equation}
\label{z79}
X_i:=f_i(X)\subset \mathrm{int}(X) \text{ for all } i\in\mathcal{A}.
\end{equation}
Below we consider a translation perturbation family of $\Psi$. That is, 
\begin{equation}
\label{z78}
\Psi^{\mathbf{t}}:=\left\{ f _{i}^{\mathbf{t}} \right\}_{i=1}^{m}, \quad
f _{i}^{\mathbf{t}}(x):=f_i(x)+t_i,\quad 
\mathbf{t}\in B(0,\delta _0),
\end{equation}
where $\delta _0>0$ is so small that \eqref{z79} 
holds if we replace  $f_i$ with $f_i^{\mathbf{t}}$ and $X_i$ with 
$X _{i}^{\mathbf{t} }:=f_i^{\mathbf{t}}(X)$ for all $i\in\mathcal{A}$.

\begin{claim}\label{z90}
	Assume that
	\begin{enumerate}
		[{\bf (a)}]
		\item  all points of $X$ are covered by at most two 
		of the cylinder intervals $X_k$ and 
		\item $\gamma _2<1/2$.       
	\end{enumerate}

	Let $\mathfrak{A}$ be a transversality-typical property. 
	Then there exists $0<\delta _*\leq\delta _0$
	such that for $\mathcal{L}^m$-a.e. $\mathbf{t}\in B(0,\delta _*)$,
	the  translated IFS  $\left\{\Psi^{\mathbf{t}}\right\}_{i=1}^{m}$
	(defined in \eqref{z78})
	has property $\mathfrak{A}$.   
\end{claim}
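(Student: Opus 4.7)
The plan is to view the $m$-dimensional translation family $\{\Psi^{\mathbf{t}}\}_{\mathbf{t}\in B(0,\delta_0)}$ as a ``bundle'' of one-parameter subfamilies to each of which Example~\ref{z94} (via Proposition~\ref{prop-trans-ex}) applies, and then to conclude with a Fubini argument over the base of the bundle.

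First I would invoke Remark~\ref{z91} (which is exactly tailored to hypothesis (a)) to produce a partition $\mathcal{A}=\mathcal{I}_{-1}\cup\mathcal{I}_1$ satisfying \eqref{z98}, and set $\mathbf{v}:=(\kappa(1),\ldots,\kappa(m))\in\{-1,+1\}^m$. Write $\mathbb{R}^m=V\oplus\mathbb{R}\mathbf{v}$ where $V:=\mathbf{v}^\perp$, and decompose each parameter uniquely as $\mathbf{t}=\mathbf{s}+\lambda\mathbf{v}$ with $\mathbf{s}\in V$ and $\lambda\in\mathbb{R}$. The point of this splitting is that, when $\mathbf{s}$ is held fixed and $\lambda$ varies, the one-parameter family $\lambda\mapsto\Psi^{\mathbf{s}+\lambda\mathbf{v}}$ is precisely of the form \eqref{z97} applied to the already-translated base IFS $\Psi^{\mathbf{s}}$.

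Second, I would choose $\delta_*\in(0,\delta_0]$ so small that for every $\mathbf{s}\in V$ with $|\mathbf{s}|\le 2\delta_*$ the cylinders of $\Psi^{\mathbf{s}}$ still satisfy condition (a) and \eqref{z98} with respect to the \emph{same} partition $\mathcal{I}_{-1}\cup\mathcal{I}_1$; this is possible because the disjoint compact cylinders $f_i(X)$ and $f_j(X)$ with $i\ne j$ in a common class have a strictly positive separation and all translations involved are at most $O(\delta_*)$. Since translations do not affect derivatives, hypothesis (b) $\gamma_2<1/2$ is preserved, and the quantities entering condition \eqref{cond1} of Proposition~\ref{prop-trans-ex}(i) are uniformly controlled: by \eqref{z70} and \eqref{z95} we have $\eta_{ij}\equiv 2$ whenever $X_{ij}\ne\emptyset$ for $\Psi^{\mathbf{s}}$, while \eqref{dmax} gives $D_{\max}\le (1-\gamma_2)^{-1}$, so that
\[
\eta_{ij}-\bigl(\|f_i'\|_{X_{ij}}+\|f_j'\|_{X_{ji}}\bigr)D_{\max}\ \ge\ 2-\frac{2\gamma_2}{1-\gamma_2}\ =\ \frac{2(1-2\gamma_2)}{1-\gamma_2}\ >\ 0,
\]
uniformly in $\mathbf{s}$. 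Hence, on a common open $\lambda$-interval $U_*\ni 0$ (chosen so small that \eqref{z75} holds uniformly in $\mathbf{s}$), the one-parameter family $\lambda\mapsto\Psi^{\mathbf{s}+\lambda\mathbf{v}}$ satisfies the transversality condition \ref{as:trans}.

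Finally, since $\mathfrak{A}$ is transversality-typical (Definition~\ref{z82}), for each fixed $\mathbf{s}\in V\cap B(0,\delta_*)$ the set of $\lambda\in U_*$ such that $\Psi^{\mathbf{s}+\lambda\mathbf{v}}$ has property $\mathfrak{A}$ is of full one-dimensional Lebesgue measure. Applying Fubini's theorem in the affine coordinates $(\mathbf{s},\lambda)\in V\times\mathbb{R}$ — together with the fact that the linear map $(\mathbf{s},\lambda)\mapsto\mathbf{s}+\lambda\mathbf{v}$ is a bi-Lipschitz isomorphism of $\mathbb{R}^m$ and therefore preserves Lebesgue null sets — shows that $\mathcal{L}^m$-almost every $\mathbf{t}$ in a neighbourhood of the origin yields $\Psi^{\mathbf{t}}$ with property~$\mathfrak{A}$. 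Shrinking $\delta_*$ once more so that $B(0,\delta_*)$ sits inside this neighbourhood completes the argument. The only nontrivial point is the uniformity in $\mathbf{s}$ needed to produce a single $\delta_*$ and a single interval $U_*$ on which transversality works for the whole family; this is essentially a continuity/compactness argument, made painless by the observation that all the quantities controlling \eqref{cond1} depend solely on the \emph{derivatives} (untouched by translation) and on the fixed combinatorial choice of $\mathbf{v}$.
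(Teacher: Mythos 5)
Your proposal is correct and follows essentially the same route as the paper: both decompose the translation parameter along the direction $(\kappa(1),\ldots,\kappa(m))$ coming from the partition of Remark \ref{z91}, verify via Example \ref{z94} (i.e.\ condition \eqref{cond1} with $\eta_{ij}=2$ and $D_{\max}\le(1-\gamma_2)^{-1}$, using $\gamma_2<1/2$) that each one-parameter slice is a transversal family uniformly in the transverse component, and then conclude by Fubini together with the transversality-typicality of $\mathfrak{A}$. The only difference is presentational: the paper argues by contradiction through a density point of the exceptional set and Fubini over the hyperplane orthogonal to that direction, whereas you state the same Fubini argument directly in the coordinates $(\mathbf{s},\lambda)$.
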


\begin{sloppypar}
\begin{proof}
	Using Remark \ref{z91}, we can find a partition $\mathcal{A}=\mathcal{I}_{-1}\cup \mathcal{I}_1$ such that $f_i(X)\cap f_{j}(X)=\emptyset $ for
	distinct $i,j\in\mathcal{I}_k$, $k=-1,1$. 
	Let $\delta _1>0$ be so small that
	$0<4\delta_1<\delta _0 $ and
	\begin{equation}
	\label{z74}
	X_i\cap X_j=\emptyset \Longrightarrow
	X^{t}_i\cap X^{t}_j=\emptyset\quad
	\text{ for all  } \mathbf{t}\in B(0,4\delta _1)
	\end{equation}
	Hence
	\begin{equation}
	\label{z77}
	X^{\mathbf{}}_i\cap X^{\mathbf{t}}_j
	=\emptyset,
	\quad
	i\ne j,\ i,j\in\mathcal{I}_k,\  k=-1,1,\ \mathbf{t}\in B(0,4\delta _1).
	\end{equation}
	Let $U:=(-\frac{1}{\sqrt{m}}\delta _1,\frac{1}{\sqrt{m}}\delta _1)$ and for a $\lambda \in U$ we define
	$\widetilde{\mathbf{a}}(\lambda ):=(\kappa (1)\lambda ,\dots  ,\kappa(m)\lambda ) $, where we recall that $\kappa (i)=k$ if $i\in\mathcal{I}_k$.
	Finally, for a 
	$\mathbf{t}\in B(0,\delta _1)$  let 
	$$\mathbf{a}_{\mathbf{t}}(\lambda ):=
	\mathbf{t}+\widetilde{\mathbf{a}}(\lambda ).
	$$
	Then $\|\mathbf{a}_{\mathbf{t}}(\lambda )\|<2\delta _1,$ 
	$\mathbf{t}\in B(0,\delta_1 ),\ \lambda \in U$. Hence 
	\begin{equation}
	\label{z73}
	X_i^{\mathbf{a}_{\mathbf{t}}(\lambda )}\subset X\quad
	\text{ and }\quad
	X_i^{\mathbf{a}_{\mathbf{t}}(\lambda )}\cap
	X_j^{\mathbf{a}_{\mathbf{t}}(\lambda )}=\emptyset,\ i\ne j,\ i,j\in\mathcal{I}_k,\  k=-1,1,\ \lambda \in \overline{U }.
	\end{equation}
	Example \ref{z94} shows that
	\begin{equation}
	\label{z72}
	\text{ the transversality condition holds for the family }
	\left\{ \Psi^{\mathbf{a}_{\mathbf{t}}(\lambda )} \right\}_{\lambda \in U}
	\quad \text{ for all }
	\mathbf{t}\in B(0,\delta_1 ).
	\end{equation}
	Let 
	$$
	H:=\left\{ \pmb{\tau}\in B\left(0,\frac{\delta _1}{2\sqrt{m}}  \right)
	:
	\Psi^{\pmb{\tau}} \text{ does not have property } \mathfrak{A}
	\right\}.
	$$
	We need to prove that $\mathcal{L}^m(H)=0$. To get a contradiction assume that 
	$\mathcal{L}^m(H)>0$. Then $H$ has a Lebesque density point 
	$\widehat{\pmb{\tau}}\in B(0,\frac{\delta _1}{2\sqrt{m}})$. Let $V$ be the intersection of  
	$ B\left(0,\frac{\delta _1}{2\sqrt{m}}  \right)$ with the $(m-1)$-dimensional hyperplane which goes through the origin and is orthogonal to 
	the vector $(\kappa (1) ,\dots  ,\kappa(m) )$. Then by the Fubini theorem
	there exists a point $\mathbf{t}\in V$ such that 
	$\mathcal{L}^1\left\{ \lambda \in U:
	\mathbf{a}_{\mathbf{t}}(\lambda )\in H
	\right\}>0$. But this contradicts \eqref{z72} and the fact that $\mathfrak{A}$ is a transversality-typical property.
\end{proof}
\end{sloppypar}

\section{Open questions and further directions}\label{sec:open}

As Theorem \ref{thm:main_cor_dim} guarantees more refined properties of $(\Pi^\lam)_*\mu_\lam$ than mere absolute continuity, it is natural to ask whether a weaker condition than \ref{as:measure} is sufficient for an almost sure absolute continuity in the supercritical region $\left\{\lam : \frac{h_{\mu_\lam}}{\chi_{\mu_\lam}}>1\right\}$. In particular, is \ref{as:measure_cont} sufficient? In our case, condition \ref{as:measure} is needed to guarantee regularity of the error term $e_j(\omega_1,\omega_2,\lambda)$ from \eqref{eq:ej def}, allowing us to follow the approach of Peres and Schlag \cite{PS00}.

Another natural direction of further research is to generalise the main result for multivariable parameters. Peres and Schlag in \cite[Section~7]{PS00} were handling this case for fixed (parameter independent) measures. In the case of parameter-dependent measures with one-dimensional family of parameters, we were using in the proof of Proposition~7.2 the Property \ref{as:measure} of the family of measures to provide proper estimates of the energy.  The main issue in the case of multiparameter-dependent measures comes from the behaviour of the error term $e_j(\omega_1,\omega_2,\lambda)$. Namely, is it possible to follow \cite[Lemma~7.10]{PS00} and use the Property \ref{as:measure} to deduce similar estimates for the energy or higher regularity assumptions shall be made for the measures?

An application of the multiparameter case would be the natural equilibrium measure for self-conformal systems with translation parameters. Furthermore, one could study the absolute continuity of the Furstenberg measure induced by the K\"aenm\"aki measure {(that is, the natural equilibrium measure for self-affine IFS, see \cite{Ka}). For self-affine systems whose linear parts are strictly positive matrices the K\"aenm\"aki measure is a Gibbs measure which smoothly depends on the matrix elements,} see B\'ar\'any and Rams \cite{BR18} and Jurga and Morris \cite{JM}. The absolute continuity and the dimension of the Furstenberg measure  induced by the K\"aenm\"aki measure plays a central role in the calculation of the dimension of the K\"aenm\"aki measure, see \cite{BR18}. 

Another possible direction of further research is to study the absolute continuity of the SBR-measures of parametrized dynamical systems. 
Persson \cite{P} considered a class of piecewise affine hyperbolic
maps on a set $K \subset\R^2$, with one contracting and one expanding direction, which contains the class of the Belykh maps, as well as the fat baker's transformations. The Belykh map, first introduced by Belykh \cite{Bel} and later considered by Schmeling and Troubetzkoy \cite{SchTro} for a wider range of parameters, {which contains the fat baker's transformations as a special case}.

For a parametrized family of Belykh maps, to prove the absolute continuity of an SBR-measure, one needs to show that the family of conditional measures over the stable foliation are absolutely continuous almost surely. Unlike the system defined in Subsection~\ref{sec:Bern}, the SBR-measure does not have a product structure, so the conditional measures of the stable directions depend not only on the parameters but also on the foliation itself. Persson \cite{P} studied such systems, however, according to a personal communication \cite{Ppc}, the proof contains a crucial error, similar to B\'ar\'any \cite{BB15}.

{Extending our main results to the case of parabolic (and possibly infinite) iterated functions systems (as in \cite{SSU1, SSU Parabolic, MU Parabolic}) is yet another possible research direction. It seems well motivated in the context of continued fractions expansion and would allow extending the results of Section \ref{sec:rcf} to their natural generality.}

\appendix

\section{Proof of Lemma \ref{lem:d2 bounds}}\label{app:proof of d2_bounds}
For $u = (u_1, \ldots, u_n) \in \Om^*$ we have
\be\label{eq:dfdx} \frac{d}{dx} f^\lam_{u}(x) = \prod \limits_{k=1}^n \left( \frac{d}{dx}f^\lam_{u_k} \right) (f^\lam_{\sigma^k u} x),
\ee
hence
\be\label{eq:dfdx sum}
\frac{d^2}{dx^2} f^\lam_{u}(x) = \left( \frac{d}{dx} f^\lam_{u}(x) \right) \sum_{k=1}^n \frac{  \left(\frac{d^2}{dx^2} f^\lam_{u_k}\right) (f^\lam_{\sigma^k u}(x))\cdot\frac{d}{dx}f^\lam_{\sigma^k u}(x)  }{  \left(\frac{d}{dx}f^\lam_{u_k}\right) (f^\lam_{\sigma^k u}(x))  }.
\ee
Applying  \ref{as:C2} and  \ref{as:hyper} we obtain
\be\label{eq:dx2 dx ratio} \left|\frac{\frac{d^2}{dx^2} f^\lam_{u}(x)}{\frac{d}{dx} f^\lam_{u}(x)} \right| \leq \frac{M_1}{\gamma_1} \sum \limits_{k=1}^n \left|\frac{d}{dx}f^\lam_{\sigma^k u}(x) \right| \leq \frac{M_1}{\gamma_1} \sum \limits_{k=1}^n \gamma_2^{n-k} \leq \frac{M_1}{\gamma_1 (1 - \gamma_2)}.
\ee
This proves \eqref{eq:d2dx bound}. For the proof of \eqref{eq:dxdlam bound}, note first that differentiating \eqref{eq:dfdx} with respect to $\lambda$ gives
\[ \frac{d^2}{d\lam dx} f^\lam_{u}(x) = \left( \frac{d}{dx} f^\lam_{u}(x) \right) \sum_{k=1}^n \frac{  \frac{d}{d\lam}\left( \left(\frac{d}{dx}f^\lam_{u_k}\right) (f^\lam_{\sigma^k u}(x)) \right)  }{  \left(\frac{d}{dx}f^\lam_{u_k}\right) (f^\lam_{\sigma^k u}(x))  }. \]
Applying  \ref{as:hyper} as before we get
\be\label{dxdlam sum}
\left| \frac{\frac{d^2}{d\lam dx} f^\lam_{u}(x)}{\frac{d}{dx} f^\lam_{u}(x)} \right| \leq \frac{1}{\gamma_1} \sum \limits_{k=1}^n \left| \frac{d}{d\lam}\left( \left(\frac{d}{dx}f^\lam_{u_k}\right) (f^\lam_{\sigma^k u}(x)) \right) \right|.
\ee
By  \ref{as:C2} and  \ref{as:dxdlam} we have
\begin{eqnarray}
\left| \frac{d}{d\lam}\left( \left(\frac{d}{dx}f^\lam_{u_k}\right) (f^\lam_{\sigma^k u}(x)) \right) \right| & \leq & \left| \frac{d^2}{d\lam dx} f^\lam_{u_k}(f^\lam_{\sigma^k u}(x)) \right| + \left| \left( \frac{d^2}{dx^2} f^\lam_{u_k} \right) (f^\lam_{\sigma^k u} (x)) \right| \cdot \left| \left( \frac{d}{d\lam} f^\lam_{\sigma^k u} \right)(x) \right| \nonumber \\
& \leq & M_2 + M_1 |h_k(\lam)|, \label{eq:dlam bound1} 
\end{eqnarray}
where $h_k(\lam) = \frac{d}{d\lam} f^\lam_{\sigma^k u} (x)$. By  \ref{as:lam hoelder} we have $L = \sup \limits_{j \in \Ak} \sup \limits_{\lam \in U} \left\| \frac{d}{d\lam} f^\lam_j \right\|_{\infty} < \infty$. Moreover, by  \ref{as:hyper}, we have for $1 \leq k \leq n-1$
\begin{eqnarray} |h_k(\lam)| & = & \left| \frac{d}{d\lam} \left( f^\lam_{u_{k+1}} \left( f^\lam_{\sigma^{k+1} u} (x) \right) \right) \right| \nonumber \\
&= & \left| \left( \frac{d}{d\lam} f^\lam_{u_{k+1}} \right) \left( f^\lam_{\sigma^{k+1} u} (x) \right) + \left( \frac{d}{dx} f^\lam_{u_{k+1}} \right) \left( f^\lam_{\sigma^{k+1} u} (x) \right) \cdot \left(\frac{d}{d\lam}  f^\lam_{\sigma^{k+1} u} (x) \right) \right| \nonumber \\
& \leq & L + \gamma_2 |h_{k+1}(\lam)|, \label{eq:h_k step}
\end{eqnarray}
with $|h_{n}(\lam)| = \left| \frac{d}{d\lam} \mathrm{id}(x) \right|  = 0$. Therefore, iterating \eqref{eq:h_k step} yields
\be \label{eq:h_k bound}
|h_k(\lambda)| \leq L \sum \limits_{j=0}^{n-1-k} \gamma_2^j \leq \frac{L}{1 - \gamma_2}.
\ee
Combining \eqref{dxdlam sum}, \eqref{eq:dlam bound1}, \eqref {eq:h_k step} and \eqref{eq:h_k bound} gives
\[ \left| \frac{\frac{d^2}{d\lam dx} f^\lam_{u}(x)}{\frac{d}{dx} f^\lam_{u}(x)} \right| \leq \frac{(M_2 + \frac{M_1 L}{1-\gamma_2})n}{\gamma_1}. \]
This concludes the proof of Lemma \ref{lem:d2 bounds}.
\section{Some more regularity lemmas}

\begin{lem}\label{lem:dlam hoelder}
	There exists a constant $C_{71} > 0$ such that
	\[ \left|\frac{d}{d\lam}f_{u}^{\lam_1}(x) - \frac{d}{d\lam}f_{u}^{\lam_2}(x)\right| \leq C_{71} | \lam_1 - \lam_2|^\delta \]
	holds for all $\lambda_1, \lambda_2 \in U,\ x \in X,\ u \in \Om^* $.
\end{lem}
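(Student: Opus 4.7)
The plan is to prove the statement by induction on $n=|u|$ rather than attacking a full chain-rule expansion directly. The recursive identity $f_{u}^{\lam}(x)=f_{u_{1}}^{\lam}(f_{\sig u}^{\lam}(x))$ (where $\sig u:=u_{2}\cdots u_{n}$) differentiates to
\[
G_{u}(\lam,x):=\tfrac{d}{d\lam}f_{u}^{\lam}(x)=\psi(\lam,y^{\lam})+\varphi(\lam,y^{\lam})\cdot G_{\sig u}(\lam,x),\qquad y^{\lam}:=f_{\sig u}^{\lam}(x),
\]
with $\psi(\lam,y):=\tfrac{d}{d\lam}f_{u_{1}}^{\lam}(y)$ and $\varphi(\lam,y):=\tfrac{d}{dx}f_{u_{1}}^{\lam}(y)$, and the strict contraction $\gam_{2}<1$ will let a single inductive constant survive across all levels.

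Before running the induction, I would first establish two a priori uniform bounds. Iterating the recursion and applying \ref{as:hyper} together with the boundedness coming from \ref{as:lam hoelder} shows $|G_{u}(\lam,x)|\le L/(1-\gam_{2})$ uniformly in $u$, $x$, $\lam$, where $L:=\sup_{j,\lam}\|\tfrac{d}{d\lam}f_{j}^{\lam}\|_{\infty}$. Via the mean value theorem this yields that $\lam\mapsto f_{v}^{\lam}(x)$ is $\tfrac{L}{1-\gam_{2}}$-Lipschitz, uniformly in $v\in\Om^{*}$ and $x\in X$.

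Next I would carry out the inductive step. Writing $A(\lam):=\psi(\lam,y^{\lam})$ and $B(\lam):=\varphi(\lam,y^{\lam})$, I would split
\[
A(\lam_{1})-A(\lam_{2})=[\psi(\lam_{1},y^{\lam_{1}})-\psi(\lam_{2},y^{\lam_{1}})]+[\psi(\lam_{2},y^{\lam_{1}})-\psi(\lam_{2},y^{\lam_{2}})]
\]
and estimate the first bracket by \ref{as:lam hoelder} and the second by \ref{as:dxdlam} combined with the above Lipschitz bound on $y^{\lam}$, obtaining $|A(\lam_{1})-A(\lam_{2})|\le K_{A}|\lam_{1}-\lam_{2}|^{\delta}$. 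An analogous splitting for $B$, this time using \ref{as:dxdlam} for the $\lam$-variation and \ref{as:C2} for the $y$-variation, will give $|B(\lam_{1})-B(\lam_{2})|\le K_{B}|\lam_{1}-\lam_{2}|$. Assuming inductively that $|G_{\sig u}(\lam_{1},x)-G_{\sig u}(\lam_{2},x)|\le C_{71}|\lam_{1}-\lam_{2}|^{\delta}$, combining with $|B|\le\gam_{2}$ and $|G_{\sig u}|\le L/(1-\gam_{2})$, and absorbing the linear piece via $|\lam_{1}-\lam_{2}|\le|U|^{1-\delta}|\lam_{1}-\lam_{2}|^{\delta}$, I obtain
\[
|G_{u}(\lam_{1},x)-G_{u}(\lam_{2},x)|\le\Bigl(K_{A}+\gam_{2}C_{71}+\tfrac{LK_{B}|U|^{1-\delta}}{1-\gam_{2}}\Bigr)|\lam_{1}-\lam_{2}|^{\delta}.
\]
Choosing $C_{71}$ large enough that $K_{A}+\tfrac{LK_{B}|U|^{1-\delta}}{1-\gam_{2}}\le(1-\gam_{2})C_{71}$ closes the induction; the base case $n=1$ is immediate from \ref{as:lam hoelder}.

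The main obstacle is precisely avoiding the linear-in-$n$ loss that a direct chain-rule expansion would produce when telescoping the product of space derivatives. The inductive framing hides all such cancellation inside a single multiplicative factor $\gam_{2}<1$, so the resulting fixed-point inequality $(1-\gam_{2})C_{71}\ge K_{A}+LK_{B}|U|^{1-\delta}/(1-\gam_{2})$ is solvable, which is exactly where the strict contractivity from \ref{as:hyper} enters in an essential way.
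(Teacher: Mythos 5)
Your proof is correct, and it takes a genuinely different route from the paper's. Both arguments run an induction on $|u|$, but the paper peels off the \emph{last} letter, writing $f^\lam_u=f^\lam_{v}\circ f^\lam_{u_{n+1}}$ with $v=u_1\ldots u_n$, so the outer factor is the long composite map; this forces it to invoke the composite-derivative bounds of Lemma \ref{lem:d2 bounds} (namely $|\frac{d^2}{dx^2}f^\lam_v|\le C_{51}|\frac{d}{dx}f^\lam_v|$ and $|\frac{d^2}{d\lam dx}f^\lam_v|\le C_{52}|v|\,|\frac{d}{dx}f^\lam_v|$, with the factor $|v|$ compensated by $\gam_2^{|v|}$ from \ref{as:hyper}) and to carry the $n$-dependent inductive bound $C_{72}\sum_{k=0}^{n-1}k\gam_2^k\,|\lam_1-\lam_2|^\delta$, whose convergence yields the uniform constant. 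You instead strip the \emph{first} (outermost) letter, $f^\lam_u=f^\lam_{u_1}\circ f^\lam_{\sig u}$, so the recursion $G_u=\psi+\varphi\,G_{\sig u}$ involves only single-map derivatives, controlled directly by \ref{as:C2}--\ref{as:hyper}; the contraction $|\varphi|\le\gam_2<1$ multiplies the inductive increment, and the constant is fixed once and for all by the solvable fixed-point inequality $(1-\gam_2)C_{71}\ge K_A+L K_B|U|^{1-\delta}/(1-\gam_2)$. Your supporting steps are sound: the a priori bound $|G_v|\le L/(1-\gam_2)$ follows by iterating the recursion, it gives the uniform Lipschitz dependence of $y^\lam=f^\lam_{\sig u}(x)$ on $\lam$, and the splittings of $A$ and $B$ use exactly \ref{as:lam hoelder}, \ref{as:dxdlam} and \ref{as:C2}, with the linear terms absorbed into H\"older ones via $|\lam_1-\lam_2|\le|U|^{1-\delta}|\lam_1-\lam_2|^\delta$ on the bounded parameter interval. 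What your approach buys is a more elementary, self-contained proof that avoids Lemma \ref{lem:d2 bounds} and the $n\gam_2^n$ bookkeeping; what the paper's buys is reuse of estimates it needs elsewhere anyway, so no new machinery is introduced there.
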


\begin{proof}
	We will prove the claim inductively with respect to $n = |u|$. More precisely, let us assume that
	\be \label{eq:hoelder induction}
	\left|\frac{d}{d\lam} f^{\lam_1}_{u}(x) - \frac{d}{d\lam} f^{\lam_2}_{u}(x)\right| \leq C_{72}\sum \limits_{k=0}^{n-1}k\gamma_2^k |\lam_1 - \lam_2|^{\delta}
	\ee
	holds for all $u \in \Ak^n,\ \lam_1,\lam_2 \in U$ and $x \in X$ with some large enough constant $C_{72}$ (its value will be specified later). We shall prove that \eqref{eq:hoelder induction} holds also for $n+1$. Fix $u = (u_1, \ldots, u_{n+1}) \in \Ak^{n+1}$ and let $v = (u_1, \ldots, u_n)$. We have
	\begin{eqnarray}
	\left| \frac{d}{d\lam} f^{\lam_1}_{u}(x) - \frac{d}{d\lam} f^{\lam_2}_{u}(x) \right| & \leq &  \left| \left( \frac{d}{d\lam} f^{\lam_1}_{v} \right) \left( f^{\lam_1}_{u_{n+1}}(x) \right) - \left( \frac{d}{d\lam} f^{\lam_2}_{v} \right) \left( f^{\lam_2}_{u_{n+1}}(x) \right) \right| + \nonumber \\
	& &\bigg| \left( \left( \frac{d}{dx} f^{\lam_1}_{v} \right) \left( f^{\lam_1}_{u_{n+1}}(x) \right) \right) \left( \frac{d}{d\lam} f^{\lam_1}_{u_{n+1}}(x) \right) - \nonumber \\
	& & \left( \left( \frac{d}{dx} f^{\lam_2}_{v} \right) \left( f^{\lam_2}_{u_{n+1}}(x) \right) \right) \left( \frac{d}{d\lam} f^{\lam_2}_{u_{n+1}}(x) \right)\bigg| \nonumber \\
	& =: & A_1 + A_2. \nonumber
	\end{eqnarray}
	Let $L = \sup \limits_{j \in \Ak}\ \sup \limits_{\lam \in U} \| \frac{d}{d\lam}f^\lam_{j}\|_{\infty}$. Assumption  \ref{as:lam hoelder} implies that $L$ is finite. By \eqref{eq:hoelder induction},  \ref{as:lam hoelder}, \ref{as:dxdlam},  \ref{as:hyper} and \eqref{eq:dxdlam bound} we obtain
	\begin{eqnarray}
	A_1 & \leq & \left| \left( \frac{d}{d\lam} f^{\lam_1}_{v} \right) \left( f^{\lam_1}_{u_{n+1}}(x) \right) - \left( \frac{d}{d\lam} f^{\lam_2}_{v} \right) \left( f^{\lam_1}_{u_{n+1}}(x) \right) \right| + \nonumber \\
	& & \left| \left( \frac{d}{d\lam} f^{\lam_2}_{v} \right) \left( f^{\lam_1}_{u_{n+1}}(x) \right) - \left( \frac{d}{d\lam} f^{\lam_2}_{v} \right) \left( f^{\lam_2}_{u_{n+1}}(x) \right) \right| \nonumber \\
	& \leq & C_{72} \sum \limits_{k=0}^{n-1}k \gamma_2^k|\lam_1 - \lam_2|^{\delta} + \left\|\frac{d^2}{dxd\lam} f^{\lam_2}_{v}\right\|_{\infty}|f^{\lam_1}_{u_{n+1}}(x) - f^{\lam_2}_{u_{n+1}}(x)| \nonumber \\
	& \leq &  C_{72} \sum \limits_{k=0}^{n-1}k \gamma_2^k|\lam_1 - \lam_2|^{\delta} + LC_{52}n\left\|\frac{d}{dx} f^{\lam_2}_{v}\right\|_{\infty}|\lam_1 - \lam_2| \nonumber \\
	& \leq & C_{72} \sum \limits_{k=0}^{n-1}k \gamma_2^k|\lam_1 - \lam_2|^{\delta} + LC_{52}n\gamma_2^n|\lam_1 - \lam_2|. \label{eq:fu hoelder A1}
	\end{eqnarray}
	Therefore, application of  \ref{as:lam hoelder} and  \ref{as:hyper} gives
	\begin{eqnarray}
	A_2 & \leq & \left| \left( \frac{d}{dx} f^{\lam_2}_{v} \right) \left( f^{\lam_2}_{u_{n+1}}(x) \right) \right| \cdot \left| \frac{d}{d\lam} f^{\lam_1}_{u_{n+1}}(x) - \frac{d}{d\lam} f^{\lam_2}_{u_{n+1}}(x)  \right| \nonumber \nonumber + \\
	& & \left| \frac{d}{d\lam} f^{\lam_1}_{u_{n+1}}(x) \right| \cdot \left|  \left( \frac{d}{dx} f^{\lam_1}_{v} \right) \left( f^{\lam_1}_{u_{n+1}}(x) \right)  - \left( \frac{d}{dx} f^{\lam_2}_{v} \right) \left( f^{\lam_2}_{u_{n+1}}(x) \right)  \right|  \nonumber \\
	& \leq & \gamma_2^nC_3|\lam_1 - \lam_2|^\delta + L \left|  \left( \frac{d}{dx} f^{\lam_1}_{v} \right) \left( f^{\lam_1}_{u_{n+1}}(x) \right)  - \left( \frac{d}{dx} f^{\lam_2}_{v} \right) \left( f^{\lam_2}_{u_{n+1}}(x) \right)  \right| \nonumber \\
	& =: & \gamma_2^nC_3|\lam_1 - \lam_2|^\delta + L A_3  \label{eq:fu hoelder A2}
	\end{eqnarray}
	Furthermore, by  Lemma \ref{lem:d2 bounds}, \ref{as:lam hoelder} and \ref{as:hyper}
	\begin{eqnarray}
	A_3 & \leq & \left|  \left( \frac{d}{dx} f^{\lam_1}_{v} \right) \left( f^{\lam_1}_{u_{n+1}}(x) \right)  - \left( \frac{d}{dx} f^{\lam_1}_{v} \right) \left( f^{\lam_2}_{u_{n+1}}(x) \right)  \right| + \nonumber\\
	& & \left|  \left( \frac{d}{dx} f^{\lam_1}_{v} \right) \left( f^{\lam_2}_{u_{n+1}}(x) \right)  - \left( \frac{d}{dx} f^{\lam_2}_{v} \right) \left( f^{\lam_2}_{u_{n+1}}(x) \right)  \right|\nonumber \\
	& \leq & \left\| \frac{d^2}{dx^2} f^{\lam_1}_{v} \right\|_{\infty} |f^{\lam_1}_{u_{n+1}}(x) - f^{\lam_2}_{u_{n+1}}(x)| + \sup \limits_{\lam \in U}\left\|\left( \frac{d^2}{d\lam dx} f^{\lam}_{v} \right) \left( f^{\lam_2}_{u_{n+1}}(x) \right) \right\|_{\infty} |\lam_1 - \lam_2| \nonumber \\
	& \leq & C_{51} \left\| \frac{d}{dx} f^{\lam_1}_{v} \right\|_{\infty} L|\lam_1 - \lam_2| + C_{52}n \sup \limits_{\lam \in U} \left\| \frac{d}{dx} f^{\lam}_{v} \right\|_{\infty} |\lam_1 - \lam_2|\nonumber \\
	& \leq & \left( LC_{51} + C_{52}n\right)\gamma_2^n |\lam_1 - \lam_2|. \nonumber
	\end{eqnarray}
	Combining the above inequality with \eqref{eq:fu hoelder A1} and \eqref{eq:fu hoelder A2} yields
	\[ \left| \frac{d}{d\lam} f^{\lam_1}_{u}(x) - \frac{d}{d\lam} f^{\lam_1}_{u}(x) \right| \leq C_{72}\sum \limits_{k=0}^{n}k\gamma_2^k |\lam_1 - \lam_2|^{\delta},  \]
	provided $C_{72}$ is large enough. As \eqref{eq:hoelder induction} holds for $n=1$ by  \ref{as:lam hoelder}, this concludes the inductive proof of \eqref{eq:hoelder induction} for $n \geq 1$. As $\sum \limits_{k=0}^{\infty}k\gamma_2^k < \infty$, the proof of the lemma is completed.
\end{proof}

\begin{lem}
	There exist constants $C_{75} > 0, C_{76}>0$ such that
	\be\label{eq:dx2 hoelder}
	\left|\frac{d^2}{dx^2}f_{u}^{\lam_1}(x) - \frac{d^2}{dx^2}f_{u}^{\lam_2}(x)\right| \leq C_{75} |u| | \lam_1 - \lam_2|^\delta \sup \limits_{\lam \in [\lam_1, \lam_2]} \left\| \frac{d}{dx} f^\lam_{u} \right\|_{\infty}
	\ee
	and
	\be\label{eq:dxdlam hoelder}
	\left|\frac{d^2}{d\lam dx}f_{u}^{\lam_1}(x) - \frac{d^2}{d\lam dx}f_{u}^{\lam_2}(x)\right| \leq C_{76} |u|^2 | \lam_1 - \lam_2|^\delta \sup \limits_{\lam \in [\lam_1, \lam_2]} \left\| \frac{d}{dx} f^\lam_{u} \right\|_{\infty}
	\ee
	hold for all $\lambda_1, \lambda_2 \in U,\ x \in X,\ u \in \Om^* $.
\end{lem}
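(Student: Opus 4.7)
Plan: I prove both inequalities by induction on $n := |u|$, simultaneously handling \eqref{eq:dx2 hoelder} and \eqref{eq:dxdlam hoelder}. The base case $n=1$ is immediate: \eqref{eq:dx2 hoelder} holds with $C_{75} \geq C_2$ by the Hölder assumption in \ref{as:C2}, and \eqref{eq:dxdlam hoelder} holds with $C_{76} \geq C_5/\gamma_1$ by \ref{as:dxdlam} combined with \ref{as:hyper}.

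For the inductive step, write $u = (u_1,\ldots,u_{n+1})$ and $v = (u_1,\ldots,u_n)$, so that $f^\lam_u = f^\lam_v \circ f^\lam_{u_{n+1}}$. The chain rule gives
\begin{equation*}
\textstyle\frac{d^2}{dx^2} f^\lam_u(x) = \bigl(\frac{d^2}{dx^2} f^\lam_v\bigr)(f^\lam_{u_{n+1}}(x))\cdot\bigl(\frac{d}{dx} f^\lam_{u_{n+1}}(x)\bigr)^2 + \bigl(\frac{d}{dx} f^\lam_v\bigr)(f^\lam_{u_{n+1}}(x))\cdot\frac{d^2}{dx^2} f^\lam_{u_{n+1}}(x),
\end{equation*}
and an analogous (longer) formula for $\frac{d^2}{d\lam dx} f^\lam_u$ obtained by differentiating this identity in $\lam$. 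I then decompose the difference $\frac{d^2}{dx^2}f^{\lam_1}_u(x) - \frac{d^2}{dx^2}f^{\lam_2}_u(x)$ into a telescoping sum in which exactly one factor is changed from $\lam_1$ to $\lam_2$ at a time. Each such elementary difference is controlled as follows: differences of type $(\tfrac{d^2}{dx^2} f^{\lam_1}_v)(y) - (\tfrac{d^2}{dx^2}f^{\lam_2}_v)(y)$ are handled by the inductive hypothesis; differences of type $(\tfrac{d^2}{dx^2}f^\lam_v)(f^{\lam_1}_{u_{n+1}}(x)) - (\tfrac{d^2}{dx^2}f^\lam_v)(f^{\lam_2}_{u_{n+1}}(x))$ are handled by combining the spatial Hölder bound in \ref{as:C2} with Lipschitz-in-$\lam$ continuity of $f^\lam_{u_{n+1}}$ coming from \ref{as:lam hoelder}; differences in $\tfrac{d}{dx}f^\lam_v$ are estimated using the bound \eqref{eq:dxdlam bound} from Lemma~\ref{lem:d2 bounds} for the first derivative, and the already-proven \eqref{eq:dx2 hoelder} for the second derivative; single-letter factors $f^\lam_{u_{n+1}}$, $\frac{d}{dx}f^\lam_{u_{n+1}}$, $\frac{d^2}{dx^2}f^\lam_{u_{n+1}}$, $\frac{d^2}{d\lam dx}f^\lam_{u_{n+1}}$ are treated directly via \ref{as:C2}, \ref{as:lam hoelder}, and \ref{as:dxdlam}. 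Finally, the parametric bounded distortion property (Lemma~\ref{lem:pbdp}) is used to pass between $\sup_{\lam\in[\lam_1,\lam_2]}\|\frac{d}{dx}f^\lam_v\|_\infty$ and $\sup_{\lam\in[\lam_1,\lam_2]}\|\frac{d}{dx}f^\lam_u\|_\infty$, which differ by a multiplicative constant once $n$ is large.

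The key combinatorial point is that the inductive step for \eqref{eq:dx2 hoelder} produces a bound of the form $C_{75} n|\lam_1-\lam_2|^\delta \sup\|\tfrac{d}{dx}f^\lam_u\|_\infty + (\text{bounded}) \cdot |\lam_1-\lam_2|^\delta \sup\|\tfrac{d}{dx}f^\lam_u\|_\infty$, i.e.\ the constant grows additively, not multiplicatively, giving linear growth in $|u|$. For \eqref{eq:dxdlam hoelder} the situation is the same, except that the base quantity $\tfrac{d^2}{d\lam dx}f^\lam_u$ already carries an intrinsic factor $|u|$ from \eqref{eq:dxdlam bound}; combining this with another $|u|$ factor from the inductive accumulation yields the quadratic bound $|u|^2$.

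The main technical obstacle is bookkeeping for \eqref{eq:dxdlam hoelder}: differentiating the recursive identity for $\frac{d^2}{dx^2}f^\lam_u$ with respect to $\lam$ produces substantially more cross terms, some of which involve, e.g., $\frac{d^2}{d\lam dx}f^\lam_v$ evaluated at $f^\lam_{u_{n+1}}(x)$ and multiplied by $\frac{d}{d\lam}f^\lam_{u_{n+1}}(x)$. One must carefully verify that each such term is bounded by a constant multiple of $n \cdot |\lam_1-\lam_2|^\delta\cdot\sup\|\tfrac{d}{dx}f^\lam_u\|_\infty$ at the present induction level, so that the recursion closes at rate $O(n)$ per step and produces the claimed $|u|^2$ factor rather than $|u|^3$. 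Once this verification is done, the constants $C_{75}, C_{76}$ can be chosen large enough at the base step to absorb all uniform constants appearing from $\gamma_1, \gamma_2, M_1, M_2, C_1, C_2, C_3, C_4, C_5, C_{51}, C_{52}$ and $C_{62}$, closing the induction.
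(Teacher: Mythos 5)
There is a genuine gap, and it sits exactly at what you call the ``key combinatorial point''. In your inductive step the contribution of the inductive hypothesis is
\begin{equation*}
\Bigl[\bigl(\tfrac{d^2}{dx^2}f^{\lam_1}_{v}\bigr)\bigl(f^{\lam_1}_{u_{n+1}}(x)\bigr)-\bigl(\tfrac{d^2}{dx^2}f^{\lam_2}_{v}\bigr)\bigl(f^{\lam_1}_{u_{n+1}}(x)\bigr)\Bigr]\cdot\Bigl(\tfrac{d}{dx}f^{\lam_1}_{u_{n+1}}(x)\Bigr)^2 ,
\end{equation*}
and the hypothesis bounds the bracket by $C_{75}\,n\,|\lam_1-\lam_2|^\delta\sup_{\lam}\|\frac{d}{dx}f^\lam_{v}\|_\infty$. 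To close the induction you must convert $\sup_{\lam}\|\frac{d}{dx}f^\lam_{v}\|_\infty\cdot(\frac{d}{dx}f^{\lam_1}_{u_{n+1}}(x))^2$ into $\sup_{\lam}\|\frac{d}{dx}f^\lam_{u}\|_\infty$. Since inside $f^\lam_u$ the word $v$ is only evaluated on $f^\lam_{u_{n+1}}(X)$, the best available comparison (bounded distortion, Lemma \ref{lem:pbdp}) is $\sup_{\lam}\|\frac{d}{dx}f^\lam_{v}\|_\infty\le \mathrm{const}\cdot\gamma_1^{-1}\sup_{\lam}\|\frac{d}{dx}f^\lam_{u}\|_\infty$, so the inductive term reappears with a multiplicative factor of order $\gamma_2^2 C_{62}/\gamma_1$ (peeling the first letter instead gives $\gamma_2/\gamma_1$). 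This factor need not be $\le 1$, so the recursion for your constants has the form $A_{n+1}\le\theta A_n+c\,n$ with possibly $\theta>1$, which produces exponential, not linear, growth in $|u|$; the assertion that ``the constant grows additively, not multiplicatively'' is precisely the unproved step, and in the worst case it is false for the induction as you set it up. A secondary problem: for the term where the argument of $\frac{d^2}{dx^2}f^\lam_{v}$ moves from $f^{\lam_1}_{u_{n+1}}(x)$ to $f^{\lam_2}_{u_{n+1}}(x)$ you invoke the spatial H\"older bound of \ref{as:C2}, but that assumption concerns a single generator; the H\"older continuity in $x$ of $\frac{d^2}{dx^2}f^\lam_{v}$ for a word $v$ of length $n$ (with constant comparable to $\|\frac{d}{dx}f^\lam_{v}\|_\infty$) is itself a statement requiring proof.

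For comparison, the paper does not induct at all: it uses the explicit formula \eqref{eq:dfdx sum}, $\frac{d^2}{dx^2}f^\lam_u=\frac{d}{dx}f^\lam_u\cdot\sum_{k=1}^{n}(\cdots)$, in which the $k$-th summand carries the factor $\frac{d}{dx}f^\lam_{\sigma^k u}(x)=O(\gamma_2^{n-k})$. The difference in $\lam$ is then split by the product rule: the single factor $|u|$ comes only from the mean value theorem combined with \eqref{eq:dxdlam bound} applied to $\frac{d}{dx}f^\lam_u$, while the differences of the summands are summable thanks to the geometric decay, using \ref{as:C2}--\ref{as:hyper} and Lemma~\ref{lem:dlam hoelder}. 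If you wish to keep an inductive scheme, the hypothesis should be formulated for the ratio $\frac{d^2}{dx^2}f^\lam_u\big/\frac{d}{dx}f^\lam_u$, whose recursion really is additive with geometrically small new terms, multiplying back by $\frac{d}{dx}f^\lam_u$ only at the end -- but that is essentially a reorganization of the paper's direct argument.
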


\begin{proof}
	We shall prove \eqref{eq:dx2 hoelder}. The proof of \eqref{eq:dxdlam hoelder} is similar and we omit it. Let $n = |u|$. By \eqref{eq:dfdx sum} we have
	\begin{eqnarray} 
	\bigg| \frac{d^2}{dx^2}f_{u}^{\lam_1}(x) & - & \frac{d^2}{dx^2}f_{u}^{\lam_2}(x)\bigg|  \leq  \left| \frac{d}{dx} f^{\lam_1}_{u}(x) - \frac{d}{dx} f^{\lam_2}_{u}(x)  \right| \cdot \sum_{k=1}^n \left|  \frac{  \left(\frac{d^2}{dx^2} f^{\lam_1}_{u_k}\right) (f^{\lam_1}_{\sigma^k u}(x))\cdot\frac{d}{dx}f^{\lam_1}_{\sigma^k u}(x)  }{  \left(\frac{d}{dx}f^{\lam_1}_{u_k}\right) (f^{\lam_1}_{\sigma^k u}(x)) }  \right| + \nonumber \\
	& & \left| \frac{d}{dx}f_{u}^{\lam_2}(x)\right| \cdot \sum_{k=1}^n \left|  \frac{  \left(\frac{d^2}{dx^2} f^{\lam_1}_{u_k}\right) (f^{\lam_1}_{\sigma^k u}(x))\cdot\frac{d}{dx}f^{\lam_1}_{\sigma^k u}(x)  }{  \left(\frac{d}{dx}f^{\lam_1}_{u_k}\right) (f^{\lam_1}_{\sigma^k u}(x)) } -  \frac{  \left(\frac{d^2}{dx^2} f^{\lam_2}_{u_k}\right) (f^{\lam_2}_{\sigma^k u}(x))\cdot\frac{d}{dx}f^{\lam_2}_{\sigma^k u}(x)  }{  \left(\frac{d}{dx}f^{\lam_2}_{u_k}\right) (f^{\lam_2}_{\sigma^k u}(x)) }   \right|  \nonumber \\
	& =: & A_1 \cdot A_2 + \left| \frac{d}{dx}f_{u}^{\lam_2}(x)\right| \cdot \sum \limits_{k=1}^n h_k(x). \label{eq:d2dx A1-3}
	\end{eqnarray}
	We will bound now the above terms. First, by \eqref{eq:dxdlam bound} and the mean value theorem,  we have
	\[ A_1 \leq \left| \frac{d^2}{d\lam dx} f^{\xi}_{u}(x)  \right|\left| \lam_1 - \lam_2 \right| \leq C_{52} |u|  \left| \lam_1 - \lam_2 \right| \sup_{\lam \in [\lam_1, \lam_2]} \left|\frac{d}{dx} f^\lam_{u} (x) \right|,   \]
	where $\xi \in U$ is a point lying between $\lam_1$ and $\lam_2$. By \eqref{eq:dx2 dx ratio} (recall \eqref{eq:dfdx sum})
	\[ A_2 \leq \frac{M_1}{\gamma_1 (1 - \gamma_2)}. \]
	Reducing the expression defining $h_k(x)$ to a common denominator and applying  \ref{as:hyper} gives
	
	\begin{eqnarray}
	h_k(x) & \leq & \frac{1}{\gamma_1^2}  \bigg|  \left(\frac{d}{dx}f^{\lam_2}_{u_k}\right) (f^{\lam_2}_{\sigma^k u}(x)) \cdot\left(\frac{d^2}{dx^2} f^{\lam_1}_{u_k}\right) (f^{\lam_1}_{\sigma^k u}(x))\cdot\frac{d}{dx}f^{\lam_1}_{\sigma^k u}(x) \nonumber \\
	&  - & \left(\frac{d}{dx}f^{\lam_1}_{u_k}\right) (f^{\lam_1}_{\sigma^k u}(x)) \cdot \left(\frac{d^2}{dx^2} f^{\lam_2}_{u_k}\right) (f^{\lam_2}_{\sigma^k u}(x))\cdot\frac{d}{dx}f^{\lam_2}_{\sigma^k u}(x)  \bigg| \nonumber \\
	& \leq & \frac{1}{\gamma_1^2} \Bigg( \bigg| \left(\frac{d}{dx}f^{\lam_2}_{u_k}\right) (f^{\lam_2}_{\sigma^k u}(x)) - \left(\frac{d}{dx}f^{\lam_1}_{u_k}\right) (f^{\lam_1}_{\sigma^k u}(x)) \bigg| \cdot\bigg| \left(\frac{d^2}{dx^2} f^{\lam_1}_{u_k}\right) (f^{\lam_1}_{\sigma^k u}(x))\cdot\frac{d}{dx}f^{\lam_1}_{\sigma^k u}(x)\bigg|\ +  \nonumber\\
	& \bigg| & \left(\frac{d}{dx}f^{\lam_1}_{u_k}\right) (f^{\lam_1}_{\sigma^k u}(x)) \bigg| \cdot \bigg| \left(\frac{d^2}{dx^2} f^{\lam_1}_{u_k}\right) (f^{\lam_1}_{\sigma^k u}(x))\cdot\frac{d}{dx}f^{\lam_1}_{\sigma^k u}(x) - \left(\frac{d^2}{dx^2} f^{\lam_2}_{u_k}\right) (f^{\lam_2}_{\sigma^k u}(x))\cdot\frac{d}{dx}f^{\lam_2}_{\sigma^k u}(x) \bigg| \Bigg) \nonumber\\
	& =: & \frac{1}{\gamma^2} \left( A_3 \cdot A_4 + A_5 \cdot A_6 \right). \nonumber
	\end{eqnarray}
	By  \ref{as:C2},  \ref{as:dxdlam}, \eqref{eq:d2dx bound} we have
	\begin{eqnarray} A_3 & \leq & \bigg| \left(\frac{d}{dx}f^{\lam_2}_{u_k}\right) (f^{\lam_2}_{\sigma^k u}(x)) - \left(\frac{d}{dx}f^{\lam_1}_{u_k}\right) (f^{\lam_2}_{\sigma^k u}(x))\bigg| + \bigg| \left(\frac{d}{dx}f^{\lam_1}_{u_k}\right) (f^{\lam_2}_{\sigma^k u}(x)) - \left(\frac{d}{dx}f^{\lam_1}_{u_k}\right) (f^{\lam_1}_{\sigma^k u}(x)) \bigg| \nonumber \\
	& \leq &  \left| \lam_1 - \lam_2 \right| \sup_{\lam \in [\lam_1, \lam_2]} \left| \left( \frac{d^2}{d\lam dx} f^\lam_{u_k} \right) (f^{\lam_2}_{\sigma^k u}(x)) \right| + \left\| \frac{d^2}{dx^2} f^{\lam_1}_{u_k} \right\|_{\infty} | f^{\lam_2}_{\sigma^k u}(x) - f^{\lam_1}_{\sigma^k u}(x)|  \nonumber \\
	&  \leq & M_2|\lambda_1 - \lambda_2| + M_1 |\lam_1 - \lam_2| \sup \limits_{\lam \in [\lam_1, \lam_2]} \left| \frac{d}{d\lam} f^{\lam}_{\sigma^k u}(x) \right| \leq M_{11}|\lambda_1 - \lambda_2|,  \nonumber
	\end{eqnarray}
	for some constant $M_{11} > 0$, as $\sup \limits_{\lam \in U} \left| \frac{d}{d\lam} f^{\lam}_{\sigma^k u}(x) \right|$ is bounded uniformly in $u \in \Om^*, 1 \leq k \leq n$ and $x \in X$ by Lemma \ref{lem:dlam hoelder}. Assumptions  \ref{as:C2} and  \ref{as:hyper} imply
	\[ A_4 \leq M_1 \gamma_2^{n-k} \text{ and } A_5 \leq \gamma_2.\]
	Applying  \ref{as:C2},  \ref{as:hyper}, \eqref{eq:dxdlam bound}, Lemma \ref{lem:dlam hoelder} gives
	\begin{eqnarray}
	A_6 & \leq & \bigg| \left(\frac{d^2}{dx^2} f^{\lam_1}_{u_k}\right) (f^{\lam_1}_{\sigma^k u}(x))\bigg| \cdot \bigg| \frac{d}{dx}f^{\lam_1}_{\sigma^k u}(x) - \frac{d}{dx}f^{\lam_2}_{\sigma^k u}(x) \bigg| + \nonumber \\
	& & \bigg| \left(\frac{d^2}{dx^2} f^{\lam_1}_{u_k}\right) (f^{\lam_1}_{\sigma^k u}(x)) - \left(\frac{d^2}{dx^2} f^{\lam_2}_{u_k}\right) (f^{\lam_2}_{\sigma^k u}(x)) \bigg| \cdot \bigg|\frac{d}{dx}f^{\lam_2}_{\sigma^k u}(x) \bigg| \nonumber \\
	& \leq & M_1  |\lam_1 - \lam_2|  \sup \limits_{\lam \in [\lam_1, \lam_2]} \left| \frac{d^2}{dx d\lam} f^{\lam}_{\sigma^k u}(x) \right| + \nonumber\\
	& & \gamma_2^{n-k} \bigg| \left(\frac{d^2}{dx^2} f^{\lam_1}_{u_k}\right) (f^{\lam_1}_{\sigma^k u}(x)) - \left(\frac{d^2}{dx^2} f^{\lam_2}_{u_k}\right) (f^{\lam_2}_{\sigma^k u}(x)) \bigg| \nonumber \\
	& \leq & M_1 |n-k|\gamma_2^{n-k} |\lam_1 - \lam_2|  + \gamma_2^{n-k}A_7  \nonumber
	\end{eqnarray}
	and again by  \ref{as:C2} and Lemma \ref{lem:dlam hoelder}
	\begin{eqnarray}
	A_7 & \leq & \bigg| \left(\frac{d^2}{dx^2} f^{\lam_1}_{u_k}\right) (f^{\lam_1}_{\sigma^k u}(x)) - \left(\frac{d^2}{dx^2} f^{\lam_1}_{u_k}\right) (f^{\lam_2}_{\sigma^k u}(x)) \bigg| + \nonumber \\
	& & \bigg| \left(\frac{d^2}{dx^2} f^{\lam_1}_{u_k}\right) (f^{\lam_2}_{\sigma^k u}(x)) - \left(\frac{d^2}{dx^2} f^{\lam_2}_{u_k}\right) (f^{\lam_2}_{\sigma^k u}(x)) \bigg| \nonumber \\
	& \leq & C_1 |f^{\lam_1}_{\sigma^k u}(x) - f^{\lam_2}_{\sigma^k u}(x)|^\delta + C_2|\lambda_1 - \lambda_2|^\delta \nonumber \\
	& \leq & C_1 |\lam_1 - \lam_2|^{\delta} \sup \limits_{\lam \in [\lam_1, \lam_2]} \left| \frac{d}{d\lam} f^\lam_{\sigma^k u}(x)\right|^{\delta} + C_2|\lambda_1 - \lambda_2|^\delta \leq M_{12} |\lam_1 - \lam_2|^{\delta}. \nonumber
	\end{eqnarray}
	Combining the above with \eqref{eq:d2dx A1-3}, bound on $h_k$ and estimates on $A_1, \ldots, A_7$ and recalling that $\sum \limits_{k=1}^{n}|n-k|\gamma_2^{n-k} \leq \sum \limits_{k=0}^{\infty} k\gamma_2^k < \infty$ finishes the proof of \eqref{eq:dx2 hoelder}.
\end{proof}

\section{Proof of Proposition \ref{prop:C_1_delta_pi}}\label{app:proof of C_1_delta_pi}

We will write $d(u,v)$ for $d_{\lam_0}(u,v)$. Let $n = |u\wedge v|$, so  that $u\wedge v = u_1\ldots u_n$. Let us begin by proving \eqref{eq:4.21.1}. We have 
\begin{eqnarray}
\textstyle{\frac{d}{d\lam}} (\Pi^\lam(u) - \Pi^\lam(v))   & = &  \textstyle{\frac{d}{d\lam}} \left[f_{u \wedge v}^\lam (\Pi^\lam(\sig^n u)) - f_{u \wedge v}^\lam (\Pi^\lam(\sig^n v)) \right]   \nonumber \\[1.2ex]
& = &   \left(\textstyle{\frac{d}{d\lam}} f_{u \wedge v}^\lam\right)(\Pi^\lam (\sig^n u)) - \left(\textstyle{\frac{d}{d\lam}} f_{u \wedge v}^\lam\right)(\Pi^\lam (\sig^n v))   + \nonumber \\[1.2ex]
& &   \left(\textstyle{\frac{d}{dx}} f_{u \wedge v}^\lam\right)(\Pi^\lam (\sig^n u))\cdot \textstyle{\frac{d}{d\lam}} \Pi^\lam(\sig^n u) - \left(\textstyle{\frac{d}{dx}} f_{u \wedge v}^\lam\right)(\Pi^\lam (\sig^n v)) \cdot \textstyle{\frac{d}{d\lam}} \Pi^\lam(\sig^n v)   \nonumber \\[1.2ex]
& = &   \left(\textstyle{\frac{d}{d\lam}} f_{u \wedge v}^\lam\right)(\Pi^\lam (\sig^n u)) - \left(\textstyle{\frac{d}{d\lam}} f_{u \wedge v}^\lam\right)(\Pi^\lam (\sig^n v))   + \nonumber \\[1.2ex]
& &   \left(\textstyle{\frac{d}{dx}} f_{u \wedge v}^\lam\right)(\Pi^\lam (\sig^n u))\cdot \left[ \textstyle{\frac{d}{d\lam}} \left(\Pi^\lam(\sig^n u) - \Pi^\lam(\sig^n v)\right)\right]   + \nonumber \\[1.2ex]
& &   \left[\left(\textstyle{\frac{d}{dx}} f_{u \wedge v}^\lam\right)(\Pi^\lam (\sig^n u)) - \left(\textstyle{\frac{d}{dx}}f_{u \wedge v}^\lam\right)(\Pi^\lam (\sig^n v))\right]   \cdot   \textstyle{\frac{d}{d\lam}} \Pi^\lam(\sig^n v)   \nonumber \\[1.2ex]
& =: & A_1 + A_2 + A_3. \label{est10}
\end{eqnarray}

Application of \eqref{eq:dxdlam bound}, Lemma \ref{lem:dx dist comparision} and  \ref{as:hyper}  yields
\begin{eqnarray} |A_1| & \leq & \left\| \frac{d^2}{dxd\lam}f^{\lam}_{u \wedge v}\right\|_{\infty} |\Pi^\lam (\sig^n u) - \Pi^\lam (\sig^n v)| \leq C_{52} n \left\| \frac{d}{dx}f^{\lam}_{u \wedge v}\right\|_{\infty} \leq \frac{C_{52}}{c_1} n d(u,v)^{1 - \beta/4} \nonumber \\
& \leq & \frac{C_{52}}{c_1} n \gamma_2^{3n\beta/4} d(u,v)^{1 - \beta} \leq \frac{C_{\beta,1}}{3}d(u,v)^{1-\beta} \nonumber,
\end{eqnarray}
provided $C_{\beta,1}$ is chosen large enough. Using Lemma \ref{lem:dx dist comparision} together with the fact that $\frac{d}{d\lam} \Pi^{\lam}$ is bounded on $U \times \Om$ (following from Proposition \ref{prop:pi hoelder}), one obtains
\[ |A_2| \leq \frac{C_{\beta,1}}{3} d(u,v)^{1 - \beta/4} \leq \frac{C_{\beta,1}}{3} d(u,v)^{1 - \beta}, \]
if $C_{\beta,1}$ is large enough. Boundedness of $\frac{d}{d\lam} \Pi^{\lam}$, \eqref{eq:d2dx bound} and Lemma \ref{lem:dx dist comparision} imply

\begin{eqnarray} |A_3| & \leq & \left\| \frac{d^2}{dx^2}f^{\lam}_{u \wedge v}\right\|_{\infty} |\Pi^\lam (\sig^n u) - \Pi^\lam (\sig^n v)| \left| \textstyle{\frac{d}{d\lam}} \Pi^\lam(\sig^n v) \right| \leq C_{51} \left\| \frac{d}{dx}f^{\lam}_{u \wedge v}\right\|_{\infty} \left| \textstyle{\frac{d}{d\lam}} \Pi^\lam(\sig^n v) \right| \nonumber \\
& \leq & \frac{C_{\beta,1}}{3}d(u,v)^{1-\beta} \nonumber,
\end{eqnarray}
once again for $C_{\beta,1}$ large enough. This finishes the proof of \eqref{eq:4.21.1}. For the proof of \eqref{eq:4.21.2}, let us write a decomposition analogous to \eqref{est10}:
\[ \frac{d}{d\lam} \left( \Pi^{\lam_1}(u) - \Pi^{\lam_1}(v) \right) - \frac{d}{d\lam} \left( \Pi^{\lam_2}(u) - \Pi^{\lam_2}(v) \right) = \left( A_1^{\lam_1} - A_1^{\lam_2} \right) + \left( A_2^{\lam_1} - A_2^{\lam_2} \right) + \left( A_3^{\lam_1} - A_3^{\lam_2} \right).\]
We have
\begin{eqnarray} |A_1^{\lam_1} - A_1^{\lam_2}| & = & \left| \int \limits_{\Pi^{\lam_1}(\sig^n v)}^{\Pi^{\lam_1}(\sig^n u)} \frac{d^2}{dx d\lam} f^{\lam_1}_{u \wedge v}(y)dy - \int \limits_{\Pi^{\lam_2}(\sig^n v)}^{\Pi^{\lam_2}(\sig^n u)} \frac{d^2}{dx d\lam} f^{\lam_2}_{u \wedge v}(y)dy \right| \nonumber \\
& \leq & \int \limits_{S} \left| \frac{d^2}{dx d\lam} f^{\lam_1}_{u \wedge v}(y) -  \frac{d^2}{dx d\lam} f^{\lam_2}_{u \wedge v}(y) \right|dy + \int \limits_{S_1} \left| \frac{d^2}{dx d\lam} f^{\lam_1}_{u \wedge v}(y) \right|dy + \int \limits_{S_2} \left| \frac{d^2}{dx d\lam} f^{\lam_2}_{u \wedge v}(y) \right|dy, \label{eq:A1 lam}
\end{eqnarray}
where
\begin{eqnarray}S &=& [\Pi^{\lam_1}(\sig^n u), \Pi^{\lam_1}(\sig^n v)] \cap [\Pi^{\lam_2}(\sig^n u), \Pi^{\lam_2}(\sig^n v)],\nonumber \\
S_1 & = & [\Pi^{\lam_1}(\sig^n u), \Pi^{\lam_1}(\sig^n v)] \setminus [\Pi^{\lam_2}(\sig^n u), \Pi^{\lam_2}(\sig^n v)], \nonumber \\
S_2 & = & [\Pi^{\lam_2}(\sig^n u), \Pi^{\lam_2}(\sig^n v)] \setminus [\Pi^{\lam_2}(\sig^n u), \Pi^{\lam_2}(\sig^n v)]. \nonumber
\end{eqnarray}
Set $L = \sup \limits_{\lam \in U} \sup \limits_{u \in \Om} \left| \frac{d}{d\lam} \Pi^\lam(u) \right|$. We have then $|\Pi^{\lam_1}(\sig^n u) - \Pi^{\lam_2}(\sig^n u)| \leq L|\lam_1 - \lam_2|$ and $|\Pi^{\lam_1}(\sig^n v) - \Pi^{\lam_2}(\sig^n v)| \leq L|\lam_1 - \lam_2|$, hence
\be\label{eq:S12 length}
|S_1|,|S_2| \leq 2L|\lam_1 - \lam_2|.
\ee Applying this together with \eqref{eq:dxdlam hoelder} and \eqref{eq:dxdlam bound} to \eqref{eq:A1 lam}, followed by Lemma \ref{lem:dx dist comparision} and  \ref{as:hyper} as before,  yields
\begin{eqnarray}
|A_1^{\lam_1} - A_1^{\lam_2}| & \leq & \left( C_{76} n^2 | \lam_1 - \lam_2|^\delta  + 4LC_{52} n |\lam_1 - \lam_2| \right) \sup \limits_{\lam \in [\lam_1, \lam_2]} \left\| \frac{d}{dx} f^\lam_{u \wedge v} \right\|_{\infty} \nonumber \\
&\leq  &\frac{C_{\beta, 1, \delta}}{3}|\lambda_1 - \lambda_2|^\delta d(u,v)^{1 - \beta} \nonumber
\end{eqnarray}
if $C_{\beta, 1, \delta}$ is large enough. Furthermore, applying Proposition \ref{prop:pi hoelder}, \eqref{eq:d2dx bound}, \eqref{eq:dxdlam bound}, Lemma \ref{lem:dx dist comparision} and  \ref{as:hyper}, we obtain
\begin{eqnarray}
|A_2^{\lam_1} - A_2^{\lam_2}| & \leq & \left| \left(\textstyle{\frac{d}{dx}} f_{u \wedge v}^{\lam_1}\right)(\Pi^{\lam_1} (\sig^n u)) - \left(\textstyle{\frac{d}{dx}} f_{u \wedge v}^{\lam_2}\right)(\Pi^{\lam_2} (\sig^n u)) \right|\cdot\left| \textstyle{\frac{d}{d\lam}} \left(\Pi^{\lam_1}(\sig^n u) - \Pi^{\lam_1}(\sig^n v)\right) \right| +\nonumber \\
& & \left| \left(\textstyle{\frac{d}{dx}} f_{u \wedge v}^{\lam_2}\right)(\Pi^{\lam_2} (\sig^n u)) \right|\cdot\left| \textstyle{\frac{d}{d\lam}} \left(\Pi^{\lam_1}(\sig^n u) - \Pi^{\lam_1}(\sig^n v)\right) - \textstyle{\frac{d}{d\lam}} \left(\Pi^{\lam_2}(\sig^n u) - \Pi^{\lam_2}(\sig^n v)\right) \right| \nonumber \\
& \leq & 2L \left| \left(\textstyle{\frac{d}{dx}} f_{u \wedge v}^{\lam_1}\right)(\Pi^{\lam_1} (\sig^n u)) - \left(\textstyle{\frac{d}{dx}} f_{u \wedge v}^{\lam_2}\right)(\Pi^{\lam_1} (\sig^n u)) \right| + \nonumber \\
& & 2L \left| \left(\textstyle{\frac{d}{dx}} f_{u \wedge v}^{\lam_2}\right)(\Pi^{\lam_1} (\sig^n u)) - \left(\textstyle{\frac{d}{dx}} f_{u \wedge v}^{\lam_2}\right)(\Pi^{\lam_2} (\sig^n u)) \right| + \nonumber \\
& & \sup \limits_{\lam \in [\lam_1, \lam_2]} \left\| \frac{d}{dx} f^\lam_{u \wedge v} \right\|_{\infty} \left( \left| \textstyle{\frac{d}{d\lam}} \left(\Pi^{\lam_1}(\sig^n u) - \Pi^{\lam_2}(\sig^n u)\right)\right| + \left| \textstyle{\frac{d}{d\lam}} \left(\Pi^{\lam_1}(\sig^n v) - \Pi^{\lam_2}(\sig^n v)\right) \right| \right) \nonumber \\
& \leq & 2L \left( \sup \limits_{\lam \in [\lam_1, \lam_2]}  \left\|\frac{d^2}{d\lambda dx} f^{\lambda}_{u \wedge v}\right\|_{\infty}|\lam_1 - \lam_2| + \sup \limits_{\lam \in [\lam_1, \lam_2]}  \left\|\frac{d^2}{dx^2} f^{\lambda}_{u \wedge v}\right\|_{\infty}|\Pi^{\lam_1}(\sig^n u) - \Pi^{\lam_2}(\sig^n u)| \right) + \nonumber \\
& & 2\sup \limits_{\lam \in [\lam_1, \lam_2]} \left\| \frac{d}{dx} f^\lam_{u \wedge v} \right\|_{\infty}C_{\delta}|\lam_1 - \lam_2|^\delta  \nonumber \\
& \leq & 2L \sup \limits_{\lam \in [\lam_1, \lam_2]} \left\| \frac{d}{dx} f^\lam_{u \wedge v} \right\|_{\infty} \left( C_{52}n |\lam_1 - \lam_2| + C_{51}L |\lam_1 - \lam_2| \right) \nonumber + \\
& & 2\sup \limits_{\lam \in [\lam_1, \lam_2]} \left\| \frac{d}{dx} f^\lam_{u \wedge v} \right\|_{\infty}C_{\delta}|\lam_1 - \lam_2|^\delta \leq \frac{C_{\beta, 1, \delta}}{3}|\lambda_1 - \lambda_2|^\delta d(u,v)^{1 - \beta} \nonumber
\end{eqnarray}
for $C_{\beta, 1, \delta}$ large enough. By \eqref{eq:dxdlam bound} and Proposition \ref{prop:pi hoelder}, we have
\begin{eqnarray}
|A_3^{\lam_1} - A_3^{\lam_2}| & \leq &  \left| \int \limits_{\Pi^{\lam_1}(\sig^n v)}^{\Pi^{\lam_1}(\sig^n u)} \frac{d^2}{dx^2} f^{\lam_1}_{u \wedge v}(y)dy - \int \limits_{\Pi^{\lam_2}(\sig^n v)}^{\Pi^{\lam_2}(\sig^n u)} \frac{d^2}{dx^2} f^{\lam_2}_{u \wedge v}(y)dy \right| \cdot \left| \textstyle{\frac{d}{d\lam}} \Pi^{\lam_1}(\sig^n v) \right|+  \nonumber\\
& & \left(\int \limits_{\Pi^{\lam_2}(\sig^n v)}^{\Pi^{\lam_2}(\sig^n u)} \left| \frac{d^2}{dx^2} f^{\lam_2}_{u \wedge v}(y) \right| dy\right) \cdot \left|\textstyle{\frac{d}{d\lam}} \Pi^{\lam_1}(\sig^n v) - \textstyle{\frac{d}{d\lam}} \Pi^{\lam_2}(\sig^n v)\right| \nonumber \\
& \leq & L \left| \int \limits_{\Pi^{\lam_1}(\sig^n v)}^{\Pi^{\lam_1}(\sig^n u)} \frac{d^2}{dx^2} f^{\lam_1}_{u \wedge v}(y)dy - \int \limits_{\Pi^{\lam_2}(\sig^n v)}^{\Pi^{\lam_2}(\sig^n u)} \frac{d^2}{dx^2} f^{\lam_2}_{u \wedge v}(y)dy \right| + \label{eq:A3 lam} \\
& & C_{52} C_{\delta} n |\lam_1 - \lam_2|^\delta \sup \limits_{\lam \in [\lam_1, \lam_2]} \left\| \frac{d}{dx} f^\lam_{u \wedge v} \right\|_{\infty} . \nonumber
\end{eqnarray}
Let intervals $S, S_1, S_2$ be defined as before. Then by \eqref{eq:dx2 hoelder}, \eqref{eq:d2dx bound} and \eqref{eq:S12 length}
\begin{eqnarray}  & & \left| \int \limits_{\Pi^{\lam_1}(\sig^n v)}^{\Pi^{\lam_1}(\sig^n u)} \frac{d^2}{ dx^2} f^{\lam_1}_{u \wedge v}(y)dy - \int \limits_{\Pi^{\lam_2}(\sig^n v)}^{\Pi^{\lam_2}(\sig^n u)} \frac{d^2}{ dx^2} f^{\lam_2}_{u \wedge v}(y)dy \right| \nonumber \\
& \leq & \int \limits_{S} \left| \frac{d^2}{ dx^2} f^{\lam_1}_{u \wedge v}(y) -  \frac{d^2}{ dx^2} f^{\lam_2}_{u \wedge v}(y) \right|dy + \int \limits_{S_1} \left| \frac{d^2}{ dx^2} f^{\lam_1}_{u \wedge v}(y) \right|dy + \int \limits_{S_2} \left| \frac{d^2}{ dx^2} f^{\lam_2}_{u \wedge v}(y) \right|dy \nonumber \\
& \leq & C_{75} n | \lam_1 - \lam_2|^\delta \sup \limits_{\lam \in [\lam_1, \lam_2]} \left\| \frac{d}{dx} f^\lam_{u \wedge v} \right\|_{\infty} + 4LC_{51}|\lam_1 - \lam_2|\sup \limits_{\lam \in [\lam_1, \lam_2]} \left\| \frac{d}{dx} f^\lam_{u \wedge v} \right\|_{\infty} \nonumber \\
& \leq & C_{86} n | \lam_1 - \lam_2|^\delta \sup \limits_{\lam \in [\lam_1, \lam_2]} \left\| \frac{d}{dx} f^\lam_{u \wedge v} \right\|_{\infty} \nonumber
\end{eqnarray}
for some constant $C_{86} > 0$. Combining this with \eqref{eq:A3 lam} and applying Lemma \ref{lem:dx dist comparision} and  \ref{as:hyper} gives
\[ |A_3^{\lam_1} - A_3^{\lam_2}| \leq (C_{52} C_{\delta} + C_{86})n |\lam_1 - \lam_2|^\delta \sup \limits_{\lam \in [\lam_1, \lam_2]} \left\| \frac{d}{dx} f^\lam_{u \wedge v} \right\|_{\infty} \leq \frac{C_{\beta, 1, \delta}}{3}|\lambda_1 - \lambda_2|^\delta d(u,v)^{1 - \beta} \]
if $C_{\beta, 1, \delta}$ is large enough. Finally, putting together bounds on $|A_i^{\lam_1} - A_i^{\lam_2}|$ finishes the proof of \eqref{eq:4.21.2}.

\section{Drop of the pressure}\label{app:pressure drop}

Let $\Ak = \{1,\ldots,m\}$ and suppose we have an IFS $\Psi=\{f_j\}_{j \in \Ak}$ of the class $C^{1+\delta}$ on a compact interval $X\subset \R$.
We assume that
that the system $\{f_j\}_{j \in \Ak}$ is uniformly hyperbolic and contractive:
\be \label{hyper}
0 < \gam_1 \le |f_j'(x)| \le \gam_2 < 1\ \ \mbox{for all}\ j \in \Ak,\ x\in X.
\ee
%We also need some conditions of regularity in $\lam$-dependence (less than in other steps), which I omit at the moment.

Let $\Om = \Ak^\N$ and let $\sigma$ denote the left shift on $\Om$. Let $\Ak^* = \bigcup \limits_{n \geq 0} \Ak^n$ and let $|u| = n$ for $u \in \Ak^n$. For $u = (u_1, \ldots u_n) \in \Ak^*$ denote
\[ f_{u} = f_{u_1 \ldots u_n} := f_{u_1} \circ \ldots \circ f_{u_n} \]
(with $f_{u} = \mathrm{id}$ if $u$ is an empty word).

\medskip

Consider the pressure function, defined by
\be \label{eq-pressure}
P_\Ak(t) = P_\Psi(t)=\lim_{n\to \infty} n^{-1} \log \sum_{u\in \Ak^n} \|f'_u\|^t.
\ee
It is well-known that this limit exists, $t\mapsto P_\Ak(t)$ is continuous and strictly decreasing (it is also convex, but we will not need this).

\begin{lem} \label{lem-drop1}
	Suppose that $\Bk = \Ak \setminus\{m\}$. Then $P_\Bk(t) < P_\Ak(t)$ for all $t\ge 0$. (The functions of the IFS are assumed to be the same. The claim can be expressed in words by saying that if we drop one of the functions of the IFS, then the pressure drops strictly.)
\end{lem}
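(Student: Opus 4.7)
My plan is to establish the lower bound $Z_n(\Ak,t)\ge C_\eps(c+e^{P_\Bk(t)-\eps})^n$ valid for every $\eps>0$, with a positive constant $c$ independent of $n$, from which the strict inequality will follow immediately by taking $n^{-1}\log$ and sending $\eps\to 0$. Here $Z_n(\Ak,t):=\sum_{u\in\Ak^n}\|f'_u\|^t$, and similarly for $\Bk$.

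The first ingredient will be the classical bounded distortion property. Although Lemma~\ref{lem:pbdp} is stated with parameters, an identical (and simpler) argument using only $C^{1+\delta}$ regularity together with \eqref{hyper} produces a constant $D\ge 1$ such that $|f'_u(y)|\ge D^{-1}\|f'_u\|$ for all $y\in X$ and $u\in\Ak^*$. Combined with the chain rule, this yields the quasi-multiplicativity
\[
D^{-2(r-1)}\prod_{i=1}^r\|f'_{u_i}\|\;\le\;\|f'_{u_1\cdots u_r}\|\;\le\;\prod_{i=1}^r\|f'_{u_i}\|
\]
for any $r$-fold concatenation of finite words.

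The combinatorial step will be to decompose each $u\in\Ak^n$ according to the positions where $m$ occurs. Choosing a subset $I\subset\{1,\dots,n\}$ with $|I|=k$ to mark those positions, the letters outside $I$ form a word $v\in\Bk^{n-k}$, which is naturally split into $k+1$ (possibly empty) blocks $v=v_0v_1\cdots v_k$. The corresponding word is the $(2k+1)$-fold concatenation $u=v_0\,m\,v_1\,m\cdots m\,v_k$. Applying the quasi-multiplicativity above, together with $\|f'_m\|^t\ge\gamma_1^t$ (valid for $t\ge 0$) and $\|f'_v\|\le\prod_i\|f'_{v_i}\|$, gives
\[
\|f'_u\|^t\;\ge\; D^{-4kt}\gamma_1^{tk}\|f'_v\|^t\;=\;c^k\|f'_v\|^t,\qquad c:=D^{-4t}\gamma_1^t>0.
\]
Summing over $k$, $I$, and $v$ via the bijection described then yields
\[
Z_n(\Ak,t)\;\ge\;\sum_{k=0}^n\binom{n}{k}c^k\,Z_{n-k}(\Bk,t).
\]

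To conclude, fix $\eps>0$. The quantity $P_\Bk(t)$ is finite because $\gamma_1^{t|u|}\le\|f'_u\|^t\le\gamma_2^{t|u|}$ by \eqref{hyper}, so there exists $C_\eps>0$ with $Z_n(\Bk,t)\ge C_\eps\,e^{n(P_\Bk(t)-\eps)}$ for all $n\ge 0$. Substituting and applying the binomial theorem gives $Z_n(\Ak,t)\ge C_\eps(c+e^{P_\Bk(t)-\eps})^n$; taking $n^{-1}\log$, letting $n\to\infty$, and sending $\eps\to 0$ yields $P_\Ak(t)\ge\log(c+e^{P_\Bk(t)})>P_\Bk(t)$, since $c>0$. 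The only real obstacle I anticipate is the careful bookkeeping in the distortion estimate for the $(2k+1)$-fold concatenation, ensuring that the resulting gain $c^k$ has $c>0$ independent of $k$, $n$, and the block splitting. The $C^{1+\delta}$ hypothesis is precisely what makes $D$ a uniform constant; once this is in hand, the strict inequality is automatic, because each chosen position of $m$ contributes a factor $c>0$ which, through the binomial theorem, produces a strict exponential gain over $e^{nP_\Bk(t)}$.
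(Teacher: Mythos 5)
Your proof is correct, and it takes a genuinely different route from the paper's. Both arguments reduce the lemma to a quantitative lower bound on partition sums showing that the extra map contributes a uniform exponential gain, and both use bounded distortion, but in different places and with different mechanics. The paper first uses distortion to replace $\|f'_u\|$ by $\inf_{x\in X}|f'_u(x)|$ in the definition of the pressure; for these inf-based sums the infimum of a product dominates the product of infima, giving $Z_{n+1}(\Ak,t)\ge Z_1(\Ak,t)\,Z_n(\Ak,t)$, and the entire gain is extracted from the one-letter estimate $Z_1(\Ak,t)\ge(1+\delta_t)Z_1(\Bk,t)$ with $\delta_t=\gamma_1^t/((m-1)\gamma_2^t)$, leading to $Z_n(\Ak,t)\ge(1+\delta_t)^n Z_n(\Bk,t)$ and hence $P_\Ak(t)\ge P_\Bk(t)+\log(1+\delta_t)$. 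You instead keep the sup-norm sums, use distortion to get quasi-multiplicativity from below, decompose each word of $\Ak^n$ by the positions of the deleted letter $m$, and compare against $e^{nP_\Bk(t)}$ via the binomial theorem, obtaining $P_\Ak(t)\ge\log\bigl(c+e^{P_\Bk(t)}\bigr)$ with $c=D^{-4t}\gamma_1^t>0$. Your bookkeeping is sound: each concatenation junction costs only a factor $D^{-1}$ (so your $D^{-2}$ per junction is a safe overestimate, uniform in $k$ and $n$), submultiplicativity of the sup norm gives $\prod_i\|f'_{v_i}\|\ge\|f'_v\|$, and the correspondence $(I,v)\mapsto u$ is a bijection, so $Z_n(\Ak,t)\ge\sum_{k=0}^{n}\binom{n}{k}c^k Z_{n-k}(\Bk,t)$ indeed holds. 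Two minor remarks: since sup norms are submultiplicative, $\log Z_n(\Bk,t)$ is subadditive and Fekete's lemma already gives $Z_n(\Bk,t)\ge e^{nP_\Bk(t)}$ for every $n$, so your $\varepsilon$ and the constant $C_\varepsilon$ can be dispensed with; and while the paper's argument is shorter (it compares $Z_n(\Ak,t)$ with $Z_n(\Bk,t)$ at the same $n$ and needs no combinatorics), yours works directly with the sup-norm definition of the pressure and makes explicit exactly where each occurrence of the dropped map enters the counting, which yields an alternative explicit lower bound for the pressure gap.
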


\thispagestyle{empty}

\begin{proof}
	For $t=0$ the claim is trivial, so let us fix $t>0$. Observe that the pressure can be expressed in the following alternative way:
	\be \label{alter}
	P_\Ak(t) = \lim_{n\to \infty} n^{-1} \log \sum_{u\in \Ak^n} \inf_{x\in X} |f'_u(x)|^t.
	\ee
	Indeed, by the Bounded Distortion Property, there exists $K>1$ such that $|f_u'(x)| \le K |f_u'(y)|$ for all $u\in \Ak^*$ and $x,y\in X$, and \eqref{alter} follows. Denote
	$$
	Z_n(\Ak,t) = \sum_{u\in \Ak^n} \inf_{x\in X} |f'_u(x)|^t.
	$$
	We claim that
	\be \label{claim2}
	Z_n(\Ak,t) \ge Z_n(\Bk,t)\cdot (1+\delta_t)^n,\ \ \mbox{where}\ \ \delta_t = \frac{\gam_1^t}{(m-1)\gam_2^t}\,.
	\ee
	This will immediately imply that $P_\Bk(t) < P_\Ak(t)$, as desired.
	%We will prove \eqref{claim2} by induction.
	We have 
	$$
	Z_1(\Ak,t) = Z_1(\Bk,t) + \inf_{x\in X} |f'_m(x)|^t \ge Z_1(\Bk,t)\cdot (1 + \delta_t),
	$$
	by \ref{as:hyper}. %Now, suppose that \eqref{claim1} is already verified for $n$. 
	Since $\inf_{x\in X} |f'_{ju}(x)|^t \ge \inf_{x\in X} |f'_j(x)|^t\cdot \inf_{x\in X} |f'_u(x)|^t$, we have
	$$
	Z_{n+1}(\Ak,t) \ge Z_1(\Ak,t)\cdot Z_n(\Ak,t),
	$$
	and \eqref{claim2} follows by induction.
\end{proof}
\textbf{Consequences.} Under the assumptions and notation of Section \ref{sec:ac equilib}, let $s(\Psi)$ be the unique zero of the pressure function $P_\Psi(t)$:
$$
P_\Psi(s(\Psi))=0.
$$

\begin{cor}\label{cor-drop1}
	Suppose that $\Phi$ is a proper subset of $\Psi$. Then $s(\Psi) > s(\Phi)$.
\end{cor}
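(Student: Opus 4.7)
The plan is to derive Corollary \ref{cor-drop1} directly from Lemma \ref{lem-drop1} by iterated removal of functions, combined with the fact that $P_\Psi$ is strictly decreasing and has $s(\Psi)$ as its unique zero.

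Write $\Psi = \{f_j\}_{j \in \Ak}$ and $\Phi = \{f_j\}_{j\in \Ak'}$ where $\Ak' \subsetneq \Ak$. Enumerate the elements of $\Ak\setminus \Ak'$ as $j_1,\ldots, j_\ell$ (with $\ell\ge 1$), and set $\Ak_0 = \Ak$ and $\Ak_k = \Ak\setminus\{j_1,\ldots,j_k\}$ for $1\le k\le \ell$, so that $\Ak_\ell = \Ak'$. Each $\Ak_k$ is obtained from $\Ak_{k-1}$ by removing a single element, so Lemma \ref{lem-drop1} (applied to the IFS indexed by $\Ak_{k-1}$, after relabeling the removed index as ``$m$'') gives $P_{\Ak_k}(t) < P_{\Ak_{k-1}}(t)$ for every $t\ge 0$. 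Chaining these inequalities yields
\[
P_\Phi(t) \;=\; P_{\Ak_\ell}(t) \;<\; P_{\Ak_0}(t) \;=\; P_\Psi(t) \qquad \text{for all } t\ge 0.
\]

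In particular, substituting $t = s(\Psi) \ge 0$ and using the defining equation $P_\Psi(s(\Psi)) = 0$ (note $s(\Psi)\ge 0$ since $P_\Psi(0) = \log m \ge 0$ and $P_\Psi$ is strictly decreasing), we obtain $P_\Phi(s(\Psi)) < 0$. On the other hand, $P_\Phi(s(\Phi)) = 0$, and $P_\Phi$ is strictly decreasing by the general theory of pressure (as recalled in the discussion preceding Lemma~\ref{lem-drop1}). Therefore $s(\Phi) < s(\Psi)$, as claimed.

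The only delicate point is verifying that Lemma~\ref{lem-drop1} actually applies at each intermediate step: the lemma is stated under the standing hypothesis that the IFS satisfies \eqref{hyper} with common constants $\gam_1, \gam_2$, but any sub-IFS of $\Psi$ inherits exactly the same constants, so the required uniform hyperbolicity is automatic. No real obstacle arises; the corollary is essentially a bookkeeping consequence of the lemma together with monotonicity of the pressure.
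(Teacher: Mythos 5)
Your proof is correct and follows exactly the route the paper intends: the paper simply states that the corollary is immediate from Lemma \ref{lem-drop1}, and your argument (iterating the one-function-removal lemma to get $P_\Phi(t)<P_\Psi(t)$ for all $t\ge 0$, then evaluating at $t=s(\Psi)$ and using strict monotonicity of $P_\Phi$) is the standard filling-in of that step.
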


This is immediate from Lemma~\ref{lem-drop1}.

\begin{cor}\label{cor-drop2}
	Suppose that the attractor of $\Psi$ is the entire interval $X$ and the IFS is overlapping in the sense that
	\be \label{overlap}
	\sum_{j\in \Ak} |X_j| > |X|,\ \ \mbox{where}\ X_j = f_j(X).
	\ee
	Then $s(\Psi)>1$.
\end{cor}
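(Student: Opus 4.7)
The plan is to reduce the claim to showing $P_\Psi(1)>0$, since $t\mapsto P_\Psi(t)$ is continuous and strictly decreasing and $P_\Psi(s(\Psi))=0$. By the bounded distortion property (BDP) available from our $C^{1+\delta}$ uniformly hyperbolic assumption, $|X_u| \asymp \|f_u'\|_\infty\cdot |X|$ uniformly in $u\in\Ak^*$, so in view of \eqref{alter},
\[
P_\Psi(1) \;=\; \lim_{n\to\infty}\frac{1}{n}\log a_n,\qquad a_n := \sum_{u\in\Ak^n}|X_u|,
\]
and it suffices to show that $a_n$ grows exponentially.

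Next, set $G_n(x) := \sum_{u\in\Ak^n}|f_u'(x)|$, so that $a_n=\int_X G_n$, and let $M(y) := \sum_{j\in\Ak}\mathbf{1}_{X_j}(y)$ be the multiplicity function of the first-generation cover. A change of variables in $|X_{uj}|=\int_{X_j}|f_u'|$ yields the recursion
\[
a_{n+1} \;=\; \sum_{j\in\Ak}\int_{X_j}G_n(y)\,dy \;=\; \int_X G_n(y)\,M(y)\,dy.
\]
Because $X=\bigcup_j X_j$ (attractor equation), $M\ge 1$ on $X$, and so $a_{n+1}-a_n=\int_X G_n(y)\bigl(M(y)-1\bigr)\,dy\ge 0$.

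The overlap hypothesis $\sum_j|X_j|>|X|$ reads $\int_X(M-1)\,dy=\delta>0$. Since $M$ is integer-valued, the set $E:=\{y\in X:\,M(y)\ge 2\}$ has positive Lebesgue measure. On $E$ we have $M-1\ge 1$, while BDP, applied globally to the function $G_n$ (each summand $|f_u'|$ varies across $X$ within a uniform factor $K$, hence so does their sum), gives the pointwise lower bound $G_n \ge K^{-1}a_n/|X|$ on $X$. Combining,
\[
a_{n+1}-a_n \;\ge\; \int_E G_n\,dy \;\ge\; \frac{|E|}{K\,|X|}\,a_n,
\]
so $a_{n+1}\ge (1+c)a_n$ with $c:=|E|/(K|X|)>0$. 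Iterating, $a_n\ge (1+c)^n|X|$, which gives $P_\Psi(1)\ge \log(1+c)>0$ and hence $s(\Psi)>1$.

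The only conceptually non-trivial point I foresee is the placement of the bounded distortion estimate: BDP must be applied globally to the entire sum $G_n$ (whose variation across $X$ is still controlled by the single constant $K$), not termwise with a constant blowing up in $n$. Equally important is the observation that the overlap assumption need only be exploited at the very first level; attempting to iterate the overlap itself would force one to track the multiplicities $M_k$ of higher generations, where both distortion and cancellation become difficult to control.
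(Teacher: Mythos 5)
Your argument is correct, and it takes a genuinely different route from the paper. The paper's proof never touches $P_\Psi(1)$ directly: it uses the covering $X=\bigcup_j X_j$ and the overlap hypothesis \eqref{overlap} to find $i\ne j$ with $X_i\cap X_j$ a nontrivial interval, chooses an iterate $k$ and a word $w\in\Ak^k$ with $X_w\subset X_i\cap X_j$, and observes that the system $\Psi^k\setminus\{f_w\}$ still has the whole interval $X$ as its attractor, so $s(\Psi^k\setminus\{f_w\})\ge\dim_H X=1$; the strict pressure-drop Lemma~\ref{lem-drop1} (via Corollary~\ref{cor-drop1}, together with $s(\Psi^k)=s(\Psi)$) then forces $s(\Psi)>1$. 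You instead prove $P_\Psi(1)>0$ head-on: writing $a_n=\sum_{u\in\Ak^n}|X_u|$, your recursion $a_{n+1}=\int_X G_n\,M$, the bounds $M\ge 1$ on $X$ and $M\ge 2$ on a set $E$ of positive measure (which is exactly what \eqref{overlap} gives), and the global distortion bound $G_n\ge a_n/(K|X|)$ yield $a_{n+1}\ge(1+c)a_n$, hence $P_\Psi(1)\ge\log(1+c)>0$ and $s(\Psi)>1$ by monotonicity of the pressure; all steps check out, including the decomposition $X_{uj}=f_u(X_j)$ behind the recursion and the fact that distortion is applied to the whole sum $G_n$ with the single constant $K$. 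The trade-offs: your argument is more self-contained and quantitative --- it avoids both the pressure-drop lemma and the inequality $\dim_H\Lambda\le s$, and it produces an explicit lower bound $\log(1+c)$ on $P_\Psi(1)$ in terms of the overlap measure $|E|$ and the distortion constant --- while the paper's argument is shorter given that Lemma~\ref{lem-drop1} is proved anyway for Corollary~\ref{cor-drop1}, and it only needs the overlap to contain a single cylinder at some level rather than a set of positive measure (though under \eqref{overlap} these amount to the same thing).
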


\begin{proof}
	We have $X = \bigcup_{j\in \Ak} X_j$ by assumption. Then \eqref{overlap} implies that there exist $i\ne j$ in $X$ such that $X_i\cap X_j$ is a non-empty interval. We can find $k\in \N$ and $w\in \Ak^k$ such that $X_w\subset X_i\cap X_j$. It follows easily that
	$$
	\bigcup_{u\in \Ak^k\setminus \{w\}} X_u = X.
	$$
	Denote $\Psi^k = \{f_u:\ u\in \Ak^k\}$, the IFS of $k$-th iterates. It follows from the existence of the limit in \eqref{eq-pressure} that $P_{\Psi^k}(t)  = k P_\Psi(t)$, hence
	$s(\Psi^k) = s(\Psi)$. By Corollary~\ref{cor-drop1}, we have $s(\Psi^k \setminus \{f_u\}) < s(\Psi^k)$. It suffices to show that for an IFS $\Phi$ whose attractor is an interval $X$ we have $s(\Phi)\ge 1$. {But this follows from the inequality $1 = \dim_H(\Lam_\Phi) \le s(\Phi)$, where $\Lam_\Phi$ is the attractor of $\Phi$.}
%{\b	Indeed, let $\Psi = \{f_j\}_{j\in \Jk}$. Again using the Bounded Distortion Principle, we can express the pressure function as follows:
%	$$
%	P_\Phi(t) = \lim_{n\to \infty} n^{-1} \log \sum_{u\in \Jk^n} |X_u|^t.
%	$$
%	But then $\sum_{u\in \Jk^n} |X_u| \ge \Lk^1(\cup_{u\in \Jk^n} X_u) = |X|$, hence $P_\Phi(1)\ge 0$ and so $s(\Phi)\ge 1$, as desired.}
\end{proof}

\end{document}